\let\oldtocsection=\tocsection
\let\oldtocsubsection=\tocsubsection
\let\oldtocsubsubsection=\tocsubsubsection
\renewcommand{\tocsection}[2]{\hspace{0em}\oldtocsection{#1}{#2}}
\renewcommand{\tocsubsection}[2]{\hspace{1em}\oldtocsubsection{#1}{#2}}
\renewcommand{\tocsubsubsection}[2]{\hspace{2em}\oldtocsubsubsection{#1}{#2}}
\newcommand{\midarrow}{\draw[postaction={decorate}]}
\tikzset{partial ellipse/.style args={#1:#2:#3}{insert path={+ (#1:#3) arc (#1:#2:#3)} }}
\newcommand\vertex[1]{\fill #1 circle (.05)}
\definecolor{darkorange}{rgb}{1.0, 0.55, 0.0}
\definecolor{forestgreen}{rgb}{0.13, 0.55, 0.13}
\newtheorem{theorem}{Theorem}
\numberwithin{theorem}{section}
\newtheorem{lemma}[theorem]{Lemma}
\newtheorem{proposition}[theorem]{Proposition}
\newtheorem{corollary}[theorem]{Corollary}
\theoremstyle{remark}
\newtheorem{remark}[theorem]{Remark}
\theoremstyle{definition}
\newtheorem{definition}[theorem]{Definition}
\newtheorem{example}[theorem]{Example}
\newcommand{\R}{\mathbb{R}} 
\newcommand{\Eu}{\mathbb{E}} 
\newcommand{\Z}{\mathbb{Z}}  
\newcommand{\inv}{^{-1}} 
\DeclareMathOperator{\lk}{lk}
\DeclareMathOperator{\st}{st}
\DeclareMathOperator{\UL}{lk^+} 
\DeclareMathOperator{\UF}{dlk} 
\DeclareMathOperator{\Sing}{split}
\DeclareMathOperator{\id}{id}
\DeclareMathOperator{\GL}{GL}
\DeclareMathOperator{\SL}{SL}
\DeclareMathOperator{\SO}{SO}
\DeclareMathOperator{\Out}{Out}
\DeclareMathOperator{\Min}{Min}
\DeclareMathOperator{\Br}{br} 
\DeclareMathOperator{\spn}{span}
\DeclareMathOperator{\Isom}{Isom}
\newcommand{\G}{\Gamma}             
\newcommand{\AG}{A_\G}                 
\newcommand{\GW}{$\G$-Whitehead} %
\newcommand{\WP}{{\mathcal P}}     
\newcommand{\WQ}{{\mathcal Q}}     
\newcommand{\WR}{{\mathcal R}}      
\newcommand{\Ca}{\mathcal{C}} 
\newcommand{\eqH}{[\![H]\!]} 
\newcommand{\eqK}{[\![K]\!]} 
\newcommand{\eqL}{[\![L]\!]} 
\newcommand{\bPi}{\Pi}  
\newcommand{\lkp}{\lk_\bPi}
\newcommand{\stp}{\st_\bPi}
\newcommand{\dlkp}{\UF_\bPi}
\newcommand{\SG}{{\Sigma}_\Gamma}                    
\newcommand{\tS}{{\mathcal{T}}_\Gamma}              
\newcommand{\OG}{\mathcal{O}_{\Gamma}}            
\newcommand{\SaG}{\mathbb{S}_{\Gamma}}           
\newcommand{\Sa}{\mathbb{S}}                                 
\newcommand{\SP}{\Sa^\bPi}                                     
\newcommand{\SPp}{{\Sa}^\Omega}                           
\newcommand{\USP}{\widetilde\Sa^\bPi}                     
\newcommand{\F}{\mathcal{F}}                                    
\newcommand{\Fp}{\mathcal{G}}                                  
\newcommand{\E}{\mathcal{E}}                                    
\newcommand{\EP}{\mathbb{C}^\bPi}			 
\newcommand{\Sv}{\mathbb K_v}              
\newcommand{\Slkv}{\mathbb K_{\lk(v)}}   
\newcommand{\Snlv}{\mathbb K_{\UF(v)} }
\newcommand{\Sulv}{\mathbb K_{\UL(v)} }
\newcommand{\USv}{\widetilde{\mathbb K}_v} 
\newcommand{\USlkv}{\widetilde{\mathbb K}_{\lk(v)}} 
\newcommand{\USlkm}{\widetilde{\mathbb K}_{\lk(m)}} 
\newcommand{\USnlv}{\widetilde{\mathbb K}_{\UF(v)} }
\newcommand{\UH}{\widetilde H} 
\newcommand{\re} {\mathbbm r} 
\newcommand{\dsx}{\mathbbm x} 
\newcommand\redvertex[1]{\fill[red] #1 circle (.075)} 
\newcommand{\pr}{\text{pr}} 
\newcommand{\iso}{\cong}   
\newcommand{\bdry}{\partial} 
\newcommand{\overbar}[1]{\mkern 5mu\overline{\mkern-5mu#1\mkern-5mu}\mkern 5mu}
\author[Corey Bregman, Ruth Charney and Karen Vogtmann]{Corey Bregman,
Ruth Charney and
 Karen Vogtmann}
 \title{Outer space for  RAAGs}
\begin{document}

\thanks{Bregman was partially supported by NSF grant DMS-1906269.}
\thanks{Charney was partially supported by NSF grant DMS-1607616.}

\begin{abstract}
For any right-angled Artin group $\AG$ we construct a finite-dimensional space $\OG$ on which the group $\Out(\AG)$ of outer automorphisms of $\AG$ acts with finite point stabilizers.  We prove that $\OG$ is contractible, so that the quotient is a rational classifying space for $\Out(\AG)$.  The space $\OG$ blends features of the symmetric space of lattices in $\R^n$ with those of  Outer space for the free group $F_n$.  Points in $\OG$ are locally CAT(0) metric spaces that are homeomorphic (but not isometric) to certain locally CAT(0) cube complexes, marked by an isomorphism of their fundamental group with $\AG$.    \end{abstract}

\maketitle

\tableofcontents

\section{Introduction}\label{sec:intro}  A lattice $\Lambda$ in a semi-simple Lie group $G$ acts discretely on the symmetric space $ G/K$, and a very well-developed  theory shows that the algebraic structure of $\Lambda$ is intimately connected to the geometric structure of $G/K.$   The   study of surface mapping class groups by Thurston, Harvey and Harer among others borrowed ideas from this classical subject, using Teichm\"uller space as a substitute for the symmetric space, and this point of view  proved to be   extremely fruitful.
An analog of symmetric spaces and Teichm\"uller spaces called ``Outer space" was later produced for the  purpose of studying the group of outer automorphisms of a free group \cite{CuVo}.  The study of this group, space and action has frequently been guided by Thurston's ideas, but there are some respects in which  $\Out(F_n)$ more closely resembles a lattice than a  mapping class group. For example, mapping class groups are automatic \cite{Mosher}, while for $n\geq3$,  $\Out(F_n)$~\cite{BriVogGeom} and $\GL(n,\Z)$~\cite{ECHLPT} are not.

In this paper we study outer automorphism groups of  right-angled Artin groups, a class which includes both $\Out(F_n)$ and the most basic lattice, $\GL(n,\Z)=\Out(\Z^n).$
Recall that a {\em right-angled Artin group (RAAG)} is defined by a presentation with a finite set of generators  together with  relations specifying that some of the generators commute.  A convenient way of expressing this is to draw a graph $\G$ with one vertex for every generator and one edge connecting each pair of commuting generators; the resulting RAAG is denoted $\AG$. In recent years RAAGs and their automorphism groups have played a prominent role in geometric group theory and low-dimensional topology.  RAAGs are linear groups and they arise naturally as subgroups of many other groups such as mapping class groups, Coxeter groups, and more general Artin groups \cite{CLM, CrPa, DaJa, Kob}.  Conversely, while all subgroups of free (or free abelian) groups are themselves free (or free abelian), a surprisingly diverse array of groups can be realized as subgroups of RAAGs, including surface groups and many 3-manifold groups \cite{BeWi, DSS, HaWi}.  The fact that the fundamental group of every closed hyperbolic 3-manifold virtually embeds in a RAAG was central to Agol's proof of the virtual Haken Conjecture \cite{AGM}, the final step in Thurston's program to classify 3-manifolds.  The diversity of subgroups has also made RAAGs a fertile source of counterexamples for a variety of conjectures \cite{BeBr, CrKl}.

To date, outer automorphism groups of RAAGs have primarily been studied from an algebraic point of view.  See, for example, \cite{ChVo1, ChVo2, Day, DayWade}.
As the case of mapping class groups and $\Out(F_n)$ clearly demonstrates, geometric approaches to studying such groups can be very effective.  In this paper we focus on constructing an analog of outer space for RAAGs that will allow us to apply similar methods to the study of $\Out(\AG)$.
Some initial steps in this direction appear in previous papers.   In \cite{CCV}, Charney and Vogtmann, together with Crisp, constructed a candidate outer space for two-dimensional RAAGs (those for which $\G$ contains no triangles), but there is no apparent way to generalize this to higher dimensions.  Then in \cite{CSV}, together with Stambaugh, they constructed a contractible space $K_\G$ with a proper action of a certain subgroup of $\Out(\AG)$.  This subgroup, denoted $U(\AG)$, is made up of ``untwisted" outer automorphisms of $\AG$ that behave more like automorphisms of free groups.  In particular, it excludes transvections between commuting pairs of generators.
In the current paper, we use the space $K_\G$ as a starting point to build an outer space for the full outer automorphism group.

Outer space for free groups, $CV_n$, can be described as a space of marked metric graphs with fundamental group $F_n,$ where the marking specifies an isomorphism of $\pi_1$ with $F_n$.  Similarly, the symmetric space $Q_n=\SO(n)\backslash \SL(n,\R)$ can be described as the space of marked flat tori with fundamental group $\Z^n$, where the marking gives an isomorphism of $\pi_1$ with $\Z^n$. Thus the basic objects in $Q_n$ (tori) are all homeomorphic but have different flat metrics, while the basic objects in $CV_n$ (graphs)  have different homeomorphism types as well as different metrics.  These different homeomorphism types, however, all have a common quotient, an $n$-petaled rose, obtained by collapsing any maximal tree.   For a general RAAG,  there is a canonical construction of a CAT(0) cube complex $\SaG$ with fundamental group $\AG$, known as the  {\em Salvetti complex}, which has a $k$-torus for each $k$-clique in $\G$.  In the new outer space,  this complex plays the role of the $n$-petaled rose.   The basic objects in our outer space $\OG$ are locally CAT(0) metric spaces $(Y,d)$ containing contractible subspaces (analogous to maximal trees) that can be collapsed to produce a quotient homeomorphic to $\SaG$.  Each $(Y,d)$ is made up of a collection of (intersecting) flat tori marked by the free abelian groups generated by cliques in $\G$.  A point in $\OG$ consists of one of these metric spaces $(Y,d)$ marked by an isomorphism of $\pi_1(Y)$ with $\AG$.

More precisely, the spaces $Y$ are homeomorphic (but not isometric) to non-positively curved cube complexes called {\em $\G$-complexes}, which were previously introduced in \cite{CSV}.  Marked $\G$-complexes form a partially ordered set whose geometric realization is the simplicial complex $K_\G$ mentioned above.  In $K_\G$, $\G$-complexes are viewed as combinatorial objects, not as metric objects, and the markings are of restricted type, allowing only an action of the subgroup $U(\AG)$.  In the current paper, $\G$-complexes are endowed with locally CAT(0) metrics that make the interior of each ``cube" isometric to a Euclidean parallelotope.  We call this a \emph{skewed} $\G$-complex.  The objects $(Y,d)$ in $\OG$ are isometric to skewed $\G$-complexes.  The markings are  arbitrary, and objects are equivalent if they  are isometric by a map that commutes with the marking, up to free homotopy.  As in the special cases of $\GL(n,\Z)$ acting on $Q_n$ and $\Out(F_n)$ acting on $CV_n$, $\Out(\AG)$ acts on $\OG$ by changing the marking. The main theorem states,

\begin{theorem}\label{thm:main} For any right-angled Artin group $\AG$, the space $\OG$ is finite-dimensional, contractible and  the action of the group $\Out(\AG)$ has finite point stabilizers.\\
\end{theorem}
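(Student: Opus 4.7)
The plan is to reduce all three claims to known facts about the simplicial spine $K_\G$ from \cite{CSV} (in particular its contractibility) by exhibiting $\OG$ as a bundle over a space closely related to $K_\G$ whose fibre records the new skewing and twist data. Throughout, the basic dichotomy to exploit is: the combinatorial/untwisted content of a point $(Y,d)\in\OG$ is captured by the $U(\AG)$-machinery of $K_\G$, while the orthogonal part (the Euclidean metric on each maximal torus, together with the twist cosets outside $U(\AG)$) lives in a convex parameter space.

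Finite dimensionality and properness should be relatively routine. A skewed $\G$-complex has finitely many cells with bounds depending only on $\G$, and a locally CAT(0) skewing amounts to assigning a Euclidean parallelotope to each maximal cube, determined by a positive-definite symmetric matrix whose size is bounded by the clique number of $\G$; together with finitely many twist parameters and the simplicial data of $K_\G$, this yields a uniform bound on $\dim \OG$. For properness, any self-isometry of a skewed $\G$-complex $Y$ permutes the finitely many maximal parallelotopes, so the stabilizer of $(Y,d)$ injects (modulo inner automorphism) into the symmetry group of the underlying combinatorial $\G$-complex, which is finite. A standard Arzel\`a--Ascoli argument promotes this to properness of the action: any sequence $\phi_n\in\Out(\AG)$ keeping $\phi_n\cdot(Y,d)$ in a compact subset of $\OG$ yields isometries of a fixed compact subcomplex of $Y$ with uniformly bounded combinatorial complexity, which must eventually land in the finite point stabilizer.

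Contractibility is the main obstacle. The idea is to construct a continuous projection $\pi\colon\OG\to K_\G$ which forgets the skewing and strips a general marking down to its untwisted part, and then to show $\pi$ has contractible fibres. Over a fixed combinatorial $\G$-complex, the space of admissible skewings is a convex cone (positive-definite symmetric matrices, one per maximal clique torus, glued along compatibility conditions on shared sub-tori), and over a fixed untwisted marking the twist cosets form an affine space. Each fibre is thus convex, hence contractible, and combined with contractibility of $K_\G$ a fibre-bundle argument (or an explicit straight-line deformation retraction carried out fibrewise and glued via a partition of unity over the simplicial stratification) delivers contractibility of $\OG$. The technical heart of the proof will be verifying that this skewing bundle varies continuously across the strata of $K_\G$: when one passes to a lower stratum, cubes collapse and the number and dimensions of maximal parallelotopes can change, so the matching of Euclidean parameters across cube collapses has to be performed with care, and the fibrewise contractions have to be arranged to fit together coherently under the $\Out(\AG)$-action.
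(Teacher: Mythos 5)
The main gap is in your contractibility argument: the projection $\pi\colon\OG\to K_\G$ that you want to build by ``forgetting the skewing and stripping a general marking down to its untwisted part'' is not well-defined, and this is not a technicality but the central difficulty of the whole theorem. A point of $\OG$ is an equivalence class $[Y,d,f]$ in which $(Y,d)$ is merely a locally CAT(0) metric space isometric to \emph{some} skewed blowup $(\SP,d_\F)$; there is no canonical cube-complex decomposition of $(Y,d)$, and no canonical factorization of the marking $f$ into an untwisted part times a twist part (the twist subgroup is not normal, and the decomposition depends on the chosen combinatorial structure). Already in the model case $Y=T^n$ with $\AG=\Z^n$, different $\GL(n,\Z)$-bases give different parallelotope decompositions of the same flat torus with the same marking, so there is no preferred image in $K_\G$. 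The paper makes this precise: twist-\emph{minimal} hyperplanes are determined by the metric and marking (Lemma~\ref{lem:tminhyperplanes}), but twist-\emph{dominant} hyperplanes are genuinely not, and the set of compatible combinatorial structures on a fixed $(Y,d,f)$ is an entire convex family parametrized by zero-sum shearings (Corollary~\ref{cor:fibers}). Because of this the paper runs the map in the opposite direction: it constructs $\Theta\colon\tS\to\OG$ that forgets structure, shows this is a fibration, and identifies its fibers with convex solution sets of the structure equations. The entire apparatus of branch loci, characteristic cycles, and the twist-minimal/twist-dominant dichotomy (Sections~\ref{sec:TMHyperplanes}--\ref{sec:OG}) exists precisely to make this work; it cannot be sidestepped by declaring a forgetful map.

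A second, smaller issue: even granting a well-defined $\pi$, your fiber description ``positive-definite symmetric matrices glued along compatibility conditions, plus twist cosets forming an affine space'' conflates two things. The skewing parameters are not arbitrary symmetric matrices but must satisfy the \emph{allowability} condition (Definition~\ref{def:allowable}), and the twists are not added as extra affine coordinates on top of a fixed $\G$-complex but are realized by \emph{changing} the combinatorial decomposition of the same metric space (Lemma~\ref{lem:t-minimal}, Proposition~\ref{prop:tdom}). Collapsing these into a product ``untwisted data $\times$ skewing/twist parameters'' is exactly what fails. Your treatment of finite-dimensionality and of properness (bounding by the combinatorics of $\G$, and an Arzel\`a--Ascoli argument reducing to the finite isometry group of a fixed complex) are reasonable and broadly compatible with the paper; the paper's properness argument is even shorter, citing finiteness of the isometry group of a compact non-positively curved space \cite[Thm.~6.17]{BH}. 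But the contractibility argument as proposed does not go through without first solving the well-definedness problem, which is in effect the content of the theorem.
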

We now give a brief outline of the proof. The proof begins with the space $K_\G$ which, as noted above, was shown in \cite{CSV} to be contractible.
The passage from  $K_\G$ to $\OG$ involves several intermediate steps. First, we embed $K_\G$ into a new space $\SG$ by endowing $\G$-complexes with metrics making ``cubes"  into orthotopes, i.e. orthogonal products of intervals of various lengths; these are called {\em rectilinear} $\G$-complexes.
 In the case of a free group, this corresponds to embedding the spine of outer space into the full outer space $CV_n$.
 As in the case of $K_\G$, the action on $\SG$ is restricted to the subgroup $U(\AG)$.  This is a result of allowing only certain types of markings, called {\em untwisted markings.}  It is easy to show that $K_\G$ is a deformation retract of $\SG$, so that $\SG$ is contractible.

Next, we allow the orthotopes in a $\G$-complex to skew so that they become parallelotopes.  This is done in a controlled manner, resulting in an  \emph{allowable parallelotope structure} which is still locally CAT(0).  The collection of skewed $\G$-complexes  with untwisted markings is denoted $\tS$.  We show that there is a deformation retraction of $\tS$ onto $\SG$ defined by straightening the parallelotopes, so $\tS$ is also contractible.

The action on $\tS$ is still restricted to the subgroup $U(\AG)$.
To get a space on which all of $\Out(\AG)$ acts we must allow for transvections between commuting elements; these are called {\em twists}.  To see how this is done, consider the case of a marked metric torus $T^n$.  One can think of a change of marking as either a change in the isomorphism $\pi_1(T^n) \to \Z^n$, or as a change in the shape of the parallelotope whose quotient is $T^n$. To reconcile these viewpoints in the case of a skewed $\G$-complex, we put an equivalence relation on the points in $\tS$.  Namely, two skewed $\G$-complexes with specified markings are equivalent if they are isometric by a map that takes one marking to the other (up to homotopy), where this map need \emph{not} preserve the combinatorial structure.  Then up to equivalence, we can accomplish twists   by adjusting the skewing of appropriate tori in the $\G$-complex.

The points in the new outer space $\OG$ are equivalence classes of points in $\tS$, thus there is a natural surjection $ \tS \to \OG$.  The proof of Theorem \ref{thm:main} consists in showing that this map is a fibration with contractible fibers.
The key problem is understanding to what extent the combinatorial structure on  a marked skewed $\G$-complex is determined by its  metric.  For this, we divide the hyperplanes into two classes, twist-minimal and twist-dominant, and show that the twist-minimal hyperplanes are completely determined by the metric.  The twist-dominant hyperplanes on the other hand, depend on the shapes of the parallelotopes and can vary within a fiber. To show contractibility, we encode the allowable skewings by a vector in a Euclidean space and prove that the set of points in a fiber corresponds to a convex subspace of this Euclidean space.

Theorem~\ref{thm:main} is a first step towards a more geometric study of $\Out(\AG)$.  It leads to many natural questions, a few of which we now discuss briefly.

 The dimension of $\OG$ can be computed (with some effort) by looking at the graph $\G$.   As is the case for symmetric spaces and Teichm\"uller spaces, the action of $\Out(\AG)$ on $\OG$  is not cocompact, and this dimension is quite a bit larger than the virtual cohomological dimension ({\sc vcd}) of $\Out(\AG).$ An algebraic algorithm for computing this {\sc vcd} has been established by Day--Sale--Wade \cite{DaySaleWade}.   For both $\GL(n,\Z)$ and $\Out(F_n)$ there is an equivariant deformation retract (a ``spine") of dimension equal to the {\sc vcd}, and it would be interesting to find an analogous spine for $\OG$.   The construction of such a spine might be fairly subtle, as it was shown in \cite{MillardV} that the dimension of $K_\G$, though often equal to the {\sc vcd} of $U(\AG),$ is sometimes strictly larger.  As an aside, we remark that no natural spine has yet been constructed for the action of the mapping class group of a closed surface on Teichm\"uller space.

Much of the work on $\Out(F_n)$ and $\GL(n,\Z)$ (as well as surface mapping class groups) depends on understanding the structure of the associated space at or near infinity, e.g. by adding a ``boundary" that compactifies either the space or its quotient, and studying the action on this boundary.    Thurston compactified Teichm\"uller space  by embedding it into the space of projective  length functions for the fundamental group of the surface,  Outer space can be compactified by embedding it in the space of projective length functions on $F_n$, and  the symmetric space $Q_n$  embeds into the space of projective length functions on $\Z^n$. Vijayan  \cite{Anna}   initiated a study of length functions on $\AG$, which was further developed by and Beyrer and Fioravanti \cite{BeyrerFioravanti}, who used length functions to compactify the ``untwisted" outer space $K_\G$ of \cite{CSV}.  A different way of understanding the structure at infinity is by ``bordifying" the space, which compactifies not the space but rather the quotient.  There are bordifications of  $Q_n$ (defined in much more generality by Borel and Serre \cite{BoSe}) and $CV_n$ (defined by Bestvina and Feighn \cite{BeFe}), who used them to prove that the respective groups are virtual duality groups in the sense of Bieri and Eckmann.  Is there an analogous bordification of $\OG$?   The question is  subtle, as  Br\"uck and Wade showed that $\Out(\AG)$ is not always a virtual duality group \cite{BruckWade}.

A space is a {\em classifying space for proper $G$-actions} if fixed point sets of finite subgroups are contractible. Such a space is called an $\underbar{$E$}G$-space. These are useful, for example, for studying centralizers of finite-order elements.  In  addition, we recall that the Baum-Connes Conjecture relates the topological K-theory of the reduced $C^*$-algebra of $G$ to an appropriate equivariant homology theory evaluated at $\underbar{$E$}G$. Both $Q_n$ and $CV_n$ are classifying spaces for proper actions, so it is natural to ask whether $\OG$ is likewise for $\Out(\AG)$.

Finally, both symmetric spaces and Outer space for free groups can be equipped with useful metrics (though the most intensively studied metric structure on Outer space is an asymmetric metric). A geometric approach often gives a
simpler, more natural explanation for algebraic features of the group.  Is there a good metric on $\OG$?  How do geodesics in this metric behave?

The paper is organized as follows.  In Section~\ref{sec:preliminaries}, we establish basic terminology, recall the construction of the space $K_\G$ and embed it into a space $\Sigma_\G$.
  In Section~\ref{sec:blowups}, we establish some basic properties of $\G$-complexes which will be needed later in the paper.
  In Section~\ref{sec:TMHyperplanes}, we introduce the notion of twist-dominant and twist-minimal hyperplanes and investigate the extent to which these notions depend on the choice of $\G$-structure and the marking.
In Section~\ref{sec:twistedSP}, we define an allowable parallelotope structure on a $\G$-complex and show that the resulting path metric is locally CAT(0).
Finally, in Section~\ref{sec:tS}, we prove that the space $\tS$ of skewed $\G$-complexes deformation retracts to $\Sigma_\G$, hence is contractible.
In Section~\ref{sec:OG}, we define our outer space $\OG$ and show that the natural map $\tS\to \OG$ is a fibration with contractible fibers.

The authors would like to thank the referees for their careful reading of the paper and many helpful comments.

\section{Preliminaries}\label{sec:preliminaries}

We fix a finite simplicial graph $\G=(V,E)$ throughout the paper, and denote by $\AG$  the associated  right-angled Artin group.  In this section we give a brief account of the contents of \cite{CSV}. We refer the reader to \cite{CSV} for further details.

\subsection{Graph terminology}   For $v\in V$, the \emph{link}, $\lk(v)$, is the full subgraph of $\G$ spanned by vertices adjacent to $v$, and the \emph{star}, $\st(v)$, is the full subgraph spanned by $\lk(v)$ and $v$.  If $W\subset V$, then $\lk(W)=\cap_{w\in W} \lk(w)$ and $\st(W)$  is the full subgraph spanned by  $\lk(W)\cup W$.

Define  $v\leq w$ to mean $\lk(v)\subseteq \st(w)$. This can happen in one of two ways: either $\lk(v)\subseteq\lk(w)$, in which case we write $v\leq_f w$, or $\st(v)\subseteq\st(w)$, in which case we write $v\leq_t w$. These are mutually exclusive unless $v=w$.

The following elementary lemma puts a restriction on the star- and link-orderings.

\begin{lemma}\label{lem:twist_and_fold}  If $u\leq _tv\leq_fw$ then either $v=u$ or $v=w$.
\end{lemma}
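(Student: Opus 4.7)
The plan is to unpack the two hypotheses and do a short chase through the inclusions of stars and links, using only the fact that every vertex lies in its own star and that $\G$ has no loops. Explicitly, $u\leq_t v$ means $\st(u)\subseteq\st(v)$ and $v\leq_f w$ means $\lk(v)\subseteq\lk(w)$. The key initial observation is that $u\in\st(u)\subseteq\st(v)=\lk(v)\cup\{v\}$, so already either $u=v$ (which is one of the two desired conclusions) or $u$ is adjacent to $v$.

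Assume from now on that $u\neq v$, so $u\in\lk(v)$. Since $\lk(v)\subseteq\lk(w)$, the vertex $u$ is also adjacent to $w$, i.e.\ $w\in\lk(u)$. Now I would run the same star-containment argument again, but this time with $w$: we have $w\in\lk(u)\subseteq\st(u)\subseteq\st(v)=\lk(v)\cup\{v\}$. Thus either $w=v$, giving the second desired conclusion, or $w\in\lk(v)$.

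It remains to rule out the last case $w\in\lk(v)$. Here one applies $\lk(v)\subseteq\lk(w)$ to the element $w$ itself to get $w\in\lk(w)$, which says $w$ is adjacent to $w$; this contradicts the standing assumption that $\G$ is a simplicial graph (no loops). I don't anticipate any real obstacle: the lemma is a purely combinatorial consequence of the two inclusions plus looplessness. The only points that require a bit of care are remembering that $\st(v)$ properly contains $\lk(v)$ (so each "$u\in\st(v)$" branches into a $u=v$ case and a $u\in\lk(v)$ case) and using the symmetry of adjacency when passing from "$u\in\lk(w)$" to "$w\in\lk(u)$".
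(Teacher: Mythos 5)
Your proof is correct and follows essentially the same chain of containments as the paper's: from $u\neq v$ you get $u\in\lk(v)\subseteq\lk(w)$, hence $w\in\lk(u)\subseteq\st(u)\subseteq\st(v)$, hence (if $w\neq v$) $w\in\lk(v)$. The only difference is in how the final contradiction is packaged. The paper observes that $w\in\lk(v)$ together with $\lk(v)\subseteq\lk(w)$ gives $\st(v)\subseteq\st(w)$, i.e.\ $v\leq_t w$, and then appeals to the previously stated fact that $\leq_t$ and $\leq_f$ are mutually exclusive unless $v=w$. You instead apply $\lk(v)\subseteq\lk(w)$ to the element $w\in\lk(v)$ itself to get $w\in\lk(w)$, a loop, which is impossible in a simplicial graph. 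Your version is self-contained and slightly more elementary; the paper's routes through the mutual-exclusivity remark, which is itself justified by precisely the loop-freeness argument you used. Both are valid, and the difference is cosmetic.
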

\begin{proof} Suppose $u\neq v$. Since $u\in \lk(v)$ and $\lk(v)\subseteq\lk(w)$, $u\in \lk(w)$.  Since $\st(u)\subseteq\st(v)$, this implies $w\in \lk(v)$. Hence $v\leq_t w$,  which is impossible unless $v=w$.
\end{proof}

If $v\leq_* w$ and $w\leq_* v$ we say that $v$ and $w$ are \emph{equivalent}, and write $w\sim_* v$, where $*=f, t$ or $\emptyset$.  The notation $v \leq_* w$ is justified by the fact that the induced relation  on equivalence classes $[v]$ is a partial ordering.  It will often be important to be more specific, so if $\lk(v)=\lk(w)$ we say that $v$ and $w$ are {\em fold-equivalent}, and if $\st(v)=\st(w)$ we  say $v$ and $w$ are {\em twist-equivalent.} 

For each $v\in V$ we divide the elements of $V_{\geq v}=\{u|u\geq v\}$ into two groups, namely
\begin{itemize}
\item  $\lk^+(v)= \{u| u\geq v \hbox{ and } u\in \lk(v)\}= \{u \in V | u \geq_t v, u\neq v\}$
\item $\UF(v)   =\{u| u\geq v \hbox{ and } u\notin \lk(v)\}  =\{u \in V| u\geq_f v\}$
\end{itemize}
See Figure~\ref{fig:lklk} for an example.
Observe that $\UF(v)$ is equal to the ``double link" $ \lk(\lk(v))$, i.e. every vertex in $\UF(v)$ commutes with every vertex in $\lk(v)$. Also observe that  if $u,u'\geq_tv$ then $u$ is connected to  $u'$, so  $\{v\}\cup \UL(v)$ is a clique.

 The following distinction will be critical when we define the points in our new outer space.
\begin{definition} A vertex $v\in\G$ is \emph{twist-dominant} if there is some $u\neq v$ with $v\geq_t u$, and is {\em twist-minimal} otherwise.
\end{definition}

 Note that elements of $\lk^+(v)$ are all twist-dominant, while elements of $\UF(v)$ may be either twist-dominant or twist-minimal.

 \begin{figure}
\begin{center}\begin{tikzpicture}[scale=.80]
 \coordinate (v) at (0,0); \coordinate (u1) at (2,1); \coordinate (u2) at (2,-1); \coordinate (u3) at (2,0);
 \coordinate (w1) at (5,1.5); \coordinate (w2) at (5,0); \coordinate (w3) at (5,-1.5);
  \coordinate (x) at (4,2.5); \coordinate (y) at (7,1); \coordinate (z) at (7,-1);
 \vertex{(v)};\vertex{(u1)};\vertex{(u2)};\vertex{(u3)};
 \vertex{(w1)};\vertex{(w2)};\vertex{(w3)};
 \vertex{(x)};\vertex{(y)};\vertex{(z)};
\draw (v) to (u1) to (w1)   to (u2) to (v);
\draw (u1) to (u3) to (w3) to (u2) to (u3);
\draw (w1) to (u2);
\draw (u1) to (w2);
\draw (y) to (w2) to (z);
\draw (u1) to (x);
\draw (v) to (u3) to (w2);
\draw (u3) to (w1);\draw (w2) to (w3);\draw(w2) to (u2);
\node [left](v) at (v) {$v$};
\node [above](u1) at (u1) {$u_1$};
\node [below](u2) at (u2) {$u_2$};
\node [above left](u3) at (u3) {$u_3$};
\node [above](w1) at (w1) {$w_1$};
\node [above](w2) at (w2) {$w_2$};

\end{tikzpicture}
\end{center}
\caption{ $\lk(v)=\{u_1,u_2,u_3\}$,  $\UL(v)=\{u_3\}$, $\UF(v) = \{v,w_1,w_2\}$
}\label{fig:lklk}
\end{figure}

\subsection{Salvetti complexes}  For a simplicial graph $\G$ the {\em Salvetti complex\,}  $\SaG$ is a  cube complex with one $k$-cube for each $k$-clique in $\G$; in particular it has a single 0-cube (for the empty clique) and a 1-cube  for each vertex (=1-clique) of $\G$. The $2$-skeleton of
$\SaG$ is the standard presentation complex for $\AG$, so $\pi_1(\SaG)\iso \AG$.   The addition of higher-dimensional cubes guarantees that $\SaG$ satisfies Gromov's link condition, i.e. all links are flag.  Therefore if all cubes of $\SaG$ are identified with standard Euclidean cubes $[0,1]^k$ then the induced path metric on $\SaG$ is non-positively curved (locally CAT(0)) and its universal cover $\widetilde{\Sa}_\G$ is CAT(0).  In Figure~\ref{fig:salvetti} we show a simple example of a graph $\G$ and its Salvetti complex.  In this example the Salvetti is made of two tori glued along a circle labeled $b$ plus a loop labeled $d$ at the basepoint. In the right-hand picture we have cut open the tori.
\begin{figure}
\begin{center}
 \begin{tikzpicture}
  \begin{scope}[decoration={markings,mark = at position 0.5 with {\arrow{stealth}}}]
  \coordinate (a) at (-1,0); \coordinate (b) at (0,1); \coordinate (c) at (1,0);\coordinate (d) at (0,-1);
 \vertex{(a)};\vertex{(b)};\vertex{(c)};\vertex{(d)};
 \draw (a) to (b) to (c);
 \node [above](A) at (a) {$a$};
  \node [above](B) at (b) {$b$};
    \node [above](C) at (c) {$c$};
      \node [above](D) at (d) {$d$};
 \begin{scope}[xshift=10cm, scale=.9]
 \midarrow [thick, blue] (-1,-2) to (1,-2); \node [above] (b1) at (0,-2) {$b$};
  \midarrow [thick, blue] (-1,0) to (1,0); \node [above] (b2) at (0,0) {$b$};
   \midarrow [thick, blue] (-1,2)  to (1,2); \node [above] (b3) at (0,2) {$b$};
      \midarrow [thick, red] (1,-2) to (1,0);  \node [right] (abot) at (1,-1) {$a$};
 \midarrow [thick, red] (-1,-2) to (-1,0);  \node [right] (atop) at (-1,-1) {$a$};
       \midarrow[thick, forestgreen] (1,0) to (1,2);\node [right] (ctop) at (1,1) {$c$};
 \midarrow [thick, forestgreen] (-1,0) to (-1,2);     \node [right] (cbot) at (-1,1) {$c$};
\midarrow [thick] (1,0) .. controls (2.35,1.25) and (2.35,-1.25) .. (1,0);
    \node [right] (dtop) at (2,0) {$d$};
 \end{scope}
 \node (is) at (8,0) {$=$};
 \begin{scope}[ xshift=4cm, scale=.35, yshift=1cm]
\draw (0,0) ellipse (4cm and 3cm);  
\draw (0,-2) [partial ellipse=55:125:2.5cm and 2.5cm];
\draw (0,2) [partial ellipse=225:315:2.5cm and 2.5cm];
\begin{scope}[xshift=5.5cm].  
\draw (0,0) ellipse (4cm and 3cm);
\draw (0,-2) [partial ellipse=55:125:2.5cm and 2.5cm];
\draw (0,2) [partial ellipse=225:315:2.5cm and 2.5cm];
\end{scope}
 \draw [thick, blue] (2.75,0) [partial ellipse=0:180:1.25cm and .5cm];
  \draw [thick, blue, densely dotted] (2.75,0) [partial ellipse=180:360:1.25cm and .5cm];
  \node [above](b) at (2,.3) {$b$}; 
\draw [thick, red] (5.5,.5) ellipse (2.75cm and 1.75cm);  
 \node [below] (a) at (9,1) {$a$};  
   \midarrow [red, thick] (8.25,.5) to (8.25,.6);
\draw [thick, forestgreen] (0,.5) ellipse (2.75cm and 1.75cm); 
 \node [below] (c) at (-3.4,1) {$c$};  
   \midarrow [forestgreen, thick] (-2.75,.5) to (-2.75,.6);
\draw [thick] (2.75,.5) .. controls (0,6) and (5.5,6) .. (2.75,.5);  
\fill (2.75,.5) circle (.15);  

\node [above](d) at (2,4.25) {$d$};
  \midarrow [blue, thick] (2,.4) to (1.95,.39);
    \midarrow [thick] (2.55,4.6) to (2.54,4.6);
 \end{scope}
 \end{scope}
\end{tikzpicture}
\caption{A graph $\G$ and its Salvetti $\SaG.$}\label{fig:salvetti}
\end{center}
\end{figure}

Throughout this paper we will assume familiarity with the language of locally CAT(0) and CAT(0) cube complexes, including hyperplanes, minsets, etc. as can be found, e.g. in \cite{BH}.

\subsection{$\G$-Whitehead partitions}\label{sec:GWpartitions}
\subsubsection{Definition and examples}
Let  $V\cup V^{-1}$ be the generators of $\AG$ and their inverses, and let $m$ be a vertex of $\G$.    A  {\em  \GW\  partition $\WP$ based at $m$} is a partition of  $V\cup V^{-1}$  into three parts $P^+, P^-$ (called the {\em sides} of $\WP$) and $\lk(\WP)$, where
\begin{itemize}
\item $\lk(\WP)$ consists of all generators  that commute with  $m,$ and their inverses.
\item The sides of $\WP$ form a thick partition of  $V\cup V^{-1} \setminus \lk(\WP)$ (recall that a partition is {\em thick} if it has at least two elements on each side).
\item  $m$ and $m^{-1}$ are in different sides of $\WP$
\item If $v\neq w$ are in the same component of $\G\setminus\st(m)$, then $v, v^{-1}, w$ and $w^{-1}$ are all in the same side of $\WP$.
\end{itemize}
A more succinct way to define a \GW\ partition $\WP$ based at $m$ is by forming a graph  $\G^\pm$  with one vertex for each element of $V\cup V^{-1}$ and an edge between distinct vertices $x$ and $y$ whenever $x$ and $y$ commute but are not inverses. If we let $\lk^\pm(m)$ be the link of $m$ in $\Gamma^\pm$ and $\mathcal C(m)=\{m,m\inv, C_1,\ldots, C_k\}$ be the components of $\G^\pm\setminus \lk^\pm(m)$, then
\begin{itemize}
\item $\lk(\WP)$ consists of vertices in $\lk^\pm(m)$, and
\item  the sides of $\WP$ form a thick partition of $\mathcal C(m)$ that separates $m$ from $m\inv$.
\end{itemize}
 The components $C_1,\ldots,C_k$ are called {\em $m$-components.} Thus $m$ together with any proper subset of $m$-components gives one side of a valid  \GW\ partition based at $m$.

A \GW\ partition $\WP$ based at $m$ determines an automorphism  $\phi(\WP,m)$ of $\AG$ called a {\em \GW\ automorphism}.  Examples of \GW\ automorphisms include partial conjugations and elementary folds; for details see ~\cite{CSV}.  Different bases for $\WP$ give different automorphisms, but the partition $\WP$ itself does not depend on the choice of base, and we will often not specify a base. Note that  a \GW\ partition is completely determined by giving one of its sides.

\begin{example} The following are  three examples of \GW\ partitions for the graph $\G$ depicted in Figure~\ref{fig:salvetti},
\begin{itemize}
\item $\WP=(P^+|P^-|\lk(\WP))=(\{b,d\}|\{b\inv,d\inv\}|\{a,a\inv, c,c\inv\})$
\item $\WQ=(Q^+|Q^-|\lk(\WQ))=(\{a,d\}\,|\{a\inv,d\inv,c,c\inv\}|\{b,b\inv\})$
\item $\WR=(R^+|R^-|\lk(\WR))=(\{a,c,d\}\,|\{a\inv,d\inv,c\inv\}|\{b,b\inv\})$
\end{itemize}
Here $\WP$ is based at $b$, $\WQ$ is based at $a$, and $\WR$  can be based at either $a$ or $c$.
\end{example}

\subsubsection{Properties}

\begin{definition}If $v$ and $v^{-1}$ are in different sides of $\WP$, we say $\WP$ {\em splits} $v$.
Define $\Sing(\WP)$ to be the set of vertices of $\G$ that are split by $\WP$, and
$$
\max(\WP) = \{ v\in V \mid v \textrm{ is a maximal element in } \Sing(\WP)\},
$$
where maximality is with respect to the relation $\leq$ defined above.
For a vertex $v\in V$, it is convenient to also define $\max(v)=\{v\}.$
\end{definition}
\begin{lemma}\label{lem:base}
 If $\WP$ is based at $m$ and $\WP$ splits $v$ then  $v \leq_f m$.
\end{lemma}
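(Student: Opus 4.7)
The plan is to establish $\lk(v)\subseteq\lk(m)$ directly, from which $v\leq_f m$ follows by definition. I would fix an arbitrary $u\in\lk(v)$ and aim to conclude $u\in\lk(m)$. The case $v=m$ is trivial, since then $\lk(v)=\lk(m)$; so I may assume $v\neq m$. In that case, the assumption that $\WP$ splits $v$ means $v$ and $v\inv$ lie on opposite sides of $\WP$, so in particular $v,v\inv\notin\lk(\WP)$; this forces $v$ to not commute with $m$, and therefore $v\notin\lk(m)$.

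The key step is to pass to the second, graph-theoretic description of a $\G$-Whitehead partition. Since the sides of $\WP$ partition the components $\mathcal C(m)=\{m,m\inv,C_1,\dots,C_k\}$ of $\G^\pm\setminus\lk(m)$, and $v,v\inv$ lie on opposite sides, they must lie in distinct components of $\G^\pm\setminus\lk(m)$. I would then argue by contradiction: suppose $u\notin\lk(m)$. Because $u$ is a vertex of $\G$ adjacent to $v$, we have $u\neq v^{\pm 1}$, and $u$ commutes with both $v$ and $v\inv$; hence $\G^\pm$ contains edges joining $u$ to $v$ and $u$ to $v\inv$. Since all three of $u,v,v\inv$ lie outside $\lk(m)$, these edges survive in $\G^\pm\setminus\lk(m)$, forcing $v$ and $v\inv$ to lie in the same component and contradicting the preceding sentence. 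Therefore $u\in\lk(m)$, completing the inclusion.

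I do not anticipate any real obstacle here: the argument is purely combinatorial, and the only point needing care is the translation between the two descriptions of a $\G$-Whitehead partition. Once the sides of $\WP$ are recognized as unions of components of $\G^\pm\setminus\lk(m)$, the contradiction follows at once from the observation that any positive generator of $\AG$ adjacent to $v$ in $\G$ is automatically adjacent to $v\inv$ in $\G^\pm$, so it cannot separate the two.
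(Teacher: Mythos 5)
Your proof is correct and takes essentially the same route as the paper's: both arguments observe that any $u\in\lk(v)$ lying outside $\lk(m)$ would connect $v$ and $v\inv$ (equivalently $v$ and $u$) within a single component, forcing the whole lot onto one side of $\WP$ and contradicting the assumption that $\WP$ splits $v$. The only cosmetic difference is that you phrase the argument in terms of the components of $\G^\pm\setminus\lk(m)$, while the paper works directly with the fourth bullet of the definition (components of $\G\setminus\st(m)$); you are also slightly more explicit about the degenerate case $v=m$.
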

\begin{proof} Since  $\WP$ splits $v,$  $v$ is not in the link of $m$.  Suppose $w$ is in the link of $v$. Since $\WP$ splits $v,$   $v$ and $w$ are not in the same component of $\G-\st(m)$, so $w$ must be in the link of $m$.  This shows $v\leq_f m$.
   \end{proof}
It follows that the elements of $\max(\WP)$ are precisely the bases of $\WP$,  and they are all fold-equivalent.
\begin{lemma}\label{lem:tdominant}
 If $\WP$ splits a twist-dominant vertex $v$, then $\max(\WP)=\{v\}$.
\end{lemma}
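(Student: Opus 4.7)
The plan is to show that the twist-dominant split vertex $v$ must itself be a base for $\WP$, and in fact that every base of $\WP$ equals $v$. Once this is done, the observation preceding the lemma (that $\max(\WP)$ coincides with the set of bases of $\WP$) immediately gives $\max(\WP)=\{v\}$.

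Concretely, let $m$ be any vertex of $\G$ at which $\WP$ is based. Since $\WP$ splits $v$, Lemma~\ref{lem:base} yields $v\leq_f m$, i.e.\ $\lk(v)\subseteq \lk(m)$. By twist-dominance of $v$, pick $u\neq v$ with $v\geq_t u$; then $\st(u)\subseteq \st(v)$, which in particular gives $u\in \lk(v)$ and $\lk(u)\subseteq \lk(v)\cup\{v\}$.

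The key step is to feed $m$ into the containment $\lk(u)\subseteq \lk(v)\cup\{v\}$. Since $u\in \lk(v)\subseteq \lk(m)$, we have $m\in \lk(u)$, and hence $m\in\lk(v)\cup\{v\}$. But $m\in\lk(v)$ would force $v\in\lk(m)$, contradicting the fact that $\WP$ splits $v$ (so $v\not\in\lk(m)$, as split vertices cannot lie in $\lk(\WP)$). Therefore $m=v$.

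Because this argument applies to every base of $\WP$, the vertex $v$ is the unique base, and so $\max(\WP)=\{v\}$. The only subtlety worth flagging is making sure one is working with the correct definition of twist-dominance ($v\geq_t u$, i.e.\ $\st(u)\subseteq \st(v)$ with $u\neq v$), which is precisely what guarantees the existence of a witness $u\in\lk(v)$ with $\lk(u)\subseteq\st(v)$; without this witness there is no leverage to pin down $m$.
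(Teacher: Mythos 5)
Your proof is correct and follows essentially the same route as the paper: both start from Lemma~\ref{lem:base} to get $v\leq_f m$ and then use the twist-dominance witness $u\leq_t v$ to force $m=v$. The only difference is that the paper invokes Lemma~\ref{lem:twist_and_fold} as a black box (concluding $v=m$ from the dichotomy there), whereas you unfold that lemma's argument inline and close the final case with the observation that $\WP$ splitting $v$ forbids $v\in\lk(m)$ rather than appealing to the mutual exclusivity of $\leq_t$ and $\leq_f$; both closings are valid.
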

\begin{proof} Let $m\in \max(\WP)$.  By Lemma~\ref{lem:base}, $v\leq_f m$. Since $v$ is twist-dominant there is a $w\neq v$ with $v\geq_tw$. But then $v=m$ by Lemma~\ref{lem:twist_and_fold}.
\end{proof}

We extend our orderings on vertices of $\G$ to \GW\ partitions by declaring $\WP\leq_*\WQ$ for $*=f$ or $t$ if for some (and therefore any) $v\in \max(\WP)$ and $w\in \max(\WQ)$ we have $v\leq_* w$.

\subsubsection{Adjacency, compatibility, consistency}
\begin{definition}\label{def:adjacent} Let $\WP$ and $\WQ$ be \GW\ partitions.  We say $\WP$ and $\WQ$ are  {\em adjacent}  
if $\max(\WP)\subset \lk(\WQ)$.  A vertex $v$ is {\em adjacent} to $\WP$ if $v\in\lk(\WP)$, and $v$ and $w$ are {\em adjacent} if they are adjacent in $\Gamma$.
\end{definition}
Since all elements of $\max(\WP)$ have the same link, $\max(\WP)\subset \lk(\WQ)$ if and only if $\max(\WQ)\subset \lk(\WP),$ i.e. the definition is symmetric.

\medskip

{\bf Warning}:  In \cite{CSV} we said ``$\WP$ and $\WQ$ {\em commute}" instead of ``$\WP$ and $\WQ$ are {\em adjacent}."  There are two reasons for changing the terminology here.  First,  two partitions based at the same vertex $v$ do not ``commute" in the sense of \cite{CSV} even though the generator $v$ certainly commutes with itself; this caused confusion for several readers.  The second reason is that the definition of ``commute" written in \cite{CSV} is not actually the one used in the proofs of the lemmas: we mistakenly added a condition in the definition requiring that the twist-equivalence classes of $\max(\WP)$ and $\max(\WQ)$ be different.   The proofs of all lemmas in that paper about commuting partitions are correct, however, if one replaces ``commuting" by the definition of ``adjacent" given above. 

 \begin{definition}Let $\WP$ and $\WQ$ be  distinct  \GW\ partitions. \begin{enumerate}  
\item{} $\WP$ and $\WQ$   are {\em compatible} if either  $\WP$ and $\WQ$ are adjacent 
or they have  sides $P^\times$ and $Q^\times$ with $P^\times\cap Q^\times=\emptyset$.
\item{} Sides $P^\times$ of $\WP$ and $Q^\times$  of $\WQ$ are {\em consistent} if  either   $\WP$ and $\WQ$ are adjacent
 or $P^\times \cap Q^\times\neq\emptyset.$
\end{enumerate}
\end{definition}
If $\WP$ and $\WQ$ are compatible but  are not adjacent,  
then exactly three of the four possible choices of pairs of sides are consistent, by  Lemma 3.6 of \cite{CSV}. (If they  are adjacent,
then any choice of sides is consistent.)

Define an involution $P^\times\mapsto \overbar{P^\times}$ that switches sides of $\WP$, i.e. $\overbar{P^+}=  P^-$ and $\overbar{P^-}=  P^+.$

\begin{lemma}\label{lem:noncommuting} If  $\WP$ and $\WQ$ are compatible but not adjacent  
and $P^\times\cap Q^\times=\emptyset$,  then $P^\times\cap \lk(\WQ)=\emptyset$  so $P^\times\subset \overbar{Q^\times}$; similarly  $Q^\times\subset \overbar{P^\times}$.
\end{lemma}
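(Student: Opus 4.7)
The plan is to argue by contradiction: assume some $x\in P^\times\cap\lk(\WQ)$ and derive a contradiction from the hypothesis $[\WP,\WQ]\neq 1$. Let $v$ and $w$ be bases of $\WP$ and $\WQ$ respectively. The condition $[\WP,\WQ]\neq 1$ means that either $v=w$, or $v,w$ are distinct and do not commute. The case $v=w$ is immediate: then $\lk(\WP)=\lk(\WQ)$, and $x$ would lie in both $P^\times$ and $\lk(\WP)$, which are disjoint parts of the partition of $V^\pm$ determined by $\WP$.

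The interesting case is when $v\neq w$ and $v,w$ do not commute. The strategy is to show that both $w$ and $w^{-1}$ end up on the side $P^\times$ of $\WP$, which contradicts the defining property that $\WQ$ separates its base. First, since $x\in P^\times$, $x$ does not lie in $\lk(\WP)$, so the vertex underlying $x$ does not commute with $v$. Using the description of $\WP$ in terms of components of $\G^\pm\setminus \lk(v)$, the element $x$ belongs to a single $v$-component $C$, and the whole of $C$ is assigned to one side of $\WP$; hence $C\subseteq P^\times$. Next, because $x\in\lk(\WQ)$ and $w^{\pm 1}$ are bases of $\WQ$ (hence not in $\lk(\WQ)$), we have $x\neq w^{\pm 1}$, so $x$ is adjacent in $\G^\pm$ to both $w$ and $w^{-1}$. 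Since $v$ and $w$ do not commute, neither $w$ nor $w^{-1}$ lies in $\lk(v)$, so the path $w - x - w^{-1}$ stays in $\G^\pm\setminus\lk(v)$; this places $w,w^{-1}\in C\subseteq P^\times$. The hypothesis $P^\times\cap Q^\times=\emptyset$ then forces $w,w^{-1}\notin Q^\times$, and since $w,w^{-1}\notin\lk(\WQ)$ either, both must lie in $\overbar{Q^\times}$, contradicting the fact that $\WQ$ places $w$ and $w^{-1}$ on opposite sides.

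Once $P^\times\cap\lk(\WQ)=\emptyset$ has been established, the inclusion $P^\times\subseteq\overbar{Q^\times}$ follows immediately because $\overbar{Q^\times}$ is the only remaining part of the partition of $V^\pm$ determined by $\WQ$. The symmetric statement $Q^\times\subseteq\overbar{P^\times}$ follows by interchanging the roles of $\WP$ and $\WQ$. I expect the most delicate point to be the verification that $w$ and $w^{-1}$ lie in the same $v$-component as $x$; this rests on the observation that the edge relation in $\G^\pm$ between $x$ and $w^{\pm 1}$ is genuine precisely because $x\neq w^{\pm 1}$, together with the fact that the sides of a $\G$-Whitehead partition are unions of $v$-components of $\G^\pm\setminus\lk(v)$.
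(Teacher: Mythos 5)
Your argument is correct. The paper does not actually reprove this lemma; it simply cites Lemma 3.4 of Charney--Stambaugh--Vogtmann, so there is no in-text proof to compare against, but your contradiction argument is the natural one and it works. The key step --- showing that a hypothetical $x\in P^\times\cap\lk(\WQ)$ would connect $w$ to $w^{-1}$ through $\G^\pm\setminus\lk(\WP)$, forcing both into a single $v$-component and hence into $P^\times\subset\overbar{Q^\times}$, contradicting that $\WQ$ separates $w$ from $w^{-1}$ --- is exactly what the structure of \GW\ partitions demands.

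One small point worth making explicit before you place $x$ in a $v$-component: you should observe that $x\neq v^{\pm 1}$. This follows because $x\in\lk(\WQ)$ commutes with $w$, while $v$ does not; hence the underlying vertices of $x$ and $v$ differ. This rules out $x$ landing in the singleton components $\{v\}$ or $\{v^{-1}\}$ of $\G^\pm\setminus\lk(\WP)$ and guarantees $C$ is a genuine $v$-component $C_i$, which is what you use when you run the path $w\text{--}x\text{--}w^{-1}$ inside $C$. The rest of the argument, including the final deduction $P^\times\subset\overbar{Q^\times}$ from $P^\times$ being disjoint from both $Q^\times$ and $\lk(\WQ)$, is clean.
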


\begin{proof} This is Lemma 3.4 of  \cite{CSV}. It is illustrated in Figure~\ref{fig:nonadjacent}
\end{proof}

\begin{figure}
\begin{center}
   \begin{tikzpicture}[scale=.65]
   \coordinate (cQ) at (0,0);
      \coordinate (cP) at (0,2.5);
      \coordinate (clP) at (4,2.0);
      \coordinate (clQ) at (4,.5);
 \draw  (cP) ellipse (2cm and 1cm); \node (P) at (cP) {$P^\times$};
  \draw   (cQ) ellipse (2cm and 1cm); \node (P) at (cQ) {$Q^\times$};
\fill [blue!15](clP) ellipse (1.5cm and 1cm); \node (lkP) at (clP) {$\lk(\WP)$};
\draw [fill=blue!15](clQ) ellipse (1.5cm and 1cm); \node (lkQ) at (clQ) {$\lk(\WQ)$};
\draw (clP) ellipse (1.5cm and 1cm);
  \end{tikzpicture}
 \caption{(Lemma~\ref{lem:noncommuting}): Non-adjacent partitions $\WP$ and $\WQ$ have disjoint sides  $P^\times$ and $Q^\times$ that are also disjoint from $\lk(\WP)\cup \lk(\WQ)$. }\label{fig:nonadjacent}
\end{center}
\end{figure}

\subsection{Blowups}
In this section  we fix a collection $\bPi=\{\WP_1,\ldots,\WP_k\}$   of pairwise-compatible \GW\ partitions and construct a locally CAT(0) cube complex $\SP$  with fundamental group $\AG$, whose edges are labeled either by a partition in $\bPi$ or by a vertex of $\G$.

\begin{definition}  A choice of sides for a set of  \GW\ partitions is {\em consistent} if each pair is consistent.    A consistent choice of sides $P_i^\times$ for all $\WP_i\in \bPi$ is a {\em region}.
\end{definition}

\begin{lemma} Any consistent choice of sides for a subset of $\bPi$  can be extended to a region.
\end{lemma}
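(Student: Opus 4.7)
The plan is to proceed by induction on the size of $\bPi\setminus I$, where $I$ is the subset carrying the given consistent choice; it suffices to show that any such consistent choice extends by one additional partition $\WP_j\in \bPi\setminus I$. If $\WP_j$ commutes with every $\WP_i$, $i\in I$, either choice of side for $\WP_j$ is automatically consistent with all of them, so I may assume some $\WP_i$ does not commute with $\WP_j$.

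Arguing by contradiction, suppose neither side of $\WP_j$ can be chosen consistently. Then there exist indices $i_1,i_2\in I$ (necessarily distinct, since by Lemma~\ref{lem:noncommuting} only one of the four pairs of sides of two non-commuting partitions has empty intersection) and signs $\times_1,\times_2$ such that
\[
P_j^+\cap P_{i_1}^{\times_1}=\emptyset,\qquad P_j^-\cap P_{i_2}^{\times_2}=\emptyset,
\]
with $[\WP_j,\WP_{i_1}]\neq 1$ and $[\WP_j,\WP_{i_2}]\neq 1$. Applying Lemma~\ref{lem:noncommuting} to each empty intersection gives the containments $P_{i_1}^{\times_1}\subset P_j^-$ and $P_{i_2}^{\times_2}\subset P_j^+$, hence $P_{i_1}^{\times_1}\cap P_{i_2}^{\times_2}=\emptyset$. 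Because these two sides were chosen consistently, this forces $[\WP_{i_1},\WP_{i_2}]=1$, so the bases $m_1,m_2$ of $\WP_{i_1},\WP_{i_2}$ are distinct commuting vertices of $\G$.

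The contradiction now comes from the fourth defining condition of a \GW\ partition. First observe that the base $m_j$ of $\WP_j$ is distinct from both $m_1$ and $m_2$: if $m_j=m_1$, then $m_2\in\lk(m_1)=\lk(m_j)$ would force $\WP_j$ and $\WP_{i_2}$ to commute, contrary to hypothesis, and symmetrically for $m_j=m_2$. Thus $m_1$ and $m_2$ are two distinct vertices of $\G\setminus\st(m_j)$, adjacent there because they commute, hence they lie in the same component of $\G\setminus\st(m_j)$. By the defining property of $\WP_j$, all four elements $m_1^{\pm 1},m_2^{\pm 1}$ must therefore lie on a single common side of $\WP_j$.

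This is incompatible with the containments established above. Indeed, $m_1\in P_{i_1}^+$ (using the labelling convention that $+$ denotes the side containing the base), so either $m_1$ or $m_1^{-1}$ lies in $P_{i_1}^{\times_1}\subset P_j^-$; likewise either $m_2$ or $m_2^{-1}$ lies in $P_{i_2}^{\times_2}\subset P_j^+$. A routine check over the four possibilities for $(\times_1,\times_2)$ produces in every case an element of $\{m_1^{\pm 1}\}$ on one side of $\WP_j$ and an element of $\{m_2^{\pm 1}\}$ on the opposite side, contradicting the component property just established. I expect the main obstacle to be simply organizing this sign bookkeeping cleanly; the structural heart of the proof is the observation that two commuting bases automatically inhabit the same component of $\G\setminus\st(m_j)$, which is what allows the component property of $\WP_j$ to be invoked.
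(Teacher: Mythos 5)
Your proof is correct and uses exactly the inductive strategy the paper attributes to [CSV, Lemma 3.9] without reproducing it, so you have simply supplied the missing details. The crux — that extending by one more $\WP_j$ cannot fail on both sides, since that would force previously chosen sides $P_{i_1}^{\times_1}\subset P_j^-$ and $P_{i_2}^{\times_2}\subset P_j^+$ to be disjoint, hence $[\WP_{i_1},\WP_{i_2}]=1$, placing the commuting bases $m_1,m_2$ in a common component of $\G\setminus\st(m_j)$ and contradicting the component condition in the definition of $\WP_j$ — is sound, and the one small elision (the parenthetical justification that $i_1\neq i_2$) is easily filled: if $i_1=i_2=i$ then the fixed side $P_i^\times$ would be disjoint from both $P_j^+$ and $P_j^-$, forcing $P_i^\times\subseteq\lk(\WP_j)$, impossible since $P_i^\times$ contains $m_i$ or $m_i^{-1}$ and $m_i\notin\lk(m_j)$.
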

\begin{proof} This is Lemma 3.9 in \cite{CSV}.  It follows easily by induction on $k$, the number of partitions.
\end{proof}

 Regions will form the vertices of our cube complex.  To describe the higher-dimensional cubes, it is convenient to define a graph $\Gamma_\bPi$ that realizes our notion of  ``adjacency" for partitions in $\bPi$:
\begin{definition}\label{def:GammaPi}
 Let $\Pi$ be a collection of pairwise-compatible \GW-partitions.  Then $\Gamma_\Pi$ is the (simplicial) graph with
\begin{itemize}
\item one vertex for each element of $V\cup \Pi$
\item an edge between $A$ and $B$ whenever  $A$ and $B$ are   adjacent according to Definition~\ref{def:adjacent}., i.e. $\max(A)\subset\lk(B)$.
\end{itemize}
The link of a vertex   $A\in\Gamma_\bPi$ will be denoted $\lkp(A),$ the star by $\stp(A)$  and the double link by $\dlkp(A)$.   
\end{definition}

Every   $v\in V\cup V^{-1}$ is in $P_i^+, P_i^-$ or $\lk(\WP_i)$ for each $i$.  If $v\not\in\lk(\WP_i)$ define  the {\em $v$-side} of $\WP_i$ to be the side containing $v$.  Then the set of $v$-sides for those $\WP_i$ that  are not adjacent to 
$v$ form a consistent set and can be extended to a region. Any such region is called a {\em terminal region for  $v$}.

\begin{definition}
The {\em blowup} $\SP$ is a cube complex with one vertex for each region $\re=\{P_1^\times,\ldots,P_k^\times\}$.  The edges of $\SP$ are constructed as follows: 
\begin{itemize}
\item If two regions  differ only by changing the side of $\WP_i$ we connect them by an (unoriented) edge labeled $\WP_i$. 
 \item
 If $\re$ is a terminal region for $v$,  then   the region $\re^{*v}$ obtained by switching sides of all $\WP_i$ that split $v$ is a terminal region for $v^{-1},$ and we connect the two   by an oriented edge  labeled  $v$ that goes from $\re^{*v}$ to $\re$. 
\end{itemize}
  Higher-dimensional cubes are attached whenever   a set of edges forms the 1-skeleton of a cube  whose labels span a clique in $\Gamma_\bPi$.
 \end{definition}
From the definition, we see immediately that:
\begin{itemize}
\item There is an edge labeled $v$ terminating at the vertex $\re=\{P_1^\times,\ldots, P_k^\times\}$ if and only if for each $i$, either $v\in P_i^\times$ or $v\in\lk(\WP_i)$.    If no $\WP_i$ splits $v$, then an edge labeled $v$ in $\SP$ is a loop at $\re$.
\item There is an edge labeled $\WP_j$ with one endpoint at  $\re=\{P_1^\times,\ldots, P_k^\times\}$ if and only if for each $i\neq j$, either  $\WP_i$ and $\WP_j$ are adjacent  or some side of $\WP_j$ is contained in $P_i^\times$.  In particular, if $\WP_i$ and $\WP_j$   are not adjacent 
  then both $P_j^\times\cap P_i^\times$ and $\overline P_j^\times\cap P_i^\times$ are non-empty.  An edge labeled $\WP_i$ is never a loop.
  \end{itemize}

 In Figures~\ref{blowupone}--\ref{blowupthree} we show three blowups of $\SaG$ for the graph $\G$ shown in Figure~\ref{fig:salvetti}.  As before,  edges with the same color in the right-hand diagram are identified.   In Figures~\ref{blowupone} and \ref{blowuptwo}  the blowups are two tori identified along a circle, with an extra edge attached. In Figure~\ref{blowupthree} the blowup is  two tori identified along a cylinder, with an extra edge attached. The structure of blowups will be explored in much more detail in Section~\ref{sec:blowups}.

\begin{figure}.  
\begin{center}
 \begin{tikzpicture}
 \begin{scope}[decoration={markings,mark = at position 0.5 with {\arrow{stealth}}}]
  \coordinate (c) at (-1,1); \coordinate (a) at (0,1); \coordinate (d) at (1,1);\coordinate (b) at (2,1);
 \vertex{(a)};\vertex{(b)};\vertex{(c)};\vertex{(d)};
  \coordinate (ci) at (-1,-0); \coordinate (ai) at (0,0); \coordinate (di) at (1,0);\coordinate (bi) at (2,0);
 \vertex{(ai)};\vertex{(bi)};\vertex{(ci)};\vertex{(di)};
 \node [above](A) at (a) {$a$};
  \node [above](B) at (b) {$b$};
    \node [above](C) at (c) {$c$};
      \node [above](D) at (d) {$d$};
\node [below](AI) at (ai) {$a^{-1}$};
  \node [below](BI) at (bi) {$b^{-1}$};
    \node [below](CI) at (ci) {$c^{-1}$};
      \node [below](DI) at (di) {$d^{-1}$};
      \draw [blue] (.5,1.1) ellipse (1cm and .5cm); \node (Q) at (.5,1.4) {$\mathcal Q$};

  \begin{scope}[xshift=10cm, scale=.9]
 \midarrow [thick, blue] (1,-2) to (-1,-2); \node [above] (b1) at (0,-2) {$b$};
  \midarrow [thick, blue] (1,0) to (-1,0); \node [above] (b2) at (0,0) {$b$};
   \midarrow [thick, blue] (1,2)  to (-1,2); \node [above] (b3) at (0,2) {$b$};
   \midarrow [thick] (1,-1) to (-1,-1);  \node [above] (b4) at (0,-1) {$b$};
         \midarrow [thick, red] (1,-2) to (1,-1);  \node [left] (abot) at (1,-1.5) {$a$};
           \draw [thick, red] (1,-1) to (1,0);  \node [left] (abot) at (1,-.5) {$\WQ$};
         \midarrow [thick, red] (-1,-2) to (-1,-1);  \node [left] (abot) at (-1,-1.5) {$a$};

           \draw [thick, red] (-1,-1) to (-1,0);  \node [left] (abot) at (-1,-.5) {$\WQ$};
       \midarrow[thick, forestgreen] (1,0) to (1,2);\node [left] (ctop) at (1,1) {$c$};
 \midarrow [thick, forestgreen] (-1,0) to (-1,2);     \node [left] (cbot) at (-1,1) {$c$};
 \midarrow  [thick] (1,0) .. controls (2.5,0) and (2.5,-1) ..   (1,-1);
    \node (dtop) at (1.75,.25) {$d$};
  
 \end{scope}
  \node (e) at (8.25,0) {$=$};
 \end{scope}
 \begin{scope}[xshift=4.5cm, scale=.33]]
 \draw (0,0) ellipse (4cm and 3cm); 
\draw (0,-2) [partial ellipse=55:125:2.5cm and 2.5cm];
\draw (0,2) [partial ellipse=225:315:2.5cm and 2.5cm];
\begin{scope}[xshift=5.5cm]. 
\draw (0,0) ellipse (4cm and 3cm);
\draw (0,-2) [partial ellipse=55:125:2.5cm and 2.5cm];
\draw (0,2) [partial ellipse=225:315:2.5cm and 2.5cm];
\end{scope}
\begin{scope}[decoration={markings,mark = at position 0.5 with {\arrow{stealth}}}]
 \draw [thick, blue] (2.75,0) [partial ellipse=0:180:1.25cm and .5cm];   
  \draw [thick, blue, densely dotted] (2.75,0) [partial ellipse=180:360:1.25cm and .5cm];
  \node [above] (b) at (1.75,.25) {$b$};
  \midarrow [blue, thick] (2.3,.45) to (2.35,.45);
 \draw [thick] (8.25,0) [partial ellipse=0:180:1.25cm and .5cm];   
  \midarrow [thick] (7.85,.45) to (7.86,.45);
   \node [above] (b) at (7.25,.25) {$b$}; 
  \draw [thick, densely dotted] (8.25,0) [partial ellipse=180:360:1.25cm and .5cm];
\draw [thick, forestgreen] (0,.5) ellipse (2.75cm and 1.75cm); 
 \midarrow [forestgreen, thick] (-2.75,.5) to (-2.75,.6);
    \node [below] (c) at (-3.4,1) {$c$};  
\draw [thick, red] (5.5,.5) ellipse (2.75cm and 1.75cm); 
\node [below] (a) at (5.5,-1.25) {$a$};
 \midarrow [red, thick] (5.5,-1.25) to (5.505,-1.25);

\draw [thick] (2.75,.5) .. controls (2.75,6) and (8.25,6) .. (8.25,.5);  
     \midarrow [thick] (5.45,4.625) to (5.55,4.625);
\node [above] (d) at (4.9,4.6) {$d$};
     \fill (2.75,.5) circle (.15);
        \fill (8.25,.5) circle (.15);
 \end{scope}
 \node [below] (Q) at (5.5,2.5) {$\WQ$};
\end{scope}
\end{tikzpicture}
\caption{The blowup $\Sa^\WQ$ for $\WQ=(\{a,d\}|\{a\inv,d\inv,c,c\inv \}|\{b,b\inv\})$}\label{blowupone}
\end{center}
\end{figure}

\begin{figure}. 
\begin{center}
  \begin{tikzpicture}
  \begin{scope}[decoration={markings,mark = at position 0.5 with {\arrow{stealth}}}]
  \coordinate (c) at (-1,1); \coordinate (a) at (0,1); \coordinate (d) at (1,1);\coordinate (b) at (2,1);
 \vertex{(a)};\vertex{(b)};\vertex{(c)};\vertex{(d)};
  \coordinate (ci) at (-1,-0); \coordinate (ai) at (0,0); \coordinate (di) at (1,0);\coordinate (bi) at (2,0);
 \vertex{(ai)};\vertex{(bi)};\vertex{(ci)};\vertex{(di)};
 \node [above](A) at (a) {$a$};
  \node [above](B) at (b) {$b$};
    \node [above](C) at (c) {$c$};
      \node [above](D) at (d) {$d$};
\node [below](AI) at (ai) {$a^{-1}$};
  \node [below](BI) at (bi) {$b^{-1}$};
    \node [below](CI) at (ci) {$c^{-1}$};
      \node [below](DI) at (di) {$d^{-1}$};
\draw [blue] (1.5,1.1) ellipse (1cm and .5cm); \node (P) at (1.6,1.4) {$\WP$};
\draw [blue] (.5,1.1) ellipse (1cm and .5cm);\node (Q) at (0.4,1.4) {$\WQ$};
  \begin{scope}[xshift=10.5cm, scale=.9]
 \midarrow [thick, blue] (0,-2) to (-1,-2); \node [above] (b1) at (-.5,-2) {$b$}; 
  \draw [thick, blue] (0,-2) to (1,-2); \node [above] (b1) at (.5,-2) {$\WP$};  
  \midarrow [thick, blue] (0,0) to (-1,0); \node [above] (b2) at (-.5,0) {$b$};  
   \draw [thick, blue] (0,0) to (1,0); \node [above] (b1) at (.5,0) {$\WP$};  
   \midarrow [thick, blue] (0,2)  to (-1,2); \node [above] (b3) at (-.5,2) {$b$}; 
    \draw [thick, blue] (0,2) to (1,2); \node [above] (b1) at (.5,2) {$\WP$}; 
   \midarrow [thick] (0,-1) to (-1,-1);  \node [above] (b4) at (-.5,-1) {$b$};  
    \draw [thick] (0,-1) to (1,-1);  \node [above] (b4) at (.5,-1) {$\WP$};  
    \midarrow [thick] (0,0) to (0,2); \node [left] (c4) at (0,1) {$c$};  
      \midarrow [thick, red] (1,-2) to (1,-1);  \node [right] (abot) at (1,-1.5) {$a$};  
 \draw [thick, red] (-1,-1) to (-1,0);  \node [left] (abot) at (-1,-.5) {$\WQ$}; 
 \draw [thick] (0,-1) to (0,0); \node [left] (Qmid) at (0,-.5) {$\WQ$};  
\draw [thick, red] (1,-1) to (1,0);  \node [right] (abot) at (1,-.5) {$\WQ$};  
 \midarrow [thick, red] (-1,-2) to (-1,-1);  \node [left] (abot) at (-1,-1.5) {$a$}; 
 \midarrow [thick] (0,-2) to (0,-1); \node [left] (amid) at (0,-1.5) {$a$};  
       \midarrow[thick, forestgreen] (1,0) to (1,2);\node [right] (ctop) at (1,1) {$c$};  
 \midarrow [thick, forestgreen] (-1,0) to (-1,2);     \node [left] (cbot) at (-1,1) {$c$};  
  \midarrow  [thick] (1,0) .. controls (3,1) and (.25,3)   ..   (0,-1);  
    \node (dtop) at (.5,1.5) {$d$};
 \end{scope}
  \node (e) at (8.5,0) {$=$};
 \end{scope}

 \begin{scope}[xshift=4.5cm, scale=.33]] 
 \draw (0,0) ellipse (4cm and 3cm); 
\draw (0,-2) [partial ellipse=55:125:2.5cm and 2.5cm];
\draw (0,2) [partial ellipse=225:315:2.5cm and 2.5cm];
\begin{scope}[xshift=5.5cm]. 
\draw (0,0) ellipse (4cm and 3cm);
\draw (0,-2) [partial ellipse=55:125:2.5cm and 2.5cm];
\draw (0,2) [partial ellipse=225:315:2.5cm and 2.5cm];
\end{scope}
\begin{scope}[decoration={markings,mark = at position 0.5 with {\arrow{stealth}}}]
 \draw [thick, blue] (2.75,0) [partial ellipse=0:180:1.25cm and .5cm]; 
  \draw [thick, blue, densely dotted] (2.75,0) [partial ellipse=180:360:1.25cm and .5cm];
  \node (b) at (1,.25) {$b$}; 
  \midarrow [blue, thick] (2.3,.45) to (2.35,.45);
 \draw [thick] (8.25,0) [partial ellipse=0:180:1.25cm and .5cm]; 
  \draw [thick, densely dotted] (8.25,0) [partial ellipse=180:360:1.25cm and .5cm];
  \node (b) at (6.5,.25) {$b$}; 
   \midarrow [thick] (7.85,.45) to (7.86,.45);
\draw [thick, forestgreen] (0,.5) ellipse (2.75cm and 1.75cm); 
 \node [above] (c) at (-3.25,0) {$c$};  
   \midarrow [forestgreen, thick] (-2.75,.5) to (-2.75,.6);
\draw [thick, dotted] (0,-.5) ellipse (2.75cm and 1.75cm); 
   \node [below] (c) at (-3.25,-.25) {$c$};  
  \midarrow [thick] (-2.75,-.6) to (-2.75,-.5);
\draw [thick, red] (5.5,.5) ellipse (2.75cm and 1.75cm); 
\draw [thick, dotted] (5.5,-.5) ellipse (2.75cm and 1.75cm); 
\midarrow [thick] (2.75,.5) .. controls (2.75,7) and (9,5) .. (9,1.5); 
\draw  [thick, densely dotted ](9,1.5) .. controls (9,.75) and (8.75,0) ..  (8.25,-.5);
\node (d) at (5,5.5) {$d$};  
\node [below] (a) at (5.5,-1.05) {$a$}; 
  \midarrow [red, thick] (5.5,-1.25) to (5.505,-1.25);
 \node [below] (a) at (5.5,-2) {$a$};
 \midarrow [ thick] (5.5,-2.25) to (5.505,-2.25);

 \end{scope}
 \node [above] (Q) at (5.5,2) {$\WQ$}; 
  \node (Q) at (5.5,1.25) {$\WQ$}; 
 \node (P) at (4.5,.25) {$\WP$};
 \node (P) at (10,.25) {$\WP$};
  \fill (2.75,.5) circle (.15);
        \fill (8.25,.5) circle (.15);
\fill (2.75,-.5) circle (.15);
        \fill (8.25,-.5) circle (.15);
 \end{scope}
\end{tikzpicture}
\caption{The blowup $\SP$ for $\bPi=\{\WP,\WQ\},$ $\WP=(\{b,d\}|\{b\inv,d\inv\}|\{a,a\inv, c,c\inv\})$}\label{blowuptwo}
\end{center}
\end{figure}

\begin{figure}
\begin{center}
\begin{tikzpicture}
  \begin{scope}[decoration={markings,mark = at position 0.5 with {\arrow{stealth}}}]
  \coordinate (c) at (-1,1); \coordinate (a) at (0,1); \coordinate (d) at (1,1);\coordinate (b) at (2,1);
 \vertex{(a)};\vertex{(b)};\vertex{(c)};\vertex{(d)};
  \coordinate (ci) at (-1,-0); \coordinate (ai) at (0,0); \coordinate (di) at (1,0);\coordinate (bi) at (2,0);
 \vertex{(ai)};\vertex{(bi)};\vertex{(ci)};\vertex{(di)};
 \node [above](A) at (a) {$a$};
  \node [above](B) at (b) {$b$};
    \node [above](C) at (c) {$c$};
      \node [above](D) at (d) {$d$};
\node [below](AI) at (ai) {$a^{-1}$};
  \node [below](BI) at (bi) {$b^{-1}$};
    \node [below](CI) at (ci) {$c^{-1}$};
      \node [below](DI) at (di) {$d^{-1}$};
\draw [blue] (1.5,1.1) ellipse (1cm and .5cm); \node (P) at (1.6,1.4) {$\WP$};
\draw [blue] (0,1.1) ellipse (1.5cm and .6cm); \node (R) at (-.4,1.4) {$\WR$};
   \begin{scope}[xshift=10cm, scale=.9] %
 \midarrow [thick, blue] (-1,2)  to (0,2); \node [above] (b3) at (-.5,2) {$b$}; 
    \draw [thick, blue] (0,2) to (1,2); \node [above] (b1) at (.5,2) {$\WP$}; 
  \midarrow [thick] (-1,.67) to (0,.67); \node [above] (b2) at (-.5,.67) {$b$};  
   \draw [thick] (0,.67) to (1,.67); \node [above] (b1) at (.5,.67) {$\WP$};  
   \midarrow [thick, blue] (-1,-.67) to (0,-.67);  \node [above] (b4) at (-.5,-.67) {$b$};  
    \draw [thick, blue] (0,-.67) to (1,-.67);  \node [above] (b4) at (.5,-.67) {$\WP$};  
        \midarrow [thick] (-1,-2) to (0,-2); \node [above] (b1) at (-.5,-2) {$b$}; 
  \draw [thick] (0,-2) to (1,-2); \node [above] (b1) at (.5,-2) {$\WP$};  
    \midarrow [thick] (0,.67) to (0,2); \node [left] (c4) at (0,1.45) {$c$};  
 \draw [thick, red] (-1,-1) to (-1,.67);  \node [left] (abot) at (-1,.15) {$\WR$}; 
 \draw [thick] (0,-1) to (0,.67); \node [left] (Rmid) at (0,.15) {$\WR$};  
\draw [thick, red] (1,-1.67) to (1,.67);  \node [left] (abot) at (1,.15) {$\WR$};  
 \midarrow [thick, red] (-1,-2) to (-1,-.67);  \node [left] (abot) at (-1,-1.2) {$a$}; 
 \midarrow [thick]         (0,-2) to (0,-.67); \node [left] (amid) at (0,-1.2) {$a$};  
 \midarrow [thick, red] (1,-2) to (1,-.67);  \node [left] (abot) at (1,-1.2) {$a$};  
       \midarrow[thick, forestgreen] (1,.67) to (1,2);\node [left] (ctop) at (1,1.45) {$c$};  
 \midarrow [thick, forestgreen] (-1,.67) to (-1,2);     \node [left] (cbot) at (-1,1.45) {$c$};  
  \midarrow  [thick] (1,.67)   .. controls (2.7,.3)  and (1.8,-.4) ..   (0,-.67);  
    \node (dtop) at (2,-.25) {$d$};
 \end{scope}
  \node (e) at (8,0) {$=$};
 \begin{scope}[xshift=5cm, scale=.33]] 
\draw (0,0) [partial ellipse=25:335:3.5cm and 4cm]; 
\draw (4,0) [partial ellipse=-155:155:3.5cm and 4cm]; 
\draw (-.2,0) [partial ellipse=45:315:1.4cm and 2cm]; 
\draw (4.2,0) [partial ellipse=-135:135:1.4cm and 2cm]; 
 \draw [thick, densely dotted] (2,1.5) [partial ellipse=0:180:1.25cm and .5cm]; 
  \draw [thick] (2,1.5) [partial ellipse=180:360:1.25cm and .5cm];
 \draw [thick, blue, densely dotted] (2,-1.5) [partial ellipse=0:180:1.25cm and .5cm]; 
  \draw [thick, blue] (2,-1.5) [partial ellipse=180:360:1.25cm and .5cm];
\draw (3.25,1.5) to (3.25,-1.5);  
\draw (.75,1.5) to (.75, -1.5);   
\begin{scope}[xshift=.5cm, yshift=1cm]
\draw [densely dotted] (2,1) to (2,-2);  
\end{scope}
\draw[thick,red] (2,1) to (2,-2); 
\midarrow [thick, forestgreen] (2,1) .. controls (-3,7) and (-5,-6) .. (2,-2); 
\midarrow [thick, red] (2,1) .. controls (7,7) and (8.5,-6.5) .. (2, -2);  
\begin{scope} [xshift=15, yshift=30]
\midarrow [densely dotted] (2,1) .. controls (-3,6) and (-5,-8) .. (2,-2); 
\midarrow [densely dotted] (2,1) .. controls (7.5,6) and (8,-9) .. (2, -2);  
\end{scope}
 \fill (2,1) circle (.15);
        \fill (2,-2) circle (.15);
        \fill (2.5,2) circle (.15);
        \fill (2.5,-1) circle (.15);
\midarrow [thick] (2,1) .. controls (6,1) and (6,-1) .. (3.25,-1);
\draw [thick, densely dotted] (3.25,-1) to (2.5,-1);
\end{scope}
\end{scope}
\end{tikzpicture}
\caption{The blowup $\SP$ for $\bPi=\{\WP,\WR\}$,  $\WR=(\{a,c,d\}|\{a\inv,d\inv,c\inv\}|\{b,b\inv\})$}\label{blowupthree}
\end{center}
\end{figure}

\subsection{Collapsing hyperplanes}

\begin{definition} Let $H$ be a hyperplane in a  cube complex $X$.  The closure of the set of cubes that intersect $H$ is called the {\em hyperplane carrier $\kappa(H)$}, and the {\em hyperplane collapse} associated to $H$ is the map $c_H$ on $X$ that collapses   $\kappa(H)$ to  $H$.
\end{definition}

Recall from \cite{CSV} that hyperplanes in $\SP$ are characterized by the fact that the set of edges they intersect is exactly the set of edges with a given label $A\in V\cup\bPi$.  We say the hyperplane is {\em labeled} by $A$. 

\begin{proposition}[\cite{CSV} Theorem  4.6]  If $\WP\in \bPi$,  and $H_{\WP}$ is the hyperplane in $\SP$ labeled by $\WP$, then  the image of $\SP$ under $c_{H_\WP}$ is isomorphic to $\Sa^{\bPi-\WP}$.
\end{proposition}
The {\em standard collapse} $c_\pi\colon \Sa^\bPi\to \Sa^\emptyset =
\SaG$ is the map that collapses all hyperplanes whose  labels are in $\bPi$.

\subsection{Untwisted Outer space $\SG$}\label{subsec:untwisted}

Recall that the {\em untwisted subgroup} $U(\AG)\leq \Out(\AG)$ is the subgroup generated by  \GW\ automorphisms, graph automorphisms and inversions.
By work of  Laurence and Servatius \cite{Lau, Ser}, $U(\AG)$ together with automorphisms $v\mapsto vw$ for  $v\leq_t w$ (called {\em  twists}) generate the full group $\Out(\AG)$. In this section we recall the main theorem of \cite{CSV} and use it to define a contractible space $\SG$ on which $U(\AG)$ acts properly.  We first recall the space $K_\G$ studied in \cite{CSV}.

\begin{definition} A cube complex $X$ is a {\em $\G$-complex}    if it is isomorphic  to a blowup $\SP$ for some $\bPi$.  A {\em $\G$-complex collapse} $c\colon X\to\SaG$ is the composition of an isomorphism $X\iso\SP$ with the standard collapse  $\SP\to \SaG$.

\begin{example} If $\G$ has no edges, then a $\G$-complex is a connected graph with no univalent or bivalent vertices and no separating edges, and a $\G$-complex collapse contracts a maximal tree to a point.
\end{example}

A {\em marked $\G$-complex} is an equivalence class of pairs $(X,g)$ where
\begin{itemize}
\item   $X$ is a $\G$-complex
\item  $g\colon X\to \SaG$ is a  homotopy equivalence\item $(X',g')\sim ({X},g)$ if there is a cube complex isomorphism $i\colon X'\to{X}$ with $g\circ i\simeq g'$.
\end{itemize}
\end{definition}
A marking $h\colon X\to\SaG$  is {\em untwisted} if  the composition of a homotopy inverse $h^{-1}$ with some (and hence any)  $\G$-complex collapse  induces an element of the untwisted subgroup $U(\AG)$.

If a hyperplane collapse $c_H\colon X'\to {X}$ is a homotopy equivalence, we set $$ (X', h\circ c_H)>(X,h).$$   This induces a partial order on $\G$-complexes with untwisted markings.  The {\em  spine $K_\G$} is the geometric realization of the resulting poset, i.e. it is a simplicial complex, where a $k$-simplex is a  $\G$-complex  with an untwisted marking together with a chain of $k$ hyperplane collapses, each of which is a homotopy equivalence to another $\G$-complex with an untwisted marking.

\begin{theorem}[\cite{CSV}]The  spine $K_\G$ is contractible, and   $U(\AG)$ acts properly and cocompactly on $K_\G$.
\end{theorem}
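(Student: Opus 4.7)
The plan is to prove the three assertions separately: finiteness of the dimension implicit in the statement will be by-product of the explicit combinatorial description, properness follows from finiteness of cube-complex automorphism groups, cocompactness from finiteness of the set of pairwise-compatible collections $\bPi$ up to the symmetries of $\Gamma$, and contractibility from a Whitehead-style peak reduction argument. A vertex of $K_\G$ is a marked $\G$-complex $(X,g)$, and by definition $X$ is isomorphic (as a cube complex) to a blowup $\Sa^\bPi$; the natural basepoint is the Salvetti $\SaG$ itself, corresponding to $\bPi=\emptyset$, with its identity marking. The standard collapse $c_\pi\colon \Sa^\bPi\to\SaG$ realizes every other vertex as a chain of hyperplane collapses down to the basepoint.

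For properness, the stabilizer in $U(\AG)$ of a vertex $(X,g)$ is, by the equivalence relation on marked $\G$-complexes, a subgroup of the group of cubical self-isomorphisms of $X$. Since $X$ has only finitely many cubes, this group is finite, and the action of $U(\AG)$ permutes vertices of simplices rigidly. For cocompactness, the key finiteness statement is: there are only finitely many pairwise-compatible collections $\bPi$ of $\G$-Whitehead partitions, hence only finitely many isomorphism classes of $\G$-complexes. Moreover, untwisted markings are encoded up to homotopy by elements of $U(\AG)$, and changing the marking by elements of $U(\AG)$ yields a transitive action on markings of a fixed $X$. Hence $U(\AG)$ has only finitely many orbits on vertices, and (by a dimension bound on chains of hyperplane collapses from $X$ to $\SaG$) on simplices of $K_\G$, giving cocompactness.

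For contractibility, I would mimic the Culler--Vogtmann proof of contractibility of the spine of Outer space. Choose as basepoint the vertex $v_0=(\SaG,\id)$. For any vertex $v=(X,g)$ define a \emph{complexity} $N(v)$: write $g$ (precomposed with the $\G$-complex collapse $X\to\SaG$) as a word $w$ of minimal length in the generators of $U(\AG)$ from \cite{Lau, Ser}, namely $\G$-Whitehead automorphisms, graph automorphisms, and inversions, and set $N(v)$ equal to this length. The filtration $K_\G^{\leq n}=\{\text{simplices all of whose vertices have }N\leq n\}$ exhausts $K_\G$. I would show inductively that each $K_\G^{\leq n}$ is contractible by exhibiting, for any vertex $v$ with $N(v)>0$, an \emph{ideal} blowup $(X',g')$ obtained by performing a single \GW\ partition move: $(X',g')$ sits above both $v$ and a vertex $v''$ of strictly smaller complexity in the blowup poset, so the star of $v$ in $K_\G^{\leq n}$ deformation retracts through $X'$ onto the star of $v''\in K_\G^{\leq n-1}$. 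Taking nerves and applying a standard collapsibility argument gives contractibility.

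The main obstacle is the \emph{peak reduction} step: given that $g$ has minimal length $n$ in generators, one must produce a single \GW\ automorphism $\phi(\WP,m)$ whose blowup relation simultaneously reduces complexity. For free groups this is Whitehead's algorithm; for general RAAGs it is considerably more delicate, because the compatibility structure on \GW\ partitions (Lemma~\ref{lem:noncommuting} and the combinatorics of regions) has to be used to guarantee that the relevant partition $\WP$ is compatible with the existing $\bPi$, so that an intermediate blowup $\Sa^{\bPi\cup\{\WP\}}$ exists in the poset. The compatibility lemmas recalled from \cite{CSV}, together with a careful analysis of how bases and splittings $\Sing(\WP)$ interact (Lemmas~\ref{lem:base} and~\ref{lem:tdominant}), are exactly what make such a choice of $\WP$ possible; the technical heart of \cite{CSV} is a peak reduction lemma guaranteeing that this move strictly reduces $N$.
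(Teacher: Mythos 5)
This theorem is cited from \cite{CSV} and the present paper does not reprove it, so your sketch should be compared with the argument there. Your properness argument (stabilizers embed in finite cubical automorphism groups) and your cocompactness argument (finitely many compatible collections $\bPi$, transitivity of $U(\AG)$ on untwisted markings of a fixed $X$, bounded length of collapse chains) are correct and essentially those of \cite{CSV}.

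Your contractibility sketch has a genuine gap, and it also uses a complexity function that is not the one used in \cite{CSV}. You set $N(v)$ to be the minimal length of the marking $g$ as a word in the generating set of $U(\AG)$. The proof in \cite{CSV} instead follows the Culler--Vogtmann scheme: it defines a norm $\|(X,g)\|_W$ on marked $\G$-complexes by summing the $X$-lengths of a fixed finite set $W$ of conjugacy classes in $\AG$, shows this norm has a unique minimizer, and contracts sublevel sets by a star/nerve argument after establishing a Factorization (Peak) Lemma for the norm. That lemma says, roughly, that if two $\G$-Whitehead moves at $(X,g)$ both fail to increase $\|\cdot\|_W$, then their difference factors through $\G$-Whitehead moves that never exceed $\|(X,g)\|_W$; its proof requires tracking how $W$-lengths change under blowups and collapses, using exactly the compatibility combinatorics of partitions (Lemma~\ref{lem:noncommuting} and the region structure). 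Your sketch asserts an analogous peak-reduction lemma for the word-length complexity $N$ and cites \cite{CSV} for it, but the lemma there is for the conjugacy-length norm, and peak reduction in $U(\AG)$ for \emph{automorphism word length} is a different statement that you would need to prove. As written, the heart of the theorem has been deferred by citation rather than established, and the complexity you chose does not obviously have the monotonicity properties under blowups and collapses that make the inductive retraction go through.
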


We now define the space $\SG$ by viewing the cubes of a $\G$-complex $X$ as metric objects, each isometric to an orthogonal product of intervals of various lengths, i.e. an {\em orthotope}.   The result is a locally CAT(0) complex  $X$ which we will call a {\em rectilinear} $\G$-complex.  All edges dual to the same hyperplane in $X$  have the same length, called the {\em width} of the hyperplane.  A point in $\SG$ is then a marked  rectilinear $\G$-complex   $(X, h)$, where $h$ is untwisted and the cube complex isomorphism in the definition of  the equivalence relation must be an  isometry on each orthotope.  In the case $\G$ has no edges, the spine $K_\G$ is the same as the spine of (reduced) Outer space, as originally defined in \cite{CuVo}, and $\SG$ is reduced Outer space itself.

The spine $K_\G$ embeds in $\SG$ as follows: the image of a vertex $[(X,h)]$ of $K_\G$ is determined by the property that all edges of $X$ have length $1/n,$ where $n$ is the number of hyperplanes in $X$.   The image of each higher-dimensional simplex is the linear span of its vertices.

\begin{proposition}  $K_\G$ is a deformation retract of $\SG$.
\end{proposition}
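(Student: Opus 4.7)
The plan is to adapt the classical Culler--Vogtmann deformation retraction of $CV_n$ onto its spine. The essential observation is that $\SG$ carries a natural decomposition into open polyhedral cells indexed by marked $\G$-complex types: for each $[X,h]$ with $n$ hyperplanes of widths $(w_1,\dots,w_n)$, varying the widths (say with fixed sum $1$) traces out an open simplex $\sigma(X,h)\subset\SG$ of dimension $n-1$. Letting a subset of widths tend to $0$ corresponds to collapsing the associated hyperplanes, and the resulting face lies in $\SG$ exactly when this collapse is a homotopy equivalence, which is precisely the condition defining the partial order on $\G$-complexes used to build $K_\G$. Thus the poset of closed cells of $\SG$ is isomorphic to the poset whose order complex is $K_\G$.

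Under this identification, the embedding $K_\G\hookrightarrow\SG$ described above is precisely the barycentric embedding of the order complex: the vertex $[X,h]$ is sent to the barycenter of $\sigma(X,h)$, and the simplex corresponding to a chain $X_k>\cdots>X_0$ is sent to the Euclidean simplex spanned by the barycenters of the corresponding cells, all of which lie in the closure of $\sigma(X_k,h_k)$. In other words, $K_\G$ is realized as the canonical simplicial subdivision of $\SG$ obtained by inserting barycenters in every cell, and the existence of a deformation retraction will then follow from a standard ``barycentric retraction'' construction.

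To make this concrete, I will define the retraction $r\colon\SG\to K_\G$ cellwise. Given $p\in\sigma(X,h)$ with widths $w_1\le\cdots\le w_n$ having distinct values $0<s_1<s_2<\cdots<s_\ell$, let $X=X_\ell>X_{\ell-1}>\cdots>X_0$ be the chain obtained by successively collapsing the hyperplanes of width $s_i$. Each such collapse is a homotopy equivalence because the corresponding face of $\sigma(X,h)$ lies in $\SG$. Expressing the width vector of $p$ as a positive linear combination of the width vectors of the barycenters of $\sigma(X_0,h_0),\dots,\sigma(X_\ell,h_\ell)$ identifies $p$ with a specific point of the $\ell$-simplex of $K_\G$ along this chain, and we let $r(p)$ be that point. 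The straight-line homotopy from $p$ to $r(p)$, performed in width-coordinates inside $\sigma(X,h)$, provides the deformation $H_t$, which by construction fixes $K_\G$ pointwise.

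The main obstacle is the continuity of $r$ (and of $H_t$) across the boundaries between cells, where some widths tend to $0$. At such a boundary point the combinatorial type jumps from $X$ to the collapsed complex $X'$, and one must check that the chain assigned to a nearby interior point of $\sigma(X,h)$ degenerates in a controlled way to the chain assigned to the corresponding boundary point in $\sigma(X',h')$. This reduces to the essentially combinatorial observation that the chain produced by the algorithm is determined by the multiset of positive widths of $p$, so that as an initial collection of widths tends to $0$ the first few vertices of the chain simply drop off while the barycentric coordinates along the remaining chain vary continuously. This is the step where it is essential that $\SG$ only includes those face-collapses that are homotopy equivalences, so that every limiting chain produced is again a legitimate simplex of $K_\G$.
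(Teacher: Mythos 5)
Your overall strategy coincides with the paper's: first scale to the slice $P\SG$ where the hyperplane widths sum to $1$, then decompose $P\SG$ into open simplices $\sigma(X,h)$, identify the embedded $K_\G$ with the subcomplex of the barycentric subdivision of each closed simplex spanned by barycenters of faces that actually lie in $\SG$, and deformation retract onto it. However, the explicit retraction you propose has a gap, and the step you pass over as obvious is exactly where it breaks.

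You form the chain $X=X_\ell>\cdots>X_0$ by collapsing hyperplanes in order of increasing width and assert that ``each such collapse is a homotopy equivalence because the corresponding face of $\sigma(X,h)$ lies in $\SG$.'' But that is not a reason; it is precisely the assertion that needs checking, and it is false in general. The simplex $\sigma(X,h)$ is \emph{ideal}: a boundary face survives in $\SG$ only if the corresponding collapse is a homotopy equivalence onto a $\G$-complex, and there is no reason the faces singled out by the width-ordering of a generic interior point should all be of this kind. For a concrete failure, take $\G$ to be two isolated vertices, so $\AG=F_2$ and $\SG$ is reduced Outer space. Let $X$ be the theta graph, the blowup along a single partition $\WP$, with hyperplanes $H_a,H_b,H_\WP$, and take $p$ with widths $(1/6,1/3,1/2)$. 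Collapsing $H_a$ or $H_b$ kills a generator, so the only proper face of $\overline{\sigma(X,h)}$ lying in $\SG$ is the one where exactly $H_\WP$ is collapsed. In particular $X_{\ell-1}$ (collapse $H_a$) and every smaller term of your chain lie outside $\SG$, so the ``$\ell$-simplex of $K_\G$ along this chain'' simply does not exist, and $r(p)$ is undefined.

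The correct map---the one the paper's brief appeal to the ``barycenters of faces actually contained in $P\SG$'' invokes---is the standard radial retraction of an ideal simplex onto the anti-star of its missing faces. If $p$ lies in the open barycentric cell spanned by barycenters of a chain $\tau_0\subsetneq\cdots\subsetneq\tau_k$ of faces of $\overline{\sigma(X,h)}$, with $\tau_0,\dots,\tau_{j-1}$ missing from $\SG$ and $\tau_j,\dots,\tau_k$ present (always an initial segment, since the missing faces form a subcomplex), then $r(p)$ is obtained by discarding the coefficients of $p$ on the missing barycenters and renormalizing the remainder---that is, by \emph{dropping} the invalid prefix of the chain, not retaining it. In the example above every proper face in the barycentric chain of $p$ is missing, so $r(p)$ is the barycenter of $\sigma(X,h)$. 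Your continuity discussion is reasonable in spirit but is aimed at the wrong map; the verification that actually carries the proof is that this radial projection is well defined and continuous across the face identifications between different cells $\sigma(X,h)$.
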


\begin{proof}
$\SG$ contains the set $P\SG$ of marked metric $\G$-complexes $[(X,h)]$ for which the sum of the hyperplane widths in $X$ equals $1$.  Note that the image of our embedding of $K_\G$ into $\SG$ is contained in $P\SG$.  The map  $\SG\to P\SG$ that scales all edge lengths simultaneously is a deformation retraction.

The subspace $P\SG$ decomposes into a union of open simplices, one for  each marked $\G$-complex $[(X,h)]$, of dimension one less than  the number of hyperplanes in $X$.  The points in this simplex are obtained by varying the widths of the hyperplanes while keeping the sum equal to $1$.    For each such simplex, consider the barycentric subdivision of its closure, and let $K[(X,h)]$ be the subcomplex of this barycentric subdivision spanned by the barycenters of faces that are actually contained in $P\SG$.  It is easy to see that $K[(X,h)]$ is equal to the image of $K_\G$ under the embedding described above, and is a deformation retract of $\SG$.
\end{proof}

\begin{corollary}\label{cor:SGcontractible} The space $\SG$ is contractible.
\end{corollary}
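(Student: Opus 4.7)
The plan is simply to chain together the two results immediately preceding. The proposition just established gives a deformation retraction of $\SG$ onto (the image of) $K_\G$, so in particular $\SG$ and $K_\G$ are homotopy equivalent. The theorem of \cite{CSV} quoted above asserts that $K_\G$ is contractible. Since contractibility is a homotopy invariant, it follows immediately that $\SG$ is contractible.

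There is no real obstacle here: all the substantive work has already been done. The hard input is the contractibility of $K_\G$, proved in \cite{CSV}. The only new piece is the construction of the explicit deformation retraction in the preceding proposition, which first rescales edge lengths to land in the codimension-one slice $P\SG$ (where the sum of hyperplane widths equals $1$) and then retracts each open simplex of $P\SG$ onto the subcomplex $K[(X,h)]$ of its barycentric subdivision coming from faces that still lie in $P\SG$. Once one has identified this subcomplex with the image of $K_\G$, the corollary is a formal consequence and no further argument is required.
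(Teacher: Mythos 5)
Your argument is correct and matches the paper exactly: the corollary is stated without proof in the paper precisely because it follows immediately from the preceding proposition (deformation retraction of $\SG$ onto $K_\G$) together with the cited theorem of \cite{CSV} that $K_\G$ is contractible.
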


\section{Combinatorial and metric structure of blowups}\label{sec:blowups}
 Throughout this section we  fix a compatible set $\Pi$ of \GW-partitions. To prove our main theorem we will have to understand the structure of  the blowup $\SP$ in some detail.  We gather some facts about blowups here.

\subsection{Basics}
The following basic features of  blowups $\SP$ are either part of Theorem 3.14 of \cite{CSV} or follow immediately from the definition of $\SP$.
\begin{enumerate}
\item\label{fact:NPC} $\SP$ is a locally CAT(0) cube complex, i.e. the path metric induced by making each $k$-cube isometric to $[0,1]^k$ is locally CAT(0).
\item\label{fact:treelike} The subcomplex $\EP\subset \SP$ consisting of cubes all of whose edge labels are in $\bPi$ is CAT(0) and locally convex, and it contains all vertices of $\SP$.
\item\label{fact:collapse} The standard collapse map $c_\pi$ maps all of $\EP$ to the single vertex in $\SaG$.
 \item\label{fact:hyplabel} The set of edges of $\SP$ with a given label $A\in V\cup\bPi$ is the set of edges that intersect a single hyperplane, which we will call $H_A$.  All hyperplanes in $\SP$ are of this form.
 \item\label{fact:hypedges} Each hyperplane $H_A$ inherits a cube complex structure from $\SP$ whose edges are labeled by the   elements of $V\cup \bPi$ that are adjacent 
 to $A$, i.e. by elements in $\lk_\bPi(A)$.
\item\label{fact:onelabel} There is at most one edge with a given label at any vertex of $\SP$.
\end{enumerate}

Another way to define the subcomplex $\EP$ is to observe that  the set of sides of the partitions in $\bPi$ form a {\em pocset}, that is, a partially ordered set with an order reversing involution $P \mapsto \overline{P}$ such that  pairs $P, \overline{P}$ are unrelated; this follows from  Lemma~\ref{lem:noncommuting}.  Any pocset  satisfying suitable finiteness conditions  gives rise to a CAT(0) cube complex  (see, e.g., \cite{Sageev}), and $\EP$ is isomorphic to the cube complex associated to the pocset of sides of $\bPi$.

 \subsection{Adjacent  
 labels}\label{subsec:linked}

 In this section  show that there is a unique cube in $\SP$ for every maximal clique in the graph $\Gamma_\bPi,$ i.e. any maximal set of pairwise adjacent elements of $V\cup \bPi$.   

We begin with existence, for which the following definition is useful.

\begin{definition} Let $\WP\in\bPi$.  For  $v\in V\cup V^{-1} \setminus \lk(\WP)$ the {\em $v$-side of $\WP$} is the side containing $v$. For $\WQ\in \bPi\setminus \{\WP\}$  not adjacent to $\WP$ 
the {\em $\WQ$-side of $\WP$} is the side containing  some side of $\WQ$ (There is a unique such side by Lemma~\ref{lem:noncommuting}). 
\end{definition}

Stated in terms of hyperplanes, $H_\WP$ splits the subspace $\EP$ into two components. If  $v\not\in \lk(\WP)$ 
the $v$-side of $H_\WP$ is the side containing the terminal vertex of some (hence every)  edge labeled $v$.  If $\WQ$ and $\WP$  are distinct and not adjacent, then $H_\WQ$ does not intersect $H_\WP$ and the $\WQ$-side of $H_\WP$ is the side containing $H_\WQ$.

  \begin{proposition}\label{prop:cube} Let $\bPi$ be a compatible set of  $k$ \GW-partitions, and let $\mathcal A=\{A_1,\ldots, A_\ell\}$ be the vertices of a maximal clique in $\Gamma_\Pi$.  Then there is a cube in $\SP$ with edge labels $\{A_1,\ldots,A_\ell\}$.
\end{proposition}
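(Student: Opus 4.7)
The plan is to locate a specific vertex $\re\in\SP$ at which all $\ell$ desired edges are incident, then appeal to the local flag condition (from $\SP$ being locally CAT(0)) to assemble them into an $\ell$-cube.  Split $\mathcal A = \mathcal A_V\sqcup\mathcal A_\bPi$ with $\mathcal A_V=\mathcal A\cap V$ and $\mathcal A_\bPi=\mathcal A\cap\bPi$.  The first step is to observe that
\[
\mathcal A^* := \mathcal A_V\cup\{m_j : \WP_j\in\mathcal A_\bPi\}
\]
is a clique of $\ell$ distinct vertices of $\G$; this follows by unpacking the commutation conditions on $\mathcal A$ in terms of graph adjacency (adjacency of pairs in $\mathcal A_V$, distinct commuting bases of pairs in $\mathcal A_\bPi$, and $v\in\lk(m_j)$ for mixed pairs, which forces $v\neq m_j$).

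Next, for each $\WP_i\in\bPi$ I would define a canonical side $P_i^\times$ as follows.  The key observation is that any two elements of $\mathcal A^*\setminus\lk(m_i)$ commute with each other but not with $m_i$, hence lie in the same component of $\G\setminus\st(m_i)$, and so on the same side of $\WP_i$ by the definition of a \GW\ partition; take $P_i^\times$ to be this common side.  By maximality of $\mathcal A$, some element of $\mathcal A$ fails to commute with $\WP_i$, which forces $\mathcal A^*\setminus\lk(m_i)$ to be nonempty, so $P_i^\times$ is well-defined.

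The third step will be to check that $\re=\{P_1^\times,\ldots,P_k^\times\}$ is a valid region and that each $A\in\mathcal A$ labels an edge at $\re$.  For consistency I would assume for contradiction that $[\WP_i,\WP_{i'}]\neq 1$ but $P_i^\times\cap P_{i'}^\times=\emptyset$; Lemma~\ref{lem:noncommuting} then forces $P_i^\times\cap\lk(\WP_{i'})=\emptyset$, and combining this with $\mathcal A^*\setminus\lk(m_i)\subset P_i^\times$ produces a contradiction via a short case analysis on whether some element of $\mathcal A^*$ lies outside both $\lk(m_i)$ and $\lk(m_{i'})$.  For edges, $\re$ is terminal for each $v\in\mathcal A_V$ directly from the construction, and for $\WP_j\in\mathcal A_\bPi$ the inclusion $m_j\in P_i^\times$ (whenever $[\WP_i,\WP_j]\neq 1$) together with Lemma~\ref{lem:noncommuting} produces a side of $\WP_j$ contained in $P_i^\times$, which is precisely the required condition for the $\WP_j$-edge.

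Finally, since $\SP$ is locally CAT(0), its vertex links are flag, so the $\ell$-cube will assemble from the pairwise 2-cubes at $\re$ once we know the labels pairwise commute (which is by hypothesis).  The main obstacle I foresee is this last step: one must verify that all four vertices of each candidate 2-cube remain valid regions and that the bounding edges persist after the relevant side flips.  This requires yet another careful application of Lemma~\ref{lem:noncommuting}, used to guarantee that the flips induced by the $*v$ operations do not knock the $\WP_i$-sides out of the ``big'' sides supporting the $\WP_j$-edges, and it is at this point that the choice of $\re$ adapted to the entire clique $\mathcal A^*$ pays off.
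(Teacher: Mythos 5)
Your plan tracks the paper's quite closely: pick a good region $\re$, show the relevant edges are all incident to it, and then assemble the cube.  The paper actually builds the full $1$-skeleton of the $\ell$-cube directly rather than invoking the flag condition, but your ``pairwise $2$-cubes + flag'' route would also work if the $2$-cubes were verified.  However, there is a genuine error earlier in your argument, in the definition of $P_i^\times$, and it causes the proof to fail.

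The error is in treating the ``$A_j$-side'' of $\WP_i$ as the side of $\WP_i$ containing the \emph{vertex} $m_j$ (a base of $A_j$).  That is the right notion when $A_j$ is a generator, but when $A_j=\WQ_j$ is a partition the paper's definition (preceding Proposition~\ref{prop:cube}) requires the side of $\WP_i$ containing an entire \emph{side} $Q_j^+$ or $Q_j^-$ of $\WQ_j$, and this can differ from the $m_j$-side precisely when $\WP_i$ splits $m_j$ (equivalently, when $m_j\leq_f m_i$; in particular when $m_i=m_j$).  Consequently your key step, ``the inclusion $m_j\in P_i^\times$ together with Lemma~\ref{lem:noncommuting} produces a side of $\WP_j$ contained in $P_i^\times$,'' is false: Lemma~\ref{lem:noncommuting} only hands you the ``nestled'' side of $\WP_j$, which contains $m_j^{-1}$, and that side may sit inside $\overline{P_i^\times}$ instead of $P_i^\times$.  (A related smaller issue: when $m_i\in\mathcal A^*$, the element $m_i$ is not in any component of $\G\setminus\st(m_i)$, so your justification that the elements of $\mathcal A^*\setminus\lk(m_i)$ all lie on a common side needs repair in that case as well.)

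Here is a concrete counterexample.  Take $\G$ with vertices $a,b,c,d$ and a single edge $a$--$b$.  Let $\WP_1$ and $\WP_2$ both be based at $a$, with
$P_1^+=\{a,c\}$, $P_1^-=\{a^{-1},c^{-1},d,d^{-1}\}$, and $P_2^+=\{a,c,d\}$, $P_2^-=\{a^{-1},c^{-1},d^{-1}\}$ (link $\{b,b^{-1}\}$ in both cases).  These are compatible since $P_1^+\cap P_2^-=\emptyset$.  Set $\bPi=\{\WP_1,\WP_2\}$ and $\mathcal A=\{b,\WP_2\}$, which is a maximal commuting set.  Then $\mathcal A^*=\{a,b\}$, so $\mathcal A^*\setminus\lk(a)=\{a\}$, and your $P_1^\times$ is the $a$-side of $\WP_1$, namely $P_1^+=\{a,c\}$.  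But neither side of $\WP_2$ is contained in $\{a,c\}$, so there is no $\WP_2$-edge at your vertex $\re=(P_1^+,P_2^+)$, and the required square does not exist there.  The correct choice, following the paper, is the $\WP_2$-side of $\WP_1$: since $P_2^-\subset P_1^-$, this is $P_1^-$, the \emph{opposite} side from what your construction produces, and at $(P_1^-,P_2^\times)$ the $\WP_2$-edge does exist.  Finally, you are right that the last step (verifying the pairwise $2$-cubes at $\re$ before invoking the flag condition) still needs to be carried out; the paper does this by showing the side-switches for the different $A_i$'s affect disjoint sets of partitions in $\bPi$ and can therefore be performed independently, which yields the entire $1$-skeleton of the $\ell$-cube.
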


\begin{proof}
 Let   $\mathcal A\cap V=\{v_1,\ldots v_r\}$ and $\mathcal A\cap\Pi=\{\WQ_1,\ldots,\WQ_s\}$, so
\begin{itemize}
\item $\Pi=\{\WQ_1,\ldots,\WQ_s,\WP_1,\ldots,\WP_t\}$ with $s+t=k$ and
\item $\mathcal A=\{v_1,\ldots, v_r,\WQ_1,\ldots,\WQ_s\}$ with $r+s=\ell$
\end{itemize}
For any choices of sides $Q_i^\times$ of $\WQ_i$  for  $i=1,\ldots,s$ and exponents $v_j^\times=v_j$ or $v_j^{-1}$  for  $j= 1,\ldots,r$, we will find a region ${\re}$  which is a terminal region for each $\WQ_i$ and $v_j^\times$.  These $2^\ell$ regions (some of which may coincide, as we will see) form the vertices of an $\ell$-dimensional cube in $\SP$ with edges labeled by the elements of $\mathcal A$.

 To define the region associated to $\{v_1^\times,\ldots,v_r^\times,Q_1^\times,\ldots,Q_s^\times\}$ we will start with the sides $Q_i^\times$.  We then need to choose a side of each $\WP_i$.
Since $\mathcal A$ is a maximal clique, for each $\WP_i$ there is some $A_j$ not adjacent to $\WP_i$.  If $A_j$ is a vertex $v_j$, let $P_i^\times$ be the $v_j^\times$-side of $\WP_i$, and if $A_j$ is a \GW-partition $\WQ_j$, let $P_i^\times$ be the $\WQ_j$-side of $\WP_i$.
To see that $P_i^\times$ does not depend on the choice of $A_j$, observe that if $\WP_i$ is not adjacent to either $A_j$ or $A_k$  
then the fact that $w_j\in\max(A_j)$ and $w_k\in\max(A_k)$ are joined by an edge in $\G$ implies that all of $\{w_j,w_j\inv,w_k,w_k\inv\}$ are on the same side of $\WP_i$, so the $A_j$-side of $\WP_i$ is the same as the $A_k$-side of $\WP_i$.

 Now let $\re=\{Q_1^\times,\ldots,Q_s^\times,P_1^\times,\ldots,P_t^\times\}$.  To see that this is a region, we must show that any two elements either belong to adjacent partitions or intersect non-trivially.
Each pair $\WQ_i,\WQ_j$ is adjacent.  If $\WQ_j$ is not adjacent to $\WP_i$ we have chosen the $\WQ_j$-side $P_i^\times$ of $\WP_i$.  Since $P_i^\times$ contains an entire side of $\WQ_j$, it intersects both sides of $\WQ_j$ non-trivially.  If $\WP_i$ and $\WP_j$ are not adjacent, let $A_k$ be an element of $\mathcal A$ that is not adjacent to $\WP_i$. We argue by contradiction:  suppose  that $P_i^\times\cap P_j^\times=\emptyset$. If $A_k$ is a vertex $v_k$ then $v_k^\times\in P_i^\times$, so $v_k^\times\not\in P_j^\times$.  Since $v_k^\times\not\in\lk(\WP_j)$   by Lemma~\ref{lem:noncommuting}, this contradicts our choice of $P_j^\times$.
 If $A_k$ is a partition $\WQ_k$ and $Q_k^\times\subset P_i^\times$ then  $\max(\WQ_k)\not\subset \lk(\WP_j)$ and neither  side of $\WQ_k$ 
  is contained in $P_j^\times$, again contradicting our choice of $P_j^\times$.

 The region $\re$ is a terminal region for each $v_i^\times$.  If we use   $(v_i^\times)^{-1}$ instead of $v_i^\times$ we get another region, terminal for $(v_i^\times)^{-1}$.   These two regions may be the same if $v_i$ and $v_i^{-1}$ are on the same side of each $\WP_j$, in which case the edge labeled $v_i$ is a loop.   Switching sides of any $\WQ_i$ gives another region, with an edge labeled $\WQ_i$ joining the two (this edge is never a loop). Thus we have the 1-skeleton of an $\ell$-dimensional cube in $\SP$, which is filled in since all of the edge-labels are adjacent.  
\end{proof}
 
\begin{corollary} \label{fact:commhyp} Two hyperplanes $H_A$ and $H_B$ intersect if and only if $A$ and $B$  are adjacent. .
\end{corollary}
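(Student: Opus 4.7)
The plan is to deduce both directions from facts already assembled about $\SP$, with Proposition~\ref{prop:cube} doing essentially all the work for the nontrivial direction.

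For the ``if'' direction, suppose $A$ and $B$ commute. First I would extend $\{A,B\}$ to a maximal commuting subset $\mathcal A \subset V \cup \bPi$ (maximal sets exist since $V\cup\bPi$ is finite). Applying Proposition~\ref{prop:cube} to $\mathcal A$ produces a cube $C$ in $\SP$ whose edge-label set equals $\mathcal A$, and in particular $C$ has an edge labeled $A$ and an edge labeled $B$. By basic fact~(\ref{fact:hyplabel}), the hyperplane $H_A$ (respectively $H_B$) is the unique hyperplane dual to any edge labeled $A$ (respectively $B$). Both hyperplanes therefore cross the cube $C$, and any two hyperplanes crossing a common cube must intersect; hence $H_A \cap H_B \neq \emptyset$.

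For the ``only if'' direction, suppose $H_A \cap H_B \neq \emptyset$ in the locally CAT(0) cube complex $\SP$. In any non-positively curved cube complex, two distinct hyperplanes meet if and only if they are dual to two edges of a common square; so there is a 2-cube $S$ in $\SP$ whose four edges come in two parallel pairs, one pair labeled $A$ and the other labeled $B$ (again using fact~(\ref{fact:hyplabel}) to identify labels). By the construction of $\SP$, a 2-cube is attached only when its boundary edges form the 1-skeleton of a cube with pairwise commuting edge labels. Therefore $A$ and $B$ commute, completing the equivalence.

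The only step that requires any care is the appeal to Proposition~\ref{prop:cube} in the ``if'' direction, and even there it is purely a matter of knowing one can enlarge $\{A,B\}$ to a maximal commuting family so that the proposition applies; the rest is bookkeeping with the labeling conventions. I do not expect a serious obstacle: both directions are essentially reformulations of the statement that labels of coincident edges of a cube commute, together with the existence result just proved.
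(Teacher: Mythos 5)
Your proof is correct and follows essentially the same route as the paper: the forward direction uses Proposition~\ref{prop:cube} to produce a cube (hence a square) with edges labeled $A$ and $B$, and the converse reads off a commuting relation from the square in which two intersecting hyperplanes must be dual, invoking the construction of $\SP$. Your explicit step of enlarging $\{A,B\}$ to a maximal commuting family before applying Proposition~\ref{prop:cube} is a correct and useful bit of bookkeeping that the paper leaves implicit.
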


\begin{proof}
If $A$ and $B$  are adjacent 
it follows from Proposition~\ref{prop:cube} that  there is a square with sides labeled $A$ and $B$, so the hyperplanes $H_A$ and $H_B$ intersect at the midpoint of that square.  Conversely, if $H_A$ and $H_B$ intersect, there is a pair of edges dual to these hyperplanes that bound a square, so  $A$ and $B$ must  be adjacent 
since by the construction of $\SP$, we only fill in squares when labels  are adjacent. 
\end{proof}

 \begin{remark} 
 Corollary~\ref{fact:commhyp}  says that $\Gamma_\Pi$ is the {\em crossing graph} for $\SP$ as defined in \cite{Sageev}.
 \end{remark}

 \begin{proposition}\label{fact:parallelcubes} Any cubes $c,c'$ in $\SP$ with the same edge labels are parallel,  i.e. $S^\Pi$ contains a subcomplex isomorphic to $c\times [0,n]$ for some $n\in\Z$, with $c=c\times \{0\}$ and $c'=c\times \{n\}$.
\end{proposition}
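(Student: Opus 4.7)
The plan is to show that the set of all $\ell$-cubes in $\SP$ with edge-label set $\mathcal{A}=\{A_1,\ldots,A_\ell\}$, together with certain higher-dimensional cubes connecting them, assembles into a subcomplex $Y\subset\SP$ admitting a product decomposition $Y\cong \bar c\times Z$, where $\bar c$ is a fixed $\mathcal{A}$-cube and $Z$ is a connected auxiliary cube complex. Given such a decomposition, both $c$ and $c'$ correspond to $(\bar c,z)$ and $(\bar c,z')$ for some $z,z'\in Z$, and any combinatorial geodesic of length $n$ in $Z$ from $z$ to $z'$ pulls back to a subcomplex of $\SP$ isomorphic to $c\times[0,n]$ with $c$ and $c'$ as its two ends.

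To identify which regions support an $\mathcal{A}$-cube I would follow the analysis in the proof of Proposition~\ref{prop:cube}: a region $\re=\{P_1^\times,\ldots,P_k^\times\}$ supports such a cube exactly when (a) for every $\WP_i\in\bPi$ not commuting with some $A_j\in\mathcal{A}$, the side $P_i^\times$ is forced to be the (unambiguous) $A_j$-side of $\WP_i$, and (b) $\re$ is a terminal region for every $v_j\in\mathcal{A}\cap V$. These constraints fix the sides of all partitions failing to commute with some element of $\mathcal{A}$, leaving unconstrained the sides of the ``free'' partitions $\bPi_{\mathrm{free}}:=\{\WP\in\bPi\setminus\mathcal{A}\mid \WP\text{ commutes with every }A_j\}$. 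Because every label in $\mathcal{A}$ commutes with every label in $\bPi_{\mathrm{free}}$, Proposition~\ref{prop:cube} guarantees that $\mathcal{A}$-edges and $\bPi_{\mathrm{free}}$-edges at a common vertex span squares, and more generally higher cubes; the subcomplex $Y$ spanned by all such cubes then has the desired product form $\bar c\times Z$, with $Z$ encoding the choices of $\bPi_{\mathrm{free}}$-sides. Connectivity of $Z$ is immediate: any two vertices of $Y$ differ only in $\bPi_{\mathrm{free}}$-sides, and these can be flipped one at a time along edges of $Y$.

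The main obstacle is rigorously establishing the product decomposition, in particular verifying that the natural map $\bar c\times Z\to\SP$ is injective on cells. This should follow from Fact~(\ref{fact:onelabel}), which asserts that at most one edge of each label appears at any vertex, combined with the disjointness of the label sets $\mathcal{A}$ and $\bPi_{\mathrm{free}}$; care is required to ensure that distinct vertices of $Z$ yield distinct regions of $\SP$ and that no cell of $Y$ is traversed twice. Once this is in place, a combinatorial geodesic in $Z$ of length $n$ connecting the projections of $c$ and $c'$ produces the required $c\times[0,n]$ subcomplex.
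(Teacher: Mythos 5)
Your proposal takes a more global route than the paper's, and the gap you flag is real, not a formality. The paper argues locally: pick a minimal-length edge-path $p\subseteq\EP$ from a vertex of $c$ to a vertex of $c'$; since $\EP$ is CAT(0), $p$ crosses each hyperplane at most once, and the first edge of $p$ is labeled by some partition $\WP\notin\mathcal A$. A short hyperplane-crossing argument then shows $\WP$ commutes with every edge label of $c$ (otherwise $p$ would be forced to re-cross $H_\WP$, contradicting minimality), so there is a cube $c\times e_\WP$, and one iterates along $p$ until reaching $c'$. No classification of all $\mathcal A$-cubes and no global product subcomplex is required.

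Establishing your decomposition $Y\cong\bar c\times Z$ is where the real work lies, and filling it in does not bypass the paper's argument --- checking that the natural map $\bar c\times Z\to\SP$ is injective on cells and that a geodesic in $Z$ pulls back to an honest $c\times[0,n]$ is exactly the same ``build one connecting cube at a time'' step, now with extra bookkeeping on top. There is also a precision issue in your appeal to Proposition~\ref{prop:cube}: that proposition assumes $\mathcal A$ is a \emph{maximal} commuting set (so that $\bPi_{\mathrm{free}}=\emptyset$ and every non-$\mathcal A$ partition has a forced side), whereas Proposition~\ref{fact:parallelcubes} must hold for arbitrary $\mathcal A$ --- indeed it is used immediately afterwards to deduce \emph{uniqueness} of maximal cubes. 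The forcing of the non-free coordinates does carry over, but the claim that an arbitrary consistent choice of $\bPi_{\mathrm{free}}$-sides, combined with the forced sides, yields a region carrying an $\mathcal A$-cube needs its own verification when $\mathcal A$ is not maximal. With all of this supplied your route would work, but it is substantially more work than the paper's, not less.
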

 \begin{proof}
 If $c$ and $c'$ share a vertex then they must be equal, so  we may assume that they are disjoint.  Recall that $\EP$ is CAT(0), hence connected, and contains every vertex of $\SP$.   Let $p$ be a minimal length edge-path from $c$ to $c'$ that is contained in $\EP$. The CAT(0) property implies that $p$ crosses each hyperplane at most once. The first edge of $p$ is labeled by some partition $\WP$. Since $p$ has minimal length, $\WP$ is distinct from all of the edge labels of $c$.   Let $\re=\{P^\times,\ldots\}$ be the initial vertex of $p$, where $P^\times$ is a side of $\WP$,  and let $\re'$ be the terminal vertex.

Suppose now that  some edge label $B$  of $c$ is not adjacent to $\WP$.   If $B=\{v\}$ then $P^\times$ contains $v$, so both $\re$ and $\re'$ use this side.  The first edge of the path $p$ switches sides of $\WP$, i.e. crosses the hyperplane $H_\WP$, so in order to reach $\re'$ it must cross $H_\WP$ again, contradicting the assumption that it is the shortest path.  If $B=\WQ$ then  the side of $\WQ$ that appears in $\re$  is neither contained in $P^\times$ nor contains $P^\times$ (since there are edges labeled both $\WP$ and $\WQ$ at $\re$).  Since changing sides of $\WQ$ is allowed at $\re'$, it follows that the side of $\WP$ at $\re'$ must also be equal to $P^\times$.  As before, the initial edge of the path $p$ crosses $H_\WP$, so in order to reach $\re'$ it must cross $H_\WP$ again, contradicting the assumption that it is shortest.

We conclude that  B and $\WP$ are adjacent 
for all edge labels $B$ of $c$, so there is a cube $c\times e_B \subset \SP$.  The  side $c''$ of this cube  opposite from $c$ is closer to $c'$, and we can continue to build a product neighborhood $c\times [0,n]$ until we reach $c'$.
\end{proof}

\begin{corollary}\label{fact:maxcube}
 For every maximal collection $\{A_1,\ldots,A_k\}$ of pairwise  adjacent 
 labels, there is a unique maximal cube in $\SP$ with those edge-labels.
 \end{corollary}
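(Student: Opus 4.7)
The plan is to combine existence from Proposition~\ref{prop:cube} with the parallelism statement of Proposition~\ref{fact:parallelcubes}, using the maximality hypothesis to force the connecting ``length'' to be zero.

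First, existence and maximality. Proposition~\ref{prop:cube} directly produces a cube $c \subset \SP$ whose edge labels are $\{A_1,\ldots,A_k\}$. By Facts (4) and (5), edges dual to distinct hyperplanes carry distinct labels, and since the edges of a $k$-cube split into $k$ parallel classes dual to $k$ distinct hyperplanes, the labels appearing on $c$ are pairwise distinct. Thus $c$ has exactly the edge labels $\{A_1,\ldots,A_k\}$. To see that $c$ is a maximal cube, suppose it were a proper face of some larger cube $c''$. Then $c''$ would have an extra edge label $B$ which, by Corollary~\ref{fact:commhyp}, commutes with every $A_i$. Maximality of the commuting set $\{A_1,\ldots,A_k\}$ forces $B \in \{A_1,\ldots,A_k\}$, contradicting distinctness of labels on $c''$. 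Hence no such $c''$ exists, and $c$ is maximal.

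For uniqueness, suppose $c$ and $c'$ are two cubes in $\SP$ with the same edge label set $\{A_1,\ldots,A_k\}$. By Proposition~\ref{fact:parallelcubes}, $\SP$ contains a subcomplex isomorphic to $c \times [0,n]$ for some $n \in \Z_{\geq 0}$ with $c = c \times \{0\}$ and $c' = c \times \{n\}$. If $n \geq 1$, then the first slab $c \times [0,1]$ is a $(k{+}1)$-cube, and the label $B$ of the new direction commutes with every $A_i$ by Corollary~\ref{fact:commhyp}. Maximality of $\{A_1,\ldots,A_k\}$ again forces $B$ into this set, which contradicts distinctness of labels on the $(k{+}1)$-cube $c \times [0,1]$. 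Therefore $n=0$ and $c=c'$.

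There is no real obstacle here: this is simply the synthesis of the existence statement, the parallelism statement, and the observation that distinct parallel classes in a single cube carry distinct labels. The only mildly delicate point is verifying that ``maximal commuting set'' is exactly the obstruction that prevents the product $c \times [0,n]$ from having length $\geq 1$, and this is immediate from the setup.
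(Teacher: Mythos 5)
Your proof is correct and follows essentially the same route as the paper: existence via Proposition~\ref{prop:cube} and uniqueness via Proposition~\ref{fact:parallelcubes}, with maximality of the commuting set preventing any nontrivial product $c\times[0,n]$. You simply spell out the one-line deduction the paper compresses into ``the existence of two distinct parallel cubes implies that $\{A_1,\ldots,A_k\}$ is not maximal,'' and you add the (easy, implicit in the paper) observation that the resulting cube is itself maximal.
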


 \begin{proof}
  Existence is Proposition~\ref{prop:cube}.  Uniqueness follows from Proposition~\ref{fact:parallelcubes}, since the existence of two distinct parallel cubes implies that $\{A_1,\ldots, A_k\}$ is not maximal.
\end{proof}

\subsection{Characteristic cycles}\label{sec:CharCycles}

  \begin{definition}\label{def:CharCycle} Let $v$ be a vertex of $\G,$  and $e_v$ an edge of $\SP$ labeled  by $v$.  Choose a minimal length edge-path $p(e_v)$ in $\EP$ from the terminal vertex $\tau(e_v)$ to the initial vertex $\iota(e_v)$.  The  loop $\chi_v = p(e_v) \cup e_v$   is called  a {\em characteristic cycle} for $v$.
 \end{definition}

\begin{figure}
\begin{center}
\begin{tikzpicture}
\fill [red!10] (3,0) ellipse (4.5cm and 1.25cm);
 \begin{scope}[decoration={markings,mark = at position 0.5 with {\arrow{stealth}}}]
 \draw [red, thick] (0,0) .. controls (0,2) and (5.5,2.5) .. (5.5,.5);
 \midarrow [red, thick] (2.75,1.75) to (2.70,1.75);
  \node [above] (ev) at (2.75,1.75) {$e_v$};
\end{scope}
\draw[red, thick] (0,0) to (1,0) to (1,-1) to (2,-1) to (2.5,-.5) to (3.5,-.5) to (3.5,.5) to (4.5,.5) to (5.5,.5);
\draw (1,0) to (1.5,.5) to (2.5,.5) to (2,0) to (1,0);
\draw (1,-1) to (1.5,-.5) to (2.5,-.5) to (2.5,.5);
\draw (1.5,-.5) to (1.5,.5); \draw (2,-1) to (2,0);
\draw (3.5,-.5) to (5.5,-.5) to (5.5,.5); \draw (4.5,-.5) to (4.5,.5);
\draw [blue] (0,0) to (1,0) to (2.5,-.5) to (3.5,-.5) to (5.5,.5);
\node [left](iv) at (0,0) {$\tau(e_v)$};
\node [right](tv) at (5.5,.5) {$\iota(e_v)$};
\node [red, above] at (4,.5) {$p(e_v)$};
\node [blue] at (5,-.1) {$\rho(e_v)$};
\node at (6.25,-.5) {$\EP$};
\end{tikzpicture}
\caption{The  local geodesic $\beta_v=e_v \cup \rho(e_v)$ and a characteristic cycle $p(e_v)\cup e_v$ containing $e_v$.} \label{fig:kappa}
\end{center}
\end{figure}

Since $\EP$ is contractible, the standard  collapse map takes $\chi_v$ to the loop in $\SaG$ representing $v$.
By the construction of $\SP$, a characteristic cycle for $v$ has one edge labeled $v$ and one edge labeled $\WP$ for each partition $\WP \in\bPi$  that splits $v$.
Such a path crosses the same hyperplanes as a locally geodesic loop $\beta_v$ representing $v$.  
(see Figure~\ref{fig:kappa}).   
Since  $v$ is not adjacent to  
any other  label on an edge crossed by $\chi_v$,   $e_v$ must lie in  $\beta_v$.
Similarly, any edge $e_{A}$ in $\chi_v$ for which $v \in  \max(A)$ must lie in $\beta_v$.

\subsubsection{Characteristic cycles and partitions}\label{sec:partitions} In this section we give a more detailed description of characteristic cycles $\chi_v$ in terms of partitions that split $v$. This depends on the following observation.

\begin{lemma}\label{lem:commute}   Suppose $\WP$ and $\WQ$ are compatible and both split $v$. Let $P$ and $Q$ be the $v$-sides of $\WP$ and $\WQ$.   If $\WP$ and
$\WQ$  are not adjacent 
then either $P\subset Q$ or $Q\subset P$. If  $\WP$ and $\WQ$ are adjacent, then  $P\not\subset Q$ and $Q\not\subset P$.
\end{lemma}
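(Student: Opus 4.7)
The plan is to split into the two cases of the statement, using the assumption that both partitions split $v$ to pin down the relative positions of the four possible side-pairs. In either case the argument reduces to identifying which of the pairs $(P,Q)$, $(P,\overbar Q)$, $(\overbar P,Q)$, $(\overbar P,\overbar Q)$ must or cannot have empty intersection.

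In the non-commuting case, compatibility of $\WP$ and $\WQ$ supplies some pair of sides $(P^\times,Q^\times)$ with $P^\times\cap Q^\times=\emptyset$. The key observation is that since both $\WP$ and $\WQ$ split $v$, the element $v$ lies in $P\cap Q$ and $v^{-1}$ lies in $\overbar P\cap\overbar Q$, so neither of the ``aligned'' pairs $(P,Q)$ or $(\overbar P,\overbar Q)$ can be the empty one. Hence the empty pair must be ``mixed'', i.e.\ either $(P,\overbar Q)$ or $(\overbar P,Q)$. Lemma~\ref{lem:noncommuting} applied to the empty pair then yields the containment in the complement side: in the first case it gives $P\cap\lk(\WQ)=\emptyset$, and combined with $P\cap\overbar Q=\emptyset$ this forces $P\subseteq Q$; in the second case, symmetrically, $Q\subseteq P$.

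In the commuting case, let $p$ and $q$ be bases of $\WP$ and $\WQ$ respectively. Since $[\WP,\WQ]=1$, $p$ and $q$ are distinct and adjacent, so $p\in\lk(q)$ and therefore $p,p^{-1}\in\lk(\WQ)$. In particular neither $p$ nor $p^{-1}$ belongs to $Q$. On the other hand $p$ and $p^{-1}$ lie in different sides of $\WP$, so exactly one of them lies in $P$; that element belongs to $P\setminus Q$, proving $P\not\subseteq Q$. Swapping the roles of $\WP$ and $\WQ$ gives $Q\not\subseteq P$.

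The main (and only) subtlety is ruling out the ``aligned'' empty pairs in Case~1, which is the step that genuinely uses the hypothesis that both partitions split $v$. Everything else is a direct consequence of the definitions and Lemma~\ref{lem:noncommuting}, with no computation required.
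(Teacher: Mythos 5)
Your proof is correct and follows essentially the same route as the paper's: in the non-commuting case you rule out the aligned pairs $(P,Q)$ and $(\overbar P,\overbar Q)$ because they contain $v$ and $v^{-1}$ respectively, so the empty pair is mixed and Lemma~\ref{lem:noncommuting} gives the containment; in the commuting case you observe $P$ meets $\lk(\WQ)$ (via the base $p$ of $\WP$) while $Q$ does not. The paper's proof is just a terser version of the same argument, and your version spells out the application of Lemma~\ref{lem:noncommuting} a bit more carefully.
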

\begin{proof}   $P\cap Q$ contains $v$ so is not empty, and  $\overline P\cap \overline Q$  contains $v\inv$ so is not empty.  Since $\WP$ and $\WQ$ are compatible,   either   they are adjacent or
$P\subset Q$ or $Q\subset P$ by Lemma~\ref{lem:noncommuting}.  If  $\WP$ and $\WQ$ are adjacent
then $P$ intersects $\lk(Q)$, so $P\not\subset Q$, similarly $Q\not\subset P$.
\end{proof}

Now fix a vertex $v\in\G$, and for each $\WP\in\Pi$ that is not adjacent to $v$, let $P$ denote the $v$-side of $\WP$ and $\overline P$ the side that does not contain $v$ (note that $v^{-1}$ may be in $P$ or in $\overline P$).   Let $\WP_1,\ldots,\WP_k$ be the partitions in $\bPi$ that have $v$ as a maximal element (i.e. are based at $v$).  
By Lemma~\ref{lem:commute} the $v$-sides $P_i$ are nested, i.e. after possibly reordering we have  $P_1\subset\ldots\subset P_k$ (see Figure~\ref{fig:nest}).  For notational convenience, set $P_0=\{v\}$ and   $\WP_0=\{P_0|\overline P_0|\lk^\pm(v)\}$, and let $P_{k+1}=\overline P_0\setminus{\{v\inv\}}$.  The differences $dP_i=P_{i+1}\setminus P_i$ for $i=1,\ldots,k$ are called the {\em pieces} of the nest.

 If $\WQ\in\bPi$ is not adjacent to $v$ and does not split $v$, then Lemma~\ref{lem:noncommuting} implies that  some side of $\WQ$ is contained in either $P_i$ or $\overline P_i$ for each $i$; since $\WQ$ does not split $v$, this must be the side that does not contain $v$, which we have called $\overline Q$. We conclude that    $\overline Q$ is contained in some piece $dP_i$   of the nest. 

Let $\bPi_v$ denote the set of partitions $\bPi$ that split $v$.  Note that in addition to the partitions $\WP_i$, $\bPi_v$ may contain partitions that split $v$ but do not have $v$ as a maximal element;  such partitions may  be adjacent to 
each other.  A characteristic cycle $\chi_v$ has one edge for each element of $\bPi_v$, so in particular one edge for each $\WP_i$.  Let $\mathcal S_i$ be the consistent set of sides
$$
 \begin{cases}
 Q  &\hbox{ if } \WQ\in\bPi_v,  Q\supseteq P_i\\
\overline Q &\hbox{ if } \WQ\in\bPi_v,  Q\subsetneqq P_i\\
Q &\hbox{ if } \WQ\in\bPi\setminus \bPi_v \hbox{ is not adjacent to } $v$.
\end{cases}
$$
\begin{figure}. 
\begin{center}
  \begin{tikzpicture}
 \node (vinv) at (8,.5) {$v\inv$};
\draw [rounded corners] (0,0) to (0,1) to (1.4,1) to (1.4,0)   --cycle;
\node (P1) at (1,.7) {$P_1$};
\node (m) at (.4,.5) {$v$};
\draw [rounded corners] (-.1,-.1) to (-.1,1.1) to (3,1.1) to (3,-.1)   --cycle;
\node (Pi1) at (2.5,.7) {$P_{i}$};
\node (dots) at (1.8,.5) {$\ldots$};
\draw [rounded corners] (-.2,-.2) to (-.2,1.2) to (4.9,1.2) to (4.9,-.2)   --cycle;
\node (Pi) at (4.4,.7) {$P_{i+1}$};
\draw [fill=blue!20,rounded corners] (3.2,-.1) to (3.2,.5) to (4.5,.5) to (4.5,-.1)   --cycle;
\node (R) at (3.8,.2) {$\overline Q$};
\node (dots) at (5.3,.5) {$\ldots$};
\draw [rounded corners] (-.4,-.4) to (-.4,1.4) to (6.6,1.4) to (6.6,-.4)   --cycle;
\node (Pk) at (6.2,.7) {$P_{k}$};
  \end{tikzpicture}
  \caption{Partitions $\WP\in\bPi$ with $v\in \max(\WP)$ are nested.  Partitions $\WQ\in\bPi$ that don't split $v$   and are not adjacent to 
  $v$ have a side $\overline Q$ in the nest. ($\overline Q$ is the side that does not contain $v$.)}\label{fig:nest}
\end{center}
\end{figure}
and $\overline{\mathcal S}_i$ the set obtained from $\mathcal S_i$ by replacing $P_i$ by $\overline P_i$.  Since the $P_i$ are nested,  changing sides of $\mathcal P_i$ doesn't change the fact that the relevant intersections are non-empty, so $\overline{\mathcal S}_i$ is still consistent.  Either set can be completed to a region by any consistent choice of sides of the $\WR\in \bPi$ that   are adjacent to  $v$.
One endpoint of the edge in $\chi_v$ labeled $\WP_i$  is a region that extends the set $\mathcal S_i$; call this endpoint $x_i$.   The other endpoint $\overline x_i$ of this edge is obtained by switching $P_i$ to $\overline P_i$; this extends $\overline{\mathcal S}_i$ (see Figure~\ref{fig:charcycle}).

We can now describe an arbitrary characteristic cycle $\chi_v$ in terms of partitions (refer to Figure~\ref{fig:charcycle}).  Start with any consistent choice $\mathcal S$ of sides of the $\WR\in\bPi$ that  are adjacent to  
$v$, and let $x_0$ be the region extending $\mathcal S$ that is given by choosing the $v$-side of every partition that  is not adjacent to  
$v$.     Define a partition $\WQ\in\bPi_v$  to be {\em innermost} if  its $v$-side $Q$ does not contain the  $v$-side of any other element of $\bPi_v$.   By  Lemma~\ref{lem:commute}, all innermost partitions in $\bPi_v$  are adjacent. 
  For the first edge of $\chi_v$ we may choose the edge labeled by any innermost $\WQ\in \bPi_v$.   For the next edge, we may choose any edge labeled by  $\WQ'\in\bPi_v$ that is  innermost in $\bPi_v\setminus \WQ$.  The following edge is labeled by any innermost element of $\bPi_v\setminus \{\WQ,\WQ'\}$ etc., and the loop is closed by an edge labeled $e_v$.

\begin{figure} 
\begin{center}\begin{tikzpicture}
\node (v) at (-.5,0) {$v$}; \node (vinv) at (6.5,.05) {$v^{-1}$}; 
\coordinate (p10) at (.5,0);\coordinate (p11) at (1.5,0); 
\coordinate (p20) at (4,0);\coordinate (p21) at (5,0); 
\coordinate (q0) at (2,1);\coordinate (q1) at (3,.5); %
\coordinate (q3) at (2.5,-.5);\coordinate (q4) at (3.5,-1);
\redvertex{(p10)} ; \redvertex{(p11)}; \redvertex{(p20)}; \redvertex{(p21)};
\redvertex{(q0)}; \redvertex{(q1)}; \redvertex{(q3)}; \redvertex{(q4)};
\node [above] (xi) at (p10) {$x_{i}$}; 
\node [above] (xibar) at (p11) {$\overline x_{i}$}; 
\node [above] (xi1) at (p20) {$x_{i+1}$}; 
\node [above] (xibar1) at (p21) {$\overline x_{i+1}$}; 
 \node  (Pi0) at (.75,-.25) {$P_i$};
     \node  (Pi4) at (4.05,-1.5) {$P_{i+1}$};
     \begin{scope}[decoration={markings,mark = at position 0.5 with {\arrow{stealth}}}]
 \midarrow [red] (6.5,.25) .. controls (6,3.5)  and (0,3.5) .. (-.5,.25); 
 \node (ev) at (3,3) {$e_v$};
 \end{scope}
 \node (dots) at (0,0) {$\ldots$};
\draw [rounded corners] (-.9,-.5) to (-.9,.5) to (1,.5) to (1,-.5)   --cycle;
\node (dots) at (5.5,0) {$\ldots$};
\draw [rounded corners] (-1.4,-1.75) to (-1.4,2) to (4.5,2) to (4.5,-1.75)   --cycle; 
\draw [blue, rounded corners] (-1.1,-.75) to (-1.1,.75) to (1.5, 1.25) to (2.55,1.25) to (1.5,-.75)   --cycle;
\draw [blue, rounded corners] (-1.2,-1) to (-1.2,1) to  (1.5,1.65) to (2.85,1.65) to (3.6,.5)
to (2.8,-1)   --cycle;
\draw [blue, rounded corners] (-1,-.6) to (-1,.7) to (1.25,.7) to (4.25,-.75) to (3.5,-1.5)   --cycle;
\draw [red] (p10) to (p11) to (q0) to (q1) to (p20) to (p21);
\draw [red]   (p11) to (q3) to (q4) to (p20);
\draw[red] (q1) to (q3);
\end{tikzpicture}
\end{center}
\caption{A characteristic cycle for $v$ has one edge $e_v$ and one edge for each partition that splits $v$. The partitions based at $v$ are nested.  Partitions that split $v$ but are not based at $v$ are indicated in blue; these have $\max>_f v$.  The blue partitions are adjacent 
if and only if they cross.}\label{fig:charcycle}
\end{figure}

 If no two partitions that split $v$ are adjacent, the description of characteristic cycles in terms of partitions is particularly simple, since then the $v$-sides of all elements of $\bPi_v$ are nested so  any  characteristic cycle $\chi_v$  consists of an edge-path dual to the nest   plus an edge $e_v$ connecting its endpoints. This characteristic cycle is a local geodesic in $\SP$.  In particular we record 

  \begin{lemma}\label{lem:tdChi}  If $v$ is twist-dominant, then any characteristic cycle $\chi_v$ for $v$ is an edge-path in $\SP$ labeled by $v$ and the partitions that split $v$. Furthermore $\chi_v$ is a local geodesic in $\SP$.  
\end{lemma}

\begin{proof}  If $v$ is twist-dominant then  all partitions that split $v$ have $\max=\{v\}$, so none of them are adjacent.   
\end{proof}

\subsubsection{Characteristic cycles and minsets}

Since $\SP$ is locally CAT(0) its universal cover $\USP$ is CAT(0).  We will label edges and hyperplanes in $\USP$ with the same   label  as their their images in $\SP$.  The group $\AG$ acts on $\USP$ via deck transformations (preserving labels), using the identification of $\pi_1(\SP)$ with  $\AG$ induced by the  standard collapse map $c_\pi\colon \SP\to\SaG$. The following lemma uses standard CAT(0) methods to investigate the relation between characteristic cycles and this action.

\begin{lemma}\label{lem:CharCycles}  Let $A\in \bPi\cup V$ be a label and $v\in  \max(A)$. Then
\begin{enumerate}
\item  The minset of $v$ in $\USP$ decomposes as a product $\alpha_v \times \UH_v$ where
$\alpha_v$  is an axis for $v$ containing an edge $\tilde e_v$ and $\UH_v$ is the dual hyperplane.
\item For each edge in $\SP$ labeled $A$ there is a unique edge $e_v$ such that $e_A$ and $e_v$ are contained in a local geodesic $\beta_v$, hence every characteristic cycle for $v$ containing $e_v$ contains $e_A$, and vice versa.
\item The carrier $\kappa(H_A)$ lies in the image of $\Min(v)$ and the induced cubical structures on $H_A$ and $H_v$ are isomorphic.
\item If $w$ is adjacent to $A$
then the carrier of $H_A$ contains a characteristic cycle for $w$.
\end{enumerate}
\end{lemma}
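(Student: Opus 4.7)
My plan is to lift everything to the CAT(0) universal cover $\USP$, apply the standard product decomposition theorem for minsets of hyperbolic isometries, and then descend to $\SP$. First observe that $v$ acts hyperbolically on $\USP$, since any geodesic representative of $v$ in $\SP$ has positive length.

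For part (1), the Bridson--Haefliger decomposition gives $\Min(v) \cong \alpha_v \times C$ for some closed convex $C$, and since $v$ acts combinatorially a combinatorial axis $\alpha_v$ through a lift $\tilde e_v$ of some $v$-labeled edge can be chosen. I would identify $C = \UH_v$ by observing that the dual hyperplane $\UH_v$ is convex in $\USP$, sits inside $\Min(v)$ because every $y \in \UH_v$ lies on a parallel combinatorial axis $\alpha_v \times \{y\}$ for $v$ of the same translation length, and has the correct codimension to be the transverse factor. Explicitly pinning down $\UH_v$ as the transverse factor (rather than just some abstract $C$) is the main geometric obstacle of the lemma and drives the rest.

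For part (2), I would lift $e_A$ to $\tilde e_A$ in $\Min(v)$: this is possible because $v \in \max(A)$ forces either $A = v$ (trivial) or $A = \WP$ splitting $v$, in which case $v \notin \lk(\WP)$, so $\tilde H_A$ and $\UH_v$ are disjoint and hence $\tilde H_A$ is a ``type I'' slice $\{p\} \times \UH_v$ in the product $\alpha_v \times \UH_v$ (forcing the carrier $\kappa(\tilde H_A) \subseteq \Min(v)$ for an appropriate lift). Thus $\tilde e_A$ is axis-direction, of the form $I \times \{y\}$, lying on the unique parallel combinatorial axis $\alpha_v \times \{y\}$; within the fundamental domain of this axis bounded by consecutive $v$-translates of $\tilde e_v$ and containing $\tilde e_A$, there is a unique $\tilde e_v$-edge, and projecting to $\SP$ gives the required unique $e_v$. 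Part (3) is immediate from the above: every edge of $\kappa(H_A)$ has a lift in $\Min(v)$, and inside $\Min(v)$ the hyperplanes $\tilde H_A$ and $\UH_v$ are parallel slices $\{p\} \times \UH_v$, isomorphic as cube complexes via translation along $\alpha_v$, and this isomorphism descends to $H_A \cong H_v$ in $\SP$.

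For parts (4) and (5) I use the standard characterization that a combinatorial edge path in a CAT(0) cube complex is a CAT(0) geodesic iff no two consecutive edges lie in a common 2-cube, equivalently have commuting labels. For (4), if $v$ is twist-dominant then Lemma~\ref{lem:tdominant} forces every $\WP \in \bPi_v$ to satisfy $\max(\WP) = \{v\}$, so Lemma~\ref{lem:commute} strictly nests the $v$-sides and no two such partitions commute; and $v \notin \lk(\WP_i)$ means $e_v$ does not commute with any $e_{\WP_i}$. Hence no consecutive edges of $\chi_v$ commute, giving a CAT(0) geodesic upon lifting. For (5), $w \in \lk(v)$ combined with $v \in \max(A)$ gives $w \in \lk(A)$ (if $A = \WP$ has base $v$ then $\lk(\WP) = \lk(v) \ni w$), so $e_w$ and $e_A$ bound a square and $e_w \in \kappa(H_A)$. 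To extend to a full characteristic cycle $\chi_w$ inside $\kappa(H_A)$, I would verify every $\WR \in \bPi_w$ commutes with $A$: Lemma~\ref{lem:base} gives $w \leq_f m$ for $m \in \max(\WR)$, so $\lk(w) \subseteq \lk(\WR)$; since $v \in \lk(w)$ we get $v \in \lk(\WR)$. A minimum-length path in $\EP \cap H_A$ from $\tau(e_w)$ to $\iota(e_w)$ then closes with $e_w$ into a characteristic cycle inside $\kappa(H_A)$.
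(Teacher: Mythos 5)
Your plan — lift to $\USP$, invoke the CAT(0) flat decomposition of the minset, and then read off the combinatorics of blowups — is the same as the paper's, and parts (4) and (5) are essentially the paper's argument (your (5) verifies directly that the partitions in $\bPi_w$ commute with $A$, where the paper notes the labels of $\chi_w$ commute with $v$ and leans on (3); both work). Two steps, however, are looser than the argument requires.

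In part (1), identifying the transverse factor $C$ with $\UH_v$ needs both inclusions, and ``has the correct codimension'' is not a proof of either. You correctly argue $\UH_v \subseteq \Min(v)$, which (with transversality to $\alpha_v$ and convexity) places $\UH_v$ inside a slice $\{p\}\times C$, giving $\UH_v \subseteq C$. The missing direction $C \subseteq \UH_v$ comes from the fact that $\alpha_v$ contains $\tilde e_v$: the slice of $\Min(v) = \alpha_v\times C$ through the midpoint of $\tilde e_v$ is a copy of $C$ and lies in the hyperplane dual to $\tilde e_v$, i.e.\ in $\UH_v$. In part (2) there is a mild circularity: you declare $\UH_A$ a ``type~I slice,'' thereby placing $\kappa(\UH_A)$ inside $\Min(v)$, before establishing that $\alpha_v$ meets any lift of $H_A$ in the first place. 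The paper gets this from the discussion preceding the lemma: because $v\in\max(A)$, the edge $e_A$ does not commute with any other edge label on $\chi_v$, so $e_A$ must lie on the geodesic $\beta_v$, hence some lift $\tilde e_A$ lies on $\alpha_v$. Even then, that $\UH_A$ does not extend beyond $\Min(v)$ (which is what your ``type~I slice'' claim really asserts) is not automatic; it follows from the observation that the labels commuting with $A$ are exactly those commuting with $v$ (since $v$ is a base of $A$), which is also the engine behind the cube-complex isomorphism $H_A\cong H_v$ in part (3). Your ``translation along $\alpha_v$ descends'' phrasing for (3) skips over this; the descent isn't via a deck transformation, so the isomorphism is better obtained directly from the identity of label sets.
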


\begin{proof}  Consider the minset of $v$ in the universal cover $\USP$.  By standard properties of CAT(0) spaces, $\Min(v)$ decomposes as an orthogonal product $\alpha_v \times Y$ where $Y$ is a convex subspace of  $\USP$ and $\alpha_v$ is an axis for $v$.  The image of $\alpha_v$ under the projection $\USP\to\SP$ is a local geodesic $\beta_v$. By the comments after Definition~\ref{def:CharCycle} we may assume $\beta_v$ contains an edge $e_v$, and thus that  $\alpha_v$ contains a lift $\tilde e_v$ of $e_v$.   We conclude that $\UH_v$ must contain a copy of $Y.$

 Conversely, we claim that every edge dual to $\UH_v$ lies on an axis for $v$, so by convexity this copy of $Y$ contains $\UH_v$.  Suppose  ${\tilde e}'_v$ is another edge dual to $\UH_v$,  separated from $\tilde e_v$ by a square whose other label is $A\in \lk_\bPi(v)$.  Let $\chi_v$ be a characteristic cycle for $v$ containing $e_v$.  Since every edge-label $B$ on $\chi_v$ splits $v$, we have $\lk_\bPi(B)\supseteq \lk_\bPi(v)\ni A$, so $\SP$ contains an annulus $\chi_v\times e_A$.  The boundary of this annulus is two characteristic cycles, one containing $e_v$ and one containing the image $e'_v$ of $\tilde e'_v$, so these two characteristic cycles are homotopic, and  correspond to two different axes for $v$, one containing   $\tilde e_v$ and one containing   $\tilde e'_v$.  Since any two edges dual to $H_v$ can be connected by a sequence of squares, this proves (1).

As observed above, for any $A\in \bPi$ with $v\in  \max(A)$, the local geodesic $\beta_v$ containing $e_v$ also contains a (unique) edge labeled $e_A$.  It follows that the axis through $\tilde e_v$ contains a lift of $e_A$, hence the dual hyperplane $\UH_A$ contains a subspace parallel to $\UH_v$.  Since every edge that  is adjacent to  
$A$ is also  adjacent to  
$v$, these two hyperplanes must, in fact, be isomorphic.  Thus the carrier of $\UH_A$ lies entirely in the minset of $v$ and every edge dual to $\UH_A$ lies on an axis containing an $e_v$ edge.  This proves (2) and (3).

For (4), note that since $w$  is adjacent to  
$v$, $e_v$ and $e_w$ span a cube in the carrier of $H_v$.  Let $\chi_w$ be a characteristic cycle containing $e_w$.  The label on every edge of this cycle  is also adjacent to  $v$, so the entire characteristic cycle is contained in the carrier of $H_v$.
\end{proof}

\begin{corollary}\label{lem:splits} If an edge $\widetilde e$ of $\USP$ is in $\Min(v)$ then its image in $\SP$ is labeled either by $v$, by some partition that splits $v$, or some label that is adjacent to $v$. 
\end{corollary}
\begin{proof}
By Lemma~\ref{lem:CharCycles} (1) $\Min(v)\iso \alpha_v\times \widetilde H_v$, and we may assume $\alpha_v$ is a lift of the local geodesic $\beta_v$ described in Section~\ref{sec:CharCycles}.   An edge $e_A$ of $\SP$ can only be in  $\beta_v$ if $A=v$ or $A$ splits $v$.  (Warning:  splitting $v$ does not guarantee that $e_A$ will be contained in $\beta_v$ unless $max(A)=v$.)  The hyperplane $H_v$ is parallel to a subcomplex with all labels adjacent to $v$. 
\end{proof}

\subsection{Subcomplexes of  $\SP$ associated to a generator}\label{subcomplexes}

Fix a compatible set $\bPi$  of \GW-partitions.  We will use the graph $\Gamma_\Pi$ defined in Definition~\ref{def:GammaPi}, with vertices  $V\cup\bPi,$ to describe certain subcomplexes of $\SP$ associated to a generator $v\in V$. We remark that $\Gamma_\bPi$ can be used to encode the fold relation:  $A\leq_f B$ if and only if $\lkp(A)\subseteq \lkp(B)$.  However, it does {\em not} encode the twist relation; this will be explored further in Section~\ref{sec:TMHyperplanes}.

\begin{definition} Given a set vertices   $\Lambda$ of $\Gamma_\Pi$, the  {\em span of $\Lambda$}, denoted $\spn(\Lambda),$ is the subcomplex of $\SP$ consisting of those cubes with all edge labels  in $\Lambda$.
\end{definition}

\begin{example}
 $\spn(\Pi)=\EP$.
 \end{example}

 Now fix $v \in V,$  let $e_v$ be an edge labeled by $v$ and let $H_v$ be the hyperplane in $\SP$ dual to $e_v.$ The  carrier $\kappa(H_v)$ is a product
$$\kappa(H_v) = e_v \times \Slkv,$$
where $\Slkv$ is the connected component of $\spn(\lk_\Pi(v))$ that contains the terminal vertex $x$ of $e_v$. 

 Since $v\in\UF_\Pi(v),$ some connected component of $\spn(\UF_\Pi(v))$ contains $x$.  Denote this component by $\Snlv$.
Since every vertex  of $\lk_\Pi(v)$ is  linked to every vertex of $\UF_\Pi(v)$, the product of these two subcomplexes is also a subcomplex of $\SP$:
$$\Sv = \Snlv \times \Slkv.$$

\begin{example} \label{ex:decomposition} If $v$ is twist-dominant then $\UF(v)=\{v\}$, so $\UF_\Pi(v)$  consists of $v$ and partitions based at $v$.  These are precisely the labels in any characteristic cycle for $v$ (see the discussion at the end of Section 3.3.1),   so the characteristic cycle $\chi_v$ containing $x$ is one component of $\spn(\UF_\Pi(v)$) Thus,
$$\Sv =\chi_v \times \Slkv \cong \chi_v \times H_v,$$
and $\Sv$ is equal to the image in $\SP$ of the minset of $v$ in $\USP$.
\end{example}

If $v$ is twist-minimal then  $\Sv$ can be considerably larger and more complicated than the image of $\Min(v)$.  However, the following lemma holds for any $v \in V$.

\begin{proposition}\label{prop:generators}  The subcomplex  $\Slkv$ contains at least one characteristic cycle for every $u \in \lk(v)$, and $\Snlv$ contains at least one characteristic cycle for every $w \in \UF(v)$.
\end{proposition}

\begin{proof}  Let $e_v$ be an edge in $\SP$ labeled $v$, and let $x$ be the terminal vertex of $e_v$.  Then $\Sv$ contains $\kappa(H_v)$, so the first statement follows from Lemma \ref{lem:CharCycles}(4).

 For the second statement, let $w\in \UF(v)$ and recall that the labels on a characteristic cycle $\chi_w$ consist of $w$ and all partitions $\WP\in\Pi$ that split $w$. If $\WP$ is based at $m$ and splits $w$, then $\lk(m)\supseteq \lk(w)\supseteq \lk(v)$, so  $m\in\UF(v)$.  This shows that all characteristic cycles $\chi_w$ are contained in $\spn(\UF_\Pi(v)$).   It remains to check that the component of $\spn(\UF_\Pi(v))$ containing $x$ also contains a characteristic cycle for $w$.  For this it suffices to find an edge  $e_w$ in the same component as $x$.

 Let $e_w$ be an edge labeled $w$ whose terminal vertex $y$  has minimal distance in $\EP$ to $x$. (Recall that $\EP$ contains all vertices and is CAT(0).)  If $y=x$ we are done; otherwise connect $y$ to $x$ by a minimal length edge path $p$ in $\EP$.
We claim that this edge path lies entirely in $\spn(\UF(v))$.  

 To see this, let $\WP_1,\ldots,\WP_k$ be the successive labels on the path $p$ (all of these labels are partitions). Since the path has minimal length, each $\WP_i$ occurs only once. The vertex $y$ is a terminal region for $w$,  $x$ is a terminal region for $y$, and the two regions differ by changing the sides of each $\WP_i$ on the path, say from $P_i$ to $\overline P_i$.  

 If $\WP_i$ is not in $\lkp(w)$ it is not in $\lkp(v)$ either, so $v$ and $w$ must be in different sides of $\WP_i$, specifically $w\in P_i$ and $v\in\overline P_i$.  Since each $\WP_i$ is a \GW-partition, this means $v$ and $w$ are in different components of $\Gamma\setminus \st(\WP_i)$ for all $i$.  But $\lk(v)\subset\lk(w)$, so this can only happen if $\WP_i\in \UF_\bPi(v).$   Thus we will be done if we can show that no $\WP_i$ is adjacent to $w$ 

Suppose to the contrary that some partition along the path is in $\lk_\bPi(w)$; let $\WP_i$ be the first such partition.  We first claim that $\WP_i$ is adjacent to  $\WP_{i-1}$. If not, then there is a unique pair of sides   of $\WP_i$ and  $\WP_{i-1}$ with empty intersection.  Since $P_{i-1}\cap P_i$, $\overline P_{i-1}\cap  P_i$ and $\overline P_{i-1}\cap \overline P_i$ all correspond to vertices of the path $p$, the empty intersection must be $ P_{i-1}\cap  \overline P_{i}$.   Since $\WP_{i-1}$ is not in $\lkp(w)$, $w\in P_{i-1}$, as observed in the previous paragraph.  But  Lemma~\ref{lem:noncommuting},   implies that  $\lk(\WP_i),$  which contains $w$, does not  intersect $P_{i-1}$, giving a contradiction. 

  Since $\WP_i$ is adjacent to $\WP_{i-1}$ we can re-route the path $p$ to obtain a new path with the same edge labels that crosses $\WP_i$ before it crosses $\WP_{i-1}$. Repeating the argument  we can arrange that $\WP_{i}$ labels the first edge of the path, so this edge has an endpoint at $y$.  Filling in a square with edge labels $w$ and $\WP_{i}$, we obtain an edge labeled $w$ that is closer to $x$, contradicting our original choice of $e_w$.  
\end{proof}

Now let $\USv \subset \USP$ be the connected component of the lift of $\Sv$ containing an axis for $v$.  This decomposes as a product
 $\USv = \USnlv \times \USlkv.$

\begin{corollary}\label{cor:ContainsAxes}   
$\USv$ is preserved by the action of the special subgroup $A_{\UF(v)} \times A_{\lk(v)}$, and $\USnlv $ contains an axis for every element of the group $A_{\UF(v)} $.   If $\alpha_v \subset \USnlv$ is the axis for $v$, then $\alpha_v \times \USlkv$ is the minset of $v$. 
 \end{corollary}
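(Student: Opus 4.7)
My plan is to treat the two claims separately, each resting on Lemma~\ref{lem:generators} together with elementary covering-space reasoning. For the invariance of $\USv$, it suffices to verify the claim on each generator $w \in \UF(v) \cup \lk(v)$. Lemma~\ref{lem:generators} provides a characteristic cycle $\chi_w \subset \Sv$ representing $w$ under the identification $\pi_1(\SP) \cong \AG$ induced by the standard collapse. Fixing a vertex $\tilde x \in \USv$ above $x$ and lifting $\chi_w$ to a path starting at $\tilde x$, this path lies in the preimage of $\Sv$; since it starts in the connected component $\USv$, it remains there, and so its endpoint $w \cdot \tilde x$ lies in $\USv$. The translate $w \cdot \USv$ is itself a connected component of the preimage of $\Sv$, and since it shares the point $w \cdot \tilde x$ with $\USv$, we must have $w \cdot \USv = \USv$. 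Because each generator stabilizes $\USv$, so does the whole group $A_{\UF(v)} \times A_{\lk(v)}$.

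For the second claim, the key observation is that each $w \in \UF(v)$ acts on $\USv = \USnlv \times \USlkv$ in the form $(f_w, \id)$. Preservation of the product decomposition follows from the fact that $w$, as a deck transformation, preserves the labels of hyperplanes of $\USP$, while the two factors of $\USv$ are cut out precisely by the partition of these labels into those associated to $\UF(v)$ and those associated to $\lk(v)$; thus $w$ acts as a product $(f_w, g_w)$. To see that $g_w = \id$, I will place $\chi_w \subset \Snlv$ inside the slice $\Snlv \times \{x\} \subset \Sv$ and lift to the corresponding slice $\USnlv \times \{\tilde x_2\} \subset \USv$: the endpoint of this lift has second coordinate $\tilde x_2$, so $g_w$ fixes a vertex of $\USlkv$. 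Since $g_w$ is a label-preserving cellular isometry and Fact (\ref{fact:onelabel}) ensures at most one edge per label at each vertex, $g_w$ fixes every edge at $\tilde x_2$; connected propagation across $\USlkv$ then forces $g_w = \id$.

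With this in hand, every $g \in A_{\UF(v)}$ acts on $\USv$ as $(f_g, \id)$, so its translation length on $\USv$ equals that of $f_g$ on $\USnlv$ and its minset decomposes as $\Min_{\USnlv}(f_g) \times \USlkv$. Any axis for $g$ in $\USv$ therefore lies in a slice $\Min_{\USnlv}(f_g) \times \{\tilde b\}$, which we identify with an axis in $\USnlv$. I expect the subtlest step to be the justification that $w$ respects the product decomposition of $\USv$, since it requires translating the label-preservation of deck transformations in $\USP$ into the correct statement about the two orthogonal foliations of $\USv$.
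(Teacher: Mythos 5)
Your argument for the first claim (that $A_{\UF(v)} \times A_{\lk(v)}$ preserves $\USv$) via lifting loops in $\Sv$ and using connectedness of components is sound, and the observation in your second paragraph — that each $g \in A_{\UF(v)}$ acts on the product $\USv = \USnlv \times \USlkv$ in the form $(f_g, \id)$, verified by combining the label partition with Fact~(\ref{fact:onelabel}) to pin down the second factor — is correct and is not spelled out in the paper.

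The gap is in the last step. You write that ``any axis for $g$ in $\USv$ therefore lies in a slice $\Min_{\USnlv}(f_g) \times \{\tilde b\}$, which we identify with an axis in $\USnlv$.'' But you never establish that $\USv$ contains an axis of $\USP$ for $g$, nor that geodesics of $\USv$ in its intrinsic path metric are geodesics of $\USP$. Both points are exactly where convexity of $\USv$ (equivalently, of $\USnlv$) in $\USP$ enters: a $g$-invariant convex subspace necessarily meets $\Min_{\USP}(g)$, since nearest-point projection onto a convex $g$-invariant set is $g$-equivariant and nonincreasing on the displacement function, and convexity guarantees the intrinsic and ambient metrics agree, so that the ``axis'' you find in a slice is genuinely an axis of $\USP$. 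Without this, $\USv$ could a priori realize a strictly larger translation length and your intrinsic axis would not be an axis of the ambient CAT(0) space. The paper supplies precisely this missing ingredient: it observes that $\Snlv$ and $\Slkv$ are locally convex in $\SP$ (a cube lies in one of them iff all its edges do), so they lift to convex, $\pi_1$-injective subspaces of $\USP$, after which invariance together with convexity immediately yield axes. Your proof needs this (or an equivalent) convexity argument to close; once it is in place, your product-decomposition analysis is a fine, somewhat more explicit, alternative to the paper's direct invocation of convexity plus invariance.
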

 
\begin{proof}  First note that the subcomplexes $\Snlv$ and $\Slkv$ are locally convex in $\SP$.   This follows from the fact that a cube lies in one of these subcomplexes if and only if its edges all lie in that subcomplex.
By general properties of CAT(0) spaces, a locally convex embedding of a subspace lifts to a globally convex embedding on universal covers and induces an injective map on fundamental groups.

It follows from Proposition \ref{prop:generators}   that under the standard collapse map, the image of $\pi_1(\Snlv)$ in $\pi_1(\Sa_\G)=\AG$ is the subgroup  $A_{\UF(v)}$ and the image of  $\pi_1(\Slkv)$ is $A_{\lk(v)}$.  Hence these subgroups preserve the lifts $\USnlv$ and $\USlkv$.  Since these subspaces are convex in $\USP$, they contain axes for each element of the corresponding subgroup.

 The last statement follows by Lemma~\ref{lem:CharCycles}(1) since $\USlkv$ is parallel to and isomorphic to $\widetilde H_v.$ 
\end{proof}

\subsection{Branch loci}\label{sec:tdomCC}
 In Section~\ref{sec:OG}   we will be given a locally CAT(0) space $X$ with fundamental group $\AG$ and will need to construct an isomorphism of $X$ with some blowup $\SP$.  We will do this using  the action of $\AG$ on the universal cover $\widetilde X$. In this section we discuss features of the action of $\AG$ on $\USP$ that will help in this task.

\begin{definition}\label{branch point} A point $x\in \Min(v)\subset \USP$ is a \emph{branch point for $v$}   if the link of $x$ in $\Min(v)$ is strictly smaller than the link of $x$ in $\USP$.   Denote the branch locus of $v$ by $\Br(v)$.
\end{definition}

 (Recall that the link of a point $x$ in a CAT(0) metric space $X$ is defined to be the boundary of a small ball centered at $x$. This is standard terminology; the reader should not confuse this with the graphical links used elsewhere in this paper.)

  If $v$ is central, then $\Min(v)=\USP$ and hence $\Br(v)=\emptyset$.
No \GW\ partition can split a central $v$, so
 in every blowup a characteristic cycle for $v$ consists of a single edge which is a loop.  For the rest of this section we assume that $v$ is not central, and  show that in this case the location of hyperplanes in $\USP$  is determined by branch loci of minsets.

\begin{proposition}\label{prop:BranchLocus1} Let $H_A$ be a hyperplane in $\SP$ with $v\in \max(A)$, and $\UH_A$ a lift of $H_A$ to $\USP.$  If $v$ is not central then each component of the boundary of $\kappa(\UH_A)$ contains a branch point of $\Min(v)$.
\end{proposition}

\begin{proof}  Let $e_A$ be an edge dual to $H_A$.  
By Lemma~\ref{lem:CharCycles} (3) we know that   $e_A$   is contained in some  characteristic cycle $\chi_v$ for $v$.  Let $e_B$ be the edge following $e_A$ in $\chi_v$, so that either $B=v$ or $B$ is a partition that splits $v$.  

 If $B$ is a partition based at $w$ and $w>_fv$, choose $u\in\lk(B)\setminus \lk(A)$.   Denote the common endpoint  of $e_A$ and $e_B$ by $x$ and let   $\bdry_x(A)$ and  $\bdry_x(B)$ be the components of $\bdry(\kappa(H_A))$  and  $\bdry(\kappa(H_B))$ respectively that contain  $x$.  Then $\bdry_x(A)\iso H_v$ is a subcomplex of $\bdry_x(B)$, but $\bdry_x(B)$ is strictly larger, since 
$\kappa(H_B)$ contains a square with edge labels $u$ and $B$, and that square is not in $\kappa(H_A)$.  Thus there is a point $x'\in\bdry_x(A)$ that is adjacent to some edge $e_C$ with $C$ not adjacent to $v$.  This means that no lift of $e_C$ is contained in $\Min(v)\iso \alpha_v\times \widetilde H_A\iso  \alpha_v\times \widetilde H_v$, i.e.   any lift $\tilde x'$ of $x'$  lying on   $\bdry(\kappa(\UH_A))$   is a branch point for $v$.

 If $v\in\max(B)$ then we need to choose our characteristic cycle carefully and look more closely at the vertex $x$. To this end, we recall the description of characteristic cycles  from Section~\ref{sec:partitions}.  
If $\WP_1,\ldots,\WP_k$ are the partitions  in $\bPi$ that are based at $v$ then the $v$-sides $P_i$ of the $\WP_i$ are nested and, 
for notational convenience  we set 
$P_0=\{v\}$,  $\WP_0=\{P_0|\overline P_0|\lk^\pm(v)\}$, and  $P_{k+1}=\overline P_0\setminus{\{v\inv\}}$, so (after possibly reordering) we have
$$ P_0\subset P_1\subset \ldots\subset P_k\subset P_k\subset P_{k+1}.$$ 
Since $A$ is based at $v$, we have $A=\WP_i$ for some $i=0,\ldots,k$  and the vertex $x$ corresponds to a region that extends the consistent set $\overline{\mathcal S}_i$  given by 
$$
\overline{\mathcal S}_i = 
 \begin{cases}
 Q  &\hbox{ if } \WQ \hbox{ splits } $v$ \hbox{ and  } Q\supsetneqq P_i\\
\overline Q &\hbox{ if } \WQ\hbox{ splits } $v$  \hbox{ and  }    Q\subseteq P_i\\
Q &\hbox{ if } \WQ  \hbox{ does not split $v$ and is not adjacent to } $v$,
\end{cases}
$$
where $Q$ is the $v$-side  of $\WQ$.

 Since $v\in\max(B)$, there is no $\WQ$ in $\bPi$ whose $v$-side $Q$ satisfies $P_i\subsetneqq Q\subsetneqq P_{i+1},$  so for any characteristic cycle the edge labeled $A=\WP_i$ is followed by an edge labeled $B=\WP_{i+1} $ (this situation is illustrated in Figure~\ref{fig:seven}). We claim that for some characteristic cycle $\chi_v$ there is an edge $e_C$ at this vertex whose label  $C$ is not adjacent to $v$ and does not split $v$, so by Lemma~\ref{lem:splits} no lift of this edge is  in $\Min(v)$.  But  $\bdry(\widetilde H_A)\subset \Min(v)$ does contain a lift of $x$, so that  is a branch point.

 Recall that if a partition $\WQ$   is not adjacent to 
$v$ and does not split $v$, then it has a side $\overline Q$ sitting  in some piece $dP_j=P_j\setminus P_{j-1}$ of the nest; call this the {\em nesting side} of $\WQ$.   We say $\WQ$ is {\em outermost} if $\overline Q$ is not properly contained in any other such  nesting side.    We call a vertex $u$  {\em outermost} if $u$ is not adjacent to $v$ and is not contained in any nesting side.

 Since $v$ is not central and  $P_{i+1}\neq P_i$, the piece $dP_{i+1}=P_{i+1}\setminus P_i$ must contain at least one outermost side or vertex; let $C$ be the corresponding label.  If $C$ is a partition, then the condition that $C$ is outermost guarantees that both sides of $C$ are consistent with $\overline{\mathcal S}_i$.  We claim we can extend $\overline{\mathcal S}_i=\mathcal S_{i+1}$ to a region that is an endpoint of an edge $e_C$, i.e. we can choose sides of all remaining $\WQ$ that are consistent with each other and with both sides of $C$.

 The remaining $\WQ$ are  those that are adjacent to $v$.  These do not split $v$ and are adjacent to every partition that does split $v$.     
 Suppose  such a $\WQ$ is not adjacent to $C$.  If $C$ is an element of $V\cup V^{-1}$ choose the side of $\WQ$ that contains $C$;  the result   is a terminal region for $C$, i.e. there's an edge labeled $C$ at the corresponding vertex.

  If $C$ is a partition, let $C^\times$ denote the nesting side.  Both sides of $\WQ$ intersect $\lk(v)$, so they cannot be contained in $C^\times$.  It follows from Lemma~\ref{lem:noncommuting} that some side $Q^\times$ must contain $C^\times$.   Note that $Q^\times$ intersects both sides of $C$, and also intersects the previously chosen side of any partition not adjacent to $\WQ$, so the complete set of chosen sides is a region.  Since $C$ is outermost, switching sides of $C$ still gives a region,  and we have found our vertex $x$.    
  \end{proof}

 \begin{figure} 
\begin{center}
  \begin{tikzpicture}
 \node (v) at (8.5,.55) {$v^{-1}$};
\node (m) at (.2,.5) {$v$};
\node (dots) at (1.25,.5) {$\ldots$}; 
\draw [rounded corners] (-.2,-.3) to (-.2,1.5) to (2.8,1.5) to (2.8,-.3)   --cycle; 
\node (Pi1) at (2.3,-.05) {$P_{i}$};
\fill [blue!20, rounded corners]  (3.2,-.3) to (3.2,.3) to (5.5,.3) to (5.5,-.3)   --cycle;
\fill [blue!30, rounded corners]  (4.2,-.3) to (4.2,.3) to (4.5,.3) to (4.5,-.3)   --cycle;
\draw [rounded corners] (3.2,-.3) to (3.2,.3) to (4.5,.3) to (4.5,-.3)   --cycle; 
\draw [rounded corners] (4.2,-.3) to (4.2,.3) to (5.5,.3) to (5.5,-.3)   --cycle; 
\draw [fill=blue!20, rounded corners] (3.2,.8) to (3.2,1.5) to (4.5,1.5) to (4.5,.8)   --cycle; 
\draw [fill=blue!30, rounded corners] (4,.9) to (4,1.4) to (4.3,1.4) to (4.3,.9)   --cycle; 
\node (dots) at (7.56,.5) {$\ldots$}; 
\draw [rounded corners] (-.4,-.4) to (-.4,1.6) to (6.6,1.6) to (6.6,-.4)   --cycle; 
\node (Pk) at (6.2,-.15) {$P_{i+1}$};
\node (R) at (3.45,1.05) {$\overline Q$};
\vertex{(5.25,1.2)} ;
\draw[red] (2,.5) to (7,.5);\redvertex{(2,.5)};\redvertex{(4.5,.5)};  \redvertex{(7,.5)};
\node[above right] at (2,.5) {$x_i$};
\node  at (5.5,.7) {$\overline x_{i}=x_{i+1}$};
\draw[red] (4.5,.5) to (3.65,1.35);\vertex{(3.65,1.35)};
\draw[red] (4.5,.5) to (5.25,1.2);\node [right] at (5.25,1.2) {$u$};
\draw[red] (4.5,.5) to (3.8,0);\vertex{(3.8,0)};
\draw[red] (4.5,.5) to (4.8,0);\vertex{(4.8,0)};
\end{tikzpicture}
 \caption{If there is no $\WQ$ with $P_i\subsetneqq Q\subsetneqq P_{i+1},$ the remaining edges at the vertex  $\overline x_{i}= x_{i+1}$  are either adjacent to
$v$ or correspond to  those $\overline Q$ and $u$  in $dP_i=P_i\setminus P_{i-1}$ that are outermost.}\label{fig:seven}
\end{center}
\end{figure}

For a generator $v$,  Lemma~\ref{lem:CharCycles}(1) gives a decomposition
$\Min(v) \iso \alpha_v \times \UH_v$ where $\alpha_v$ is an axis containing an edge $\widetilde e_v$ and $\UH_v$ is the hyperplane dual to $\widetilde e_v$.   Let
$$\pr_v: \Min(v) \iso \alpha_v \times \UH_v \rightarrow \alpha_v$$
be the  (nearest point)  projection map corresponding to this decomposition.

If $\SP$ is a blowup with the standard collapse marking, an axis $\alpha_v$ in $\USP$ is transverse to some lift $\UH_A$ of a hyperplane $H_A$ if and only if $A=v$ or $A$ is a partition that splits $v$.  In either case we say $\UH_A$ {\em splits} $v$.

\begin{corollary}\label{cor:BranchImage}If $v$ is not central,  the image $\pr_v(\Br(v))$ of the branch set under projection to $\alpha_v$  is a set of discrete points and closed intervals. Each component of the complement of this image crosses exactly one hyperplane, which lifts a hyperplane  in $\SP$ labeled either by $v$ or by a partition $\WP$  based at $v$.  
\end{corollary}

\begin{proof}   First note that being a branch point is a closed condition and $pr_v$ is a closed map, so $\pr_v(\Br(v))$ is closed.  By Corollary~\ref{cor:ContainsAxes} the min set  $\Min(v)$ decomposes as  $\alpha_v \times \USlkv \subseteq \USnlv \times \USlkv$, and by Lemma~\ref{lem:CharCycles} we may assume $\alpha_v$ contains lifts  of all edges in $\SP$ labeled by some $A$ with $v\in\max(A)$. 

 Any segment of $\alpha_v$ lying in the interior of a cube $C \subset \USnlv$ of dimension $\geq 2$ lies entirely in the branch set, since $C$ is not contained in the minset.   So
the only segments of $\alpha_v$ which might not be in the image are contained in edges $\tilde e$ of $\USnlv$.  Let $\tilde e$ be such an edge, and $\widetilde H$  the hyperplane dual to $\tilde e$.  The hyperplane  $\widetilde H$ projects to a hyperplane $H_A$ in $\SP$ for some $A\in V\cup\bPi$. 

 Since $v$ splits $A$, either $v\in  \max(A)$ or any $w\in \max(A)$ satisfies  $w>_f v$.    If $v\in \max(A)$, then $\kappa(\widetilde{H})=\widetilde e\times \widetilde{H}$ lies entirely in $\Min(v)$ and hence the interior of $\kappa(\widetilde{H})$ does not contain any branch points.  By Proposition~\ref{prop:BranchLocus1}, the two boundary components of $\kappa(\widetilde{H})$ do contain branch points so  the two endpoints of $\tilde e$ do lie in  $\pr_v(\Br(v))$.

 If  $w\in \max(A)$ satisfies  $w>_f v$, then  there exists $u\in \lk(w)$  with $u \notin \lk(v)$.   By Proposition~\ref{prop:generators} $\kappa(H_A)$ contains a characteristic cycle for $u$.  It follows that  $\kappa(\UH)$ contains a square with edges labeled by $A$ and $u$.  This square does not lie in $\Min(v)$ (since $u$ is not in $\lk(v)$). This implies that closest  edge in $\Min(v)$ that is parallel to the edge labeled $A$ is also in a square with one edge outside of $\Min(v)$, so this edge is  entirely contained  in the branch locus. Since this edge is dual to $\widetilde H$, it projects to $\tilde e$, so $\tilde e$ is contained in $pr_v(\Br(v))$.  
\end{proof}

If $v$ is twist-dominant then every partition in $\bPi_v$ has $\max=\{v\}$, so every $\chi_v$ is an edge-path in $\SP$ and its lift to $\USP$  is an edge path which is an axis $\alpha_v$  for $v$. By Proposition  \ref{prop:BranchLocus1} every vertex  of $\alpha_v$ is the projection of a branch point and there are no branch points in the interior of edges.  We record these observations in the following statement.

\begin{corollary}\label{cor:BranchLocus2}  If  $v$ is twist-dominant and not central, then any lift of a characteristic cycle to $\USP$ is an axis $\alpha_v$, and $\pr_v(\Br(v))$ is precisely the set of vertices of  $\alpha_v$.
\end{corollary}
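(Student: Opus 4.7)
The plan is to deduce the corollary by combining Lemma~\ref{lem:CharCycles}(4) with the explicit description of characteristic cycles given at the end of Section~\ref{sec:partitions}, and then applying Corollary~\ref{cor:BranchImage}.

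First I would use Lemma~\ref{lem:CharCycles}(4): for twist-dominant $v$ every characteristic cycle $\chi_v \subset \SP$ is already a local geodesic. Since $\USP$ is CAT(0) and $\chi_v$ represents the conjugacy class of $v$ in $\pi_1(\SP)\iso\AG$ via the standard collapse, any lift $\widetilde{\chi}_v$ is a bi-infinite geodesic in $\USP$ on which $v$ acts by translation by the length of $\chi_v$. That is, $\widetilde{\chi}_v$ is an axis $\alpha_v$ for $v$, which establishes the first assertion.

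Next I would pin down the edge structure of $\alpha_v$. Because $v$ is twist-dominant, Lemma~\ref{lem:tdominant} gives $\max(\WP)=\{v\}$ for every $\WP\in\bPi_v$, and by the discussion at the end of Section~\ref{sec:partitions} the $v$-sides of the partitions in $\bPi_v$ form a strict nest $P_1\subsetneq \cdots \subsetneq P_k$; the characteristic cycle $\chi_v$ consists of one edge labeled $\WP_i$ for each $i$, closed up by the single edge $e_v$. Lifting, $\alpha_v$ is an edge-path whose consecutive edges are lifts of these labeled edges.

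Finally I would invoke Corollary~\ref{cor:BranchImage}, which identifies $\alpha_v\setminus \pr_v(\Br(v))$ as a disjoint union of open intervals, one for each lift of a hyperplane $H_A$ with $v\in\max(A)$. Twist-dominance forces the set of such labels $A$ to be exactly $\bPi_v \cup \{v\}$, and the proof of Corollary~\ref{cor:BranchImage} shows each corresponding interval $I_{\widetilde H}$ coincides with an open edge of $\alpha_v$ dual to $\widetilde H$. Enumerating these intervals therefore recovers precisely the open edges of $\alpha_v$, so $\pr_v(\Br(v))$ is exactly the complementary set of vertices.

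The only place demanding care is the matching step in the last paragraph: I must check that in the twist-dominant case the intervals produced by Corollary~\ref{cor:BranchImage} fill every open edge of $\alpha_v$ and nothing more. This is not really an obstacle, however, because the count of such hyperplanes ($|\bPi_v|+1$, coming from the nested partitions plus the $v$-label) agrees with the number of edges per fundamental domain of $\alpha_v$, and each lifted carrier $\kappa(\widetilde H)$ meets $\Min(v)$ in a slab of the form $\widetilde e_A\times \widetilde H$ whose intersection with $\alpha_v$ is a single edge. Thus the decomposition is forced to be exactly the edge decomposition of $\alpha_v$.
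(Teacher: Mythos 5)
Your proof is correct and follows essentially the same route as the paper: it establishes that a lift of $\chi_v$ is a geodesic (hence an axis) from the twist-dominant description in Section~\ref{sec:partitions}, and then reads off the branch-locus projection from the hyperplanes with $v \in \max(A)$, citing Corollary~\ref{cor:BranchImage} where the paper cites Lemma~\ref{lem:BranchLocus1} directly (Corollary~\ref{cor:BranchImage} is itself a repackaging of that lemma, so the two citations carry the same content).
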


\section{Hyperplanes in $\Gamma$-complexes}\label{sec:TMHyperplanes}

  Let $(X,h)$ be a point of $\SG,$ i.e. a rectilinear $\G$-complex with an untwisted marking.  If we choose an isomorphism $X\iso \SP$ the hyperplanes of $X$ acquire labels, and we can use these labels to define what it means for a hyperplane to be twist-dominant or twist-minimal.   In this section we show that this designation is independent of the isomorphism and can be detected using the action of $\AG$ on $\widetilde X$ induced by any untwisted marking.

 To this end, let $\Ca(X)$ be the \emph{crossing graph} of $X$, i.e., the graph whose vertices are the hyperplanes of $X$, and where two vertices are connected by an edge if the corresponding hyperplanes intersect non-trivially.
 If we give $X$ the structure of a blowup, then $\Ca(X)\cong \Gamma_\Pi$.

 We defined twist and fold orderings on partitions $\WP$ by choosing an element $v\in\max(\WP)$ and using the twist and fold orderings defined in terms of $\Gamma$.  The defining graph $\Gamma$ occurs as a subgraph of $\Gamma_\Pi$, but  the corresponding subgraph in $\Ca(X)$ is not well-defined since it depends on a choice of isomorphism $\Ca(X)\cong \Gamma_\Pi$.   We do know that  both orderings are well-defined on fold-equivalence classes in $\Gamma$, so it is natural to try to define these orderings on {\em fold-equivalence classes} of $\Ca(X)$, i.e. equivalence classes of vertices with the same link.  This works well for the fold ordering, but must be modified for the twist ordering, as we will see. In the end, 
 our notions of twist-dominant and twist-minimal will be defined using both $
\Ca(X)$ and  the combinatorial structure of $X$ itself.  

\subsection{Isomorphisms of $\G$-complexes}

First we define twist and fold orderings for hyperplanes in a $\G$-complex $X$ and show that, for any  isomorphism  $X\cong \Sa^{\bPi}$  these orderings coincide with the orderings of their labels, as previously defined.   Note that the ordering of labels is well-defined on fold-equivalence classes, so we need the same to be true here.

\begin{definition} Let $H$ be a hyperplane in a   $\G$-complex $X.$   The \textit{link} $\lk(H)$ of $H$ is
the link of $H$ in $\Ca(X)$.  In other words, $\lk(H)$ is the set of hyperplanes $K\neq H$ that intersect  $H$ non-trivially.  The {\em fold-equivalence class} $\eqH$  is
$$\eqH=\{K\mid \lk(K)=\lk(H)\}. $$
We then define $\eqH\leq_f \eqK$ if $\lk(H)\subseteq \lk(K)$. 
\end{definition}

By Corollary~\ref{fact:commhyp} hyperplanes $H_A\neq H_B$ in $\SP$ intersect non-trivially if and only if their labels $A$ and $B$  are adjacent,  
so  this coincides with the notion previously defined for $A\leq_f B$.

Giving a combinatorial definition of the twist relation is more subtle, and requires us to look beyond the structure of $\Ca(X)$ to the combinatorial structure of $X$ itself.
 \begin{definition}We call a hyperplane  $H$ {\em cyclic} if $$\bigcup_{H'\in\eqH}\kappa(H')\iso H\times C,$$ where $C$ is a graph homeomorphic to $S^1$.
Define $\eqK\leq_t \eqH$ to mean $H$ is cyclic and $\lk(K)\cup\eqK \subseteq \lk(H)\cup\eqH$.
\end{definition} 
 The second condition in the definition of $\eqK\leq_t\eqH$ is the analogue of $\st(v)\subseteq \st(w)$ for the twist-relation on $\G$. However, the second condition alone does not  capture the notion of twist-dominance.  For instance, if  $\Gamma$ is a 4-cycle with vertices $a,b,c,d$, then in the Salvetti $\SaG$ we have  $\lk(H_a)\cup \![\![H_a]\!]=\{H_a,H_b,H_c,H_d\}=\lk(H_b)\cup[\![H_b]\!]$  but neither $a$ nor $b$ is twist-dominant as generators.

  Since not every fold-equivalence class $\eqH$ is cyclic, we only have $\eqK\leq_t\eqH$ when $\eqH$ is cyclic.  Nevertheless, it is transitive: if $\eqK\leq_t \eqH$ and $\eqH\leq_t \eqL$ then $\eqL$ must be cyclic so $\eqK\leq_t \eqL$. Also note that the analogue of Lemma \ref{lem:twist_and_fold} still holds for fold equivalence classes of hyperplanes: if $\eqK,$ $\eqH,$ and $\eqL$ are distinct then $\eqK\leq_t\eqH\leq_f\eqL$ is not possible.

\begin{definition}\label{def:GeneralTwistDominance} A hyperplane  $H$ is {\em twist-dominant} if there is some hyperplane $K\neq H$ with $\eqK\leq_t\eqH$; in particular $H$ must be cyclic. If $H$ is not twist-dominant, it is {\em twist-minimal}.
\end{definition}
 If $X=\SaG$, each hyperplane is labeled by a generator and the two notions of twist-dominance coincide.  Indeed, a hyperplane $H_v$ of $\SaG$ is cyclic if and only if $v$ is not fold-equivalent to another generator.  Then if there exists $w\neq v$ with $[\![H_w]\!]\leq_t [\![{H_v}]\!]=\{H_v\}$ this means $w\leq_t v$, and conversely.

\begin{lemma}\label{lem:TDSingle} Let $X$ be a $\Gamma$-complex and choose an isomorphism $X\cong \SP$.  For any hyperplane $H_B\subset \SP$, $[\![H_B]\!]$ is twist-dominant if and only if there exists $H_A$ such that $ \max(A)\leq_t  \max(B)$.
\end{lemma}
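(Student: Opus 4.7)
The plan is to prove each direction of the biconditional separately, using the vertex orderings of Section~\ref{sec:preliminaries} together with the structural results on minsets and characteristic cycles from Section~\ref{sec:blowups}, and especially the branch-locus analysis of Section~\ref{sec:tdomCC}.

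For the forward direction, assume $\max(A)\leq_t\max(B)$ and pick representatives $v\in\max(A)$ and $w\in\max(B)$ with $v\leq_t w$ and $v\neq w$.  Since $\leq_t$ and $\leq_f$ are mutually exclusive on distinct vertices, $v\not\sim_f w$; this forces $A\neq B$ as labels, so $H_A\neq H_B$.  The relation $v\leq_t w$ with $v\neq w$ makes $w$ twist-dominant in $\G$, so Lemma~\ref{lem:tdominant} gives $\max(B)=\{w\}$.  I take $K=H_A$ and verify $\eqH_A\leq_t\eqH_B$ in two parts.  For cyclicity, Example~\ref{ex:decomposition} says the image of $\Min(w)$ in $\SP$ is $\chi_w\times H_w$ with $\chi_w$ a genuine edge cycle of length $n=|\bPi_w|+1\ge 1$ (the degenerate case $\bPi_w=\emptyset$ still gives $n=1$, because $e_w$ is then a loop at a vertex of $\SP$).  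Using Lemma~\ref{lem:CharCycles}(3) to identify the hyperplane structure dual to each edge of $\chi_w$ with $H_w$, I obtain $\bigcup_{H'\in\eqH_B}\kappa(H')=\chi_w\times H_w\cong H_B\times\partial P_n$.  For the link inclusion, a case analysis on $H_C\in\lk(H_A)\cup\eqH_A$ does the job: if $C$ commutes with $A$ then $\max(C)\subseteq\lk(v)\subseteq\st(w)$, and since $\lk(w)$ is disjoint from the fold-equivalence class of $w$ (a consequence of $w\notin\lk(w)$), either $\max(C)=\{w\}$ (so $H_C\in\eqH_B$) or $\max(C)\subseteq\lk(w)$ (so $H_C\in\lk(H_B)$); if $\lk(H_C)=\lk(H_A)$ then $\max(C)\sim_f v$, so its elements have link $\lk(v)\ni w$ and commute with $B$.

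For the reverse direction, suppose $\eqH_B$ is twist-dominant with witness $K\neq H_B$.  The key step is to show that some $w\in\max(B)$ is twist-dominant in $\G$; once known, Lemma~\ref{lem:tdominant} gives $\max(B)=\{w\}$ and any vertex $v\neq w$ with $v\leq_t w$ yields the label $A=v$ satisfying $\max(A)\leq_t\max(B)$.  To show $w$ twist-dominant I split on whether $w$ is central in $\G$.  If $w$ is central then $|V|\ge 2$ (else $\bPi=\emptyset$ and $H_B$ is the only hyperplane of $\SP$, contradicting the existence of $K$), and any other vertex $u$ then satisfies $u\in\lk(w)$ and $\st(u)\subseteq V=\st(w)$, so $u\leq_t w$ and $w$ is twist-dominant.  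If $w$ is non-central I use cyclicity together with the branch locus results: Corollaries~\ref{cor:BranchImage} and~\ref{cor:BranchLocus2} imply that for a non-central twist-minimal $w$ the image of $\Min(w)$ in $\SP$ branches along the projection of $\Br(w)$, which prevents $\bigcup_{H'\in\eqH_B}\kappa(H')$ (a subcomplex of that image by Lemma~\ref{lem:CharCycles}(3)) from being the product $H_B\times\partial P_n$ required by cyclicity.  Hence $w$ must be twist-dominant.

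The main obstacle is the last step of the reverse direction --- converting the topological cyclicity $\bigcup\kappa(H')\cong H_B\times\partial P_n$ into combinatorial twist-dominance of $w$ in the non-central case.  This requires identifying $\bigcup_{H'\in\eqH_B}\kappa(H')$ precisely enough inside the image of $\Min(w)$ so that the branching behaviour described in Section~\ref{sec:tdomCC} for twist-minimal $w$ yields a genuine obstruction to the product structure.
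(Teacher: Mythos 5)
Your forward direction (combinatorial $\leq_t$ implies $\eqH_B$ twist-dominant) follows essentially the same route as the paper --- both hinge on Example~\ref{ex:decomposition} and the identification $\Sv = \bigcup_{H\in\eqH_B}\kappa(H) \cong \chi_v\times H_v$ to establish cyclicity. Your explicit case analysis for the link inclusion $\lk(H_A)\cup\eqH_A\subseteq\lk(H_B)\cup\eqH_B$ actually spells out a step the paper asserts without proof, and it is correct.

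The reverse direction, however, has a fatal gap in the non-central case. You attempt to show that a non-central, twist-minimal $w\in\max(B)$ forces $\bigcup_{H'\in\eqH_B}\kappa(H')$ to fail to be a product $H_B\times\partial P_n$, using the branch-locus results from Section~\ref{sec:tdomCC}. But cyclicity alone does \emph{not} obstruct twist-minimality. Concretely, take $\Gamma$ to be the ``T''-graph with vertices $a,b,c,d$ and edges $\{a,b\},\{b,c\},\{a,d\}$, and $\bPi=\emptyset$, so $\SP=\SaG$. Then $\lk(c)=\{b\}$ is a singleton not equal to any other vertex's link, so $\eqH_c=\{H_c\}$ and $\bigcup_{H'\in\eqH_c}\kappa(H') = \kappa(H_c) = e_c\times H_c$, which is indeed $H_c\times\partial P_1$ (a torus). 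So $H_c$ is cyclic. Yet $c$ is twist-minimal and non-central: $\st(c)=\{b,c\}$ contains no $\st(u)$ for $u\neq c$. Every vertex of $\Min(c)$ in $\widetilde\SaG$ is a branch point, but this does not interfere with the product structure of the carrier union; the branch points merely live on the boundary of the carrier. So the implication ``non-central twist-minimal $\Rightarrow$ not cyclic'' is false, and your reduction to central vs.\ non-central cannot close.

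What you are missing is that you must use the witness $K\neq H_B$ beyond merely forcing $|V|\geq 2$. The paper's argument for this direction uses $K$ essentially: from cyclicity, the collapse map takes all of $\eqH_B$ to a single hyperplane, so the fold-equivalence class of $\max(B)$ is a singleton $\{v\}$. Then $K\in\lk(K)\cup\eqK\subseteq\lk(H_B)\cup\eqH_B$. If $K\in\eqH_B$ then $\max(K)=\{v\}$; if $K\in\lk(H_B)$ then $u\in\lk(v)$ for $u\in\max(K)$, and combining the containment $\lk(K)\subseteq\lk(H_B)\cup\eqH_B$ with the triviality of $[v]$ yields $\lk(u)\subseteq\st(v)$, hence $u\leq_t v$. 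In the T-graph example above, the absence of any such $K$ for $H_c$ is exactly what makes $\eqH_c$ twist-minimal, even though it is cyclic. Cyclicity is necessary but nowhere near sufficient; the proof has to run through the containment condition on $K$.
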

\begin{proof}
First suppose $[\![H_B]\!]$ is twist-dominant.  Then there exists $H_A$ such that $[\![H_A]\!]\leq _t [\![H_B]\!]$.  As noted above, the fold equivalence class of the hyperplane $H_A$ in $\SP$ consists of all hyperplanes $H_{A'}$ with $ \max(A)\sim_f  \max(A')$. Since $[\![H_B]\!]$ is cyclic, under the collapse map $c:\SP\rightarrow \SaG$, $[\![H_B]\!]$ maps to a single hyperplane labeled by a generator $v$. Hence, the fold equivalence class of $v$ is just $\{v\}$, and all of the hyperplanes in $[\![H_B]\!]$ have $v$ as the unique maximal element. Since $[\![H_A]\!]\leq_t[\![H_B]\!]$, this means $ \max(A)\leq_t v= \max(B)$.

Conversely, if there exists $H_A$ such that $ \max(A)\leq_t  \max(B)$, then  any $v\in \max(B)$ is twist-dominant  and $[\![H_A]\!]\cup \lk(H_A)\subseteq[\![H_B]\!]\cup \lk(H_B)$.  By Lemma \ref{lem:tdominant},   $\max(B)=\{v\}$ and  the hyperplanes with $ \max(H)=\{v\}$ coincide with the hyperplanes that split $v$, which are exactly those occurring along any characteristic cycle $\chi_v$ for $v$. It follows from Example \ref{ex:decomposition} that  $\Sv\cong \chi_v\times H_v$, but on the other hand this implies that \[\Sv=\bigcup_{H\in [\![H_v]\!]}\kappa(H).\]
This proves $[\![H_v]\!]=[\![H_B]\!]$ is cyclic as well, and therefore $[\![H_B]\!]$ is twist-dominant.
\end{proof}

Since the definitions of  cyclic  hyperplane  and the link of a  hyperplane depend only the combinatorial structure of $X$, the following is an immediate corollary.

\begin{corollary}\label{cor:independent}  Let $i\colon \SP\to \SPp$ be an isomorphism of cube complexes.  Then $i$ preserves the twist and fold ordering on edge labels.
\end{corollary}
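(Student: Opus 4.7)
The plan is to leverage the combinatorial nature of the definitions given just before the corollary: everything on the hyperplane side depends only on which pairs of hyperplanes intersect, not on any chosen labeling, whereas Lemma~\ref{lem:TDSingle} (and the paragraph preceding it for the fold case) ties these combinatorial orderings on hyperplanes back to the $\max$-based orderings on labels. The isomorphism $i$ will then transport the ordering because the combinatorial invariants are preserved automatically.

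\textbf{Step 1: $i$ preserves all relevant hyperplane data.} A cube complex isomorphism carries $k$-cubes to $k$-cubes bijectively, so it sends hyperplanes to hyperplanes and preserves the incidence relation $H\cap K\neq\emptyset$ (since, in a nonpositively curved cube complex, two hyperplanes intersect iff they are the two midplanes of some $2$-cube). Consequently $i$ sends $\lk(H)$ to $\lk(i(H))$, so it preserves the fold-equivalence classes $\eqH$, and it preserves the cyclic condition $\bigcup_{H'\in \eqH}\kappa(H')\iso\kappa(H)\times\bdry P_n$ because this is a statement purely about the combinatorial intersection/carrier pattern. Hence $i$ preserves the relations $\eqH\leq_f \eqK$ and $\eqH\leq_t\eqK$ between fold-equivalence classes.

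\textbf{Step 2: Translate to labels.} Choose the given labelings $X=\SP$ and $i(X)=\SPp$. If $e_A$ is an edge with label $A\in V\cup\bPi$, let $A'\in V\cup\Omega$ be the label of $i(e_A)$; the assignment $A\mapsto A'$ is a bijection between the label sets. By Corollary~\ref{fact:commhyp}, $H_A\cap H_B\neq\emptyset$ iff $A$ and $B$ commute, equivalently iff $\max(A)$ and $\max(B)$ are distinct and adjacent in $\G$. Thus $\lk(H_A)\subseteq\lk(H_B)$ translates into $\lk(\max(A))\subseteq\lk(\max(B))$, so $\eqH_A\leq_f \eqH_B$ in the combinatorial sense iff $\max(A)\leq_f \max(B)$, iff $A\leq_f B$. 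Applying Step~1, $A\leq_f B\iff A'\leq_f B'$. For the twist ordering, Lemma~\ref{lem:TDSingle} (and its proof) shows that $\eqH_A\leq_t\eqH_B$ in the combinatorial sense is equivalent to $\max(A)\leq_t\max(B)$, i.e.\ $A\leq_t B$; Step~1 then gives $A\leq_t B\iff A'\leq_t B'$.

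\textbf{Main obstacle.} There is no serious obstacle — the corollary is really a bookkeeping consequence of Lemma~\ref{lem:TDSingle} together with the observation that cube complex isomorphisms preserve intersection patterns of hyperplanes. The only mildly delicate point is the equivalence between the two notions of ``cyclic''/``twist-dominant'' for a hyperplane used on either side of the translation; this is precisely the content of Lemma~\ref{lem:TDSingle}, which we invoke. Once that equivalence is in hand, the corollary follows in one line: each of $\lk$, $\eqH$, cyclicity, $\leq_f$ and $\leq_t$ is intrinsic to the cube complex, so any isomorphism must respect them.
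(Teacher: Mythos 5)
Your proof is correct and follows essentially the same route as the paper: the combinatorial notions (links of hyperplanes, fold-equivalence classes, cyclicity, and hence both hyperplane orderings) are intrinsic to the cube complex and thus preserved by any isomorphism, and Lemma~\ref{lem:TDSingle} together with the paragraph preceding it translates these back to the $\max$-based orderings on labels. The paper states this more tersely as an immediate consequence, but your expansion fills in exactly the intended reasoning.
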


\subsection{Untwisted markings}\label{sec:tminimal}
In this section we show that we can detect twist-minimal hyperplanes in a $\G$-complex $X$  using only the action of $\AG$  that an untwisted marking $h\colon X\to \SaG$ induces on the CAT(0) space $\widetilde X$.

We begin by recalling  some basic facts  about untwisted markings. Define  $U^0(\AG)$ to be the subgroup of $U(\AG)$ generated by inversions, folds and partial conjugations.
 
\begin{lemma}\label{lem:U0PreservesLinks}  For any $v\in V$, both $A_{\lk(v)}$ and $A_{\UF(v)}$ are invariant up to conjugacy under the action of $U^0(\AG)$.
\end{lemma}
\begin{proof}Let $\Delta\subseteq \G$ be any subgraph. We claim that the special subgroup $A_{\lk(\Delta)}$ is invariant up to conjugacy under $U^0(\AG)$, where $\displaystyle \lk(\Delta)=\cap_{v\in \Delta}\lk(v)$.  The lemma will follow by taking $\Delta=\{v\}$ and $\Delta=\lk(v)$, respectively. If $\lk(\Delta)=\emptyset$ then we set $A_\emptyset=\{1\}$ which is trivially invariant. Otherwise, assume $\lk(\Delta)\neq \emptyset$. We consider each type of generator of $U^0(\AG)$. Clearly inversions preserve $A_{\lk(\Delta)}$.  If $v\in \lk(\Delta)$ and $v<_f w$, then $\Delta\subseteq \lk(v)\subseteq \lk(w)$, hence $w\in \lk(\Delta)$ as well.  It follows that $A_{\lk(\Delta)}$ is also invariant under the fold sending $v$ to $vw$.  Finally, suppose consider a partial conjugation by $w\in V$. If $\Delta$ is not contained in $\st(w)$, then there exists $v\in \Delta\setminus \st(w)$ hence $\lk(\Delta)\setminus \st(w)$ is contained in the same component of $\Gamma\setminus\st(w)$ as $v$. Hence a partial conjugation by $w$ preserves $A_{\lk(\Delta)}$ up to conjugacy. On the other hand, if $\Delta\subseteq \st(w)$ then either $w\in \Delta$, whence $\lk(\Delta)\subset \st(w)$, or $w\in \lk(\Delta)$.  Either way, any partial conjugation by $w$ preserves $A_{\lk(\Delta)}$, and the claim is proved.
\end{proof}

 The next lemma implies that after changing the collapse map, we can always assume the marking lies in $U^0(\AG)$. 
 \begin{lemma}\label{lem:Gcollapse} Let $X$ be a $\G$-complex.  Every untwisted marking $h\colon X\to \SaG$ is a $\G$-collapse map $c$ followed by  an element of $U^0(\AG)$,
\end{lemma}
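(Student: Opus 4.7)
The plan is to fix any $\G$-complex collapse $c_0\colon X\to\SaG$, observe that by hypothesis the outer automorphism $\alpha = [h\circ c_0^{-1}]$ lies in $U(\AG)$, decompose $\alpha = \phi\circ\hat\sigma$ with $\phi\in U^0(\AG)$ and $\hat\sigma$ induced by a graph automorphism $\sigma$ of $\G$, and then absorb $\sigma$ into the collapse by replacing $c_0$ with $c := \sigma\circ c_0$.  This will give $h\simeq \phi\circ c$ as required.

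The first ingredient is the algebraic decomposition $U(\AG) = U^0(\AG)\cdot \Aut(\G)$.  To establish this, I would first check that $U^0(\AG)$ is normal in $U(\AG)$: conjugating any generator of $U^0(\AG)$ (an inversion, elementary fold, or partial conjugation) by a graph automorphism of $\G$ produces another such generator on relabeled data, so $\sigma U^0(\AG)\sigma^{-1}\subseteq U^0(\AG)$ for every $\sigma\in\Aut(\G)$.  It then suffices to show that every generator of $U(\AG)$ lies in $U^0(\AG)\cdot \Aut(\G)$, and since graph automorphisms and inversions are already covered, the only nontrivial case is a general \GW\ automorphism $\phi(\WP,m)$ with $|\Sing(\WP)|\ge 2$.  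Such an automorphism should factor as a product of elementary folds, one for each vertex in $\Sing(\WP)\setminus\{m\}$, together with a partial conjugation; this factorization is visible from the explicit action of $\phi(\WP,m)$ on the generators of $\AG$ and is in the spirit of the decomposition lemmas of \cite{CSV}.  Granted this, any word in the generators representing $\alpha$ can be rearranged, using normality, into the desired form $\phi\circ\hat\sigma$.

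The second ingredient is that $c=\sigma\circ c_0$ is itself a $\G$-complex collapse.  Writing $c_0 = c_\pi\circ i$ with $i\colon X\to\Sa^\bPi$ a cube-complex isomorphism and $c_\pi$ the standard collapse, the graph automorphism $\sigma$ permutes $V^\pm$ and therefore sends the compatible family $\bPi$ to a new compatible family $\sigma(\bPi) = \{\sigma(\WP_j)\}$.  This induces a cube-complex isomorphism $\tilde\sigma\colon \Sa^\bPi\to\Sa^{\sigma(\bPi)}$ fitting into a commuting square $\sigma\circ c_\pi = c_{\sigma(\bPi)}\circ\tilde\sigma$, so $c = c_{\sigma(\bPi)}\circ (\tilde\sigma\circ i)$ exhibits $c$ as a standard collapse of $X$ precomposed with an isomorphism of $\G$-complexes.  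Combining, $h\simeq \alpha\circ c_0 = \phi\circ\hat\sigma\circ c_0\simeq \phi\circ c$, completing the argument.

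The main obstacle is the factorization of a general \GW\ automorphism into elementary folds and a partial conjugation, which is the one step that is not purely formal.  Everything else---the normality of $U^0(\AG)$ in $U(\AG)$, the behavior of $\Aut(\G)$ on blowups $\Sa^\bPi$, and the bookkeeping of homotopy equivalences---is essentially automatic once the algebraic decomposition $U(\AG) = U^0(\AG)\cdot\Aut(\G)$ is in hand.
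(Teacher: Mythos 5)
Your proposal is correct and takes essentially the same approach as the paper: both decompose the untwisted marking's outer automorphism so that the single graph automorphism appears first, then absorb it into the collapse map using the fact that a graph automorphism carries blowups to blowups. The factorization of a \GW\ automorphism into commuting elementary folds and a partial conjugation, which you flag as the one nontrivial step, is indeed standard and is implicitly invoked by the paper when it states that $U(\AG)$ is generated by inversions, partial conjugations, elementary folds, and graph automorphisms.
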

\begin{proof} By definition $h$ is untwisted if there is an isomorphism $i\colon X\iso \SP$ for some $\bPi$ so that the composition $h \circ c\inv$ (where $c=c_\pi i$ and $c^{-1}$ is a homotopy inverse for $c$) induces an element $\varphi\in U(\AG)$ on $\pi_1(\SaG)=A_\G.$ This  condition is independent of the choice of $i$. The subgroup $U(\AG)$ is generated by inversions, partial conjugations, elementary (right and left) folds and graph automorphisms.   Any product of these is equal to a product with a single graph automorphism $\sigma$ as the initial element. The automorphism $\sigma$ permutes $V$, sends a $\G$-Whitehead partition $\WP$ to $\sigma\WP$, and induces an isomorphism $\SP$ to $\Sa^{\sigma\bPi}$, so the composition of the initial $\G$-collapse map $c=c_\pi  i$ with $\sigma$ is itself a $\G$-collapse map, and the rest of the factors are inversions, partial conjugations and elementary folds.
\end{proof}

Now let $H$ be a hyperplane in a $\G$-complex $X$, fix an untwisted marking $h$ and let $\bar h\colon \SaG\to X$ be a homotopy inverse for $h$, so that   $g\in \AG=\pi_1(\SaG)$ acts on $\widetilde X$ by the deck transformation $\bar h_*(g)\in \pi_1(X)$.
Define
$$ \Sing_h(H) = \{ v \in V \mid \textrm{an axis for $\bar h_*(v)$ crosses a lift of $H$}\}.$$
and let $\max_h(H)$ denote the set of maximal elements in $\Sing_h(H)$.

 A special case is when $h=c$ is just a collapse map.  In this setting, the set $\max_c(H)$ all belong to the same fold equivalence class of $\G$. 
In the next lemma we will see that the elements of $\max_h(H)$ also all belong to the same fold-equivalence class and moreover, that this equivalence class is actually independent of the marking $h$ up to graph automorphisms.

\begin{lemma}\label{lem:Sing}   Let $X$ be a $\G$-complex, $h\colon X\to \SaG$  an untwisted marking and let $m_h$ be any element of $\max_h(H)$.
 \begin{enumerate}
\item   If  $v\in\Sing_h(H)$, then $v\leq_f m_h$.
 \item   There is a $\G$-collapse map $c$ such that $m_c \sim_f m_h$  for any $m_c \in \max_c(H).$
 \end{enumerate}
Thus the maximal elements in $\Sing_h(H)$ all lie in the fold equivalence class of $\max_c(H)$.
\end{lemma}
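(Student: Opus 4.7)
The plan is to reduce to the case of a $\G$-collapse marking, where everything can be computed explicitly from the label $A\in V\cup\bPi$ of the hyperplane $H$. For a $\G$-collapse $c=c_\pi\circ i$ with isomorphism $i\colon X\iso\SP$, the description of characteristic cycles in Section~\ref{sec:CharCycles}, together with Lemma~\ref{lem:CharCycles}(2), shows that an axis for $v\in\AG$ acting on $\USP$ crosses a lift of the hyperplane $H$ labelled $A$ exactly when either $A=v$ or $A=\WP\in\bPi$ splits $v$. Therefore $\Sing_c(H)=\Sing(A)$ (with the convention $\Sing(v)=\{v\}$) and $\max_c(H)=\max(A)$. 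Lemma~\ref{lem:base} then gives $v\leq_f m$ for every $v\in\Sing(A)$ and $m\in\max(A)$, and the remark following its proof records that the elements of $\max(A)$ form a single fold-equivalence class. This proves both (1) and (2) when $h=c$.

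For a general untwisted $h$, apply Lemma~\ref{lem:Gcollapse} to write $h=\psi\circ c$ with $\psi\in U^0(\AG)$, a product of inversions, partial conjugations and elementary folds. The key observation is that inversions and partial conjugations do not affect $\Sing$: if $\psi$ contains only these two types of generators, then $\psi^{-1}(v)$ is a conjugate of $v^{\pm 1}$ in $\AG$, so the axis for $\bar h_*(v)=\bar c_*(\psi^{-1}(v))$ in $\widetilde X$ is a deck-translate (or reversal) of an axis for $\bar c_*(v)$ and crosses the same $\pi_1(X)$-orbit of lifts of $H$. Hence $\Sing_h(H)=\Sing_c(H)$ in this case, and the lemma follows from the collapse case with $c$ unchanged.

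The remaining task is to absorb the folds in $\psi$ into a new choice of $\G$-collapse, one at a time. An elementary fold $\phi(\WP,m)\in U^0(\AG)$ is the $\G$-Whitehead automorphism of a partition $\WP$ of a specific type, and we plan to show that one can construct a new isomorphism $i'\colon X\iso\Sa^{\bPi'}$ for a modified family $\bPi'$ such that the resulting $\G$-collapse $c'=c_{\pi'}\circ i'$ gives a factorization $h=\psi'\circ c'$ with $\psi'$ containing strictly fewer folds. Iterating this move produces a $\G$-collapse whose associated $\psi$ has no folds, at which point the previous paragraph finishes the proof. The main obstacle is this absorption step: identifying precisely which modifications of $\bPi$ correspond to pre-composing the collapse with a single $\G$-Whitehead fold, and verifying that the resulting $\Sa^{\bPi'}$ is still isomorphic to $X$ as a cube complex. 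This should follow the combinatorial manipulation of blowups developed in \cite{CSV} for the study of $K_\G$, but adapting those moves to the present setting is where the real work lies.
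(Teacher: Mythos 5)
Your reduction to the $\G$-collapse case is correct, and your treatment of inversions and partial conjugations matches the paper: axes for $\bar h_*(v)$ are deck-translates or reversals of those for $\bar c_*(v)$, so $\Sing_h(H)=\Sing_c(H)$. The gap is in the fold case, and it is a genuine one. Your ``absorption'' strategy, iterating to produce a factorization $h=\psi'\circ c'$ with $\psi'$ fold-free, cannot work: for a fixed $\G$-complex $X$ the set of $\G$-collapse maps is finite up to homotopy (it is a torsor over $\Aut(X)$ composed with the finitely many choices of $\bPi$ realizing $X$), whereas $U^0(\AG)$ is infinite once there are any folds at all. Concretely, take $\G$ with two vertices and no edges, $X=\SaG$ the $2$-petal rose, and $h$ inducing the right fold $a\mapsto ab^{-1}$. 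The only $\G$-collapse maps on $X$ are the identity collapse post-composed with graph automorphisms and inversions, so no change of $\bPi$ or of isomorphism $X\iso\SP$ can absorb this fold, and the iteration you propose never terminates.

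What the paper does instead is prove part~(2) relative to the \emph{original} $c$, by a careful induction on the number of generators in $\psi=h\circ c^{-1}$. The subtle case is exactly the one your outline does not engage: after a fold $v\mapsto vw^{-1}$, the vertex $v$ might be removed from $\Sing_{h'}(H)$, and if $v$ was the unique element of $\max_{h'}(H)$ one must still produce a maximal element in the same fold-equivalence class. The paper handles this by introducing signed net crossing numbers $n_h(v,H)$, proving and maintaining the auxiliary invariant that some $v\in\max_h(H)$ has $n_h(v,H)\neq 0$, and exploiting the additivity $n_{h'}(vw,H)=n_{h'}(v,H)+n_{h'}(w,H)$ to show that either $w$ (which dominates $v$) already lies in $\Sing_{h'}(H)$ with non-zero crossing, or $v$ persists in $\Sing_h(H)$ with non-zero crossing. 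Your proof needs some replacement for this mechanism; absent one, the fold step is unproved and the claimed termination is false.
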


\begin{proof}
By Lemma~\ref{lem:Gcollapse} we can write $$h=\varphi_n\circ\ldots\circ\varphi_1\circ c$$
where $c=c_\pi i\colon X\iso \SP\to \Sa $ is a $\G$-collapse map and each $\varphi_j$ induces an inversion, partial conjugation or elementary fold.  Let $\bar h$ denote the homotopy inverse of $h$.   By Lemma \ref{lem:U0PreservesLinks}, $U^0(\AG)$  preserves the special subgroup $A_{\UF(v)}$ up to conjugacy for every $v \in V$, and by  Corollary  \ref{cor:ContainsAxes}, the subcomplex $\widetilde X_{\UF(v)}=i\inv(\USnlv) $ contains an axis for every element of this subgroup.  Thus, some translate of $\widetilde X_{\UF(v)}$ contains an axis for $\bar h_*(v)$.  It follows that every hyperplane $H$ that splits $\bar h_*(v)$ has a lift that is dual to an edge in $\widetilde X_{\UF(v)}$, or in other words, if $v \in \Sing_h(H)$, then $v \leq_f m_c$.  Thus (1) will follow immediately from (2).

We will prove (2) by induction on $n$.
By definition, if $v \in \Sing_h(H)$, then the image of an $\bar h_*(v)$-axis crosses $H$ at least once.  For the purpose of this proof, we will need to keep track of more information about how many times it crosses $H$.  Begin by choosing an orientation for $H$, or equivalently, for a dual edge to $H$.  ($H$ is orientable since $X$ is a special cube complex.)  If $p$ is an edge path in $X$ we define the \emph{net crossing number} of $p$ with  $H$ to be
$$n(p,H)=  \textrm{\# positive crossings of $H$} -  \textrm{\# negative crossings of $H$}.$$
Note that two paths that are homotopic rel endpoints have the same net crossing number with respect to any hyperplane.
For a generator $v \in V$, set $n_h(v,H) = n(p_v,H)$ where $p_v$ is some (hence any) loop in $X$ representing $\bar h_*(v)$.   This is independent of basepoint since changing basepoints conjugates $p_v$ by a path connecting the two basepoints and hence leaves the net crossing number unchanged.  In particular, $n_h(vw,H)=n_h(v,H) + n_h(w,H)$.

Note that if $n_h(v,H) \neq 0$, then $v$ necessarily lies in $\Sing_h(H)$, but the converse need not be true.  In addition to property (2), we will prove by induction that the following property holds:
\begin{quote}
(3) For some $v \in \max_h(H)$,  $n_h(v,H)  \neq  0$.
\end{quote}

For $n=0$, $h=c$, so (2) is true trivially and for any $v \in \max_h(H)$, a characteristic cycle for $v$ crosses $H$ exactly once, so $n_h(v,H) = \pm 1$.

Now set $h'=\varphi_{n-1}\circ\ldots\circ\varphi_{1}\circ c,$  with homotopy inverse $\bar h'$ and assume by induction that (2) and (3) hold for $h'$.
 If $\varphi_n$ is an inversion $v \mapsto v^{-1}$, then $c \circ \bar h$ and $c \circ \bar h'$ agree on every generator except $v$.  Furthermore $\bar h'_*(v)$  and $\bar h_*(v)=\bar h'_*(v\inv)=\bar h'_*(v)\inv$ have the same axis in $\widetilde X$, so there is no change in which hyperplanes this axis crosses hence no change in the splitting set.  Only the sign of the net crossing numbers with these hyperplanes change.

If $\varphi_n$ is a partial conjugation then $c \circ \bar h_*(v)$ is conjugate to  $c \circ \bar h'_*(v)$ for every generator $v$, so an axis for $\bar h_*(v)$  is just a translate of an axis for $\bar h'_*(v)$.  Thus, the former crosses some lift of $H$ if and only if the latter crosses some lift of $H$, and again there is no change is the splitting set or the net crossing numbers for $H$.

It remains to consider the case that $\varphi_n$ is a right fold $v \mapsto vw^{-1}$ for some $w \geq_f v$  (the case of  left folds is symmetric.) Again $c \circ \bar h_*$ and $c \circ \bar h'_*$ agree on every generator except $v$, and $c \circ \bar h_* (v)= c \circ \bar h'_*(vw)$.  So the only possible change is that after composing with $\varphi_n$, $v$ may be added to or removed from $\Sing_{h'}(H)$ and the net crossing of $v$ with $H$ may change.

Suppose $v$ is in $\Sing_h(H)$, but not in $\Sing_{h'}(H)$.  By induction, we know that $m_{h'} \sim_f m_c$.  and as observed above, $v \leq_f m_c $.  Thus, $v \leq_f m_{h'}$, so adding $v$ to $\Sing_{h'}(H)$ does not change its maximal equivalence class and (2) and (3) remain valid.

Next suppose that $v \in \Sing_{h'}(H)$.  If  $\Sing_{h'}(H)$ contains more than one maximal element with non-zero net crossing number, then  removing $v$ from $\Sing_{h'}(H)$  or changing its net crossing number will again preserve properties (2) and (3).

Thus, we need only consider the case where $v$ is the unique element of $\max_{h'}(H)$ with $n_{h'}(v,H) \neq 0$.   Since $n_{h'}(vw,H)= n_{h'}(v,H) + n_{h'}(w,H)$, either $n_{h'}(w,H)$ or $n_{h'}(vw,H)$ must also be non-zero.  In the former case, $w$ lies in $\Sing_{h'}(H)$ and since $v \leq_f w$, this contradicts our assumption that $v$ is the unique maximal element  with non-zero net crossing number.  In the latter case, since $n_h(v,H) = n_{h'}(vw,H) \neq 0$, we conclude that $v$ is also in $\Sing_h(H)$ and its net crossing number remains non-zero, so (2) and (3) still hold for $h$.  This completes the induction.
\end{proof}

\begin{remark} Suppose $v$ is twist-dominant.  Then there are no elements $w\neq v$ with $v \leq_f w$.  It follows that any element of $U^0(\AG)$ takes $v$ to a conjugate of itself or its inverse, so the image in $X$ of an axis for $\bar h_*(v)$ is the same for every marking $h$ as in the lemma.  Moreover, any hyperplane $H$ crossed by this axis has $\max_h(H)=\{v\}$.  \end{remark}

More generally, if we drop the assumption that $h^{-1}c \in U^0(\AG)$, we have

\begin{corollary}\label{cor:mh_independence}  Let $X$ be a $\G$-complex and $h, h'\colon X\to\SaG$ untwisted markings.  Then $[\max_{h'}(H)] = \sigma[\max_{h}(H)]$ for some graph automorphism $\sigma$.
\end{corollary}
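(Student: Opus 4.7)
The plan is to factor the change of marking as a graph automorphism times an element of $U^0(\AG)$, then treat the two factors separately. Since both $h$ and $h'$ are untwisted, $h'_{*} \circ h_{*}^{-1}$ lies in $U(\AG)$. Generalizing the rewriting argument used in the proof of Lemma~\ref{lem:Gcollapse}, every element of $U(\AG)$ can be expressed as $\sigma \circ \psi$ for some graph automorphism $\sigma$ and some $\psi \in U^{0}(\AG)$; this holds because conjugating an inversion, a partial conjugation, or an elementary fold by a graph automorphism yields another generator of the same type. Up to free homotopy we therefore have $h' = \sigma \circ h''$ with $h'' = \psi \circ h$. The corollary will follow from two claims: (i) $[\max_{h''}(H)] = [\max_h(H)]$, and (ii) $\max_{h'}(H) = \sigma(\max_{h''}(H))$, combined with the fact that graph automorphisms preserve $\lk$ and hence fold-equivalence.

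For (i), I will use Lemma~\ref{lem:Gcollapse} to write $h = \psi_0 \circ c$ with $c$ a $\G$-collapse and $\psi_0 \in U^0(\AG)$. Then $h'' = (\psi \psi_0) \circ c$ is a decomposition with the \emph{same} underlying $\G$-collapse $c$, so Lemma~\ref{lem:Sing}(2) applied to both decompositions yields $[\max_h(H)] = [\max_c(H)] = [\max_{h''}(H)]$. Equivalently, the proof of Lemma~\ref{lem:Sing} proceeds by induction on the length of a $U^0$-decomposition, with each inductive step (prepending an inversion, partial conjugation, or fold) preserving $[\max]$; composing $h$ with $\psi$ therefore does not change the fold-class.

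For (ii), a homotopy inverse to $h' = \sigma \circ h''$ is $\overline{h''} \circ \sigma^{-1}$, so for each $v \in V$ we have $\overline{h'}_{*}(v) = \overline{h''}_{*}(\sigma^{-1}(v))$. Thus an axis for $\overline{h'}_{*}(v)$ in $\widetilde{X}$ crosses a lift of $H$ if and only if $\sigma^{-1}(v) \in \Sing_{h''}(H)$, whence $\Sing_{h'}(H) = \sigma(\Sing_{h''}(H))$ and therefore $\max_{h'}(H) = \sigma(\max_{h''}(H))$. Combining with (i) then gives $[\max_{h'}(H)] = \sigma[\max_h(H)]$.

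The main obstacle is (i): Lemma~\ref{lem:Sing} only asserts the existence of \emph{some} $\G$-collapse realizing $[\max_h(H)]$, and a priori the collapses it produces for $h$ and $h''$ could differ. What makes the argument work is that the graph-automorphism part of the change of marking is isolated first, so that the remaining difference $\psi$ lies in $U^0(\AG)$ and can be absorbed into a common underlying $\G$-collapse for both markings. This is why the factorization $U(\AG) = \{\text{graph autos}\} \cdot U^0(\AG)$ is essential: Lemma~\ref{lem:Sing}'s induction handles $U^0$-moves exactly, and the graph-automorphism factor contributes precisely the $\sigma$-twist appearing in the conclusion.
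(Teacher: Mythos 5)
Your proof is correct and takes essentially the same approach as the paper: isolate a graph-automorphism factor from the $U^{0}(\AG)$ part of the change of marking, handle the graph automorphism by direct computation on $\Sing$, and invoke Lemma~\ref{lem:Sing} for the $U^{0}$ part by observing that the two markings can be made to factor through a common $\G$-collapse. The only difference is that you write $\psi = \sigma \circ \psi'$ (graph automorphism on the outside) whereas the paper writes $\psi = \varphi \circ \sigma$ (graph automorphism on the inside); since graph automorphisms normalize $U^{0}(\AG)$, both factorizations are available and the two arguments are interchangeable.
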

\begin{proof} We have $h'=\psi h$ for some $\psi\in U(\AG)$. Write $\psi=\varphi\circ \sigma$, where $\sigma$ is a   graph automorphism and  $\varphi\in U^0(\AG)$.  Then $\max_{\sigma h}(H)=\sigma \max_{h}(H)$, so by Lemma~\ref{lem:Sing}, $[\max_{h'}(H)]=[\max_{\sigma h}(H)]=\sigma [\max_{h}(H)]$.
\end{proof}

\begin{corollary}\label{cor:tmin_characterize} Let $H$ be a hyperplane in a $\Gamma$-complex $X$ and $h\colon X\to\SaG$ an untwisted marking. Then  $H$ is  twist-minimal (respectively   twist-dominant) if and only if  $[\max_h(H)]$ is twist-minimal (respectively twist-dominant).
 \end{corollary}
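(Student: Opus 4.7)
The plan is to combine Corollary~\ref{cor:independent}, Lemma~\ref{lem:TDSingle}, and Lemma~\ref{lem:Sing} in a fairly mechanical way, using an auxiliary $\G$-collapse marking as a bridge. First I would fix an isomorphism $i\colon X \iso \SP$ for some compatible collection $\bPi$; composing with the standard collapse gives a distinguished untwisted marking $c = c_\pi \circ i$, and under $i$ the hyperplane $H$ acquires a label $A$, so that by construction $\max_c(H) = \max(A)$. By Corollary~\ref{cor:independent}, the twist-dominant/twist-minimal dichotomy for $H$ is intrinsic to the combinatorial structure of $X$ and independent of the choice of $i$, so it suffices to work with this particular $i$.

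Next I would invoke Lemma~\ref{lem:TDSingle}: the hyperplane $H \iso H_A$ is twist-dominant if and only if there exists a label $A'$ with $\max(A') \leq_t \max(A)$. Inspection of the proof of that lemma shows this is equivalent to $\max(A)$ being a singleton $\{v\}$ with $v$ a twist-dominant vertex of $\G$. By Lemma~\ref{lem:twist_and_fold}, the fold-equivalence class of any twist-dominant vertex is itself a singleton, so this is in turn equivalent to the class $[\max(A)]$ being twist-dominant in the vertex sense of Section~\ref{sec:preliminaries}.

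To replace the specific marking $c$ with an arbitrary untwisted marking $h$, I would apply Lemma~\ref{lem:Sing}(2), which asserts that $\max_h(H)$ and $\max_c(H) = \max(A)$ lie in the same fold-equivalence class. Hence $[\max_h(H)] = [\max(A)]$, and this class is twist-dominant if and only if $H$ is. The twist-minimal statement follows by taking the contrapositive on both sides.

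The main step that requires care, rather than being an outright obstacle, is bookkeeping: ensuring that the combinatorial notion of ``twist-dominant'' for hyperplanes of $X$ and the graph-theoretic notion for vertices of $\G$ match up correctly under the chosen isomorphism, and that the fold-equivalence class the marking $h$ attaches to $H$ is genuinely independent of $h$ to the extent needed. Once the translations provided by Corollary~\ref{cor:independent} and Lemma~\ref{lem:Sing} are in place, the rest is a direct invocation of Lemma~\ref{lem:TDSingle}.
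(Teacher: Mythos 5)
Your proof takes essentially the same route as the paper's, which is much terser: it simply sets $h = c_\pi$, records $[\max_h(H_A)] = [\max(A)]$, and leaves both the appeal to Lemma~\ref{lem:TDSingle} and the reduction from general $h$ implicit. One small slip in your write-up: you cite Lemma~\ref{lem:Sing}(2) as asserting that $\max_h(H)$ and your chosen $\max_c(H)$ lie in the same fold-equivalence class, but that lemma only guarantees the existence of \emph{some} $\G$-collapse map $c'$ with $\max_{c'}(H) \sim_f \max_h(H)$; your fixed $c = c_\pi \circ i$ may differ from that $c'$ by a graph automorphism, so the two sets of maximal elements need not literally coincide. The precise tool here is Corollary~\ref{cor:mh_independence}, which gives $[\max_h(H)] = \sigma[\max_c(H)]$ for some graph automorphism $\sigma$; since graph automorphisms preserve $\leq_t$ and hence preserve twist-dominance of a vertex class, the conclusion is unaffected and the rest of your argument goes through as written.
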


 \begin{proof} Choose an isomorphism $X\iso \SP$ and let $h=c_\pi$.  Then $H=H_A$ for some $A\in V\cup\bPi$ and  $[\max_h(H_A)]=[\max(A)]$.
 \end{proof}

\begin{definition}Given an untwisted marking $h\colon X\rightarrow \SaG$ and a generator $v\in V$ we define $\Min_h(v)$ to be $\Min(\bar{h}_*(v))\subset \widetilde{X}$. Similarly, we define the branch locus $\Br_h(v)$ to be the set of points in $\Min_h(v)$ whose link in $\widetilde{X}$ strictly larger than the link in $\Min_h(v)$.
\end{definition}

If we choose an identification of $X$ with $\SP$, then in terms of this definition $\Min(v)=\Min_{c_\pi}(v)$ and $\Br(v)=\Br_{c_\pi}(v)$, where $c_\pi\colon \SP\rightarrow \SaG$ is the standard collapse map. Using Lemma \ref{lem:Sing}, we can identify when a hyperplane is contained in the minset for a general untwisted marking.

\begin{proposition}\label{prop:min_h} Let $h\colon X\rightarrow \SaG$ be an untwisted marking,  $H$ a hyperplane of $X$, and $v\in \Sing_h(H)$.  Then $v \in \max_h(H)$ if and only if there is a lift $\widetilde{H}$ contained in $\Min_h(v)$, and in this case, both components of $\partial\kappa(\widetilde{H})$ contain points in $\Br_h(v)$.
\begin{proof}
 It is easily seen that this property is preserved by graph automorphisms, so it suffices to consider the case where $hc^{-1} \in  U^0(A_\G)$ for some collapse map $c$.  Fix an identification of $X$ with  $\SP$ and let $c=c_\pi$.    Then by Lemma \ref{lem:Sing}, for any hyperplane $H$ we have $[\max_h(H)]= [\max_c(H)]$.   Consider the subspace  $\USv= \USnlv \times \USlkv$.
By Lemma \ref{lem:U0PreservesLinks}, since $U^0(A_\G)$  preserves the subgroups $A_{\UF(v)}$ and $A_{\lk(v)}$ up to conjugacy, taking a translate if necessary, we may assume that  {$\USnlv$} contains an axis for $\bar h_*(v)$ (see Corollary \ref{cor:ContainsAxes}).  Call this axis $\alpha^h_v$. Then  {$\USv$} contains $\Min_h(v)= \alpha^h_v \times {\USlkv}$.

By assumption, the axis $\alpha^h_v$ crosses some lift $\widetilde{H}$ of $H$. Let $m \in \max_c(H)$.   If $v$ is not maximal in $\Sing_h(H)$ then $v<_f m$.  In this case,  {$\UH$} is isomorphic to  {$\USlkm$} which is strictly bigger than  {$\USlkv$} hence $\widetilde{H}$ is not contained in $\Min_h(v)$.

Conversely, if $v$ is maximal, then $v \sim_f m$ and we can identify  { $\UH= \USlkv = \USlkm$}.  It follows that the entire carrier $\kappa(\widetilde{H})$ is contained in $\Min_h(v)$.
In addition,  {$\USnlv$} also contains an axis $\alpha_m$ for $m$ with respect to the marking $c$ and $\Min_c(m)= \alpha_m \times  {\USlkv}$.  Since $m \in \max_c(H)$, this minset contains $\kappa(\widetilde{H})$ and by Proposition  \ref{prop:BranchLocus1} , both components of $\partial\kappa(\widetilde{H})$ contain points in the branch locus of $\Min_c(m)$.  Since $\Min_c(m)$ and $\Min_h(v)$ are both metrically the product of  a real line with  {$\USlkv$},  any point in $\partial\kappa(\widetilde{H})$ that is branch for one of these minsets is branch for the other.  Thus
both components of $\partial\kappa(\widetilde{H})$ also contain points in $\Br_h(v)$.
\end{proof}
\end{proposition}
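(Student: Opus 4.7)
My plan is to reduce to the case where the marking $h$ differs from a standard collapse map $c$ by an element of $U^0(A_\G)$, then transport known structural results about $\Min_c(m)$ to $\Min_h(v)$ using the product decomposition of $\Sv$.

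First, I would observe that the statement is invariant under composing $h$ with a graph automorphism of $\G$, since graph automorphisms simultaneously permute generators, hyperplane labels, and the subcomplexes $\Snlv$, $\Slkv$. Thus, appealing to Lemma~\ref{lem:Gcollapse}, I may assume $h = \varphi \circ c$ with $\varphi \in U^0(A_\G)$ and $c = c_\pi \circ i$ a $\G$-collapse map coming from some identification $X \cong \Sa^\bPi$. By Lemma~\ref{lem:Sing}, $[\max_h(H)] = [\max_c(H)]$, so the claim "$v \in \max_h(H)$" becomes "$v \sim_f m$ for some (hence every) $m \in \max_c(H)$". The key point is that $U^0(A_\G)$ preserves the subgroups $A_{\UF(v)}$ and $A_{\lk(v)}$ up to conjugacy; therefore, after an appropriate deck translate, Corollary~\ref{cor:ContainsAxes} produces an axis $\alpha_v^h \subset \widetilde X_{\UF(v)}$ for $\bar h_*(v)$, and the minset decomposes as $\Min_h(v) = \alpha_v^h \times \widetilde X_{\lk(v)}$ inside $\widetilde X_v = \widetilde X_{\UF(v)} \times \widetilde X_{\lk(v)}$.

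Now I handle the two directions separately. Suppose $\widetilde H$ is a lift of $H$ crossed by $\alpha_v^h$. If $v$ is not maximal in $\Sing_h(H)$, then by Lemma~\ref{lem:Sing}(1), $v <_f m$ for $m \in \max_c(H)$, so $\lk(v) \subsetneq \lk(m)$; but $\widetilde H$ is isomorphic (as a cube complex) to $\widetilde X_{\lk(m)}$, which is strictly larger than the $\widetilde X_{\lk(v)}$-factor of $\Min_h(v)$. Hence $\widetilde H$ cannot be contained in $\Min_h(v)$. Conversely, if $v \sim_f m$, then $\lk(v) = \lk(m)$, and $\widetilde H \cong \widetilde X_{\lk(v)}$ embeds as a factor slice of $\Min_h(v)$; the entire carrier $\kappa(\widetilde H) = \widetilde e_H \times \widetilde H$ then sits inside $\Min_h(v)$.

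For the branch locus statement, I would exploit the fact that $\widetilde X_{\UF(v)}$ also contains an axis $\alpha_m$ for $m$ under the marking $c$, with $\Min_c(m) = \alpha_m \times \widetilde X_{\lk(v)}$. Since $m \in \max_c(H)$, Lemma~\ref{lem:BranchLocus1} applied to the marking $c$ shows that each component of $\partial\kappa(\widetilde H)$ contains a point of $\Br_c(m)$. The branch locus of a minset that splits as $\R \times \widetilde X_{\lk(v)}$ depends only on the ambient cube complex $\widetilde X$ and on the factor $\widetilde X_{\lk(v)}$ (a point is branch iff its link in $\widetilde X$ strictly exceeds its link within the product); since both $\Min_c(m)$ and $\Min_h(v)$ have this same product form with the same second factor embedded in the same way in $\widetilde X$, the branch loci coincide pointwise. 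Thus the branch points of $\Min_c(m)$ on $\partial\kappa(\widetilde H)$ are also branch points of $\Min_h(v)$, giving the desired conclusion.

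The main obstacle I anticipate is the subtle bookkeeping in verifying that branch-point-hood really is determined locally by the factor $\widetilde X_{\lk(v)}$ alone, independent of which axis we pair it with; this reduces to observing that whether the link in the ambient $\widetilde X$ strictly exceeds the link in $\R \times \widetilde X_{\lk(v)}$ is a condition on the non-link directions at the point, which is the same for $\Min_c(m)$ and $\Min_h(v)$ because they share the same slice $\{*\} \times \widetilde X_{\lk(v)}$ through any point of $\partial\kappa(\widetilde H)$.
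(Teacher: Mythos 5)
Your proposal is correct and follows essentially the same approach as the paper: the same reduction via graph automorphisms to $hc^{-1}\in U^0(A_\G)$, the same appeal to Lemma~\ref{lem:Sing}, Corollary~\ref{cor:ContainsAxes}, and the product decomposition $\Min_h(v)=\alpha_v^h\times\Slkv$, and the same transport of branch points via Lemma~\ref{lem:BranchLocus1} using the shared factor with $\Min_c(m)$. The only cosmetic difference is that you spell out slightly more explicitly why branch-point-hood is detected identically in $\Min_c(m)$ and $\Min_h(v)$ (both being $\R\times\Slkv$ sharing the same slice through a given boundary point), which is precisely what the paper's phrase ``both metrically the product of a real line with $\Slkv$'' encodes.
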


\section{Parallelotope structures on blowups}\label{sec:twistedSP}

In this section we consider blowups $\SP$  as metric objects, where we now allow some of the cubes in $\SP$  to be skewed in certain directions, so that edges spanning a ``cube" are no longer necessarily orthogonal.  We call these {\em skewed blowups}.

An $n$-dimensional   {\em Euclidean parallelotope}  $F$ is a metric space isometric to the image of the unit cube $[0,1]^n \subset\R^n$ under some element of $\GL(n,\R)$.  If $e$ is an edge of   $F$, the {\em midplane} $H_e$   is the convex hull of the midpoints of the edges parallel to $e$. A parallelotope $F$ is  an {\em orthotope} if any two edges at a vertex are orthogonal or, equivalently, the dihedral angle between any two midplanes  is a right angle.

\subsection{Allowable parallelotope structures}

In a blowup $\SP$ every edge $e$ has a label, \textit{i.e.} $e=e_A$ where $A\in V\cup\bPi$.    By Corollary~\ref{fact:commhyp} there is a square in $\SP$    spanned by $e_A$  and $e_B$ if and only if $A$ and $B$   are adjacent. 
We will say $A,B$ are \textit{twist-related} if $ \max(A)\leq_t  \max(B)$ or vice versa.

\begin{definition}\label{def:pre-allowable}  Let $\mathbf c$ be a maximal cube of  $\SP$ with  outgoing edges $e_1,\ldots,e_n$  at a vertex $p\in\mathbf c$.  Let $A_i$ be the label of $e_i$, choose $v_i\in \max(A_i)$ and let $\st^+(v_i) = \{v_i\} \cup \UL(v_i)$.    Given $d_{\mathbf{c}}$ a parallelotope metric on $\mathbf{c}$, we realize $d_{\mathbf c}$ via an embedding $\rho\colon\mathbf{c}\hookrightarrow \R^n$ which sends $p$ to $0$.  Regarding $\rho(e_i)$ as vectors in $\R^n$,   set
\begin{align*}
K_i &= \textrm{ the  subspace of $\R^n$ spanned by $\rho(e_k)$  with $v_k\in\st^+(v_i)$} \hbox { and } \\
  L_{ij} &= K_i \cap K_j.
  \end{align*}
The metric $d_{\mathbf c}$ on $\mathbf c$ is \emph{allowable} if whenever $v_i$ and $v_j$ are not twist-related, then
$$L_{ij}^\perp\cap K_i \hbox{ is orthogonal to   } L_{ij}^\perp\cap K_j.$$
\end{definition}

Note that if $v_i, v_j$ are not twist-related, then $\st^+(v_i) \cap \st^+(v_j) = \UL(v_i) \cap \UL(v_j)$ so in this case, $L_{ij}$ is the  subspace spanned by the $\rho(e_k)$  with $v_k \in \UL(v_i) \cap \UL(v_j)$.

\begin{lemma}\label{lem:angles} An allowable parallelotope metric is determined by edge-lengths and the angles between twist-related edges.
\end{lemma}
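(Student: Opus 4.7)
The plan is to reformulate the statement in terms of the Gram matrix $G_{ij}=\langle e_i,e_j\rangle$ of the edge vectors spanning $\mathbf c$, since a Euclidean parallelotope is determined up to isometry by this matrix. The diagonal entries $G_{ii}=|e_i|^2$ are recorded by the edge-lengths, and the off-diagonal entries $G_{ij}$ for twist-related pairs $(v_i,v_j)$ are recorded by the prescribed angles. What must be shown is that, for any pair $(v_i,v_j)$ that is not twist-related, allowability forces $G_{ij}$ to be a specific function of this data.

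For such a pair, decompose $e_i=e_i^\parallel+e_i^\perp$ with $e_i^\parallel$ the orthogonal projection of $e_i$ onto $L_{ij}$ and $e_i^\perp\in L_{ij}^\perp\cap K_i$, and analogously for $e_j$. The cross terms $\langle e_i^\parallel,e_j^\perp\rangle$ and $\langle e_i^\perp,e_j^\parallel\rangle$ vanish because $L_{ij}\perp L_{ij}^\perp$ in the ambient Euclidean space, while $\langle e_i^\perp,e_j^\perp\rangle=0$ is precisely the allowability hypothesis. Hence
\[
G_{ij}=\langle \pi_{L_{ij}}(e_i),\pi_{L_{ij}}(e_j)\rangle,
\]
so $G_{ij}$ is completely determined by the Gram submatrix on $\{e_i,e_j\}\cup\{e_m:v_m\in\UL(v_i)\cap\UL(v_j)\}$ with the entry $G_{ij}$ itself omitted: the right-hand side is obtained by solving the normal equations for the projections in the spanning set $\{e_m\}$ of $L_{ij}$, which requires only the Gram submatrix of the $e_m$'s together with the inner products $\langle e_i,e_m\rangle$ and $\langle e_j,e_m\rangle$.

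I will close the argument by strong induction on $\min(h_i,h_j)$, where $h_k$ denotes the length of the longest strict $<_t$-chain above $v_k$. In the base case $L_{ij}=0$ (which includes the situation $\min(h_i,h_j)=0$, since then at least one of $\UL(v_i),\UL(v_j)$ is empty), allowability directly yields $e_i\perp e_j$. In the inductive step, the entries $\langle e_i,e_m\rangle$ and $\langle e_j,e_m\rangle$ are prescribed because every $v_m\in\UL(v_i)\cap\UL(v_j)$ satisfies $v_m\geq_t v_i$ and $v_m\geq_t v_j$, and an entry $\langle e_m,e_{m'}\rangle$ with $v_m,v_{m'}\in\UL(v_i)\cap\UL(v_j)$ is either prescribed (twist-related case) or supplied by the inductive hypothesis.

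The main point to verify—the only place the induction could fail—is that this hypothesis genuinely applies to the pairs $(v_m,v_{m'})$, i.e.\ that $\min(h_m,h_{m'})<\min(h_i,h_j)$. Here one uses the small observation that, because $v_i$ and $v_j$ are not twist-related, no $v_m\in\UL(v_i)\cap\UL(v_j)$ can be $\sim_t$-equivalent to $v_i$ or $v_j$: otherwise transitivity of $\leq_t$ would force $v_i\leq_t v_j$ or $v_j\leq_t v_i$. Therefore $v_m>_t v_i$ and $v_m>_t v_j$ \emph{strictly}, and likewise for $v_{m'}$, giving the required strict decrease of height and making the induction well-founded.
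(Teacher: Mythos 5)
Your proof is correct and follows the same core route as the paper's: decompose $e_i$ and $e_j$ relative to $L_{ij}$, use allowability to reduce the inner product to $\langle e_i,e_j\rangle=\langle \pi_{L_{ij}}(e_i),\pi_{L_{ij}}(e_j)\rangle$, and then recurse on the Gram data of $L_{ij}$. The one structural difference is the induction parameter --- you use $\min(h_i,h_j)$, the paper uses $\dim L_{ij}$ --- but both well-foundedness arguments rest on the same observation, namely that $v_i\not\sim_t v_j$ forces every $v_m\in\UL(v_i)\cap\UL(v_j)$ to lie \emph{strictly} above both $v_i$ and $v_j$ in the twist-order; you spell this out while the paper leaves it implicit, and you are likewise more explicit that the off-block entries $\langle e_i,e_m\rangle$, $\langle e_j,e_m\rangle$ feeding the normal equations are of twist-related type and hence prescribed. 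One small slip in the base-case parenthetical: $\min(h_i,h_j)=0$ does not force $\UL(v_i)$ or $\UL(v_j)$ to be empty, since these sets may still contain vertices $\sim_t$-equivalent to $v_i$ or $v_j$; what it does force, together with $v_i\not\sim_t v_j$, is that $\UL(v_i)\cap\UL(v_j)=\emptyset$ and hence $L_{ij}=0$, which is all the argument uses.
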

\begin{proof} Any parallelotope metric is determined by edge-lengths and the angles between edges, so we must show that in an allowable metric, the angles between non-twist-related edges are determined by those between twist-related edges.

Suppose $e_i$ and $e_j$ have labels that are not twist-related, so $L_{ij}$ is the span of edges $e_k$ with $v_k\in\UL(v_i)\cap \UL(v_j)$.     The proof is by induction on $\dim(L_{ij})$.  If $\dim(L_{ij})=0$ the angle between $e_i$ and $e_j$ must be $\frac{\pi}{2}$.  If $\dim(L_{ij})>0$, write $e_i=e_i^\prime + \ell_i$,
where $\ell_i$ is the orthogonal projection of $e_i$ onto $L_{ij}$ and $e_i^\prime$ is the projection onto $L_{ij}^\perp\cap K_i$. Similarly, write $e_j=e_j^\prime+\ell_j$.  Then $e_i\cdot e_j=(e_i^\prime+\ell_i)\cdot(e_j^\prime+\ell_j)=\ell_i\cdot\ell_j$. Since $\ell_i$ and $\ell_j$ are linear combinations of  the $e_k\in L_{ij}$, this dot product is determined by the dot products of these $e_k$. The dot products $e_k\cdot e_k$ are the squares of the lengths of the $e_k$, which are given. If $v_k\in \UL(v_i)\cap \UL(v_j)$ then $\UL(v_k)$ is strictly contained in $\UL(v_i)$, so for two edges $e_k\neq e_l\in L_{ij}$, the subspace $L_{kl}$ has dimension strictly smaller than $\dim(L_{ij})$.  Thus, by induction, the angle between edges in $L_{ij}$ is determined by edge-lengths and the angles between twist-related edges, so the same holds for $e_i,e_j$.
\end{proof}

\begin{definition} \label{def:allowable}An \emph{allowable parallelotope structure} $\F$ on $\SP$ is an assignment of a parallelotope metric to each cube $\mathbf c$ of $\SP$ such that
\begin{enumerate}
\item the metric on each maximal cube is allowable,
\item If $\mathbf c'$ is a face of $\mathbf c$, then the metric on $\mathbf c'$ is the restriction of the metric on $\mathbf c$, and
\item If $\max(A)=\{v\}$ is twist-dominant, then for any $B$    adjacent to  
$A$, the angle between $e_A$ and $e_B$ is equal to the angle between $e_v$ and $e_B$.
\end{enumerate}
\end{definition}
The parallelotope structure  in which every $k$-cube is isometric to the Euclidean cube $[0,1]^k$ is clearly allowable; it will be called the {\em standard structure} and denoted $\E$. If all parallelotopes in $\F$ are orthotopes, the structure $\F$ will be called {\em rectilinear}. These too are clearly allowable.

If $A$ and $B$ are  adjacent 
labels in $\bPi\cup V$, then there is at least one parallelogram $F\in \F$ with edges labeled $e_A$ and $e_B$.  If $\F$ is allowable, then condition (2) guarantees that the angle between these edges is the same for any such $F$, and we will denote this angle by $\alpha_{A,B}$.   By Lemma \ref{lem:angles}, the entire structure $\F$ is determined by the lengths of the edges $e_A$ and the angles $\alpha_{A,B}$ for twist-related $A,B$.

An allowable parallelotope structure $\F$  induces a (path) metric $d_\F$ on $\SP$.  Different parallelotope structures may induce the same metric on $\SP$; for example if $\SP=\SaG$ is an $n$-torus consisting of a single parallelotope $F$ with sides identified, then changing $F$ by any element of $\GL(n,\Z)$ results in the same metric $d_\F$.

Note that an edge-path which was convex in the standard cube complex structure $(\USP,d_\E)$ is no longer necessarily convex in the metric space $(\USP,d_\F)$. We define a {\em hyperplane}  $H_A$ in $(\SP,\F)$ to be the set of midplanes dual to edges with  label $A$.  This is the usual notion of hyperplane if $\F=\E$, but for arbitrary $\F$ lifts of hyperplanes are no longer necessarily convex in  $(\USP,d_\F)$.

\subsection{Rotating a hyperplane in $\SP$}\label{sec:rotating}

\begin{figure}
\begin{center}
    \begin{tikzpicture}[scale=1.2]
    \fill[blue!20] (-1.5,0) to (4,0) to (4,1) to (-1.5,1) to (-1.5,0);
        \draw (-1.5,0)  to (4,0);
             \draw (-1.5,1)  to (4,1);
                 \draw[densely dotted] (-1.5,.5) to (4,.5);
    \foreach \x in {-1,0,2,3.5}
    {
     \fill[black] (\x,0) circle (.05);
     \fill[black] (\x,1) circle (.05);
     \draw (\x,0) to  (\x,1);
     \draw (\x,0) to (\x,-.75);
     \draw (\x,1) to (\x-.5,1.5);
          \draw (\x,1) to (\x+.4,1.5);
          \draw ([shift=(0:.3cm)] \x,0) arc (0:90:.3);
   }
        \node (ew) at (1,1.2) {$e_w$};
               \node (eB) at (2.75,1.2) {$e_B$};
    \node (e) at (-.2,.65) {$e_A$};
     \node [left] (H) at (-1.5,.5) {$\kappa(H_A)$};
          \node [right] (alpha0) at (.2,.2) {$\alpha_{A,w}$};
                  \node [right] (alpha01) at (2.2,.2) {$\alpha_{A,w}$};
   \begin{scope} [yshift=-3cm]

  \draw (-1.5,.15)  to (4,.15);
    \fill[blue!20] (-1.5,.15) to (4,.15) to (4,1) to (-1.5,1) to (-1.5,.15);
             \draw (-1.5,1)  to (4,1);
                 \draw[dotted] (-1.5,.575) to (4,.575);
    \foreach \x in {-1,0,2,3.5}
    {
     \fill[black] (\x-.4,.15) circle (.05);
     \fill[black] (\x,1) circle (.05);
     \draw (\x-.4,.15) to  (\x,1);
     \draw (\x-.4,.15) to (\x-.4,-.75);
     \draw (\x,1) to (\x-.4,1.5);
          \draw (\x,1) to (\x+.4,1.5);
          \draw ([shift=(0:.3cm)] \x-.4,.15) arc (0:70:.3);
   }
     \node (eA) at (-.4,.75) {$e_A$};
       \node (eB) at (2.75,1.2) {$e_B$};
      \node (ew) at (1,1.2) {$e_w$};
      \node [left] (H) at (-1.5,.575) {$\kappa(H_A)$};
        \node [right] (alpha) at (-.2,.35) {$\alpha'_{A,w}$};
           \node [right] (alpha) at (1.8,.35) {$\alpha'_{A,w}$};
\end{scope}
    \end{tikzpicture}
  \caption{Rotating  $H_A$ in the direction $w\in\UL(A).$ Here  $B$ splits $w$}\label{fig:rotation}
  \end{center}
  \end{figure}
  \begin{definition} Suppose $(\SP,\F)$ is an allowable parallelotope structure.  Let $A\in \bPi\cup V$, $H_A$ the hyperplane in $\SP$ labeled $A$, $v\in \max(A)$ and $w \in \UL(v)$. Then {\em rotating $H_A$ in the direction of $w$} means changing the angle $\alpha_{A,w}$ to $\alpha'_{A,w}$, so that for every $B$ that splits $w$, the angle between the edges $e_A$ and $e_B$ is $\alpha'_{A,w}$.  More generally, {\em rotating $H_A$} means rotating it in the direction of one or more $w \in  \UL(v)$.  The length of the edge $e_A$ remains unchanged under rotation.
 \end{definition}

Rotating a hyperplane $H_A$ in an allowable paralleotope structure $\F$ gives rise to a new parallelotope structure $\F'$ which still satisfies the first two conditions for allowability.  This is because the subspaces $K_i$ in the definition of an allowable parallelotope are unchanged by the rotation.  However,  if  $A$ is twist-dominant, then to achieve the third allowability condition, one needs to do comparable rotations to every hyperplane $H_{A'}$ with $ \max(A')= \max(A)=\{v\}$.

Recall that we have a partial ordering on equivalence classes in $V$ given by $[v] \leq [w]$ if $\lk(v) \subseteq \st(w)$.  Choose a total ordering $\prec$ on $V$ consistent with this partial ordering.  Given a compatible collection of partitions $\bPi$,  we can extend this to a total order on  $\bPi \cup V$  satisfying $[\max(A)] < [\max(B)] \Rightarrow A \prec B$.

\begin{proposition}\label{prop:shearing} Every allowable parallelotope structure on $\SP$ can be obtained from an orthotope structure on $\SP$ by a sequence of rotations.
\end{proposition}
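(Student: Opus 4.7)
The plan is a direct construction: starting from an orthotope structure on $\SP$, perform one rotation per twist-related angle parameter of $\F$ in order to reach $\F$.

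I take the starting point $\F_0$ to be the orthotope structure on $\SP$ in which each edge $e_A$ has the length it has in $\F$ and every angle equals $\pi/2$; this is manifestly allowable. Since rotations preserve edge lengths, every intermediate structure will still have the correct edge lengths, so only the angles remain to be matched.

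By Lemma~\ref{lem:angles}, $\F$ is determined by its edge lengths and the angles $\alpha_{A,B}$ between twist-related labels $A,B$.  If $v=\max(A)<_t w=\max(B)$, then $w$ is twist-dominant: when $B\in\bPi$, Lemma~\ref{lem:tdominant} forces $\max(B)=\{w\}$; when $B\in V$, $B=w$.  In either case, condition~(3) of Definition~\ref{def:allowable}, applied to $B$, identifies $\alpha_{A,B}$ with $\alpha_{A,w}$.  So it suffices to match the finitely many parameters $\alpha_{A,w}$ indexed by pivots $(A,w)$ with $w\in\UL(\max(A))$.  When $v=\max(A)$ is itself twist-dominant, a second application of condition~(3) gives $\alpha_{A,w}=\alpha_{v,w}$ for every label $A$ with $\max(A)=\{v\}$, so pivots with the same $(v,w)$ represent a single parameter.

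For each pivot I perform one rotation of $H_A$ in the direction of $w$, bundling together all hyperplanes with $\max=\{v\}$ when $v$ is twist-dominant, as noted in Section~\ref{sec:rotating}.  This sets the parameter $\alpha_{A,w}$ to any prescribed value while preserving allowability.  The remaining work, and the main obstacle, is to check that distinct pivots modify disjoint parameters, so that the rotations can be performed sequentially without undoing one another.  A rotation at pivot $(A,w)$ alters $\alpha_{A,B}$ only for those $B$ that split $w$; by Lemma~\ref{lem:base}, such $B$ satisfy $w\leq_f\max(B)$, and the twist-dominance of $w$ combined with Lemma~\ref{lem:tdominant} forces $\max(B)=\{w\}$ when $B$ is a partition (and $B=w$ when $B$ is a vertex).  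Hence the twist-related angles altered are precisely the representatives of the single parameter $\alpha_{A,w}$.  Performing the rotations in any order compatible with $\prec$ — say, processing pivots $(A,w)$ in increasing order of $A$ — therefore yields a structure whose edge lengths and twist-related angles coincide with those of $\F$, and which equals $\F$ by Lemma~\ref{lem:angles}.
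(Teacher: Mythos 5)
Your proof follows the same basic strategy as the paper's: start from the orthotope with the correct edge lengths, invoke Lemma~\ref{lem:angles} to reduce to matching the twist-related angles, and reach $\F$ by a sequence of rotations.  The paper realizes this by a descending induction over the order $\prec$, with the invariant that after rotating $H_A$ every twist-related angle between $e_B$ and $e_C$ with $B,C\succeq A$ already agrees with $\F$.  You instead try to short-circuit the bookkeeping by claiming that the pivots touch pairwise disjoint sets of angles, so that the rotations may be carried out in any order.

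That disjointness claim is false in general, and since you flag it as ``the main obstacle'' this is a genuine gap.  Take two distinct twist-dominant generators with $v\sim_t w$ (for example, any two vertices when $\G$ is a complete graph, or whenever $\st(v)=\st(w)$ with $v\neq w$).  Then $w\in\UL(v)$ and $v\in\UL(w)$, so for labels $A,B$ with $\max(A)=\{v\}$ and $\max(B)=\{w\}$, both $(A,w)$ and $(B,v)$ are legitimate pivots, and both rotations touch the same angle $\alpha_{A,B}$.  Your analysis shows only that the rotation at $(A,w)$ touches exactly the representatives of $\alpha_{A,w}$; it does not rule out that $A$ is itself among the $B'$'s touched by some other pivot, which is exactly what happens here.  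The construction nevertheless succeeds, because the two rotations assign $\alpha_{A,B}$ the same target value: two applications of condition~(3) of Definition~\ref{def:allowable} give $\alpha_{A,w}=\alpha_{A,B}=\alpha_{B,v}$ in $\F$, so later rotations never undo earlier ones even when they revisit an angle.  But this ``consistency of overwrites'' is the claim your argument actually needs, and it has to be stated and verified; disjointness alone is not available.  The paper's descending induction sidesteps the issue completely, since at the $A$-step it simply (re)assigns every angle $\alpha_{A,C}$ with $C\succ A$ to its target value, regardless of what earlier steps may have done to it.
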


\begin{proof}  Suppose $\F$ is an allowable parallelotope structure on $\SP$ and let $\alpha_{A,B}$ denote the angle between edges $e_A, e_B$ for any  adjacent 
 pair $A,B$.  Let $\F_0$ denote the rectilinear structure with the same edge lengths as $\F$.  Using the total order $\prec$, we will rotate the hyperplanes in $\F_0$ in descending order and show inductively that after rotating $H_A$, we get a parallelotope structure on $\SP$ satisfying
\begin{itemize}
\item[(i)] The metric on each paralleotope is allowable and agrees on common faces.
\item[(ii)] For all $B,C\succeq A$ such that $ \max(B) \leq_t  \max(C)$, the angle between $e_B$ and $e_C$ equals $\alpha_{B,C}$.
\end{itemize}

Say by induction that we have rotated all the hyperplanes $H_{A'}$ with $A \prec A'$.  Rotating $H_A$ only changes the angles between $e_A$ and other edges.  By induction, condition (ii) is already satisfied whenever $B,C\succ A$.  We now rotate $H_A$ so that condition (ii) also holds when $A=B$, i.e. when $A \prec C$ and $ \max(A) \leq_t  \max(C)$.  As observed above, rotating preserves allowability of individual parallelotopes, and by definition it agrees on common faces, so condition (i) continues to hold.

 At the end of this process, when we have rotated all the hyperplanes as needed, we arrive at a parallelotope structure in which the angles between any two edges $e_A,e_B$ with $ \max(A) \leq_t  \max(B)$,  agree with those in $\F$.  This implies that this structure also satisfies the third condition for allowability.  So by Lemma \ref{lem:angles}, it must in fact be equal to $\F$.
\end{proof}

\begin{proposition} \label{prop:CAT0}  Let $\F$ be an allowable parallelotope structure on $\SP$ and suppose the induced path metric $d_\F$ is locally CAT(0).  Suppose $\F'$ is obtained from $\F$ by a hyperplane rotation. Then
\begin{enumerate}
\item  $d_{\F'}$ is also locally CAT(0).
\item Any twist-minimal hyperplane which is locally convex with respect to $d_{\F}$ remains locally convex with respect to $d_{\F'}$.
\end{enumerate}
\end{proposition}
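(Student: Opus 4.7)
Proof plan:

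I would approach both parts via the standard link characterization of local CAT(0): a piecewise Euclidean complex is locally CAT(0) if and only if the link $\Lk(p)$ at every vertex $p$, with its induced piecewise spherical metric, is CAT(1). The first task is to localize the effect of rotating $H_A$ in direction $w$. By definition, such a rotation alters the angle $\alpha_{A,B}$ only for labels $B$ that split $w$, and it only modifies parallelotopes that contain both an edge $e_A$ and an edge $e_B$ for some such $B$. Consequently, $\Lk(p)$ is unchanged unless $p\in\kappa(H_A)$, and when it does change, only the spherical distances from the single vertex $\hat A$ (the direction of $e_A$ at $p$) to vertices $\hat B$ with $B$ splitting $w$ are altered, all by exactly the same amount to the common new value $\alpha'_{A,w}$.

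For part (1), the main step is to realize this link modification as a local isometry. Choose $v\in\max(A)$; recall $w\in\UL(v)$, so $w$ commutes with $v$ and in $\kappa(H_A)=e_A\times\Slkv$ the ``$w$-direction'' is spanned by the edges $e_B$ with $B$ splitting $w$. I would exhibit an affine shear $\sigma_p$ of the tangent space at (a lift of) $p$ that fixes the hyperplane spanned by all directions other than $\hat A$ and tilts $\hat A$ to restore the old angle with the $w$-direction. The third clause of allowability, together with Lemma~\ref{lem:angles}, forces the angles of $\F'$ to differ from those of $\F$ only in the single controlled way described above; this is precisely what is needed to verify that $\sigma_p$ carries each $\F'$-parallelotope at $p$ isometrically onto the corresponding $\F$-parallelotope. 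The shears $\sigma_p$ are mutually compatible across vertices of $\kappa(H_A)$ because the rotation is applied uniformly. This produces a local isometry from $(\widetilde{\SP},d_{\F'})$ to $(\widetilde{\SP},d_\F)$ on a neighborhood of each lift of $p$, so CAT(1) of $\Lk_{\F'}(p)$ follows from the hypothesis that $d_\F$ is locally CAT(0).

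For part (2), I would use the criterion that a hyperplane $K$ is locally convex in a locally CAT(0) piecewise Euclidean complex iff at every vertex $p\in\kappa(K)$ the subcomplex $\Lk_K(p)\subset\Lk(p)$ of directions parallel to $K$ is $\pi$-convex. Let $H$ be a twist-minimal hyperplane with $H$ locally convex in $\F$. If $H$ does not lie in $\kappa(H_A)$ then rotation does not affect $\Lk(p)$ for $p\in H$ and there is nothing to prove. Otherwise, I would use Corollary~\ref{cor:tmin_characterize} and Lemma~\ref{lem:TDSingle}: twist-minimality of $H=H_C$ means no label $C'$ satisfies $\max(C')<_t \max(C)$, so in particular $\max(A)$ and $\max(C)$ are not twist-related in the forbidden direction. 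This constrains which angles in $\Lk_H(p)$ move under the rotation and, together with the local isometry constructed in part (1), shows that $\Lk_H(p)$ in $\F'$ is obtained from $\Lk_H(p)$ in $\F$ by an ambient rotation of $\Lk(p)$, which preserves $\pi$-convexity.

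The main obstacle, and the step that will require the most care, is verifying that the local shears $\sigma_p$ genuinely assemble into an isometry on a neighborhood of $\kappa(H_A)$: one must check that the shear is compatible with the gluings of parallelotopes along their faces, and this is precisely where the allowability axioms — in particular clause (3), forcing all hyperplanes with the same maximal label to rotate together — are indispensable. Once that compatibility is nailed down, both conclusions follow formally from preservation of CAT(1) in the link and of $\pi$-convexity for twist-minimal hyperplanes under ambient rotation.
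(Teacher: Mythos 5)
The overall strategy — reduce to the link condition and localize the effect of a rotation to $\kappa(H_A)$ — matches the paper, but the central step of your argument does not go through. You propose to produce a local isometry $\sigma_p\colon(\widetilde{\Sa}^\Pi,d_{\F'})\to(\widetilde{\Sa}^\Pi,d_\F)$ near each lift of $p$ by an affine shear of the tangent space that fixes the hyperplane complementary to $\hat A$ and tilts $\hat A$ back to its $\F$-position, and you assert that this shear carries each $\F'$-parallelotope \emph{isometrically} onto the corresponding $\F$-parallelotope. That assertion is false: a shear is never an orthogonal transformation, so it cannot be a Euclidean isometry between two parallelotopes of different shapes, and after a nontrivial rotation the parallelotopes containing $e_A$ genuinely do change shape (the length $\|e_A\|$ is fixed but its angle to the $w$-direction changes, hence its orthogonal component changes). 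A concrete instance already shows the claim cannot be repaired: take $\Gamma$ a triangle, so $\Sa^\Pi=\Sa_\Gamma$ is a $3$-torus with one vertex and one maximal cube. Skewing $\alpha_{a,b}$ replaces a cubical fundamental domain by a parallelepiped; there is no isometry of $\mathbb R^3$ taking the one to the other while fixing the complementary coordinate plane, and indeed the two flat tori are generically not even isometric. Nonetheless their vertex links are both round $S^2$'s. This is exactly the distinction the paper's argument respects and yours elides: the conclusion one needs is that the \emph{links} are isometric, and this is not achieved by an ambient isometry compatible with the parallelotope structure.

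The paper's route avoids the problem entirely. Using $\Sa_\Gamma=\kappa(H)\cup_{\partial\kappa(H)}Y$ and the fact that rotation changes nothing in $\partial\kappa(H)$ or $Y$, one reduces to checking that the induced metric on $\Lk(x,\kappa(H))$ is unchanged for $x\in\partial\kappa(H)$. The key geometric input — which your sketch does not use — is that $\mathbb K_{\UL(v)}$ is a \emph{flat torus}, because elements of $\UL(v)$ pairwise commute and are twist-dominant, so by allowability condition (3) the characteristic cycles for them are straight. Since $e_A$ rotates only within the $\mathbb K_{\UL(v)}$-directions, $e_A\times\mathbb K_{\UL(v)}$ is, before and after the rotation, metrically an interval times a flat torus (a slab in the universal cover); the rotation merely changes the slab's width, and the link at a boundary vertex of a slab does not depend on the width. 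This is where the flat-torus structure is doing the work that your shear cannot. Your conclusion for part (2) also leans on the (nonexistent) local isometry of part (1); the paper's argument for (2) instead uses that the local metric at $p$ is unchanged and that the underlying set of a twist-minimal hyperplane is untouched by the rotation, so local convexity is a property that transfers directly.

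In short, the gap is the claim that the correcting shear $\sigma_p$ is an isometry. Replacing "isometry between neighborhoods respecting the parallelotopes" with "isometry of links, obtained by recognizing $e_A\times\mathbb K_{\UL(v)}$ as a slab over a flat torus," is not a cosmetic change: it is the essential content of the proposition.
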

\begin{proof}  The local geometry at a point $p$ is determined by the geometry of its link. Thus, it suffices to show that rotating a single hyperplane $H$ does not change the isometry type of links in $\SP$.  The carrier $\kappa(H)$ either has two boundary components, each isometric to $H$, or (if the dual edge is a loop in $\SP$) these boundary components may be identified to each other.  In either case, we will denote (the image of) this boundary by $\partial \kappa(H)$ and the interior by $\kappa^\circ (H)=\kappa(H)-\partial \kappa(H)$.  Setting $Y=\SP - \kappa^{\circ}(H)$, we have
$$ \SP = \kappa(H) \cup_{\partial \kappa(H)} Y.$$
Rotating $H$ changes the parallelotope structure only on cubes meeting the interior of $\kappa(H)$, leaving  those in $\partial \kappa(H)$ and $Y$ unchanged. Hence, it suffices to show that if $x$ is a vertex lying in $\partial \kappa(H)$, then the rotation does not change the induced metric on the link of $x$ in $\kappa(H)$  (though it may change the metric on individual simplices in that link).

To see this, note that if $H$ is dual to $e_A$ with $v\in\max(A)$, then the carrier of $H$ decomposes as
$\kappa(H) = e_A \times \Slkv$,
and by Proposition \ref{prop:generators}, $\Slkv$ contains a characteristic cycle for every $w \in \lk(v)$. Consider the subspace $\Sulv \subset \Slkv$ spanned by the characteristic cycles for $w \in \UL(v)$.  Elements of $\UL(v)$ commute and are twist-dominant, so this subspace is a torus with a flat metric.   Moreover, as elements of $\UL(v)$ commute with every element of $\lk(v)$ we have a further (combinatorial) decomposition $\kappa(H_A)=e_A\times \Sulv\times \mathbb{K}_{\lk(v)\setminus \UL(v)}$. The edge $e_A$ can only rotate in the direction of $\Sulv$.  Thus, viewing $e_A \times \Sulv$  geometrically as an interval cross a torus, this rotation changes only the width of the interval.  In particular, the rotation does not change the local geometry of $\kappa(H)$.  This proves (1).

For (2), let $L$ be any twist-minimal hyperplane of $\SP$ and $p$ be a point of $L$.  Then as was just shown,  the local metrics at $p$ with respect to $d_\F$ and $d_{\F'}$ are the same.  Since $L$ is twist-minimal, it is preserved setwise by rotation. Thus if $L$ was locally convex before rotation, it remains locally convex afterward.
\end{proof}

 \begin{corollary}\label{cor:CAT(0)}
If $\F$ is an allowable parallelotope structure on $\SP$ then the induced path metric $d_\F$ is locally CAT(0).
 \end{corollary}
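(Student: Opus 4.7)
The corollary is essentially immediate from the two preceding propositions; my plan is to assemble them by induction.

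First I would check that the \emph{rectilinear} base case holds, namely that any orthotope structure on $\SP$ is locally CAT(0). By Fact~\ref{fact:NPC}, the standard structure $\E$ (in which every $k$-cube is $[0,1]^k$) is locally CAT(0). Rescaling edges so that they remain mutually orthogonal at every vertex does not change the combinatorial link of any vertex in $\SP$, and in particular does not change whether the links are flag. Hence Gromov's link condition gives local CAT(0)-ness for every orthotope structure on $\SP$, with arbitrary edge lengths.

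Next, apply Proposition~\ref{prop:shearing} to write the given allowable parallelotope structure $\F$ as the endpoint of a finite sequence
\[
\F_0,\,\F_1,\,\ldots,\,\F_n=\F,
\]
where $\F_0$ is an orthotope structure with the same edge lengths as $\F$, and each $\F_{i+1}$ is obtained from $\F_i$ by rotating a single hyperplane. By the previous paragraph, $d_{\F_0}$ is locally CAT(0). Now iterate Proposition~\ref{prop:CAT0}(1): at each step, the hypothesis that $d_{\F_i}$ is locally CAT(0) and that $\F_{i+1}$ comes from $\F_i$ by a single hyperplane rotation gives that $d_{\F_{i+1}}$ is locally CAT(0). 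After $n$ steps we conclude $d_\F$ is locally CAT(0).

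Since all of the genuine geometric content (control of links under rotation, preservation of the flag condition at a branch of the rotating hyperplane) has already been carried out in the proofs of Propositions~\ref{prop:shearing} and~\ref{prop:CAT0}, the only thing to verify carefully in the base case is that changing edge lengths in an orthotope structure does not affect the simplicial isomorphism type of links, so that Fact~\ref{fact:NPC} transfers directly. I do not anticipate any real obstacle; this corollary is purely a bookkeeping assembly of the two preceding results.
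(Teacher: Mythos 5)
Your proposal is correct and follows the same route as the paper: establish that orthotope structures are locally CAT(0) via Gromov's link condition, then use Proposition~\ref{prop:shearing} to reach $\F$ by a finite sequence of rotations and apply Proposition~\ref{prop:CAT0}(1) at each step. Your write-up simply makes explicit the induction and the base-case observation that rescaling edges in an orthotope structure does not change the combinatorial links, which the paper leaves implicit.
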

 \begin{proof} This follows from Propositions~\ref{prop:CAT0}(1) and  \ref{prop:shearing}, since any orthotope structure is CAT(0) by Gromov's link condition.
  \end{proof}

As noted above, subcomplexes which are locally convex in $(\SP,\E)$ may no longer be  convex in a general allowable parallelotope structure $(\SP,\F)$.  The following lemma specifies two  exceptions that will be important in the sequel.

\begin{lemma}\label{lem:convex} Suppose $\F$ is an allowable parallelotope structure on $\SP.$
\begin{itemize}
\item  Let $v\in V$ be twist-dominant.  Then any lift of a characteristic cycle for $v$ in $\SP$ is convex in $(\USP,d_\F).$
\item Let $A\in V \cup \bPi$ be a label with $v\in \max(A)$.  If $v$ is twist-minimal, then any lift of the hyperplane $H_A$ is convex  in $(\USP,d_\F).$
\end{itemize}
\end{lemma}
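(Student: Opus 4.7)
The plan is to combine Proposition~\ref{prop:shearing} with induction on the resulting sequence of hyperplane rotations. Proposition~\ref{prop:shearing} expresses $\F$ as the end of a sequence $\F_0, \F_1, \dots, \F_n = \F$ in which $\F_0$ is an orthotope structure and each subsequent $\F_{i+1}$ arises from $\F_i$ by a single rotation; Corollary~\ref{cor:CAT(0)} ensures $(\USP, d_{\F_i})$ is CAT(0) at every stage.  In the base structure $\F_0$, any lift of a hyperplane is a convex sub-cube-complex of the CAT(0) cube complex $\USP$, and any lift of a characteristic cycle for twist-dominant $v$ is an axis for $v$ by Corollary~\ref{cor:BranchLocus2}; both are therefore convex.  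It suffices to show each form of convexity propagates through each rotation.

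For the second bullet, the inductive step is supplied by Proposition~\ref{prop:CAT0}(2): since $\max(A)$ is twist-minimal, each rotation preserves the local convexity of $H_A$ in $\SP$.  A connected lift $\widetilde H_A$ is simply connected, since its underlying topology is inherited from the orthotope case in which hyperplanes are contractible convex subcomplexes; local convexity in the CAT(0) ambient space $(\USP, d_{\F_i})$ then upgrades to global convexity by a standard Cartan--Hadamard-type argument.

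For the first bullet, I would verify at each step that $\widetilde{\chi_v}$ remains a local geodesic.  Its combinatorial structure is unchanged by rotations and it is simply connected, so in the CAT(0) space $(\USP, d_{\F_i})$ this implies it is a geodesic line, hence convex.  At a vertex $x$ of $\widetilde{\chi_v}$, denote by $-e_A$ and $+e_B$ the incoming and outgoing half-edges of $\widetilde{\chi_v}$ at $x$, both of which have $\max = \{v\}$.  By Lemma~\ref{lem:generators} together with the product decomposition $\Sv = \chi_v \times \Slkv$ from Example~\ref{ex:decomposition}, at least one edge $e_w$ with $w \in \lk(v)$ is incident at $x$.  Reading off the interior angles in the parallelograms $e_A \times e_w$ and $e_B \times e_w$ at $x$ yields
\[
\angle(-e_A,+e_w)+\angle(+e_w,+e_B)=(\pi-\alpha_{A,w})+\alpha_{B,w},
\]
and the allowability condition (3) applied to the twist-dominant vertex $v$ forces $\alpha_{A,w}=\alpha_{B,w}=\alpha_{v,w}$; the same computation applies to any label $C$ commuting with both $A$ and $B$, so every two-edge link path from $-e_A$ to $+e_B$ has length exactly $\pi$, independently of the rotation.

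The main obstacle is upgrading the bound $d(-e_A,+e_B)\leq\pi$ furnished by these witness paths to the equality required for the local geodesic condition.  The inductive hypothesis supplies equality before the rotation, and the link at $x$ remains CAT(1) after the rotation by Corollary~\ref{cor:CAT(0)}; the cleanest completion is to verify that the exhibited length-$\pi$ paths are themselves geodesics in the link, by checking that the turning angle at the intermediate vertex $+e_w$ is exactly $\pi$.  This reduces in turn to an allowability computation in the link of $+e_w$ involving only labels that commute with $w$, and is where I expect the principal technical work to lie.
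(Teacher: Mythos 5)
Your second bullet is fine and matches the paper's approach: the paper also cites Propositions~\ref{prop:shearing} and~\ref{prop:CAT0}(2) to propagate local convexity through the rotation sequence, with the upgrade to global convexity (a complete connected locally convex subspace of a CAT(0) space is convex) left implicit.

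For the first bullet, however, there is a genuine gap, which you yourself flag. You correctly observe via condition~(3) of Definition~\ref{def:allowable} that every two-edge link path from $-e_A$ to $+e_B$ through a commuting direction has length exactly $\pi$, but this only gives $d(-e_A,+e_B)\le\pi$; you need $\ge\pi$ to conclude the characteristic cycle is a local geodesic. Your proposed repair --- checking that the two-edge path turns by $\pi$ at the midpoint and running a recursion in the link of $+e_w$ --- is plausible but is left undone, and it is exactly where the content lies. The paper avoids this entirely and, notably, also avoids your induction on rotations for this bullet. It argues directly in $(\SP,\F)$: at a vertex $x$ of $\chi_v$ with incoming edge $e_A$ and outgoing edge $e_B$ (both with $\max=\{v\}$), the carriers $\kappa(H_A)=e_A\times\Slkv$ and $\kappa(H_B)=e_B\times\Slkv$ are glued along $\{x\}\times\Slkv$, and condition~(3) forces the parallelotopes $e_A\times c$ and $e_B\times c$ to carry \emph{identical} angle data $\alpha_{v,C}$. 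Thus the union develops as a uniformly skewed product $(e_A\cup e_B)\times\Slkv$ in which $e_A\cup e_B$ is a straight segment, giving the angle $\pi$ directly, and local geodesity then lifts to a geodesic in the CAT(0) cover. In short: your two-edge computation is the shadow of the correct product-structure observation, but by working in the link you are forced to prove a lower bound that the product viewpoint gets for free, and by wrapping everything in the rotation induction you add machinery the paper does not need for this half of the lemma.
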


\begin{proof} If $v$ is twist-dominant, then condition (3) in Definition \ref{def:allowable} guarantees that consecutive edges in a characteristic cycle for $v$ have angle $\pi$ in $(\SP,d_\F)$. Since $(\SP,d_\F)$ is locally CAT(0), the lift of the characteristic cycle to $\USP$ is geodesic and convex. The second statement follows from Propositions~\ref{prop:shearing} and \ref{prop:CAT0}(2).
\end{proof}

  \subsection{Straightening an allowable parallelotope structure}\label{sec:straightenSP}

 In this section we show how to straighten an allowable parallelotope structure  $\F$ on $\SP$ to obtain an orthotope structure, while maintaining allowability throughout the process.
  \begin{remark}\label{rmk:MaxOfEdge}It will be convenient to describe the straightening process in terms of what it does to the edges of $\SP$, rather than its dual hyperplanes.  In particular, if an edge $e_A$ is dual to a hyperplane $H_A$ and $m\in \max(A)$ we will say that $m$ is a \emph{maximal element of $e_A$}.\end{remark} 

We begin by straightening a single parallelotope $F\in \F$. The straightening procedure for  $F$ will depend only on the equivalence classes $[ \max(A)]$ of the edges $e_A$ in $F$.  Therefore, it suffices to describe the straightening process in the case where all edges are labeled $e_v$ for some $v \in V$.

Fix a vertex $x$ in $F$ with all angles acute or right.  Let $E$ be the set of edges emanating from $x$.  We can view $E$ as a set of $n$ linearly independent vectors in the positive orthant of $\R^n$.
Let $\prec$ be a total ordering on $V$ as described in Section~\ref{sec:rotating}.  For each edge $e_v$,  define subspaces
\begin{align*}
K_{v} &=  \textrm{span of } \{e_w \in E \mid w \in \st^+(v)\}\\
K_{v}^{\prec} &= \textrm{span of } \{e_w \in E \mid w \in \st^+(v), v \prec w \}
\end{align*}

For $e_v,e_w \in E$, set $L_{v,w}= K_v \cap K_w$.  Recall that $F$ is allowable if whenever $v, w$ are not twist-related, $K_v \cap L_{v,w}^\perp$ is orthogonal to $K_w \cap L_{v,w}^\perp$.

Now define a new basis $\{b_v\}$ for  $\R^n$ as follows.  For each $v$,  let $b_v$ be the unit normal vector to $K_v^\prec$ in $K_v^\prec \oplus \langle e_v \rangle$.  In the case where $K_v^\prec$ is empty, $b_v$ is just the unit vector in the direction of $e_v$.
With respect to this basis, we have
$$e_v = r_vb_v  + \sum_w r_{v,w} b_w$$
for some $r_v >0,  r_{v,w}  \geq 0$, where the sum is taken over all $w$ with $e_w \in  K_{v}^{\prec} $.
 In particular, this set of vectors $\{b_w \}$ is also a basis for $K_v^\prec$.

\begin{lemma}  $F$ is allowable if and only if for any two edges $e_v,e_w \in E$,  $b_v$ is orthogonal to $b_w$. That is, the vectors $\{b_v\}$ span an orthotope.
\end{lemma}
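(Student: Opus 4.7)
The plan is to prove both implications by linear algebra, with the bookkeeping leveraging the fact that the change of basis from $\{e_v\}$ to $\{b_v\}$ is triangular with respect to $\prec$ in a way compatible with the star/link structure of $\G$. First I would verify two preliminaries: that $\{b_v\}$ is a basis of $\R^n$, and more strongly, that $K_v$ equals the span of $\{b_u : u \in \st^+(v)\}$ for every $v$. The former is immediate since $b_v$ lies in the span of $\{e_u : u \in \st^+(v),\ v \preceq u\}$, so listing vertices in increasing $\prec$-order gives a change-of-basis matrix that is triangular with nonzero diagonal. For the latter, each $b_u$ with $u \in \st^+(v)$ is a linear combination of $e_s$ with $s \in \st^+(u)$ and $u \preceq s$, and a short check (using transitivity of $\leq_t$ together with $v \in \st(u) \subseteq \st(s)$) places each such $s$ in $\st^+(v)$, whence $b_u \in K_v$; a dimension count then finishes the job.

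Granting this, the reverse direction is almost automatic. Assuming $\{b_v\}$ is orthonormal, for twist-unrelated $v, w$ the intersection $L_{v,w} = K_v \cap K_w$ equals the span of $\{b_u : u \in \st^+(v) \cap \st^+(w)\}$, so $K_v \cap L_{v,w}^\perp$ and $K_w \cap L_{v,w}^\perp$ are spanned by the disjoint sets $\{b_u : u \in \st^+(v) \setminus \st^+(w)\}$ and $\{b_u : u \in \st^+(w) \setminus \st^+(v)\}$ respectively. These are disjoint subsets of an orthonormal set, so the two subspaces are orthogonal, proving allowability.

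For the forward direction, fix $v \neq w$ with $v \prec w$ and assume $F$ is allowable. Suppose first that $v, w$ are twist-related; compatibility of $\prec$ with the partial order forces $w \in \UL(v)$, and repeating the star-transitivity argument from the preliminaries shows every $s$ with $s \in \st^+(w)$ and $w \preceq s$ lies in $\st^+(v)$ with $v \prec s$, so $e_s \in K_v^\prec$. Hence $b_w \in K_v^\prec$, and $b_v \perp K_v^\prec$ by construction yields $b_v \perp b_w$. Suppose instead that $v, w$ are not twist-related; then every $u \in \UL(v) \cap \UL(w)$ must satisfy $u >_t v$ strictly, for otherwise $u \sim_t v$ would give $\st(v) = \st(u) \supseteq \st(w)$, contradicting the twist-unrelated hypothesis. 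Thus $v \prec u$ and $L_{v,w} \subseteq K_v^\prec$, which places $b_v$ in $K_v \cap L_{v,w}^\perp$; symmetrically $b_w \in K_w \cap L_{v,w}^\perp$; and the allowability hypothesis gives $b_v \perp b_w$.

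The main obstacle is the careful bookkeeping around $\sim_t$-equivalent vertices, since $\prec$ is unconstrained inside a $\sim_t$-class; one must verify in each subcase that the set-theoretic relationships among $\st^+(v), \st^+(w), \UL(v), \UL(w)$ interact with $\prec$ as the argument requires. Once these containments are in place the linear algebra is essentially forced by the definition of $b_v$ as a unit normal to $K_v^\prec$.
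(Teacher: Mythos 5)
Your argument is correct and follows essentially the same route as the paper's proof: in both, the twist-related case is handled by showing $b_w\in K_v^\prec$ whenever $v\prec w$, the twist-unrelated case uses that $L_{v,w}=K_v^\prec\cap K_w^\prec$ so $b_v\in K_v\cap L_{v,w}^\perp$, and the converse rests on $K_v=\operatorname{span}\{b_u:u\in\st^+(v)\}$. You merely make explicit (via triangularity of the change of basis) a spanning fact the paper leaves implicit, and you present the directions in the opposite order.
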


\begin{proof} Assume $F$ is allowable.  Suppose $v$ and $w$ are twist-related and say $v \prec w$.   In this case,
$K_w^\prec \oplus \langle e_w \rangle  \subset K_v^\prec$, so by definition, $b_w$ lies in $K_v^\prec$ and hence it is orthogonal to $b_v$. (This is always true, even without assuming allowability.)

So now suppose $v$ and $w$ are not twist-related.  Then
$$L_{v,w} = K_v \cap K_w = K_v^\prec \cap K_w^\prec $$
since any $u \in \st^+(v) \cap \st^+(w)$ must be strictly greater than either $v$ or $w$ with respect to the ordering $\leq_t$, and hence also with respect to $\prec$.
Since  $b_v \in (K_v^\prec)^\perp \subset L_{v,w}^\perp$, and  $b_w \in (K_w^\prec)^\perp \subset L_{v,w}^\perp$,  the allowability condition implies that $b_v$ and $b_w$ are orthogonal.

Conversely, assume the all of the $b_*$ vectors are orthogonal to each other.  For $v,w$ not twist-related, a basis for  $K_v \cap L_{v,w}^\perp$ is given by the set of $b_u$ with $v \leq_t u$ and $w \nleq_t u$, and similarly, a basis for $K_w \cap L_{v,w}^\perp$ is given by the set of $b_z$ with $w \leq_t z$ and $v \nleq_t z$.  These sets are disjoint, and any two such $b_u$ and $b_z$ are orthogonal, so  $K_v \cap L_{v,w}^\perp$ is orthogonal to $ K_w \cap L_{v,w}^\perp$ as required.
\end{proof}

Next we describe a process for straightening $F$.  For $t \in [0,1]$, set
$$e^t_v = s_t (r_vb_v  + t \sum_{w} r_{v,w} b_w)$$
where $s_t \in \R^+$ is chosen so that $\|e^t_v\|=\|e_v\|$.  Then $e^1_v = e_v$ and $e^0_v=\|e_v\| b_v$.
Let $F^t$ be the parallelotope spanned by $\{e_v^t\}$.

\begin{lemma}  If $F$ is allowable, then $F^t$ is allowable for all $t \in [0,1]$, and  $F^0$ is an orthotope.
\end{lemma}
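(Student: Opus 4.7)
The plan is to identify the orthonormal basis $\{b_v^t\}$ that one would extract from $F^t$ via the same recipe used to construct $\{b_v\}$ from $F$ with the original $\{b_v\}$; the preceding lemma will then immediately give allowability of $F^t$.

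My first step is to show that
\[
K_v^{\prec,t} := \mathrm{span}\{e_w^t : w \in \st^+(v),\ v \prec w\}
\]
equals $K_v^\prec$ for every $v$. Since $F$ is allowable, the previous lemma makes $\{b_u\}$ orthogonal, and then an upper-triangular argument applied to the expansions $e_w = r_w b_w + \sum_u r_{w,u} b_u$ shows $K_v^\prec = \mathrm{span}\{b_u : u \in \st^+(v),\ v \prec u\}$. For $w \in \st^+(v)$ with $v \prec w$, the expansion of $e_w^t$ involves only indices $u \in \st^+(w)$ with $w \prec u$; by transitivity of $\leq_t$ and compatibility of $\prec$ with $\leq_t$, each such $u$ also satisfies $u \in \st^+(v)$ and $v \prec u$, so $e_w^t \in K_v^\prec$. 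The reverse inclusion holds because the matrix expressing $\{e_w^t\}$ in terms of $\{b_u\}$ is upper-triangular with respect to $\prec$ with positive diagonal entries $s_t r_w$, hence invertible.

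The second step identifies $b_v^t$ with $b_v$. The correction term in $e_v^t = s_t r_v b_v + s_t t \sum_w r_{v,w} b_w$ lies in $K_v^\prec = K_v^{\prec,t}$, so
\[
K_v^{\prec,t} \oplus \langle e_v^t \rangle \ = \ K_v^\prec \oplus \langle b_v \rangle \ = \ K_v^\prec \oplus \langle e_v \rangle.
\]
The unit normal $b_v^t$ to $K_v^{\prec,t}$ in this ambient space, chosen on the side of $e_v^t$, therefore coincides with $b_v$; the positive coefficient $s_t r_v$ of $b_v$ in $e_v^t$ guarantees the correct side. Applying the previous lemma, orthogonality of $\{b_v^t\} = \{b_v\}$ yields allowability of $F^t$ for every $t \in [0,1]$. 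For the last claim, $e_v^0 = s_0 r_v b_v$ is a positive multiple of the mutually orthogonal $b_v$, so the edges of $F^0$ are pairwise orthogonal and $F^0$ is an orthotope.

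The main obstacle I anticipate is the transitivity-based index-set containment used in the first step, namely that $\{u \in \st^+(w) : w \prec u\} \subseteq \{u \in \st^+(v) : v \prec u\}$ whenever $w \in \st^+(v)$ and $v \prec w$. This uses that $\prec$ refines the partial order on $\leq_t$-equivalence classes; once it is in place, the rest is routine upper-triangular linear algebra whose positivity of diagonal entries persists for every $t \in [0,1]$.
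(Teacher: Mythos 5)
Your proposal is correct and follows essentially the same route as the paper: show $K_v^{\prec}(F^t)=K_v^\prec(F)$ and $K_v^{\prec}(F^t)\oplus\langle e_v^t\rangle = K_v^\prec\oplus\langle e_v\rangle$, deduce $b_v^t=b_v$, and then invoke the preceding lemma. The only difference is that you supply the upper-triangular/transitivity argument that the paper leaves implicit when asserting these subspaces are $t$-independent.
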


\begin{proof}  At all times $t$, the subspaces $K_v^\prec(F^t)$  remain unchanged, that is,
$K_v^\prec(F^t)=K_v^\prec(F)$ for all $t$, and likewise for $K_v^\prec(F^t) \oplus \langle e^t_v \rangle$.
  Hence the normal vectors $b_v$ remain fixed throughout the process. By the previous lemma, $F$ is allowable if and only if all of the $b_v$ vectors are orthogonal, or equivalently $F^0$ is an orthotope.  \end{proof}

We now want to apply the straightening procedure simultaneously to all parallelotopes in $\F$.   Suppose two maximal parallelotopes $F$ and $F'$ share a face $F_0$ in $\SP$.  If $e_A$ is an edge lying in $F_0$, with $v \in \max(A)$ then for any $w$ with $v <_tw$, $F$ and $F'$ must each contain an edge with maximal element $w$.  Since $w$ is twist-dominant, the allowability condition implies that these edges both lie along an axis for $w$, hence they are parallel.   Since the straightening procedure on $e_A$ depends only on these edges, it follows that the straightening  in $F$ and $F'$ agree on this face. Moreover, the same argument applied to the edges with maximal element $w$ shows that these edges remain parallel throughout the straightening process.  Thus, we obtain a consistent straightening, $\F^t$, of the entire complex which remains allowable at all times $t$.  We call $(\SP,\F^t)$ the \emph{straightening path} for $(\SP,\F)$.

\section{The space of skewed $\G$-complexes with untwisted markings}\label{sec:tS}

We are now ready to define a space $\tS$ of skewed $\G$-complexes with untwisted markings, that serves as an intermediary between $\SG$ and the full outer space $\OG$.

\subsection{Skewed $\G$-complexes}

Let $X$ be a $\G$-complex and $\F$ a parallelotope structure on $X$.  Define $\F$ to be {\em allowable} if there is some isomorphism $ \SP\iso X$ such that the pullback  of $\F$ is an allowable parallelotope structure on $\SP$.

\begin{lemma} Allowability of a parallelotope structure on $X$ is independent of the choice of isomorphism $\SP\iso X$.
\end{lemma}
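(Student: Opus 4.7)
The plan is to deduce the lemma from the following claim: for any isomorphism of cube complexes $\phi\colon\SP\to\Sa^{\bPi'}$ and any parallelotope structure $\F'$ on $\Sa^{\bPi'}$, the pullback $\phi^*\F'$ is allowable on $\SP$ if and only if $\F'$ is allowable on $\Sa^{\bPi'}$. Given two isomorphisms $i\colon\SP\to X$ and $i'\colon\Sa^{\bPi'}\to X$ with $i^*\F$ allowable, the composition $\phi=(i')^{-1}\circ i$ is such an isomorphism, and $i^*\F=\phi^*\bigl((i')^*\F\bigr)$, so the claim reduces the lemma to a statement purely about blowups.

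To prove the claim, I would use Corollary~\ref{cor:independent}, which says $\phi$ preserves the twist and fold orderings on edge labels. First I would check that the subspaces $K_i$ and $L_{ij}$ of Definition~\ref{def:allowable}(1) depend only on the pattern of $\leq_t$ relations among the labels on the edges of a maximal cube $\mathbf c$, not on the labels themselves: indeed $K_i$ is the span of those edges $e_k\in\mathbf c$ whose label $A_k$ satisfies $[\max(A_k)]\geq_t[\max(A_i)]$, and this is well-defined because $\st^+(v)=\{v\}\cup\UL(v)$ is constant on fold-equivalence classes, a direct consequence of Lemma~\ref{lem:twist_and_fold}. Hence $\phi$ carries the $K_i,L_{ij}$ decomposition of $\mathbf c$ to that of $\phi(\mathbf c)$; combined with the tautology that pullback of the metric is an isometry on each cube, this shows condition (1) is preserved. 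Condition (2) is preserved trivially by pullback.

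The main subtlety is condition (3), since it references the specific edge $e_v$ labeled by the vertex $v$ itself, whose image under $\phi$ need not be labeled by a vertex in $\Sa^{\bPi'}$. I would handle this by reformulating (3) as the intrinsic statement: for every twist-dominant fold class $[v]$ and every label $B$ commuting with $v$, the angle $\alpha_{A,B}$ has the same value for all labels $A$ with $[\max(A)]=[v]$ that co-occur with $B$ in a cube. This is clearly equivalent to condition (3) --- one direction is the original condition (both sides equal $\alpha_{v,B}$), and the other is recovered by specializing to $A=v$ --- and it is stated entirely in terms of fold/twist classes and angles between co-occurring edges, data that $\phi$ preserves. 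Thus condition (3) also transfers, establishing the claim and hence the lemma.

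The main obstacle is exactly this asymmetry in condition (3): Definition~\ref{def:allowable} is phrased as if we have a preferred generator edge $e_v$ in each cube, but an arbitrary isomorphism $\phi$ can swap such an edge with one dual to a partition hyperplane. The intrinsic reformulation above removes the preferred role of $e_v$ and shows all three allowability conditions depend only on cube-complex-combinatorial data (preserved by Corollary~\ref{cor:independent}) and on the induced metric (preserved by pullback).
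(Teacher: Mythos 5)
Your argument takes the same route as the paper's (which is considerably more terse): reduce to a cube-complex isomorphism between blowups and invoke Corollary~\ref{cor:independent} to show that the twist and fold orderings---and hence the three conditions of Definition~\ref{def:allowable}---transfer under pullback; your intrinsic reformulation of condition (3) just makes explicit what the paper leaves implicit. One small correction: $\st^+(v)=\{v\}\cup\UL(v)$ is \emph{not} constant on fold-equivalence classes, since $v\in\st^+(v)$ but $v\notin\st^+(v')$ for a distinct $v'\sim_f v$; what is true, and what you actually need, is that $\UL(v)$ is constant on fold classes. This suffices for the well-definedness of $K_i$ because two distinct commuting labels in a cube can never have fold-equivalent maximal vertices (if $v_k\in\lk(v_i)=\lk(v_k)$ we would have a self-loop), so for $k\neq i$ the condition $v_k\in\st^+(v_i)$ reduces to $v_k\in\UL(v_i)$, while for $k=i$ both sides of your reformulation hold trivially.
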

\begin{proof}
By Corollary~\ref{cor:independent} the twist-relation is independent of the  isomorphism $\SP\iso X$. Hence if $X$ is isomorphic to both $\SP$ and $\Sa^{\bPi'}$, then  Properties 1, 2 and 3 in Definition~\ref{def:allowable}  are satisfied by the pullback structure on $\SP$, if and only if they are satisfied for the pullback structure on $\Sa^{\bPi'}$, showing that allowability is also independent of the isomorphism.
\end{proof}

\begin{definition} A {\em skewed $\G$-complex} is a $\G$-complex $X$ together with an allowable parallelotope structure $\F$.
If all of the parallelotopes $F\in\F$ are orthotopes, we will call $(X,\F)$ a  {\em rectilinear $\G$-complex}, and if all parallelotopes are isometric to $[0,1]^k$ we will write $\F=\E$ and call $(X,\E)$ a  {\em standard $\G$-complex}.
\end{definition}

\subsection{Definition of  $\tS$} We now add untwisted markings to skewed $\G$-complexes to form a space $\tS$.

\begin{definition} A \textit{marked}, skewed $\G$-complex is a triple $(X,\F,h),$ where $(X, \F)$ is a skewed $\G$-complex  and $h\colon X\rightarrow \SaG$ is an untwisted homotopy equivalence, i.e.   for any $\G$-collapse map $c:X\rightarrow \SaG$, the composition $c\circ h^{-1}:\SaG\rightarrow \SaG$ induces an element of $U(A_\G)$ (where $h^{-1}$ is a homotopy inverse to $h$).  Two marked, skewed $\G$-complexes $(X,\F,h)$ and $(X',\F',h')$ are equivalent if there is a combinatorial isometry $i: (X,\F) \to (X',\F')$  (i.e. a map which preserves both the combinatorial structure and the metric on each parallelotope)  that commutes with the markings up to homotopy, i.e., $h \simeq h' \circ i$.
\end{definition}

The space $\tS$ is the space of equivalence classes of marked skewed $\G$-complexes with untwisted markings:
$$\tS = \{ \textrm{marked, skewed $\G$-complexes $(X,\F,h)~ |~h$ is untwisted} \} / \sim.$$
We will denote the equivalence class of $(X,\F,h)$ by $[X,\F,h]$.

Given a $\G$-complex $X$ and untwisted marking $h : X \to \SaG$,  let $U_{X,h}$ denote the subset of $\tS$ obtained by equipping $X$ with all possible allowable parallelotope structures,  i.e.
$$U_{X,h} = \{ [X, \F,h] \in \tS\} $$
We will call this a ``cell" in $\tS$. It comes equipped with a natural topology as a subspace of a Euclidean space determined by the parallelotopes in $\F$ and subject to the allowability conditions in Definition \ref{def:allowable}. Metrically, collapsing a hyperplane in $X$ corresponds to letting the length of the dual edges go to zero. The closure of $U_{X,h}$ thus consists of the cells $U_{X',h'}$ such that there exists a hyperplane collapse map $c: X \to X'$ with $h$ homotopic to  $h' \circ c$.  The topology on $\tS$ is therefore determined as a complex of spaces comprised of the cells  $U_{X,h}$, where cells are identified by collapse maps as just described. For a more detailed description of complexes of spaces and their properties, see \cite[Chapter 4.G]{Hat}.

\subsection{Contractibility of $\tS$}\label{Sec:contractibility} We will show that $\tS$ is contractible by finding a deformation retraction of $\tS$ onto the subspace of rectilinear marked $\G$-complexes; this is the space   $\SG$ defined in Section~\ref{subsec:untwisted}, which we know is contractible.  In other words, we want to find a way to straighten marked, skewed $\G$-complexes  in a way that maintains allowability and extends to a continuous straightening of the whole of $\tS$.

In order to straighten a skewed $\G$-complex $(X,\F)$ we choose an identification of $X$ with $\SP$ for some $\bPi$ and apply the straightening process described in Section~\ref{sec:straightenSP}.   We need to show that this is independent of the isomorphism $X\iso \SP$.  We note that the labeling on  $\SP$ was used in the straightening process only to order the edges in $K_v$.
By Corollary \ref{cor:independent}, any combinatorial isomorphism $i: \SP \to \Sa^{\bPi'}$ preserves the twist ordering $\leq_t$ on edge labels, so in fact we need only be concerned about what it does to the ordering $\prec$ within each twist equivalence class.

To address this problem, we will need to choose preferred representatives for points in  $\tS$.  Let $X$ be a $\G$-complex and $h\colon X\to \SaG$ an untwisted marking.  By Lemma~\ref{lem:Gcollapse} there exists a blow-up $\SP$ and  an isomorphism of cube complexes  $i: X \to \SP$ such that $c_\pi \circ i \circ h^{-1} \in U^0(\AG)$.   Suppose $j\colon X\to {\SPp}$ is another such isomorphism. 

\begin{lemma}\label{lem:GC2} Let $i\colon X\to \SP$ and $j\colon X\to {\SPp}$ be as above.  For any twist-dominant $v$,  $j \circ i^{-1}$ takes edges with maximal element $v$ to edges with maximal element $v$  (cf. Remark \ref{rmk:MaxOfEdge}).
\end{lemma}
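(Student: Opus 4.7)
The plan is to deduce the lemma from the combinatorial/metric dictionary developed in Section~\ref{sec:tminimal}, after first spelling out the hypothesis on $i$ and $j$ that is implicit in the sentence just preceding the statement: namely, that there is an untwisted marking $h\colon X\to\SaG$ with $c_\pi\circ i\circ h^{-1}\in U^0(\AG)$ and $c_{\pi'}\circ j\circ h^{-1}\in U^0(\AG)$. Without such a hypothesis the statement would fail, since composing $j$ with a nontrivial graph automorphism could permute twist-dominant fold-classes of vertices of $\G$.

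First I would set $h_1=c_\pi\circ i$ and $h_2=c_{\pi'}\circ j$. Each of these is itself a $\G$-complex collapse, hence an untwisted marking for which the decomposition used in the proof of Lemma~\ref{lem:Sing} may be taken to consist of the collapse alone. Consequently, if $i(H)=H_A\subset\SP$ and $j(H)=H_{A'}\subset\SPp$ for a hyperplane $H$ of $X$, then $\max_{h_1}(H)=\max(A)$ and $\max_{h_2}(H)=\max(A')$. The lemma therefore reduces to showing $\max(A)=\max(A')$ whenever some $v\in\max(A)$ is twist-dominant.

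Next, since $U^0(\AG)$ is closed under products and inverses, the composition $h_2\circ h_1^{-1}$ lies in $U^0(\AG)$. Writing it as a product of inversions, partial conjugations and elementary folds produces a decomposition $h_2=\varphi_n\circ\cdots\circ\varphi_1\circ h_1$ of exactly the form required by Lemma~\ref{lem:Sing}, with $h_1$ playing the role of the collapse. The conclusion of that lemma gives $\max_{h_2}(H)\sim_f\max_{h_1}(H)$, i.e.\ $\max(A)$ and $\max(A')$ belong to a common fold-equivalence class of vertices of $\G$.

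Finally I would record the short graph-theoretic observation that the fold-class of a twist-dominant vertex $v$ is the singleton $\{v\}$: if $w\sim_f v$ with $w\neq v$ and $u\neq v$ witnesses $v\geq_t u$, then $u\in\lk(v)=\lk(w)$ forces $w\in\lk(u)\subseteq\st(v)=\lk(v)\cup\{v\}$, so $w\in\lk(v)=\lk(w)$, contradicting the fact that $\G$ is simplicial. Combined with Lemma~\ref{lem:tdominant}, which gives $\max(A)=\{v\}$, the fold-equivalence $\max(A')\sim_f\{v\}$ now forces $\max(A')=\{v\}$, completing the proof. The step I expect to require the most care is the very first one: recognizing that the statement implicitly assumes $i$ and $j$ satisfy the $U^0(\AG)$-condition rather than the weaker $U(\AG)$-condition; everything after that is a formal application of Lemma~\ref{lem:Sing}, Lemma~\ref{lem:tdominant}, and the definition of twist-dominance.
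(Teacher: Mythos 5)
Your argument is correct, and you have rightly flagged a gap in the paper's proof: the opening sentence, asserting that for \emph{any} two cubical isomorphisms $i,j$ the composition $c_\omega\circ j\circ i^{-1}\circ c_\pi^{-1}$ lies in $U^0(\AG)$, fails in general, since a cubical isomorphism can realize a nontrivial graph automorphism (for $\G$ a single edge, swapping the two generators is a cubical automorphism of the torus and lies in $U(\AG)\setminus U^0(\AG)$, in which case edges with maximal element $a$ go to edges with maximal element $b$). Your reading --- that both $i$ and $j$ relate to a common marking $h$ via elements of $U^0(\AG)$, as set up in the paragraph preceding the statement --- is the one actually used when the lemma is applied to define $R_t$. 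Given that hypothesis, your route differs from the paper's: the paper observes that $U^0(\AG)$ takes a twist-dominant $v$ to a conjugate of itself or its inverse (because, by Lemma~\ref{lem:twist_and_fold}, no nontrivial fold can be applied to a twist-dominant generator), so $v$-axes in $\USP$ are carried to $v$-axes in $\widetilde{\Sa}^\Omega$ and hence edges with maximal element $v$, which lie on such axes, go to edges with the same maximal element. You instead reduce to $\max_{h_1}(H)\sim_f\max_{h_2}(H)$ via Lemma~\ref{lem:Sing}, and then observe that the fold-class of a twist-dominant vertex is a singleton --- a fact that again comes down to Lemma~\ref{lem:twist_and_fold}. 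The two proofs therefore rest on the same graph-theoretic core; yours stays at the combinatorial level and makes the dependence on Lemma~\ref{lem:Sing} explicit, while the paper's is shorter by invoking the axis picture developed in Section~3.
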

\begin{proof}
For  $i\colon X\to \SP$ and $j\colon X\to {\SPp}$  as above, the composition $c_{\omega} \circ j \circ i\inv\circ c_\pi\inv$ induces an element of $U^0(\AG)$.
 Since any element of $U^0(\AG)$ takes every twist-dominant generator $v$ to a conjugate of itself,
 the map $j \circ i^{-1} : \SP \to \SPp$ takes an axis for $v$ in $\USP$ (with respect to the standard metric) to an axis for $v$ in $\widetilde{\Sa}^\Omega$.  Edges with maximal element $v$ lie on such an axis, thus they map to edges with the same maximal element.
\end{proof}

We can now define the deformation retraction $ R_t: \tS \to \SG$ as follows.  Let $(X,\F,h)$ represent a point in $\tS$ and choose a cubical isomorphism $i : X \to \SP$ as in Lemma \ref{lem:GC2}.  Using this isomorphism, we can identify paralleotope structures on $\SP$ with parallelotope structures on $X$.  Thus the straightening path for $(\SP, \F)$ gives a path in $\tS$ defined by $R_t [X,\F,h] = [X,\F^t,h]$.

\begin{lemma}\label{lem:continuity} The deformation retraction  $ R_t: \tS \to \SG$ is  well-defined and  continuous.
\end{lemma}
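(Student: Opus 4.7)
The topology on $\tS$ is that of a complex of spaces whose cells $U_{X,h}$ are naturally parameterized by the edge-length and angle data of their allowable parallelotope structures (cf.\ Lemma~\ref{lem:angles}), and which are glued by letting certain edge lengths tend to zero along hyperplane collapses. To prove that $R_t$ is continuous, it therefore suffices to check continuity within each cell and continuity at points approached from an adjacent larger cell.

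Continuity within a single cell is straightforward once the straightening formula is rewritten appropriately. Fix $[X,\F_0,h]$ and, using Lemma~\ref{lem:GC2}, a cubical isomorphism $i\colon X\to\SP$ with $c_\pi\circ i\circ h^{-1}\in U^0(\AG)$; this identifies allowable parallelotope structures on $X$ with those on $\SP$. In the straightening of a single parallelotope, the identity $r_v b_v = e_v - P_v(e_v)$, together with the observation that $\{b_w : e_w\in K_v^{\prec}\}$ is an orthonormal basis of $K_v^{\prec}$, allows one to rewrite
\[
e_v^t \;=\; \frac{\|e_v\|}{\|e_v-(1-t)\,P_v(e_v)\|}\,\bigl(e_v-(1-t)\,P_v(e_v)\bigr),
\]
where $P_v$ denotes orthogonal projection onto $K_v^{\prec}$. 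Both the projection $P_v$ and the scalar in front depend jointly continuously on $t$ and on the edge vectors $\{e_w\}$, so the straightening varies continuously in the cell. Agreement on common faces established in Section~\ref{sec:straightenSP} then assembles these parallelotope-level straightenings into a continuous map on $U_{X,h}$.

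For continuity across cell boundaries, suppose $[X_n,\F_n,h_n]\to[X,\F,h]$ in $\tS$. By the definition of the complex-of-spaces topology, we may assume that all $[X_n,\F_n,h_n]$ lie in a single cell $U_{X_0,h_0}$ where $X_0$ admits a hyperplane collapse $c\colon X_0\to X$ with $h\simeq h_0\circ c$, and that convergence amounts to the parallelotope data of $\F_n$ converging to an assignment on $X_0$ in which the edges dual to the collapsed hyperplanes have length zero while the remaining data matches $c^{*}\F$. The reformulation above now makes the limit transparent: $P_v(e_v)$ is the unique closest point to $e_v$ in the span $K_v^{\prec}$, and this closest-point projection depends continuously on its generating vectors even as some of them degenerate to zero. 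For an edge $e_A$ whose length goes to zero, the straightened edge satisfies $\|e_A^t\|=\|e_A\|\to 0$ automatically, so $R_t[X_n,\F_n,h_n]$ converges to a parallelotope structure on $X_0$ in which those same edges still have length zero and the remainder agrees with $R_t(c^{*}\F)$. Under the gluing identification, this is exactly convergence to $R_t[X,\F,h]$.

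The main subtlety, and what forces the reformulation in terms of $P_v$, is that the basis vector $b_A$ itself has no well-defined limit as $e_A\to 0$: it is defined as the unit normal to $K_A^{\prec}$ in $K_A^{\prec}\oplus\langle e_A\rangle$, whose last summand disappears in the limit. The only way $b_A$ enters the straightening of another edge $e_v$ is through the summand $r_{v,A}b_A$, which is the $b_A$-component of $P_v(e_v)$, and the full projection $P_v(e_v)$ does stay continuous through the degeneration. Once this obstacle is circumvented, continuity of $R_t$ on all of $\tS\times[0,1]$ follows.
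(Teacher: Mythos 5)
Your reformulation of the single-parallelotope straightening in terms of the orthogonal projection $P_v$ onto $K_v^{\prec}$ is correct, and the within-cell continuity argument is fine. The gap is in the across-cell step. You assert that the closest-point projection onto $K_v^{\prec}$ ``depends continuously on its generating vectors even as some of them degenerate to zero,'' but this is false as a general statement: projecting a fixed vector onto $\mathrm{span}\{(1,0,0),\ \epsilon(0,1,0)\}\subset\R^3$ gives a constant answer for $\epsilon>0$ that jumps at $\epsilon=0$, because the span drops a dimension. The same dimension drop could in principle happen to $K_v^{\prec}$ when a dual edge length tends to zero, so the observation that $\|e_A^t\|=\|e_A\|\to 0$ does not by itself show that $R_t[X_n,\F_n,h_n]$ converges to $R_t$ of the limit. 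Your last paragraph flags precisely this obstacle with the $b_A$ basis, but the projection $P_v(e_v)$ is a function of the subspace $\mathrm{span}\,K_v^{\prec}$, which is spanned in part by the degenerating edge, so it inherits the same potential discontinuity rather than circumventing it.

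What actually makes the limit behave is specific to allowable parallelotope structures and does not appear in your argument. The collapsing edges are always dual to hyperplanes labelled by some $\WP\in\bPi$, never by generators. If $\max(\WP)$ is twist-minimal, a short check with the definitions shows that $e_\WP$ lies in no $K_A^{\prec}$ at all (any other label with the same maximal element fails to commute with $\WP$), so its collapse affects no projection. If $\max(\WP)=\{m\}$ is twist-dominant, then condition~(3) of Definition~\ref{def:allowable} forces $e_\WP$ to be parallel to the $m$-axis, and the maximal parallelotope of $X'$ into which the degenerate cube collapses contains some other edge $e_B$ with $m\in\max(B)$ pointing in that same direction (this is exactly the observation used in Section~\ref{sec:straightenSP} to show that straightenings of adjacent parallelotopes agree on shared faces). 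Consequently $K_A^{\prec}$ does not drop dimension in the limit, and only because of this does $P_v$ vary continuously across the cell boundary. This is the content hidden behind the paper's terse remark that ``the straightening paths in every cell are defined using the same ordering $\prec$''; without invoking it, the continuity claim in your proposal is unjustified.
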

\begin{proof}
 The straightening path depends only on which edges in the parallelotope structure are twist-dominant.  If $i\colon X\to \SP$ and $j\colon X\to {\SPp}$ are two identifications of $X$ with blowups then by Lemma \ref{lem:GC2}, $j\circ i^{-1}$ takes twist-dominant edges to twist-dominant edges, so for each $t$ the straightening path induced by $i$ is isometric to the straightening path induced by $j$. 

 It is clear from the definition of the straightening path that $R_t$ is continuous on each cell $U_{X,h}$ of $\tS$.  It suffices to show that $R_t$ is also continuous on the closure of each cell. The closure of $U_{X,h}$ consists of all the cells $U_{X',h'}$ such that there exists a collapse map $c: X \to X'$
with $h$ homotopic to $h' \circ c$.  Since straightening paths preserve edge lengths, a path $[X,\F^t,h]$ in $U_{X,h}$ will collapse to a path in $U_{X',h'}$ when the appropriate edge lengths go to zero.  Moreover, since the straightening paths in every cell are defined using the same ordering $\prec$ on $V$, this path will agree with $R_t$ on $U_{X',h'}$.
\end{proof}

In light of Corollary~\ref{cor:SGcontractible} we conclude:

\begin{corollary}\label{cor:tScontractible}  The space  $\tS$ is contractible.
\end{corollary}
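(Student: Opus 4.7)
The plan is straightforward given the constructions of Section~\ref{Sec:contractibility}: invoke the continuous homotopy $R_t\colon \tS\to \tS$ built there, which realizes $\SG$ as a strong deformation retract of $\tS$, and combine it with Corollary~\ref{cor:SGcontractible}, which asserts that $\SG$ is contractible. Since any space that strongly deformation retracts onto a contractible subspace is itself contractible, the result follows immediately.

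Concretely, I would verify three defining properties of the family $R_t[X,\F,h]=[X,\F^t,h]$ (re-parametrizing if convenient so that $t=0$ is the identity). First, the endpoint at which straightening is complete lands in $\SG$: by the construction of Section~\ref{sec:straightenSP} the limiting structure $\F^0$ has every parallelotope an orthotope, and a marked rectilinear $\G$-complex with untwisted marking is by definition a point of $\SG$. Second, $R_t$ restricts to the identity on $\SG$ for every $t$: applied to a parallelotope that is already orthogonal, the straightening procedure of Section~\ref{sec:straightenSP} produces normal vectors $b_v$ that agree with the edge vectors up to scaling, so $\F^t=\F$ throughout the path. Continuity of the homotopy is exactly Lemma~\ref{lem:continuity}. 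Taken together, these properties assemble $R_t$ into a strong deformation retraction of $\tS$ onto $\SG$.

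I do not foresee any substantive obstacle; all the delicate work has already been done. Well-definedness of $R_t$ on equivalence classes of marked skewed $\G$-complexes rests on Lemma~\ref{lem:GC2}, which ensures independence of the chosen isomorphism $X\cong \SP$ among those for which $c_\pi\circ i\circ h^{-1}\in U^0(\AG)$; preservation of allowability along the straightening path is built into the construction of Section~\ref{sec:straightenSP}; and continuity on closures of cells in the complex-of-spaces topology on $\tS$ is precisely Lemma~\ref{lem:continuity}. The final conclusion is then just the homotopy invariance of contractibility applied to the deformation retraction $R_t$, combined with Corollary~\ref{cor:SGcontractible}.
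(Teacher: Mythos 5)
Your proposal is correct and follows exactly the paper's intended argument: the paper proves Corollary~\ref{cor:tScontractible} by simply observing that Lemma~\ref{lem:continuity} establishes $R_t$ as a (continuous) deformation retraction of $\tS$ onto $\SG$, and then invoking Corollary~\ref{cor:SGcontractible}. Your fleshing-out of the three properties of $R_t$ — landing in $\SG$, fixing $\SG$ pointwise, and continuity — is accurate and adds useful detail, but it is the same route the paper takes.
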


\section{Outer space $\OG$}\label{sec:OG}

\subsection{Definition of $\OG$ and the map   $\Theta\colon\tS\to \OG$}

We now  define a new space $\OG$  by forgetting the combinatorial structure  on skewed $\G$-complexes and allowing arbitrary markings.  Thus a point in $\OG$ is an equivalence class of triples $(Y, d, f)$ such that
 \begin{itemize}
 \item $(Y,d)$ is a locally CAT(0) metric space that is isometric to  $(\SP, d_\F)$ for some skewed blowup $(\SP,\F)$.
 \item $f\colon Y\to \SaG$ is a homotopy equivalence, and
 \item $(Y,d,f)\sim (Y',d',f')$ if there is an isometry $i:(Y,d)\to (Y',d')$  with $f'\circ i\simeq f$.
\end{itemize}
The full group $\Out(\AG)$ acts on the left on $\OG$ by changing the marking $f$.

\begin{proposition}
The action of $\Out(\AG)$ on $\OG$ has finite stabilizers. 
\end{proposition}
 
\begin{proof} The element of $\Out(\AG)$ induced by a homotopy equivalence $g:\SaG \to \SaG$ fixes the point $[Y,d,f]$ if and only if $f^{-1} \circ g \circ f$ is homotopic to an isometry of $(Y,d)$.  Thus, the stabilizer of a point $[Y,d,f]$ can be identified with the group of isometries of $Y$ up to homotopy.   

 Since $(\SP, \F)$ has no free faces, each $(Y,d)$ has the geodesic extension property.  It follows from \cite{BH} Lemma II.6.16 that the min set of the center of $A_\G$ is all of $Y$, so by the Flat Torus Theorem (Theorem II.7.1)  $Y$ splits as a product $Y=Y_0\times T_{\mathcal Z(\AG)}$, where $T_{\mathcal Z(\AG)}$ is a torus  of dimension equal to the rank of the center $\mathcal Z(\AG)$.  Moreover, by Theorem II.6.17 of \cite{BH},  $\Isom(Y)$ is a topological group with finitely many components, and the connected component of the identity is generated by translations of $T_{\mathcal Z(\AG)}$. As every such translation is homotopic to the identity, the group of isometries of $Y$ up to homotopy is a quotient of the group of path components of $\Isom(Y)$, hence finite as claimed. 
 \end{proof}
 
  In fact, as shown by Bregman in \cite{Breg}, the group of path components of $\Isom(Y)$ injects into $\Out(\AG)$. 

To finish the proof  of Theorem~\ref{thm:main} we need to show that $\OG$ is contractible.  To do this, we 
define a map $ \Theta\colon \tS \to \OG $
 by forgetting the parallelotope structure on $X\in \tS$ and just viewing it as a CAT(0) metric space. The remainder of this section is devoted to  proving the following theorem.

\begin{theorem} \label{thm:Theta} The map $\Theta\colon\tS \to \OG$  is a fibration with contractible fibers. Hence $\OG$ is contractible.
\end{theorem}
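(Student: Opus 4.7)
The plan is to prove (a) the fiber of $\Theta$ over each point of $\OG$ is homeomorphic to a convex Euclidean region (hence contractible), and (b) $\Theta$ is a Serre fibration via local triviality. Granted (a) and (b), the long exact sequence in homotopy together with contractibility of $\tS$ (Corollary~\ref{cor:tScontractible}) yields $\pi_n(\OG)=0$ for all $n$, and since $\OG$ inherits a CW-like structure from the complex-of-spaces description of $\tS$, it is contractible.

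For the fiber analysis, fix $[Y,d,f]\in\OG$ and consider any representative $(X,\F,h)\in\Theta^{-1}([Y,d,f])$. The combinatorial freedom in the fiber is controlled by the twist-minimal/twist-dominant dichotomy of Section~\ref{sec:TMHyperplanes}. By Corollary~\ref{cor:tmin_characterize}, a hyperplane $H$ of $X$ is twist-minimal precisely when $[\max_h(H)]$ is twist-minimal, and Proposition~\ref{prop:min_h} identifies every lift of such an $H$ as the unique convex subspace $\widetilde H\subset\Min_h(v)$ (where $v=\max_h(H)$) whose carrier has both boundary components meeting $\Br_h(v)$. Since the minsets and branch loci are invariants of $(Y,d,f)$, twist-minimal hyperplanes are pinned down canonically by the metric. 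In contrast, for a twist-dominant label $A$ with $\max(A)=\{v\}$, Corollary~\ref{cor:BranchLocus2} shows that the positions at which the lifts of $H_A$ cut an axis $\alpha_v$ are the canonical vertices of a characteristic cycle, but the directions in which the edges $e_A$ cross $\Min_h(v)\iso\alpha_v\times\UH_v$ may be rotated within $\st^+(v)$, subject to the allowability conditions of Definition~\ref{def:allowable}. By Lemma~\ref{lem:angles}, the entire remaining freedom is parameterized by the angles $\alpha_{A,B}$ between twist-related edges. These angles enter linearly into the Gram matrices of the parallelotopes; the equation $d_\F=d$ fixes some Gram entries, and the remaining allowability conditions are convex constraints. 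Thus the parameter set for the fiber is a convex slice of the positive-definite cone of parallelotope shapes, hence contractible.

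For the fibration property, I would verify local triviality over each point $[Y_0,d_0,f_0]\in\OG$. Choose a preimage $(X_0,\F_0,h_0)\in\tS$ and work in the cell $U_{X_0,h_0}\subset\tS$: the restriction of $\Theta$ to $U_{X_0,h_0}$ is a smooth surjection from the parameter space of $\F$ onto a neighborhood of $[Y_0,d_0,f_0]$ in its image, and a local section is produced by fixing the combinatorial and marking data and adjusting only the parallelotope structure. Combining this section with the convex fiber description yields a local product decomposition $\Theta^{-1}(U)\iso U\times F$. Continuity across the boundaries between cells, which correspond to hyperplane collapses, follows from the same edge-length degeneration argument used in Lemma~\ref{lem:continuity}.

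The principal obstacle will be showing that the fiber is a \emph{single} convex region, not a disjoint union of convex pieces indexed by distinct combinatorial representatives of $[Y,d,f]$. Equivalently, one must verify that any two combinatorially distinct representatives can be joined by a continuous path of skewed $\G$-complexes all mapping to $[Y,d,f]$. The expected mechanism is that adjusting the twist-dominant angle coordinates already realizes the change-of-combinatorial-type transitions between different blowups carrying the same metric, as foreshadowed by the introduction's discussion of twists. Establishing this rigorously will rely on the structural results of Section~\ref{sec:blowups} together with the straightening path of Section~\ref{sec:straightenSP}.
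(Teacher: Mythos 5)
Your high-level plan (convex fibers, local sections, conclude contractibility from that of $\tS$) matches the paper's strategy, but several essential steps are missing or misformulated, and the gaps are not minor. First, you never establish that $\Theta$ is surjective. You begin the fiber analysis with ``consider any representative $(X,\F,h)\in\Theta^{-1}([Y,d,f])$,'' but the fiber could a priori be empty: markings in $\OG$ may differ from any $\G$-collapse by an arbitrary element of $\Out(\AG)$, while those in $\tS$ are constrained to $U(\AG)$. Proving nonemptiness is a substantial portion of the argument (Proposition~\ref{prop:surjective}, via Lemma~\ref{lem:t-minimal} and Proposition~\ref{prop:tdom}), and requires an explicit construction realizing both TM and TD twists by altering the parallelotope and blowup structure; without it your fiber description has no content.

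Second, parametrizing the fiber by the angles $\alpha_{A,B}$ and Gram matrices does not engage the marking constraint. Two skewed $\G$-complexes lie in the same $\Theta$-fiber only if there is an isometry intertwining the markings up to homotopy; it is this condition, not merely $d_\F=d$, that forces the ``zero-sum'' constraints and makes the fiber a proper subset of allowable shapes. The paper parametrizes instead by the shear vectors $s_A\in U_A^+$ (displacements of edge endpoints), for which the zero-sum condition $\ell_v=0$ is \emph{linear}, so the fiber is a literal linear subspace of $\bigoplus_{H\in\mathcal H_{min}}U_H^+$. Your Gram-matrix picture obscures this linearity and is not obviously convex once the marking constraint is imposed. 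Third, you correctly flag connectedness of the fiber as the principal obstacle but leave it unresolved; in the paper this follows automatically from the characterization of the fiber as exactly the zero-sum shearings (Corollary~\ref{cor:fibers}), and proving both inclusions (Propositions~\ref{prop:FiberZero} and~\ref{prop:zerosum}) — in particular constructing a compatible partition set $\Omega$ realizing each shearing — is the real work. Finally, your fibration sketch omits the analysis of combinatorial type change across small neighborhoods of $\OG$: as the metric deforms, projections of branch loci may split or coalesce (new hyperplanes appearing or merging), and constructing continuous local sections must account for these degenerations.
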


  Since the inclusion map  $\SG \hookrightarrow \tS$ is a homotopy equivalence by Lemma~\ref{lem:continuity}, the map
$$  \SG  \hookrightarrow  \tS  \xrightarrow{\Theta} \OG $$
that forgets the orthotope structure on $X$ is also a homotopy equivalence.  We will show in Corollary \ref{embedding} below, that this map is an embedding.

\begin{corollary}  The restriction of $\Theta$ to  $\SG$ is a homotopy equivalence $\SG \simeq \OG$.\end{corollary}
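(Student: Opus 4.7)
The plan is to obtain the corollary as an essentially immediate consequence of Theorem~\ref{thm:Theta} together with the contractibility/deformation-retract results already established for $\tS$. Concretely, I would factor the restriction $\Theta|_{\SG}$ as the composition
\[
\SG \xhookrightarrow{\iota} \tS \xrightarrow{\Theta} \OG,
\]
and argue that both maps in the composition are homotopy equivalences.

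First, I would invoke the deformation retraction constructed in Section~\ref{sec:tS}: the straightening path $R_t$ is a continuous deformation retraction of $\tS$ onto $\SG$ by Lemma~\ref{lem:continuity}, so the inclusion $\iota\colon \SG \hookrightarrow \tS$ is a homotopy equivalence (in fact $\SG$ is a deformation retract of $\tS$). Next I would apply Theorem~\ref{thm:Theta}, which asserts that $\Theta\colon \tS \to \OG$ is a fibration with contractible fibers. A Hurewicz/Serre fibration with contractible fibers is a weak homotopy equivalence; since both $\tS$ and $\OG$ have the homotopy type of CW complexes — $\tS$ by construction as a complex of cells $U_{X,h}$, and $\OG$ by transport of this structure through the surjection $\Theta$ — Whitehead's theorem upgrades this to a genuine homotopy equivalence. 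Composing, $\Theta \circ \iota = \Theta|_{\SG}$ is a homotopy equivalence $\SG \simeq \OG$.

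There is no substantive obstacle in the proof itself, since both of the key inputs are already in hand. The only point requiring a small remark is the passage from ``weak equivalence'' to ``homotopy equivalence'' for $\Theta$; I would handle this by noting that the cell decomposition of $\tS$ as a complex of spaces descends through $\Theta$ to give $\OG$ the structure of a complex of spaces as well (the fibers over points in a common cell $U_{X,h}$ being the convex subspaces of Euclidean space identified in the proof of Theorem~\ref{thm:Theta}), so both sides are paracompact and of CW type and Whitehead's theorem applies. If one prefers to remain purely in the weak-equivalence setting, the corollary can equivalently be stated and proved at the level of weak homotopy equivalence, which is what is needed for the classifying-space applications mentioned in the introduction.
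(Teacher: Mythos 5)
Your proof takes exactly the same route as the paper: factor $\Theta|_{\SG}$ through the inclusion $\SG \hookrightarrow \tS$ (a homotopy equivalence by the deformation retraction of Lemma~\ref{lem:continuity}) and apply Theorem~\ref{thm:Theta}. The paper states this in one sentence without pausing over the weak-versus-genuine homotopy equivalence issue you flag, but your extra care there is harmless and the argument is the same.
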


 The proof of Theorem \ref{thm:Theta} has two major components.  The first is to show that the map $\Theta$ is surjective.  This is by no means obvious since the markings in $\tS$ must be untwisted, whereas the markings in $\OG$ are unrestricted.  Finding a point in the fiber over some $(Y,d,f) \in \OG$ means finding a skewed blowup structure $(\SP,\mathcal F)$ on $Y$ such that $f^{-1}$ followed by the standard collapse map  is untwisted.  To do this we  first decompose $Y$ into parallelotopes, then identify the \GW\ partitions in the blowup structure, and finally calculate the composition  $c_\pi\circ f^{-1}$.

 The second component of the proof is to show that the fibers are contractible.  To do this, we fix a point in the fiber and describe a process of ``shearing" edges dual to a hyperplane in this $\G$-complex.  We then prove that every point in the fiber can be obtained by a series of ``zero-sum shearings" of the initial point.  This set of shearings spans a linear subspace of a Euclidean space, hence is contractible.  

\subsection{Surjectivity of $\Theta$}\label{sec:Surjectivity}

The first step in proving Theorem~\ref{thm:Theta} is to show that the inverse image of an arbitrary point in $\OG$ is non-empty.
\begin{proposition}\label{prop:surjective} $\Theta\colon \tS\to\OG$ is $U(\AG)$-equivariant and surjective.
\end{proposition}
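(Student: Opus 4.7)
The plan is to verify the two claims separately: equivariance by direct inspection of the actions, and surjectivity by decomposing the marking's induced element of $\Out(\AG)$ into an untwisted part together with a sequence of twists, then realizing the twist part as a non-combinatorial isometry of the underlying metric space. First I would observe that $U(\AG)$ acts on both $\tS$ and $\OG$ purely by post-composition of the marking, via any representative $\bar\psi\colon \SaG\to\SaG$ of $\psi \in U(\AG)$, and that this preserves the untwistedness condition on $\tS$ since the composition of an untwisted marking with an element of $U(\AG)$ is again untwisted. The map $\Theta$ leaves the marking unchanged, so $U(\AG)$-equivariance follows immediately.

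For surjectivity, given $[Y,d,f]\in\OG$, by definition there is an isometry $\iota\colon (Y,d)\to(\SP,d_\F)$ for some blowup $\SP$ and allowable parallelotope structure $\F$, and I would set $h_0=f\circ\iota^{-1}$. If $h_0$ is untwisted, then $[\SP,\F,h_0]$ already lies in $\tS$ and maps to $[Y,d,f]$. Otherwise, comparing $h_0$ with the standard collapse $c_\pi$ gives an element $\varphi\in\Out(\AG)$, and by Laurence--Servatius (as recalled in Section~\ref{subsec:untwisted}), $\varphi$ factors as $\psi\tau$ where $\psi\in U(\AG)$ and $\tau$ is a composition of twists $v\mapsto vw$ with $v\leq_t w$. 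The strategy is then to absorb each factor of $\tau$ into the cubical structure, leaving only $\psi$ in the marking.

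To realize a single twist $v\mapsto vw$ geometrically, I would use that $v\leq_t w$ forces $\st(v)\subseteq\st(w)$, so every maximal clique of $\G$ containing $v$ also contains $w$; in particular, every parallelotope in $\SP$ carrying an edge $e_A$ with $v\in\max(A)$ also contains an edge parallel to $e_w$. I would then construct a new blowup with an allowable parallelotope structure $\F'$ by reinterpreting each such $e_A$ as the diagonal $e_A+e_w$ within its containing parallelotope, keeping all other edges fixed. The reinterpretation is the identity on points, so it yields an isometry of metric spaces, while the composition of $h_0$ with this identity-as-isometry differs from the original marking precisely by the desired twist. Iterating this construction for each factor in the decomposition of $\tau$ eventually yields a $\G$-complex $X$ with allowable parallelotope structure $\F'$, an isometry $Y\to X$, and an induced marking $h\colon X\to\SaG$ representing $\psi\in U(\AG)$ and hence untwisted. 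The class $[X,\F',h]$ then lies in $\tS$ and maps under $\Theta$ to $[Y,d,f]$.

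The main obstacle is verifying that the edge reinterpretation in the previous step does produce a genuine blowup together with an allowable parallelotope structure $\F'$ whose induced metric agrees with $d_\F$. Allowability (Definition~\ref{def:allowable}) imposes delicate orthogonality conditions between non-twist-related edges, and one must check these persist when edges associated to elements of one twist class are replaced by diagonals involving $w$. The fact that $v$ and $w$ lie in the same twist class is what ensures the shear is invisible to the relevant orthogonality relations, and the product decomposition $\Sv=\Snlv\times\Slkv$ from Section~\ref{subcomplexes} should furnish the global framework for checking that the reinterpretation can be carried out consistently across all parallelotopes containing $e_v$ simultaneously. A secondary care point is that one must also verify that the resulting set of edge labels fits together as a valid compatible set of $\G$-Whitehead partitions producing the new blowup.
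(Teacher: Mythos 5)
The uniform ``diagonal replacement'' you propose---reinterpreting each edge $e_A$ with $v\in\max(A)$ as the diagonal $e_A+e_w$ in its containing parallelotope---does work when $v$ is \emph{twist-minimal}, and in that case your construction is essentially the paper's Lemma~\ref{lem:t-minimal}. But it cannot work when $v$ is \emph{twist-dominant}, and this is where the genuine gap lies. Allowability condition (3) in Definition~\ref{def:allowable} forces the characteristic cycle of a twist-dominant $v$ to lift to a \emph{straight} geodesic axis in $\USP$ (Lemma~\ref{lem:convex}). Skewing a single edge of $\chi_v$ while keeping the others fixed breaks this, and skewing all of them coherently so the lifted path stays geodesic just replaces the axis $\alpha_v$ by $\alpha_{vw}$---after which the old hyperplane positions along $\alpha_w$ no longer align with the projections of the branch loci, so the old set of $\G$-Whitehead partitions no longer describes the decomposition. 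The paper explicitly flags this: the remark immediately after Lemma~\ref{lem:t-minimal} says the single-edge trick \emph{cannot} realize a twist of a twist-dominant $v$, and that instead a wholly new blowup $(\Sa^\Omega,\Fp)$ must be constructed. That construction (Proposition~\ref{prop:tdom}) requires building fresh nested sides $P'_i$ from projected branch loci and verifying they form a compatible collection of \GW\ partitions---the ``secondary care point'' you mention is in fact the central difficulty in the twist-dominant case, and your outline gives no mechanism for producing those new partitions.

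There is a second, compounding gap: the twist-dominant realization in Proposition~\ref{prop:tdom} only realizes $\tau\circ\varphi$ for some extra $\varphi\in U(\AG)$ (satisfying $\tau\varphi=\varphi\tau$), not $\tau$ itself. Your outline assumes each elementary twist can be cleanly absorbed into the parallelotope structure and composed; in reality one accumulates untwisted ``debris'' at each stage, and eliminating it requires the commutation relations of Lemma~\ref{lem:TDcommuting} (a TD twist normalizes the subgroup generated by $U^0(\AG)$ and TM twists). Without those relations, the factorization $\varphi = \psi\tau$ with $\psi\in U(\AG)$ and $\tau$ a product of twists, which you take for granted, is also not justified---Laurence--Servatius gives a generating set, not a normal form. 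So the argument needs to be split: TM twists via shearing within the same $\SP$, TD twists via a new blowup $\Sa^\Omega$, and then the commutation lemma to reorganize the resulting composition so that what remains in the marking is untwisted.
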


Equivariance under the action of $U(\AG)$ is clear from the definition of $\Theta$ whereas surjectivity is not, since markings in $\OG$ can differ by any element of $\Out(\AG)$. The key is to show that an appropriate change of skewed blowup structure on a point of $\tS$  will have the effect of composing the collapse marking with a twist.  The proof of Proposition \ref{prop:surjective} will occupy the remainder of this subsection.

For skewed blowups the end result of the retraction $R_t$ defined in Section \ref{Sec:contractibility} followed by scaling the edge lengths linearly gives a continuous ``straightening map"  $s_\F\colon (\SP,\F)\to (\SP,\E)$  that sends each parallelotope to a unit cube.  The standard collapse map $c_\pi\colon\SP\to \SaG$ induces a collapse map $c_\pi^\F=c_\pi\circ s_\F$ on $(\SP, d_\F),$ called a {\em straighten-collapse} map.

\begin{definition}\label{def:realized}  An automorphism $\phi\in\Out(\AG)$ is {\em realized} by an isometry $i\colon (\SP,d_\F)\to(\SPp,d_\Fp)$ if $c^\Fp_{\omega}\circ i\circ (c^\F_\pi)\inv$ induces $\phi$ on $\pi_1(\SaG)=\AG$:  
 \begin{center}\begin{tikzcd}[]
(\SP,d_\F)   \arrow[r, "i"] \arrow[d, "c^\F_\pi"] &( \SPp,d_\Fp) \arrow[d, "c^\Fp_{\omega}"]\\
\SaG \arrow[r,"\phi"] & \SaG
 \end{tikzcd}
 \end{center}
 \end{definition}
 Note that we are not requiring $i$ to be a {\em combinatorial} isometry, just an isometry.  The realization of a combinatorial isometry is always untwisted.     In Figure ~\ref{fig:one} we illustrate an isometry between two skewed blowups  that realizes  an elementary twist $v\mapsto vw$; one should think of these blowups as giving two different parallelotope decompositions of the same space, and the isometry as the identity.   The following lemma explains in general how to realize a twist $v\mapsto vw$ in the case that $v$ is twist-minimal.

\begin{figure}
\begin{center}
  \begin{tikzpicture}
  \begin{scope}[decoration={markings,mark = at position 0.5 with {\arrow{stealth}}}]
 \coordinate (iw) at (-1.5,-1);\coordinate (mw) at (-.5,-1); \coordinate (tw) at (.5,-1);
        \midarrow [thick, blue] (-1,-2) to (1,-2); \node [above] (b1) at (0,-2) {$w$}; 
  \midarrow [thick, blue] (-1,0) to (1,0); \node [above] (b2) at (0,0) {$w$};  
   \midarrow [thick, blue] (-1,2)  to (1,2); \node [above] (b3) at (0,2) {$w$}; 
   \midarrow [thick] (iw) to (tw);  \node [above] (b4) at (mw) {$w$};  
 \draw [thick, red] (-1,0) to (iw);  \node () at (-1,-.5) {$\WP$}; 
\draw [thick, red] (tw) to (1,0);  \node () at (1,-.5) {$\WP$};  
   \midarrow [thick, red] (1,-2) to (tw);  \node () at (1,-1.5) {$v$};  
 \midarrow [thick, red] (-1,-2) to (iw);  \node  () at (-1,-1.5) {$v$}; 
       \midarrow[thick, forestgreen] (1,0) to (1,2);\node [right] (ctop) at (1,1) {$u$};  
 \midarrow [thick, forestgreen] (-1,0) to (-1,2);     \node [left] (cbot) at (-1,1) {$u$};  
  \midarrow  [thick] (tw) .. controls (.25,3) and (3,1) ..   (1,0);  
    \node (dtop) at (.5,1.5) {$z$};

\begin{scope}[xshift=4cm]. 
 \coordinate (iw) at (-1.5,-1);\coordinate (mw) at (-.5,-1); \coordinate (tw) at (.5,-1);
        \draw [densely dotted] (-1,-2) to (1,-2); 
  \draw [densely dotted] (-1,0) to (1,0);  
   \draw [densely dotted] (-1,2)  to (1,2);  
   \draw [densely dotted] (iw) to (tw);   
      \draw [densely dotted] (1,-2) to (tw);   
 \draw [densely dotted] (-1,0) to (iw);  
\draw [densely dotted] (tw) to (1,0);  
 \draw [densely dotted] (-1,-2) to (iw); 
       \draw[densely dotted] (1,0) to (1,2); 
 \draw [densely dotted] (-1,0) to (-1,2);    
 \end{scope}

\begin{scope}[xshift=6cm]. 
  \coordinate (iw) at (-1.5,-1);\coordinate (mw) at (-.5,-1); \coordinate (tw) at (.5,-1);  
   \coordinate (iwb) at (-3,-2);\coordinate (mwb) at (1,-2); \coordinate (twb) at (-1,-2); 
      \midarrow [thick, blue] (iwb) to (twb); \node [above] (b1) at (-2,-2) {$w$}; 
  \midarrow [thick, blue] (-1,0) to (1,0); \node [above] (b2) at (0,0) {$w$};  
   \midarrow [thick, blue] (-1,2)  to (1,2); \node [above] (b3) at (0,2) {$w$}; 
   \midarrow [thick] (iw) to (tw);  \node [above] (b4) at (mw) {$w$};  
    \draw [thick, red] (-1,0) to (iw);  \node   () at (-1,-.5) {$\WP$}; 
\draw [thick, red] (tw) to (1,0);  \node  () at (1,-.5) {$\WP$};  
 \midarrow [thick, red] (twb) to (tw);  \node () at (0.1,-1.5) {$v$};  
 \midarrow [thick, red] (iwb) to (iw);  \node  () at (-1.9,-1.5) {$v$}; 
       \midarrow[thick, forestgreen] (1,0) to (1,2);\node [right] (ctop) at (1,1) {$u$};  
 \midarrow [thick, forestgreen] (-1,0) to (-1,2);     \node [right] (cbot) at (-1,1) {$u$};  
  \midarrow  [thick] (iw) .. controls (-1.75,3) and (1,1) ..   (-1,0);  
\end{scope}
   \end{scope}
\end{tikzpicture}
\caption{Parallelotope structures $\F, \Fp$ on $\Sa^{\WP}$ such that $(\Sa^{\WP},d_\F)$ is isometric to $(\Sa^{\WP},d_{\Fp})$ and  $c^\Fp_{\omega}\circ i\circ (c^\F_\pi)\inv$   induces a twist $v\mapsto vw$.}\label{fig:one}
\end{center}
\end{figure}

\medskip
\begin{lemma}~\label{lem:t-minimal}   Let $\F$ be an allowable parallelotope structure  on $\SP$ and 
let $\tau\colon v\mapsto vw$ be an elementary twist.  If $v$ is twist-minimal then $\tau$ can be realized by an isometry  $i: (\SP,d_\F)\to (\SP,d_\Fp)$ for some allowable parallelotope structure  $\Fp$ on $\SP$. 
\end{lemma}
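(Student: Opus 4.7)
Since $\tau\colon v\mapsto vw$ is an elementary twist, $v\leq_t w$ with $v\neq w$, so $w\in\UL(v)$ and $w$ is twist-dominant, while $v$ is twist-minimal by hypothesis. My plan is to take $\F=\E$ (unit orthotopes) and define $\Fp$ from $\F$ by rescaling each edge labelled $v$ to length $\sqrt{2}$ and rotating $H_v$ in the direction of $w$ until $\alpha^\Fp_{v,w}=\pi/4$ (and correspondingly $\alpha^\Fp_{v,B}=\pi/4$ for every partition $B$ splitting $w$, as required by the definition of rotation in Section~\ref{sec:rotating}; note that by Lemma~\ref{lem:tdominant} any such $B$ has $\max(B)=\{w\}$). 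Allowability of $\Fp$ is then checked against Definition~\ref{def:allowable}: condition~(3) is vacuous for the twist-minimal $v$ and preserved by construction for the twist-dominant $w$; condition~(1) on a maximal cube is unchanged because the only pairs $(v_i,v_j)$ whose angles the rotation alters have $v_j\geq_t v$, hence are twist-related and fall outside the hypothesis of~(1). Local CAT(0)-ness of $d_\Fp$ then follows from Corollary~\ref{cor:CAT(0)}.

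The isometry is guided by the flat-torus case. If $\G$ is the single edge $v-w$, then $\SaG=\R^2/\Z^2$; setting $e_v^\F=(1,0),~e_w^\F=(0,1)$ and $e_v^\Fp=(1,1),~e_w^\Fp=(0,1)$ realises both structures on the \emph{same} flat torus, since $\Z(1,0)+\Z(0,1)=\Z(1,1)+\Z(0,1)=\Z^2$. The identity on the underlying torus is therefore an isometry $i$, and a loop along $e_v^\Fp$ traces the $\F$-diagonal from the basepoint through $(1,1)$, which represents $vw$ in $\pi_1$; thus $c^\F_\pi\circ i\circ(c^\Fp_\pi)^{-1}$ already induces $\tau$ in this case. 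For general $\SP$ I will lift this picture to the universal cover. Twist-dominance of $w$ ensures via Lemma~\ref{lem:convex} that every lift of a characteristic cycle for $w$ is a geodesic, providing a globally coherent ``$e_w$-direction''; twist-minimality of $v$ ensures via Lemma~\ref{lem:convex} that every lift of $H_v$ is convex in both $\F$ and $\Fp$, so the shear respects the product structure $\kappa(H_v)=e_v\times\Slkv$ of the carrier.

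The main technical content is then to construct an isometry $\widetilde i\colon(\widetilde\SP,\widetilde d_\Fp)\to(\widetilde\SP,\widetilde d_\F)$ that is equivariant up to the twist: $\widetilde i\circ g_\Fp = \tau(g)_\F\circ\widetilde i$ for every $g\in\AG$, where $g_\F$ and $g_\Fp$ denote the deck-transformation actions via the respective standard collapses. The hard part will be verifying this equivariance. The shear inside each flat $2$-plane spanned by an $e_v$-axis and an $e_w$-axis is defined cube-by-cube as in the torus model, and these local shears must be checked to agree on their mutual intersections throughout $\widetilde\SP$; this consistency follows from the global parallelism of $w$-axes (Lemma~\ref{lem:convex} applied to the twist-dominant $w$) together with the convexity of $H_v$-lifts (Lemma~\ref{lem:convex} applied to the twist-minimal $v$). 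Once $\widetilde i$ is constructed it descends to $i\colon(\SP,d_\Fp)\to(\SP,d_\F)$, and $c^\F_\pi\circ i\circ(c^\Fp_\pi)^{-1}$ sends $v\mapsto vw$ while fixing every other generator (up to conjugacy), realising exactly $\tau$.
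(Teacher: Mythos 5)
Your parameters are too special: rescaling $e_v$ to $\sqrt{2}$ and rotating to angle $\pi/4$ is the right choice only when the characteristic cycle $\chi_w$ is the single edge $e_w$, i.e.\ when no partition in $\bPi$ splits $w$. In general $\chi_w$ has $k\geq 1$ edges, and the shear realizing the twist must carry the terminal vertex of $e_v$ all the way around $\chi_w$, not one edge of it. Concretely, let $\G$ be the path $v\!-\!w\!-\!x$ together with an isolated vertex $y$, and take $\bPi=\{\WQ\}$ with $\WQ=(\{w,y\}\,|\,\{w^{-1},y^{-1}\})$. Then $v$ is twist-minimal, $w$ is twist-dominant with $v\leq_t w$, $\max(\WQ)=\{w\}$, and $\chi_w=e_w\cup e_\WQ$ has length $2$ in $\E$. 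In the flat $\Min(\langle v,w\rangle)$ with $\F=\E$, $v$ translates by $(1,0)$ and $w$ by $(0,2)$, so any isometry realizing $\tau$ must satisfy $|v|_{\Fp}=|\tau(v)|_{\F}=|vw|_{\F}=\sqrt{5}$; but your $\Fp$ makes $v$ translate by $(1,1)$, giving $|v|_{\Fp}=\sqrt{2}$. So no such isometry exists, and $[\SP,\Fp,c_\pi]$ is a genuine skewing (a different point of $\OG$), not a twist of the marking. The parameters that make the translation lengths match are $\sqrt{1+k^2}$ at angle $\arctan(1/k)$.

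Even with those corrected parameters, however, your proof has a structural gap: you acknowledge the construction of the isometry $\tilde{i}$ as ``the main technical content'' and leave it as an outline. Matching translation lengths is necessary but not sufficient, and ``defining the shear cube-by-cube and checking the local shears agree on intersections'' is precisely the work that needs to be done, not a proof that it can be done. This is where your framing diverges most sharply from the paper's. You take $\F=\E$, genuinely change the metric to get $\Fp$, and then try to prove two metric spaces are isometric. The paper inverts this: it starts with an \emph{arbitrary} allowable $\F$ and keeps the underlying metric space $Y=(\SP,d_\F)$ completely fixed, defining $\Fp$ only as a different parallelotope \emph{decomposition} of the same $Y$. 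Inside $\kappa(H_v)=e_v\times\chi_w\times Z$, the new $e_v$-edges are the geodesics from $\iota(\widetilde{e}_v)$ to $\tau(w\widetilde{e}_v)$ cutting diagonally across $e_v\times\chi_w$; nothing outside $\kappa(H_v)$ changes. The isometry is then literally the identity on $Y$, the ``right'' shear amount falls out of the definition rather than having to be guessed, and the only thing to verify is allowability of $\Fp$. The change of marking is then immediate: the loop that $c_\pi^\Fp$ labels $v$ is the one that $c_\pi^\F$ labels $vw$, so $c_\pi^\F\circ\mathrm{id}\circ(c_\pi^\Fp)^{-1}$ induces $\tau$.

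Your allowability discussion is correct as far as it goes (condition~(3) is respected and the rotation only touches twist-related pairs), but this was the easy part; the paper's reframing is what makes the whole lemma routine.
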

\begin{proof}   Let $\chi_w$ be a characteristic cycle for $w$.  Note that $w$ is twist dominant, so $\chi_w$ is a local geodesic. The carrier  $\kappa(H_v)$ of the hyperplane $H_v$  decomposes  combinatorially  as  a product
$$e_v\times \chi_w\times Z,$$
where $Z$ is the subcomplex of $\SP$ spanned by edges that  are adjacent to   
$v$ and don't split $w$. The orientation on $e_w$ induces an orientation on all edges of $\chi_w$.   We define a new decomposition of $e_v\times \chi_w$ by replacing each edge $e_v$  by the geodesic  from its initial vertex to its terminal vertex which cuts diagonally across all the parallelograms in in $e_v\times \chi_v$.  In a lift of $e_v\times\chi_v$ to $\USP$,  the new edge is a geodesic from the initial point of $\widetilde e_v$ to the terminal point of $w\widetilde e_v$ (this is what happened in Figure~\ref{fig:one}, where $\chi_w$ consisted of a single edge $e_w$).   Since the structure of $Z$ is unchanged, the new decomposition of $e_v\times \chi_w$   extends to a new parallelotope decomposition of   $\kappa(H_v)$ which is combinatorially isomorphic to the old one.  It does not change the metric on any parallelotope outside $\kappa(H_v)$, so extends to a new parallelotope structure $\Fp$ on $Y=\SP$.   Since $v$ is twist-minimal, skewing a single edge of a characteristic cycle is allowed, so this new parallelotope structure is allowable (Definition \ref{def:allowable}).  Note that  the identity on $Y$ is an isometry $(\SP,d_\F)\to (\SP,d_\Fp)$ but is not a combinatorial isometry $(\SP,\F)\to (\SP,\Fp)$.

The new collapse map $c^\Fp_{\pi}$ gives a new action of $\pi_1(\SaG)=\AG$ on $\widetilde Y$.  The only generator whose action has changed is $v$, whose new axis is the axis that was formerly the axis for $vw$.
\end{proof}

Notice that in the proof of  Lemma~\ref{lem:t-minimal} we skewed a single edge of a characteristic cycle for $v$.  If $v$ is twist-dominant  we cannot use that  trick to realize $\tau\colon v\mapsto vw$, since a characteristic cycle for $v$ must lift to a (straight!) axis for $v$ in  $\USP$.  Instead we will have to construct a new blowup structure $(S^\Omega,\mathcal G)$  on $Y$ to realize $\tau$.
The idea is to locate  branch points and twist-minimal hyperplanes using our identification of $Y$ with $\SP$, then show that these are metric invariants and use them to construct a new skewed blowup structure $(S^\Omega,\mathcal G)$ on $Y$.  To make this work we first need   to relate the geometry of $(Y,d)=(\SP,d_\F)$ to the combinatorial structure of $\SP$. The following proposition is the key.

\begin{proposition}\label{prop:straightening} Let $v$ be a twist-dominant generator of $\AG$.  The straightening map $s_\F\colon (\SP,\F)\to (\SP,\E)$ takes  axes  for $v$ in $(\USP,d_\F)$  to axes for $v$  in $(\USP,d_{\E})$ and the minset of $v$ to the minset of $v$, where the actions are given by  the collapse maps $c_\pi^\F$ and $c_\pi=c_\pi^\E$ respectively.   Moreover,  $s_\F$ maps branch points for $v$  in  $(\USP,d_\F)$ to branch points for $v$ in $(\USP,d_\E)$.  The same holds  if we replace $c_\pi^\E$  and $c_\pi^\F$  by any untwisted markings $h$ on $(\SP,\E)$ and $h'=h \circ s_F$ on $(\SP,\F)$.
\end{proposition}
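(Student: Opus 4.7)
The plan is to exploit the fact that the straightening map $s_\F$ is a cellular homeomorphism of $\SP$ that preserves the combinatorial structure cell by cell. I would first lift $s_\F$ to a map $\widetilde s_\F\colon (\USP, d_\F) \to (\USP, d_\E)$; because $c_\pi^\F = c_\pi \circ s_\F$ by construction, this lift is $\AG$-equivariant with respect to the two actions induced by the respective collapse maps. For axes, by Corollary \ref{cor:BranchLocus2} in the standard metric the $v$-axes for twist-dominant $v$ are precisely the lifts of characteristic cycles $\chi_v$. By Lemma \ref{lem:convex}(1), these same edge-paths also lift to geodesic lines in $(\USP, d_\F)$; since $\chi_v$ represents the conjugacy class of $v$, their lifts are axes in the skewed metric as well. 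As $\widetilde s_\F$ carries each such combinatorial edge-path to itself, it takes axes to axes.

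For the minset, Example \ref{ex:decomposition} identifies $\Min(v)_\E$ as the lift of the subcomplex $\Sv = \chi_v \times \Slkv$. The same underlying subcomplex hosts $\Min(v)_\F$: every point of the lift of $\Sv$ lies on a translate of a characteristic-cycle lift, which is an axis by the previous paragraph, so the lift of $\Sv$ is contained in $\Min(v)_\F$; conversely any axis in $(\USP, d_\F)$ is the lift of a length-minimizing loop representing $v$, and such a loop must be combinatorially a characteristic cycle, hence lies in $\Sv$. Therefore $\widetilde s_\F$ sends $\Min(v)_\F$ bijectively onto $\Min(v)_\E$. For the branch locus, a point $p$ lies in $\Br(v)$ precisely when some cell at $p$ is not contained in $\Min(v)$; this is a combinatorial condition once $\Min(v)$ has been identified as a subcomplex, and is therefore preserved by the cellular homeomorphism $\widetilde s_\F$.

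The final sentence follows by the same argument: $\widetilde s_\F$ intertwines the $\AG$-actions induced by $h$ and $h'=h \circ s_\F$, and the remark following Lemma \ref{lem:Sing} guarantees that since $v$ is twist-dominant every element of $U^0(\AG)$ sends $v$ to a conjugate of $v^{\pm 1}$, so the combinatorial characteristic cycles representing $\bar h_*(v)$ and $\bar h'_*(v)$ correspond under $s_\F$. The main obstacle I anticipate is in the minset step: one must verify that in the skewed metric no new length-minimizing loops appear that fail to be combinatorial characteristic cycles. This relies on allowability condition (3) of Definition \ref{def:allowable}, which forces all edges labeled by some $A$ with $\max(A) = \{v\}$ to be mutually parallel, so that the fibered structure $\Sv = \chi_v \times \Slkv$ remains rigid in $(\SP, d_\F)$ and every $d_\F$-minimizer in the conjugacy class of $v$ retains the combinatorial type of a characteristic cycle.
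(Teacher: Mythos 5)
Your overall strategy tracks the paper's: rely on Lemma \ref{lem:convex} for the axis claim, reduce the minset claim to the identification $\Sv = \chi_v \times \Slkv$ from Example \ref{ex:decomposition}, and observe that the branch locus is detected by a link condition that $s_\F$ preserves cell by cell. The forward containment $\widetilde\Sv \subseteq \Min(v)$ in the skewed metric is fine once "translate" is read as "parallel translate" inside the product $\chi_v \times \Slkv$ (allowability condition (3) is exactly what guarantees this product survives the skewing).

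The genuine gap is in your converse containment. You assert that any axis for $v$ in $(\USP,d_\F)$ is the lift of a length-minimizing loop that "must be combinatorially a characteristic cycle," and your proposed patch in the final paragraph still reduces to this claim. It is false: $\Min(v)$ decomposes as $\alpha_v \times \UH_v$, so axes form a continuum $\alpha_v \times \{y\}$ for $y \in \UH_v$, almost none of which pass through vertices. The same failure already occurs in the straight metric $\E$, so no appeal to allowability can repair the statement as worded. The paper sidesteps this by characterizing $\Min(v)$ as the convex hull of all translates of the characteristic-cycle lifts (equivalently, by showing the minset is the subcomplex $\widetilde\Sv$, which $s_\F$ fixes setwise), rather than trying to match individual axes with individual characteristic cycles.

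A second, smaller omission is in the final sentence. You invoke the remark after Lemma \ref{lem:Sing} that every element of $U^0(\AG)$ sends a twist-dominant $v$ to a conjugate of $v^{\pm 1}$, but an arbitrary untwisted marking may differ from a collapse map by a general element of $U(\AG)$, which also includes graph automorphisms. The paper explicitly factors the marking as $\sigma \circ h_0$ with $\sigma$ a graph automorphism and $h_0 c_\pi^{-1} \in U^0(\AG)$ and treats the $\sigma$-case by applying the proposition to $w=\sigma(v)$, which is again twist-dominant. Your argument should incorporate this reduction.
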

\begin{proof}
 First assume the markings are standard collapse maps.
Since $v$ is twist-dominant,  each characteristic cycle for $v$ in both $(\SP,\F)$ and  $(\SP,\E)$ is a geodesic that is the image of an axis by Lemma \ref{lem:convex}.  The full minset $\Min(v)\subset \USP$ is the convex hull of the lifts of these characteristic cycles, and since $s_\F$ identifies these, it also takes the minset for $v$ in $(\USP,d_\F)$ to the minset for $v$ in $(\USP,d_\E)$.
The last statement about branch points follows from the fact that the straightening map induces a homeomorphism on links.

For a more general  untwisted  marking $h$, factor $h$ as $\sigma \circ h_0$ where $h_0 \circ c_{\pi}^{-1} \in U^0(\AG)$ and $\sigma$ is a graph automorphism.  Since $U^0(\AG)$ preserves twist-dominant generators up to conjugacy, the axes and minset of $v$ with respect to $h_0$ are just translates of the axes and minset with respect to $c_{\pi}$, so the argument above still applies.  For the graph automorphism, $\sigma(v)=w$ for some other twist-dominant generator $w$, so applying the proposition to $w$ gives the same result.
\end{proof}

For the standard metric $d_\E$, Lemma~\ref{lem:CharCycles} (1) gives a decomposition of the minset of a  generator $v$ with respect to the marking $c_\pi$ as  $\Min(v) \iso \alpha_v \times \UH_v$, and hence a projection $\pr_v\colon \Min(v)\to \alpha_v$.  This projection can be viewed either as the nearest-point (orthogonal) projection, or as collapsing hyperplanes whose labels  are adjacent to   $v$.  If $v$ is twist-dominant, then by the proposition above, the straightening map takes axes of $v$ in $(\USP,d_\F)$ to axes of $v$ in $(\USP,d_\E)$, and likewise minsets to minsets.  Thus, we can define an analogous projection in $(\USP, d_\F)$ by  ``straightening-projecting-unstraightening", i.e.
$$\pr_v^\F=s_\F^{-1}\circ \pr_v\circ s_\F$$
(see Figure~\ref{fig:eight}).  While this is no longer a nearest-point projection, it is again obtained by collapsing all hyperplanes whose labels  are adjacent to   $v$.  That is, for any paralleotope in the minset, $\pr_v^\F$ collapses every edge $e_A$ with $\max(A)\neq \{v\}$ to a point.

\begin{figure}
\begin{center}
  \begin{tikzpicture}
  \coordinate (e0) at (6,0); \coordinate (f0) at (7,0); \coordinate (g0) at (8.5,0); \coordinate (h0) at (10,0);
  \coordinate (e1) at (6.5,1); \coordinate (f1) at (7.5,1); \coordinate (g1) at (9,1); \coordinate (h1) at (10.5,1);
  \coordinate (x) at (7.75,1.5);
   \draw [red] (e0) to (h0);
 \node (av) at (8,-.5) {$\widetilde\chi_w\subset \alpha_w$};
 \vertex{(e0)}; \vertex{(f0)}; \vertex{(g0)}; \vertex{(h0)};
  \vertex{(e1)}; \vertex{(f1)}; \vertex{(g1)}; \vertex{(h1)};
  \draw (e1) to (h1);
 \draw (e0) to (e1) to (5.5,2);
  \draw (f0) to (f1) to (6.5,2);
   \draw (g0) to (g1) to (8,2);
    \draw (h0) to (h1) to (9.5,2);
  \vertex{(7.75,1.5)};\node [above] () at (x) {$x$}; \draw[blue,thick, ->] (7.75,1.5) to (8.25,1) to (7.8,0);
  \end{tikzpicture}
  \caption{The projection map $\pr_w^\F$ for 
  $w$ twist-dominant  }\label{fig:eight}
\end{center}
\end{figure}

\begin{proposition}\label{prop:tdom}  Let $\F$ be an allowable parallelotope structure  on $\SP$.  Let $v$ be twist-dominant and suppose $\tau\colon v\mapsto vw$ is an elementary twist. Then  there is an isometry $i: (\SP,d_\F)\to (\SPp,d_\Fp)$ that  realizes $\tau\circ\varphi$ for some $\varphi\in U^0(\AG)$ satisfying  $\tau\circ\varphi=\varphi\circ\tau$.
\end{proposition}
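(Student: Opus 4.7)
The plan is to generalize the construction in Lemma~\ref{lem:t-minimal}, which realized a twist in the twist-minimal case by skewing the single edge $e_v$ diagonally across one parallelogram of $e_v\times\chi_w$. For twist-dominant $v$, Lemma~\ref{lem:tdominant} gives $\max(\WP)=\{v\}$ for every $\WP\in\bPi_v$, so a characteristic cycle $\chi_v$ consists of $k+1$ mutually parallel edges ($e_v$ together with one $e_{\WP_i}$ for each $\WP_i\in\bPi_v=\{\WP_1,\ldots,\WP_k\}$), and condition~(3) of Definition~\ref{def:allowable} forces any allowable structure to keep them all parallel. Hence the skewing must be applied collectively to all $k+1$ edges at once, and the resulting combinatorial change will be more substantial than in Lemma~\ref{lem:t-minimal}.

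First I would set up the geometric picture. By Corollary~\ref{cor:BranchLocus2} and Lemma~\ref{lem:convex}, each characteristic cycle for $v$ lifts to an axis $\alpha_v$ with $\Min(v)=\alpha_v\times\USlkv$, and since $w\in\lk(v)$, Lemma~\ref{lem:generators} supplies a characteristic cycle $\chi_w\subset\Slkv$. The flat sub-torus $\chi_v\times\chi_w\subset\Sv$ lifts to a Euclidean plane $\alpha_v\times\alpha_w$ on which $\langle v,w\rangle\iso\Z^2$ acts by translations; in this plane the diagonal line of slope $1/(k+1)$ through a basepoint is an axis for $vw$. Now construct $(\SPp,\Fp)$: let $\SPp$ be combinatorially a copy of $\SP$, let $\Fp$ agree with $\F$ outside $\Sv$, and inside $\Sv$, for each parallelotope $e_A\times C$ with $A\in\bPi_v\cup\{v\}$, skew the $e_A$-edge uniformly in the $e_w$-direction so that the $k+1$ new edges of the new characteristic cycle for $v$ collectively translate by exactly one $e_w$-step in the universal cover. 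The map $i\colon(\SP,d_\F)\to(\SPp,d_\Fp)$ sending each old parallelotope to its replacement by the linear identification of edges is an isometry on each cell, and since it only re-decomposes the fixed flat torus $\chi_v\times\chi_w$ into new parallelograms covering the same region, it glues to a global isometry.

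The remaining verifications are: (a)~$\Fp$ is allowable, because the new $e_A$ edges are all mutually parallel by construction (so condition~(3) is satisfied inside $\Sv$) and the shear is supported in $\Sv$; (b)~the induced automorphism $c^{\Fp}_\omega\circ i\circ(c^\F_\pi)^{-1}$ sends $v\mapsto vw$, since the new $v$-characteristic cycle projects from what was an axis for $vw$; and (c)~the residual effect $\varphi$ on other generators lies in $U(\AG)$ and commutes with $\tau$. The $\varphi$ arises because $i$ may reparametrize characteristic cycles of generators $u\in\lk(v)$ that pass through $\Sv$, and these corrections are compositions of folds and partial conjugations involving $w$, hence untwisted. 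Commutation $\tau\varphi=\varphi\tau$ then holds because $\tau$ fixes every generator other than $v$ while $\varphi$ fixes both $v$ and $w$ up to inner automorphism. The principal obstacle will be identifying $\varphi$ precisely and verifying allowability globally when $\UF(v)\neq\{v\}$, since then additional parallelotopes meet $\Sv$ outside $\Min(v)$ and one must check that the shear interacts compatibly with the broader combinatorial structure of $\SP$.
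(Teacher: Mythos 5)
Your geometric setup is right and parallels the paper's: work in the plane $\alpha_v \times \alpha_w$ on which $\langle v,w\rangle \cong \Z^2$ acts, and re-decompose $Y$ so that the diagonal $\alpha_{vw}$ becomes the new axis for $v$. Your points (b) and (c) about the marking change also match what the paper proves in Lemma~\ref{lem:markingchange}. The genuine gap is in the construction of $(\SPp, \Fp)$: you take $\SPp$ to be ``combinatorially a copy of $\SP$'' (so $\Omega = \bPi$) and only modify the parallelotope structure. That is precisely the move the paper rules out in the paragraph just before the proposition, and it is where the real work of the proof lives. Replacing $\alpha_v$ by $\alpha_{vw}$ as the transverse direction changes the projection of the branch locus $\Br(w)$ onto $\alpha_w$: the new projection $\pr'_w$ collapses along $\alpha_{vw}$ rather than $\alpha_v$, so branch points that previously projected to the same position on $\alpha_w$ can separate, and distinct projections can coalesce. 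Consequently the cell structure of $\alpha_w$, and hence the collection of partitions dual to it, genuinely changes: the paper must replace $\WP_1, \ldots, \WP_k$ (those splitting $w$) by a new family $\WP'_1, \ldots, \WP'_n$, with $n$ possibly different from $k$, constructed from the $w$-components lying in each piece $dP_i \cap dQ_j$, and then verify that $\Omega$ is a compatible set. If you instead keep $\Omega = \bPi$ and skew the $e_A$-edges ``uniformly,'' the resulting decomposition generally fails to have its vertices at the branch points, so it is not a blowup; equivalently, the ``linear identification of edges'' you invoke is a shear rather than an isometry and does not re-decompose the same metric space $Y$.

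A secondary omission is what the paper calls collateral damage: re-subdividing $\alpha_w$ perturbs the characteristic cycle of any twist-minimal $u \in \lk(w)$ with $[u,v]\neq 1$ whose cycle shares a band $e_\WP \times \chi_w$ with $\chi_v$. The paper compensates by a further skew of $e_u$ (legal since $u$ is twist-minimal), and this is where the correction factor $\varphi$ ultimately comes from; Lemma~\ref{lem:markingchange} then computes it explicitly as a product of folds and partial conjugations by $w$, which commute with $\tau$. Finally, your closing worry about $\UF(v)\neq\{v\}$ is moot: for $v$ twist-dominant, Lemma~\ref{lem:twist_and_fold} forces $\UF(v)=\{v\}$, so $\Sv = \chi_v \times \Slkv$ is exactly the image of $\Min(v)$ and there are no ``additional parallelotopes meeting $\Sv$ outside $\Min(v)$'' to worry about.
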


\begin{proof}  Since $v$ and $w$ commute, there is a vertex $x\in \SP$ which is a terminal vertex for edges $e_v$ and $e_w$.  Let $\chi_v$ and $\chi_w$ be characteristic cycles for $v$ and $w$ containing $e_v$ and $e_w$, $\tilde x$ a lift of $x$ to $\USP$ and $\tilde\chi_v, \tilde \chi_w$ lifts starting at  $\tilde x$ of these characteristic cycles.  Since both $v$ and $w$ are twist-dominant,  $\tilde\chi_v$ and $\tilde \chi_w$ are contained in axes $\alpha_v$ and $\alpha_w$ through $\tilde x$, and the product of these axes is a subcomplex of $\USP$ isometric to $\Eu^2,$ with stabilizer  the subgroup $\langle v,w\rangle\iso\Z^2$ of $\AG$.   The parallelogram in  $\USP$ spanned by $\tilde x, v\tilde x, w\tilde x$ and $vw\tilde x$ is a fundamental domain $D$ for this action (see Figure~\ref{fig:two}).
\begin{figure}.  
\begin{center}
  \begin{tikzpicture}[scale=.65].
   \draw  (0,0) to (5,0) to (4,4) to (-1,4) to (0,0);
 \draw [red] (-.25,1) to (4.75,1);
  \draw [red] (-.5,2) to (4.5,2);
    \draw (-.75,3) to (4.25,3);
    \draw (3,0) to (2,4);
       \draw (4,0) to (3,4);
       \redvertex{(0,0)};\node [below left] (x) at (0,0) {$\tilde x$};
         \redvertex{(5,0)};  \node [below right] (x) at (5,0) {$w\tilde x$};
            \redvertex{(4,4)};  \node [above right] (x) at (4,4) {$vw\tilde x$};
               \redvertex{(-1,4)};  \node [above left] (x) at (-1,4) {$v\tilde x$};
       \redvertex{(2.25,3)};       \redvertex{(3.75,1)};
 \draw[->] (4,0) to (4.5,0);
  \node () at (4.5,-.35) {$e_w$}; 
   \draw[->] (-.75,3) to (-.875,3.5);
  \node () at (-1.25,3.5) {$e_v$}; 
 \node () at (-.75, 1.5) {$e_{P_2}$}; 
  \node () at (-.6, .5) {$e_{P_1}$}; 
       \node () at (2.5,-.5)  {$\tilde\chi_w$};
   \end{tikzpicture}
  \caption{Fundamental domain $D$ for $\langle v, w\rangle\iso \Z^2$ on $\alpha_v\times\alpha_w\iso \Eu^2\subset \USP$.  The red dots and lines are the projections of all branch points for $v$ and $w$ onto $D$.}\label{fig:two}
\end{center}
\end{figure}
Define a map $p\colon(\USP,d_\F)\to \alpha_v\times \alpha_w$ by $p=(s_\F)\inv\circ p^\perp \circ s_\F$, where $p^\perp$ is nearest-point projection in $(\USP,d_\E)$.  We will be most interested in the  restriction of $p$ to $\Min(w),$ projecting $\Min(w)$ onto $\alpha_v\times\alpha_w$.

\medskip
{\bf Claim.} Let $\Br(v)$ be the set of branch points for $v$ and $\Br(w)$ the set of branch points for $w$.  Then $p(\Br(v))$ consists of lines parallel to $\alpha_w$ and isolated points, and $p(\Br(w))$ consists of isolated points.  The isolated points are vertices of $\SP$.
\medskip

\begin{proof}[Proof of claim.]
There is a branch point for $w$ at a vertex $x\in \chi_w\times H_w\subset \SP$ if and only if there is an edge $e_A$ at $x$ with $[A,w]\neq 1$.  If $x$ is a branch point for $w$ and $x\in\chi_v\times H_v$, then $x$ is also a branch point for $v$.   If $x$ is a branch point for $v$ but not for $w$, then all edges $e_A$ at $x$ that do not  are adjacent to 
$v$ must commute with $w$.  In this case every point of $x\times \chi_w$ is a branch point for $v$.
\end{proof}

 Let  $x\in \SP$ be a terminal vertex for edges $e_v$ and $e_w$ as above and $\tilde x$ a lift to $\tilde\chi_v\times\tilde\chi_w$.
If $w$ is central, then $\Br(w)$ is empty.  In this case, the characteristic cycle  for $w$ consists of the single edge $e_w$ and the only vertices on $\tilde\chi_w$ are the $w$-translates of $\tilde x$, but these are not branch points.    The same is true  for $\tilde\chi_v$  if $v$ and $w$ are both central.

Let $B=\Br(v)\cup \Br(w)$.  Note that the decomposition of $\tilde\chi_v\times\tilde\chi_w$ into parallelograms is completely determined by  $p(B)$ $\cup \{\tilde x\}$. This is because each edge of this decomposition is on a lift of a characteristic cycle $\chi_w$ or $\chi_v$, and each endpoint of this edge corresponds to a branch point in some (parallel) axis for $v$ or $w$  or to a translate of $\tilde x$.

 We are now ready to replace the action of $v$ by the action of $\tau(v)=vw$.
 Since $v\leq_tw$ the centralizer of $v$ is equal to the centralizer of $vw$, so $\Min(v)=\Min(vw)$ and $\Br(v)=\Br(vw).$ Thus replacing $v$ by $vw$ does not change $B$ nor the projections of branch points onto the plane $\alpha_v\times\alpha_w$.
 Replacing the fundamental domain $D$ of $\alpha_v\times\alpha_w$ by a new fundamental domain $D'$ with vertices $\tilde x, vw\tilde x, vw^2\tilde x$ and $w\tilde x$, these projections determine a decomposition of $D'$ into parallelograms.   The decomposition of $\alpha'_v=\alpha_{vw}$, the axis for $\tau(v)$,  is in one-to-one correspondence with the decomposition of $\alpha_v$, since in both cases, the vertices are projections of points in $p(B)$ parallel to an axis $\alpha_w$.   But the decomposition of $\alpha_w$ will change since vertices are now projections of $p(B)$ parallel to the new axis  $\alpha'_{v}$, instead of the old axis $\alpha_v$.  So for example, two points in $p(B)$ could project to the same point under one of these projections and to distinct points under the other.

 \begin{figure}
\begin{center}
  \begin{tikzpicture}[scale=.65]
 \draw [densely dotted] (0,0) to (5,0) to (4,4) to (-1,4) to (0,0);

\draw [blue]  (0,0) to (5,0) to (9,4) to (4,4) to (0,0);
 \draw [red] (1,1) to (6,1);
  \draw [red] (2,2) to (7,2);
     \draw [blue] (3,3) to (8,3);
        \draw [blue] (2.75,0) to (6.75,4);
      \draw [blue] (4.25,0) to (8.25,4);
         \redvertex{(0,0)};\node [below left] (x) at (0,0) {$\tilde x$};
         \redvertex{(5,0)};  \node [below right] (x) at (5,0) {$w\tilde x$};
            \redvertex{(4,4)};  \node [above right] (x) at (4,4) {$vw\tilde x$};
                \redvertex{(9,4)};  \node [above right] (x) at (9,4) {$vw^2\tilde x$};
               \redvertex{(-1,4)};  \node [above left] (x) at (-1,4) {$v\tilde x$};
       \redvertex{(2.25,3)};    \redvertex{(7.25,3)};     \redvertex{(3.75,1)};

  \end{tikzpicture}
  \caption{Skewing $D$. The branch points for $v$ and $w$ are the same as the branch points for $vw$ and $w$.}\label{fig:three}
\end{center}
\end{figure}

We  claim that  $D'$ together with its decomposition is part of a skewed $\G$-complex structure $(\Sa^\Omega,\Fp)$ on $(Y,d)$.
To prove this we need to do two things.  The first is to complete the new parallelogram decomposition of $\alpha_v\times\alpha_w$ to a parallelotope decomposition  of  all of $Y$.  The second is to find a compatible set $\Omega$  of partitions corresponding to this decomposition, i.e. a parallelotope structure $\Fp$  on $\Sa^\Omega$ making $(\Sa^\Omega,d_\Fp)$ isometric to $(Y,d)=(\SP,d_\F)$.\\

\noindent{\em Parallelotope decomposition.} We have changed the decomposition of the axis $\alpha_w$  into edges. As collateral damage, we have also changed the decomposition of any characteristic cycle with a lift that intersects $\alpha_w$.  However the endpoints of the intersection interval are images of branch points for $w$, so are still vertices in the new decomposition, i.e. this segment of the characteristic cycle is the only thing we have changed.  (In particular if the intersection is a single point we have not changed this characteristic cycle at all.)

 If $w$ commutes with $u\in  \max(A)$ for some label $A$ then the decomposition of every product subcomplex $\chi_w\times\chi_u$ of $\SP$ is affected by changing the decomposition of $\alpha_w$.
 If $u$ also commutes with $v$, then this is not a problem because then the new decomposition of $\alpha_v\times\alpha_w$ extends to a decomposition of  $\alpha_v\times \alpha_w\times e_A\subset \USP$.

If $[u,w]=1$ but $[u,v]\neq 1$ it may happen that some partition $\WP$ that splits $v$ also splits $u$, so that $\chi_u\times\chi_w$ overlaps $\chi_v\times\chi_w$ in the band $e_\WP\times \chi_w$.  We have changed the decomposition of this band.  However, notice that $u\leq_fv$ so $u$ cannot be twist-dominant  and moreover $u\leq_t w$.  Since $u$ is twist-minimal we can compensate for what we have done by using the band $e_u\times \alpha_w\subset \tilde\chi_u\times \alpha_w$ to skew the characteristic cycle for $u$ back to the original endpoint of $\tilde\chi_u$ (see Figure~\ref{fig:six}). We do not change the angle with $\alpha_w$ in any other band, so preserve the condition of allowability for the new parallelotope structure.\\

 \begin{figure}
\begin{center}
  \begin{tikzpicture}[scale=.75]
 \draw  (0,0) to (5,0) to (4.5,2) to (-.5,2) to (0,0); 
 \draw (4.5,2) to (4.5,4) to (-.5,4) to (-.5,2);
 \draw[->] (-.5,2.75) to (-.5,3.5);
 \node () at (-1,3.5) {$e_u$}; 
 \node () at (-.75, 1.5) {$e_{P_2}$}; 
  \node () at (-.6, .5) {$e_{P_1}$}; 
   \draw [red] (-.25,1) to (4.75,1);
  \draw [red] (-.5,2) to (4.5,2);
    \draw (-.5,2.75) to (4.5,2.75); %
\draw(3,0) to (2.5,2) to (2.5,4);
       \draw(4,0) to (3.5,2) to (3.5,4);
       \node () at (-2,2) {$\tilde\chi_u$};
\draw [blue]  (0,0) to (5,0) to (7,2) to (2,2) to (0,0); 
 \draw [red] (1,1) to (6,1);
  \draw [red] (2,2) to (7,2);
        \draw [blue] (2.75,0) to (4.75,2); 
      \draw [blue] (4.25,0) to (6.25,2); 

         \redvertex{(0,0)};\node [below left] (x) at (0,0) {$\tilde x$};
         \redvertex{(5,0)};  \node [below right] (x) at (5,0) {$w\tilde x$};
      \redvertex{(4.5,4)};  \node [above right] (x) at (4.5,4) {$uw\tilde x$};
               \redvertex{(-.5,4)};  \node [above left] (x) at (-.5,4) {$u\tilde x$};
       \redvertex{(3.75,1)};
\begin{scope}[xshift=10cm]
  \draw [densely dotted] (0,0) to (5,0) to (4.5,2) to (-.5,2) to (0,0); 
  \draw [densely dotted] (4.5,2) to (4.5,4) to (-.5,4) to (-.5,2);  
\draw[blue]  (4.5,4) to (-.5,4);  
   \draw [red] (-.25,1) to (4.75,1); 
  \draw [red] (-.5,2) to (7,2); 
    \draw (-.5,2.75) to (7,2.75); 
\draw [blue]  (0,0) to (5,0) to (7,2) to (2,2) to (0,0);
\draw[blue] (2,2) to (2,2.75) to (-.5,4);
\draw[blue] (7,2) to (7,2.75) to (4.5,4);
 \draw [red] (1,1) to (6,1); 
  \draw [red] (2,2) to (7,2); 
        \draw [blue] (2.75,0) to (4.75,2) to (4.75,2.75) to (2.25,4);
      \draw [blue] (4.25,0) to (6.25,2) to (6.25,2.75) to (3.75,4);
         \redvertex{(0,0)};\node [below left] (x) at (0,0) {$\tilde x$};
         \redvertex{(5,0)};  \node [below right] (x) at (5,0) {$w\tilde x$};
      \redvertex{(4.5,4)};  \node [above right] (x) at (4.5,4) {$uw\tilde x$};
               \redvertex{(-.5,4)};  \node [above left] (x) at (-.5,4) {$u\tilde x$};
       \redvertex{(3.75,1)};
\end{scope}
  \end{tikzpicture}
  \caption{ Skewing $e_u$ back in $\tilde\chi_u\times \alpha_w$ when $e(\WP_1), e(\WP_2)$ are in $\chi_u$}\label{fig:six}
\end{center}
\end{figure}

\noindent{\em Blowup structure.} We need to find a set of partitions $\Omega$ corresponding to our new parallelotope decomposition.
In particular we need to show that the new decomposition of $\alpha_w$ comes from a set of \GW\ partitions that split $w$. Recall from Section~\ref{sec:CharCycles} that the partitions $\{\WP_1,\ldots,\WP_k\}$ that split $w$ are nested, i.e. their $w$-sides $P_i$ satisfy $P_0=\{w\}\subset P_1\subset P_2\subset\ldots\subset P_k\subset P_{k+1}=\overline P_0\setminus\{w\inv\}$, and if $\WR$ is any other partition in $\bPi$ that  is not adjacent to   
$w$, its non-$w$ side  $\overline R$ is contained in some piece $dP_i=P_i\setminus P_{i-1}$ of the nest.

 Since $v$ is twist-dominant, the partitions splitting $v$ are also nested, say $Q_0=\{v\}\subset Q_1\subset Q_2\subset\ldots\subset Q_\ell\subset Q_{\ell+1}=\overline Q_0\setminus\{v\inv\}$, and the pieces $dQ_j=Q_j\setminus Q_{j-1}$ are unions of $v$-components of $\G^\pm$.  (Recall from Section~\ref{sec:GWpartitions} that a $v$-component is a connected component of $\G^\pm\setminus \lk^\pm(v)\setminus \{v,v^{-1}\}$ and that each side of a partition based at $v$ is a union of $v$-components plus $v$ or $v\inv$.)  Since $\st(v)\subseteq \st(w)$,  these $v$-components are unions of $w$-components plus possibly some elements of $\lk(w)$.  Thus  the intersection of a set of $v$-components with a set of $w$-components is a set of $w$-components.  In particular each intersection $I_{ij}=dP_i \cap dQ_j$  is a union of $w$-components.

Each vertex $\re_{ij}$ of $\chi_v\times\chi_w$ is a region that contains the consistent set
$$\{\overline P_1,\ldots\overline P_{i-1},P_i,\ldots,P_k, \overline Q_1,\ldots, \overline Q_{j-1},Q_j,\ldots, Q_\ell\}.$$
Partitions that  are not adjacent to  
$w$ also  are not adjacent to  
$v$, so have sides $\overline R_i$ that fit into both nests (the sides that don't contain $v$ or $w$), and $\re_{ij}$ must also contain the consistent set
$$\mathcal S_{ij}=\{\overline P_1,\ldots\overline P_{i-1},P_i,\ldots,P_k, \overline Q_1,\ldots, \overline Q_{j-1},Q_j,\ldots, Q_\ell,  R_1,\ldots,  R_m\}$$
The remaining partitions in $\bPi$  are all adjacent to  $w$.

If   $I_{ij}=dP_i\cap dQ_j$ contains some outermost $\overline R_s$ or a vertex $u$ outside all of the $\overline R_s$, we can can use this to extend $\mathcal S_{ij}$ to a region incident to an edge labeled $\WR_s$ or $u$ as we did in Section~\ref{sec:tdomCC}.  This region is  is a branch point for $w$ in some parallel copy of $D,$ and projects to $\re_{ij}$.

On the other hand, suppose $I_{ij}$ contains no $\overline R_s$ or outermost vertex $u$.  Then no extension of $\mathcal S_{ij}$ produces a region incident to an edge labeled $\WR_s$ or $u$.  Since every edge that branches off $\Min(w)$ has such a label, no such region gives a branch point for $w$, i.e. $\re_{ij}$ is not in the image of $\Br(w)$.

Identifying $(\alpha_v \times \alpha_w) = (\alpha_{vw} \times \alpha_w)$ we get a new fundamental domain $D'$ and a new map $\pr'_w \colon D' \to \alpha_w\cap D'$ which projects along $vw$-axes.    Using $\pr'_w$, project  those $\re_{ij}$ that are images of branch points for $w$ to an ordered set of points
$(\dsx_1,\ldots,\dsx_n)$ on $\alpha_w\cap D'$.

Let $I(\dsx_k)$ be the union of the $I_{ij}$ such that $\pr'_w(\re_{ij})=\dsx_k$. Let $P'_1=\{w\}\cup I(\dsx_1), P'_2=P'_1\cup I(\dsx_2)$  etc.  Each $P_i'$ is a side of a valid \GW\ partition $\WP_i'$ based at $w$, since each $I_{ij}$ is a union of $w$-components.

Let $\Omega$ be the collection of $\G$-Whitehead partitions obtained from $\bPi$ by replacing $\WP_1, \dots \WP_k$ by $\WP'_1, \dots \WP'_n$.  To see that the $\Omega$ partitions are pairwise compatible, we need only check that $\WP'_i$ is compatible with $\WR_j$ for all $i,j$.  We know that the side $\overline R_j$  lies in some $I_{st}$, and hence in some $I(\dsx_k)$.  So by definition, $\overline R_j \subset P'_k \setminus P'_{k-1}$ and it follows  that $\WR_j$  is compatible with $\WP'_i$ for all $i$.\\

\medskip\noindent{\em Marking change.}  Finally, we calculate the effect of replacing the structure $(\SP,\F)$ on $Y,$ with its marking $c_\pi^\F,$ by the new structure $(\SPp,\Fp)$ and marking $c_\omega^\Fp$.

\begin{lemma}\label{lem:markingchange} Suppose $v$ is twist-dominant and let $\tau\colon v\mapsto vw$ be an elementary twist. The composite map $c_\omega^\Fp\circ (c_\pi^\F)^{-1} : \SaG \to \SaG$ is of the form $\tau\circ \varphi$, where $\varphi\in U^0(A_\G)$ and $\tau\circ\varphi=\varphi\circ\tau$.
\end{lemma}

\begin{proof}

 Let  $\mu=c_\omega^\Fp\circ (c_\pi^\F)^{-1}  \colon \SaG \to \SaG$.
  The corner point $\widetilde x$ of the fundamental domains $D$ and $D'$ described above is a terminal vertex of edges $\widetilde e_v$ and $\widetilde e_w$ in $\widetilde{\Sa}^\Omega$ as well as in $\USP$.    Let $x$ be its image in $\SP$, and
for each $u\in V$, let $\xi_u$ be an edge path which goes from $x$ to an $e_u$ edge in $\EP$ , across $e_u$, and then back to $x$ in $\EP$. Note that $\xi_u$  crosses a single $e_u$ edge and all other edges are labeled by partitions. We choose $\xi_u$ to have minimal length among all such paths.  $\xi_u$ represents the homotopy class $(c_\pi^\F)^{-1}(u)\in \pi_1(\SP,x)$.

Lift $\xi_u$ to a path $\widetilde{\xi}_u$ based at $\widetilde{x}$. The endpoint $\widetilde{y}$ of $\widetilde{\xi}_u$ is then $u\cdot \widetilde{x}$ with respect to the $c_\pi^\F$-marking. Since $\xi_u$ was taken to be minimal, $\widetilde{\xi}_u$ is a combinatorial geodesic (i.e. it crosses each hyperplane in $\USP$ at most once), and our choice of $x$ means $\widetilde{x}$ and $\widetilde{y}$ are vertices in  $(\widetilde{\Sa}^\Omega,\Fp)$.  Any minimal length edge path $\widetilde{\eta}_u$ in $(\widetilde{\Sa}^\Omega,\Fp)$ between $\widetilde{x}$ and $\widetilde{y}$ consists of edges that cross a hyperplane which separates $\widetilde{x}$ and $\widetilde{y}$. To calculate $\mu(u)$, it is enough to know the $\Omega$-labels of hyperplanes that are crossed by $\widetilde{\eta}_u$. The only hyperplanes and labels that change as we go from $(\SP,\F)$ to $(\SPp,\Fp)$ are those with $\max=w$.  Thus, $\widetilde{\eta}_u$ crosses one hyperplane labeled $u$, and all other hyperplanes are either labeled by partitions or by $w$.

It follows that $\mu(u)=w^{n_u}uw^{m_u}$ for some $n_u,m_u\in \Z$. In particular, the twist-component of $\mu$ is a product of elementary twists by $w$. By construction, a $c_\pi^\F$-axis for $v$ maps to a  $c_\omega^\Fp$-axis for $vw$, so we know that $\mu(v)=vw$. If $u\neq v$ but $u\leq_t w$, either $u$ is twist-dominant, so the axis for $u$ has not changed, or we have sheared the $e_u$ edge so that a $c_\pi^\F$-axis for $u$ maps to a  $c_\omega^\Fp$-axis for $u$. Thus, $\mu(u)=u$.  This proves that the twist component $\tau$ of $\mu$ is just $\tau\colon v\mapsto vw$. Therefore we can write $\mu=\tau\circ \varphi$, where $\varphi$ is a product of folds and partial conjugations by $w$. Thus, $\varphi\in U^0(A_\G)$ and since $\tau$ is a twist by $w$, $\tau\circ \varphi=\varphi\circ \tau$, as desired.
\end{proof}

This completes the proof of Proposition~\ref{prop:tdom}.
\end{proof}

We next make some observations about changing the order of elementary twists, folds and partial conjugations.

\begin{definition} Let $\tau\colon v\mapsto vw$ be an elementary twist.  If $v$ is twist-dominant we say $\tau$ is a {\em $TD$ twist}, and if $v$ is twist-minimal we say $\tau$ is a {\em $TM$ twist}.
\end{definition}

\begin{lemma}\label{lem:TDcommuting} Let $\tau\colon v\mapsto vw$ be an elementary twist.
\begin{enumerate}
\item Let $\varphi$ be a partial conjugation or an elementary fold. Then either $\varphi$ commutes with $\tau$ or
$\tau\varphi=\alpha \varphi\tau $
where $\alpha$ is a partial conjugation, an elementary fold, or an elementary $TM$ twist by $w$ that commutes with both $\varphi$ and $\alpha$.
\item  If $\tau$ is a $TD$ twist and $t$ is a $TM$ twist, then either $t$ commutes with $\tau$ or $\tau t=\alpha t \tau $ where $\alpha$ is an elementary $TM$ twist by $w$ that commutes with both $\tau$ and $t$.
\end{enumerate}
\end{lemma}

 \begin{proof} 
(1) First suppose $\varphi$ is conjugates a component $C$ of $\G\setminus \st(u)$ by $u$.   Since $v$ and $w$ are connected by an edge in $\G,$ $\varphi$ commutes with $\tau$ unless $u=v$, in which case $\varphi\tau$ agrees with $\tau\varphi$ except that $\varphi\tau$ conjugates $C$ by  $vw$ instead of $v$.   Since $\st(v)\subset \st(w)$, $C$ is a union of components $C_i$ of $\G\setminus \st(w)$  plus some elements of $\st(w)$, so we can correct this by  partially conjugating the $C_i$ by $w\inv$.

Next suppose $\varphi$ is a right fold $\rho_{xy}\colon x\mapsto xy$ or left fold $\lambda_{xy} \colon x\mapsto yx$.  It cannot be that $w=x$ since that would mean
$v\leq_tw\leq_fy$.  Therefore $\tau$ commutes with $\varphi$ unless $v=y,$ in which case
  $$\varphi\tau\alpha=\tau\varphi$$
 where $\alpha: x \mapsto xw$ if $\varphi$ is a right fold, or $\alpha: x \mapsto wx$ if $\varphi$ is a left fold. Note that $\alpha$ may be either a fold if $[x,w]\neq 1$ or a twist  if $[x,w]=1$.  Since $x\leq_fy$, this implies $x$ cannot be twist-dominant, so if $\alpha$ is a twist, it is a $TM$ twist.  In either case, since $v$ commutes with $w$,  $\alpha$ commutes with both $\tau$ and $\varphi$.

(2)  Let $t\colon x\mapsto xy$ be a TM twist.  Then $x\neq w$ since $w$ is twist-dominant, so $t$ commutes with $\tau$ unless $v=y$.
  If $v=y$, then $x$ must commute with $v$ and hence also with $w$.  In this case, $\tau t=\alpha t \tau$ where $\alpha: x\mapsto xw$, which is a TM twist commuting with both $\tau$ and $t$.  
\end{proof}

 Recall that $\Out^0(\AG)$ is the subgroup of $\Out(\AG)$ generated by folds, twists, partial conjugations and inversions.  By checking the generators, it is not hard to see that graph automorphisms normalize $\Out^0(\AG)$, hence it is a normal subgroup.

\begin{corollary} \label{cor:factorization} Let $\langle TM \rangle$ denote the subgroup of $\Out^0(\AG)$  generated by $TM$ twists and let $G$ be the subgroup generated by $U^0(\AG)$ and  $\langle TM \rangle$. 
\begin{enumerate}
\item Any element $g \in G$ can be factored as 
$g=t_1 \circ \phi_1 =  \phi_2 \circ t_2 $ where $\phi_i \in U^0(\AG)$ and $t_i \in \langle TM \rangle$.
\item $TD$ twists normalize $G$, hence any element  of $\Out^0(\AG)$ can be factored as a product of an element of $\langle TD \rangle$, an element of $\langle TM \rangle$, and an element of $U^0(\AG)$  in any order.   The  $U^0(\AG)$ and $\langle TM \rangle$ factors may depend on the choice of order, but the $\langle TD \rangle$ factor remains unchanged. 
\end{enumerate}
\end{corollary} 

\begin{proof} First note that inversions normalize the subgroup of $\Out^0(\AG)$ generated by folds, twists, and partial conjugations.  Thus any inversion can be moved past any twist.  For (1), it remains to consider the case where $t_1= \tau$ is a single $TM$-twist and $\phi_1 = \varphi_1 \cdots \varphi_n$ is a product of folds and partial conjugations.  Applying part (1) of Lemma \ref{lem:TDcommuting} repeatedly gives 
$$ t_1 \circ \phi_1  = \tau \varphi_1 \cdots \varphi_n = (\varphi_1 \alpha_1\cdots \varphi_n\alpha_n)\tau$$
where each $\alpha_i$ is either the identity, a partial conjugation, an elementary fold, or a $TM$ twist by the same element $w$.  In particular, all of the $\alpha_i$'s commute with each other.  If all $\alpha_i$ lie in $U^0(\AG)$ we are done, but if one or more $\alpha_i$ is a twist, then we must apply the lemma again to move these twists to the right.  Since $\alpha_i$ commutes with the other $\alpha_j$'s, only moving it past the $\varphi_j$ terms can introduce new factors and these, too, will commute with each other and with the $\alpha_i$. Repeating this process, we can move all of the newly introduced $TM$ twists to the right  to obtain a new factorization $\tau \phi_1 = \phi_2 t_2$ as desired.

For (2), the fact that $TD$-twists normalize $G$ follows immediately from Lemma \ref{lem:TDcommuting} since $\alpha$ always lies in $G$.   So for any $h \in \Out^0(\AG)$, we can write $h = g_1 \circ \ t = t \circ g_2$ where $t \in \langle TD \rangle$ and $g_i \in G$. By part (1), we can factor $g_i$ into an element of $U^0(\AG)$ and an element of $\langle TD \rangle$ in either order.  By part (2) of Lemma \ref{lem:TDcommuting}, we can also switch the order of the $TM$ and $TD$ twists if desired.
\end{proof}

 We can now  complete the proof that $\Theta$ is surjective. 
\begin{proof}[Proof of Proposition~\ref{prop:surjective}]

 By definition, a point in $\OG$ is a space $(Y,d)$ isometric to a skewed $\G$-complex $(\SP, d_\F)$, together with a homotopy equivalence $f:Y \to \SaG$.  For the purpose of this proof, we will identify $(Y,d)$ with $(\SP, d_\F)$.  Then a point in the fiber
$\Theta\inv(Y,d,f)$ is given by a skewed $\G$-complex $(X,\Fp)$, an untwisted homotopy equivalence $h\colon X\to \SaG$  and an isometry $i\colon (\SP, d_\F) \to (X,d_{\Fp})$ such that $h\simeq f\circ i$.  If we also choose a combinatorial isometry of $X$ with some blowup $\SPp$ then the picture is 
 \begin{center}\begin{tikzcd}[]
 \SPp\iso X \arrow[d,"c_\omega^\Fp"]  \arrow[r, "i"] \arrow[dr, "h"] & \SP \arrow[d, "f"] \arrow[dr,"c_\pi^\F"]\\
 \SaG&\SaG & \SaG\arrow[l,"\phi"]
 \end{tikzcd}
 \end{center}
 where $h \circ(c_\omega^\Fp)\inv \in U(\AG)$.  To prove the Proposition, we must find such an $(\SPp, \Fp, h)$. 

 Let $\phi = f \circ (c_\pi^\F)\inv$.  Since graph automorphisms normalize $\Out^0(\AG)$, we can write $\phi=\phi'\circ \gamma$, where $\phi'\in \Out^0(\AG)$ and $\gamma$ is a graph automorphism.  Then replacing $\SP$ by $\mathbb S^{\gamma(\Pi)}$ as in the proof of Lemma \ref{lem:Gcollapse}, we may assume $\phi\in \Out^0(\AG)$.  By composing $c_\pi^\F$ with an isometry of $\SaG$, we can change the collapse map as in the proof of Lemma \ref{lem:Gcollapse}, thereby removing $\gamma$.  Without loss of generality, we therefore assume $\phi\in \Out^0(\AG)$. By Corollary~\ref{cor:factorization}, we can factor $\phi$  as $\phi=\eta \circ t_1 \circ t_2$ where $\eta \in U^0(\AG)$,  $t_1$ is a product of $TD$ twists, and $t_2$ is a product of  $TM$ twists.  Elements of $U^0(\AG)$ act on the left on both $\tS$ and $\OG$ and the action commutes with $\Theta$, so the fiber over $(\SP,d_\F,f)$ is isomorphic to the fiber over $\eta\inv(\SP,d_\F,f)=(\SP,d_\F,\eta\inv f)$.  Thus we may assume that $\phi = t_1 \circ t_2$.
 Moreover, by Lemma~\ref{lem:t-minimal} we can realize $t_2$ by a change of parallelotope structure on $\SP$ (which changes the collapse map, but not the metric on $\SP$), so we may assume that  $t_2=\id$ and write $\phi=\tau_1\circ\tau_2\circ\ldots\circ \tau_k$, a product of elementary $TD$ twist.

By Proposition~\ref{prop:tdom}  we can find elements $\varphi_i \in U^0(\AG)$ and   a sequence of skewed blowup structures on  $Y$ realizing the compositions $\tau_i\circ\varphi_i$.  Composing these gives
 \begin{center}
 \begin{tikzcd}[column sep=width("bbbbbbbbbb")]
\SPp \arrow[d,"c_\omega^\Fp"] & \SP \arrow[d, "c_\pi^\F"] \arrow[l, "i"]  \arrow[dr,"f"]\\
\SaG &\SaG \arrow[l, "\tau_1\varphi_1\ldots\tau_k\varphi_k"] \arrow[r, "\phi"]& \SaG
 \end{tikzcd}
 \end{center}
By Corollary~\ref{cor:factorization}, we can rewrite
$$\tau_1\varphi_1\ldots\tau_k\varphi_k=t^\prime\circ \varphi \circ (\tau_1\ldots\tau_k) = t^\prime\circ \varphi \circ \phi $$
where $t^\prime$ is a product of $TM$ twists and $\varphi \in U^0(\AG)$.  By changing the parallelotope structure on $\SPp$ we may again arrange that $t^\prime=\id$, so the diagram above becomes
\begin{center}
 \begin{tikzcd}[column sep=width("bbbbbbbbbb")]
\SPp \arrow[d,"c_\omega^\Fp"]  & \SP \arrow[d, "c_\pi^\F"] \arrow[l, "i"] \arrow[dr,"f"]\\
\SaG &\SaG \arrow[l, "\varphi\circ \phi"] \arrow[r, "\phi"]& \SaG
 \end{tikzcd}
 \end{center}
 Setting $h= \varphi\inv\circ c^{\mathcal G}_\omega =  f\circ i^{-1}$, we have $h \circ (c_\omega^\Fp)\inv = \varphi\inv \in U^0(\AG)$, so
 $(\SPp, \mathcal G, h)$ is the desired point in the fiber. 
\end{proof}

 \subsection{Structure of fibers}
 \subsubsection{Finding twist-minimal hyperplanes}

  In this section we show that the set of twist-minimal hyperplanes in a marked twisted $\G$-complex depends only on the underlying metric and the marking, i.e. on the projection to $\OG$.

\begin{lemma}\label{lem:tminhyperplanes} Let $[X,\F,h]$ and $[X',\F',h']$ be two points in the fiber over $[Y,d,f]$. The images in $Y$ of twist-minimal hyperplanes in $(X,\F)$ and $(X',\F')$ are the same (both set theoretically and pointwise) and their carriers have the same width.
 \end{lemma}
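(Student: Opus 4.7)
The strategy is to characterize the twist-minimal hyperplanes of any $\G$-complex structure in the fiber purely in terms of intrinsic invariants of $(Y,d,f)$, so that the two pictures must agree. Using the isometries $\iota\colon(X,d_\F)\to(Y,d)$ and $\iota'\colon(X',d_{\F'})\to(Y,d)$ implicit in the fiber condition to push the twist-minimal hyperplanes of $(X,\F)$ and $(X',\F')$ forward to convex subsets of $Y$, the task reduces to showing these subsets (and the widths of their carriers) coincide.

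First, Corollary~\ref{cor:tmin_characterize} reduces twist-minimality of a hyperplane $H$ to a property of the fold-equivalence class $[\max_h(H)]$, and by Lemma~\ref{lem:Sing} this class is determined by $(Y,d,f)$ alone via the $\AG$-action on $\widetilde Y$. It therefore suffices, for each twist-minimal $v\in V$, to recover the twist-minimal hyperplanes $H$ with $v\in\max_h(H)$ and their carrier widths from $(Y,d,f)$.  Note that $\Min_f(v)\subset\widetilde Y$ and $\Br_f(v)\subset\Min_f(v)$ are manifestly intrinsic. By Proposition~\ref{prop:min_h} each lift $\widetilde H$ of such a hyperplane lies inside $\Min_h(v)$ with $\partial\kappa(\widetilde H)$ meeting $\Br_h(v)$, and by Lemma~\ref{lem:convex} $\widetilde H$ is convex, so the lift descends to a well-defined convex subset of $Y$ that is independent of ambient combinatorial data.

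The main step will be to show that the lifts $\widetilde H$ occur as canonical cross-sections of $\Min_f(v)$ determined intrinsically. Using the metric product decomposition $\Min_h(v)=\alpha_v^h\times\Slkv$ from the proof of Proposition~\ref{prop:min_h}, I would argue that each such $\widetilde H$ projects to a single point $p_{\widetilde H}\in\alpha_v^h$ and that its carrier projects to a maximal open interval $I_{\widetilde H}\subset\alpha_v^h\setminus\pr_v(\Br_f(v))$ of length equal to the carrier width, in direct analogy with Corollary~\ref{cor:BranchImage}. Since the axis direction for $\bar f_*(v)$ in $\Min_f(v)$ and the projection of $\Br_f(v)$ onto it are both intrinsic, the decomposition of $\alpha_v^h$ into the complementary open intervals of $\pr_v(\Br_f(v))$ determines both the positions of the lifts and the widths of their carriers, proving the lemma.

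The main obstacle is adapting Corollary~\ref{cor:BranchImage} (proved there for the standard cubical metric) to an arbitrary allowable parallelotope structure: one must confirm that the image of the branch locus on the axis is precisely the complement of the carrier projections of twist-minimal hyperplanes with $v\in\max_h(H)$, and that no interior branch points arise inside such a carrier. I expect this to follow from the description of characteristic cycles for twist-minimal $v$ in Section~\ref{sec:partitions}, together with the rigidity of twist-minimal hyperplanes under rotation established in Lemma~\ref{lem:convex} and Proposition~\ref{prop:CAT0}(2); the latter show that rotations in the $\UL(v)$ directions do not alter the metric geometry of carriers of twist-minimal hyperplanes, so the branch-locus/carrier dichotomy along the axis is preserved throughout the allowable parallelotope structures compatible with $(Y,d)$.
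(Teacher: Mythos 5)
Your overall strategy matches the paper's: identify $(X,\F)$ and $(X',\F')$ as two skewed $\G$-complex structures on the same $(Y,d,f)$, use Lemma~\ref{lem:Sing} and Corollary~\ref{cor:tmin_characterize} to pin down $[\max_h(H)]$ intrinsically, and show that the twist-minimal hyperplanes and their carrier widths can be read off from $\Min_f(v)$, $\Br_f(v)$, and the projection to an axis $\alpha_v$.  You correctly identify the crux: Proposition~\ref{prop:min_h} and Corollary~\ref{cor:BranchImage} are proved only in the rectilinear setting, so one must transfer the branch-locus/carrier description to an arbitrary allowable parallelotope structure.

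Where you diverge from the paper is in how you propose to carry out that transfer, and this is where your argument has a gap.  You suggest that Proposition~\ref{prop:CAT0}(2) (convexity of twist-minimal hyperplanes preserved under rotation) and Lemma~\ref{lem:convex} let you conclude that ``the branch-locus/carrier dichotomy along the axis is preserved'' under rotation.  But for a twist-minimal $v$, rotations do not fix the axis $\alpha_v$: the paper's own proof flags that ``for twist-minimal elements, the straightening map $s_\F$ need not take axes to axes or minsets to minsets.''  So your dichotomy ``along the axis'' is along a moving target, and neither Lemma~\ref{lem:convex} nor Proposition~\ref{prop:CAT0}(2) controls what happens to $\pr_v(\Br_f(v))$ as $\alpha_v$ leans.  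The paper avoids this by a more explicit transport: the straightening map does take $\kappa(\UH)$ to $\kappa(\UH')$ preserving maximal elements, and since both $\Min_h(v)$ and its straightened version are metric products $\alpha_v\times\UH$ and $\alpha'_v\times\UH'$ with identified second factors, the straightening on carriers \emph{extends} to a homeomorphism of the two minsets which carries branch points to branch points.  That lets one import Proposition~\ref{prop:min_h}'s conclusion directly.  You also omit the paper's closing observation that each lift $\UH$ is the convex hull of the $A_{\lk(v)}$-orbit of a point on $\alpha_v$, which is what shows the projection $\pr_v$ itself is intrinsic to $(Y,d,f)$ and not a further choice.  To complete your sketch you would need either to reproduce that minset-homeomorphism transport, or to give an independent argument controlling branch points on the skewed $\alpha_v$; as written, the appeal to Proposition~\ref{prop:CAT0}(2) does not suffice.
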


\begin{proof}  Since $(X,\F,h)$ and $(X',\F',h')$ both project to $(Y,d,f)$, we can identify
 $(X,d_\F) \cong (Y,d) \cong (X',d_{\F'})$, that is, we consider $(X,\F)$ and $(X',{\F'})$ to be two different skewed $\G$-complex  structures on the same underlying space $Y$.  Using this identification, we have $h= f = h'$, so $f$ is untwisted in both of these structures.

Recall that in Section \ref{sec:tminimal} we defined the sets $\Sing_h(H)$  and $\max_h(H)$ for a hyperplane in a rectilinear $\G$-complex with an untwisted marking $h$.  The same definitions can be used for a hyperplane $H$ in a skewed $\G$-complex $(X, \F)$ providing $H$ is convex, that is, $v \in \Sing_h(H)$ if an axis $\alpha_{v}$  crosses some lift of $H$ in $(\widetilde X, \F)$, where the action is given by the isomorphism $h_* \colon\pi_1(X)\iso \pi_1(\SaG)=\AG$.   In addition, if $s_\F$ is the straightening map and $h_\F=hs_\F^{-1}$,
 then the induced map $\tilde s_\F$ on the universal cover is equivariant with respect to the markings determined by $h$ and $h_\F$. Thus if some lift of $H$ separates $x$ from $vx$ in $(X, \F)$, then the same holds after straightening.  In other words, 
 $\Sing_h(H) = \Sing_{h_\F}(s_\F(H))$.  Thus, by Corollary \ref{cor:tmin_characterize} and Lemma \ref{lem:convex}, a hyperplane $H$ in $(X, \F)$ is twist-minimal if and only if any lift $\UH$ is convex and $[\max_h(H)]$ is twist-minimal.

Assume $H$ in $(X, \F)$ is twist-minimal and let $v \in \max_h(H)$.  Then some lift $\UH$ lies in $\Min_h(v)$ and we can decompose $\Min_h(v)$ as a (not necessarily orthogonal) product  $\Min_h(v) = \alpha_{v} \times \UH$. We would like to apply Proposition \ref{prop:min_h}, but that proposition was proved only in the context of rectilinear $\G$-structures, so we first must straighten $(X, \F)$.
For twist-minimal elements, the straightening map $s_\F$ need not take axes to axes or minsets to minsets, but as observed above, it does take the carrier of $\UH$  to the carrier of a hyperplane $\UH'=s_\F(\UH)$ that also has maximal element $v$.  Hence the minset of $v$ in the straightened structure $(X, \E, h_\F)$  decomposes as $\alpha'_{v} \times \UH'$, where $\alpha'_v$ is an axis for $v$ with respect to the marking $h_\F$.  In particular, the straightening  map between carriers extends to a homeomorphism between these two minsets.  It follows that $s_\F^{-1}$ maps branch points in $\kappa(\UH)$ to branch points in $\kappa(\UH')$.   By Proposition \ref{prop:min_h}, $\kappa(\UH')$ contains branch points on both components of its boundary, so the same holds for $\kappa(\UH)$.  The position of $\UH$  is determined by the projection of these branch points on $\alpha_v$  via the projection map $\pr_v^\F=s_\F^{-1}\circ \pr_v\circ s_\F$.    Moreover, since $\UH$ is the convex hull of the $A_{\lk(v)}$-orbit of a point on $\alpha_{v}$, the projection map is determined by the CAT(0) metric and the marking $h$, independent of the choice of point on $\alpha_{v}$.

Since $\Min_h(v)$,  the projection map to $\alpha_{v}$, and the branch locus of $v$ depend only on the CAT(0) metric and the marking, they are the same for $(\widetilde X, \F)$ and $(\widetilde X', \F')$.
 \end{proof}

\subsubsection{Shearing $(\SP,\F)$}
Now let $(Y,d,f)$ be an arbitrary point of $\OG$. By Proposition~\ref{prop:surjective} the fiber $\Theta\inv(Y,d,f)$  is non-empty, so we may fix a point $(\SP,\F,h_0)$ in this fiber and use an isometry $(Y,d)\iso (\SP,d_\F)$ to identify $Y$ with $\SP$ and $f$ with the untwisted marking $h_0$.  After acting by the untwisted subgroup $U(\AG)$ we may further assume $f=c_\pi^\F$.

If $(X,\Fp,h)$ is any other point in the same fiber, then there is an isometry $i\colon (X,d_\Fp)\to (Y,d)$ with $f\circ i\simeq h$.  Using this isometry to identify $(X,d_\Fp)$ with $(Y,d)$ and $h$ with $f$, we can view $(X,\Fp)$ as a different decomposition of the same underlying metric space $Y$ into   (unlabeled) parallelotopes.  We say $(X,\Fp)$ is a {\em $\G$-complex structure on $Y$}.   To understand the topology of the fiber, we will compare an arbitrary $\G$-complex structure $(X,\Fp)$ with our given structure $(\SP,\F)$.

The action of $\AG$ on universal covers is given by $f$ in both cases, so the axes, minsets and branch points are the same. However, while $f$ is untwisted with respect to both structures, it is a $\G$-collapse map only for $(\SP,\F)$ where it is in fact the standard collapse map.

 By Lemma~\ref{lem:tminhyperplanes} the set of twist-minimal hyperplanes and their carriers are the same in both structures. Let $H$ be a twist-minimal hyperplane and $\UH$ a lift of $H$ to $\USP$.
The carrier $\kappa(\UH)$ has two boundary components, $\bdry_0$ and $\bdry_1$. Let $x_0$ be a branch point in $\bdry_0$; then each of $(\USP,\F)$ and $(\widetilde X,\Fp)$ must have an edge dual to $\UH$ with one endpoint at $x_0$.  In $(\SP,\F)$ hyperplanes are labeled, so we have $\UH=\UH_A$ for some $A\in \bPi\cup V$ with $\max(A)$ twist-minimal, and we label this edge $e_A$.  In the skewed $\G$-complex structure $(X,\Fp)$ the edge does not have a  label, so we will just call it $e_H$.

 By Lemma \ref{lem:convex}, $\UH$ is convex.  The elements of  $\UL(A)$ are twist-dominant and commute with each other, so $\UH$ contains a subspace of the form $e_A \times \Eu^+_A$ where $\Eu^+_A$ is an affine space generated by axes of elements in $\UL(A)$.
By definition of an allowable parallelotope structure, the edge $e_A$ was obtained from an orthogonal edge by rotating in the direction of $\Eu^+_A$.  The same applies to $e_H$, since $\UL(H) =\UL(A)$.  Letting $t(e_A)$ and $t(e_H)$ be the endpoints of $e_A$ and $e_H$ in $\bdry_1$, it follows that the subspaces $t(e_A) \times \Eu^+_A$ and $t(e_H) \times \Eu^+_A$ agree.   So the difference $s_A=t(e_H)-t(e_A)$ is vector in the vector space $U^+_A$ spanned by the axes of $\UL(A)$.  (See Figure~\ref{fig:shearvector}.)

 Finally, note that in defining $s_A$, we began by choosing an isometry $i\colon (X,d_\Fp)\to (Y,d)$.  While this isometry need not be unique, for any other such isometry $j$, we have  $$j^{-1} \circ i = (j^{-1}  \circ f^{-1})\circ (f \circ i) \simeq h^{-1} \circ h = id.$$ Recall that $Y$ decomposes as an orthogonal product $Y=Y_0\times T_{\mathcal{Z}(\AG)}$ where $T_{\mathcal{Z}(\AG)}$ is a torus of dimension equal to the rank of the center $\mathcal{Z}(\AG)$. It follows from the work of Bregman \cite{Breg} that the only isometries of $Y$ that are homotopic to the identity are translations of the central torus $T_{\mathcal{Z}(\AG)}$.  Such a translation has no effect on the relative position of $e_H$ and $e_A$, so $s_A$ is independent of the choice of $i$. 

\begin{figure}
\begin{center}
  \begin{tikzpicture}[xscale=.65, yscale=.5]
 \coordinate (x0) at (7.6,1); \coordinate (teA) at (5,6);\coordinate (teH) at (8,5); 
 \vertex{(x0)};   \node [below] () at (x0) {$x_0$}; 
  \vertex{(teA)};  \node[above] () at (teA) {$t(e_A)$}; 
 \draw [thick, blue] (x0) to (teA); \node (eA) at (5.5,3.5) {$e_A$}; 
 \vertex{(teH)};   \node[above right] () at (teH) {$t(e_H)$}; 
  \draw [thick, blue] (x0) to (teH); \node (e) at (8.5, 3.5) {$e_H$}; 
   \draw [thick, red, ->] (teA) to (teH);\node at (7,5.7) {$s_A$};
\coordinate (bfl) at (-1,0); \coordinate (bfr) at (10,0); \coordinate (brl) at (2,3); \coordinate (brr) at (12,3);  
\coordinate (tfl) at (-1,4); \coordinate (tfr) at (10,4); \coordinate (trl) at (2,7); \coordinate (trr) at (12,7); 
 \draw (bfl) to (bfr) to (brr) to (brl) to (bfl); 
\draw (tfl) to (tfr) to (trr) to (trl) to (tfl); 
  \node (kappa) at (1.5,2.25) {$e_A\times \Eu^+_A\subset\kappa(\UH)$}; 
   \node (U) at (2.5,5) {$t(e_A)\times \Eu^+_A$}; 
 \draw [gray] (bfl) to (tfl);
   \draw [gray] (bfr) to (tfr);
      \draw [gray] (brl) to (trl);
       \draw [gray] (brr) to (trr);
  \end{tikzpicture}
\end{center}
\caption{Shear  of $e_H$ with respect to $e_A$}\label{fig:shearvector}
\end{figure}

\begin{definition}  The vector $s_A=t(e_H)-t(e_A)\in U^+_A$
 is the {\em shear} of $e_H$ relative to $e_A$.
 \end{definition}

 \begin{definition}  A \emph{shearing} of $(\SP,\F)$ is a choice of vector  $s_A\in U^+_A$ for every hyperplane $H_A$, subject to the condition that  if $ \max(A)$ is twist-dominant, then $s_A=0$.  \end{definition}

We now observe that two $\G$-complex structures $(X,\Fp)$ and $(X',\Fp')$ in the fiber that define  the same shearing are the same.

\begin{proposition}\label{prop:uniqueness} Two points $[X,\Fp,h]$ and $[X',\Fp',h']$ in the fiber over $(\SP,d_\F,c^\F_\pi)= (Y,d,f)$ are the same if and only if they define the same shearings $\{s_A\}$ and $\{s'_A\}$ of $(\SP,\F)$.
\end{proposition}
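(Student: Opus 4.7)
After the fiber-condition isometries are used to identify $X$ and $X'$ with $Y$ as metric spaces so that both markings coincide with $f=c_\pi^\F$, the problem becomes to show that $\Fp$ and $\Fp'$ induce the same parallelotope decomposition of $Y$. If so, the identity $Y\to Y$ is a combinatorial isometry and $[X,\Fp,h]=[X',\Fp',h']$ in $\tS$; the converse direction is immediate, since any equivalence between the two triples carries the shear of each edge in $\Fp$ to the shear of the corresponding edge in $\Fp'$.

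The plan for the forward direction is to reconstruct the common 1-skeleton hyperplane by hyperplane from the shearing data. For a twist-minimal hyperplane $H=H_A$ of $(\SP,\F)$, Lemma~\ref{lem:tminhyperplanes} shows that $H$ is still a hyperplane of $(X,\Fp)$ with the same carrier in $Y$. Inside $\kappa(\UH_A)$, the dual edge of $(X,\Fp)$ incident to a chosen branch point $x_0\in\bdry_0\kappa(\UH_A)$ terminates at $t(e_A)+s_A$ by the very definition of shear; equal shearings thus force equal dual edges, and translating this across all lifts determines every edge of $(X,\Fp)$ meeting a twist-minimal carrier. For a twist-dominant vertex $v$, Lemma~\ref{lem:convex} guarantees that axes for $v$ are genuine geodesics of $\widetilde Y$ in every skewed structure, and Corollary~\ref{cor:BranchLocus2} identifies $\pr_v(\Br(v))$ with the vertex set of any such axis $\alpha_v$. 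Since $\alpha_v$, $\Min(v)$, and $\Br(v)$ depend only on the marked metric space $(Y,d,f)$, the partition of $\alpha_v$ into edges is forced to be the same for $\Fp$ and $\Fp'$; combined with the constraint $s_A=0$ for twist-dominant $A$, which pins each such dual edge along the axis of its maximal vertex, this identifies the twist-dominant edges of the two decompositions as well. Once the 1-skeleton is recovered, Corollary~\ref{fact:maxcube} locates the higher cells: at each vertex the maximal cubes correspond bijectively to maximal pairwise-commuting collections of edges emanating from that vertex, and the metric on each is fixed by allowability together with the ambient metric on $Y$. Hence $\Fp=\Fp'$ as decompositions of $Y$.

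The main obstacle is the twist-dominant step, where no direct label-by-label comparison is available: the partitions realizing the blowup structure on $(X,\Fp)$ need not agree with those of $(\SP,\F)$, so one cannot just match edges combinatorially. The argument has to show instead that the positions of all twist-dominant hyperplanes and their dual edges are intrinsic to $(Y,d,f)$, coming from axes of twist-dominant generators and their branch loci. Corollary~\ref{cor:BranchLocus2}, together with the allowability condition forcing edges labelled by twist-dominant $A$ to align with the axis of $\max(A)$ (recorded in the definition of shearing by $s_A=0$), is precisely what makes this intrinsic reconstruction go through.
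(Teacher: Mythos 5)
Your plan mirrors the paper's in outline (reduce to $(Y,d,f)$, pin down twist-minimal hyperplanes via Lemma~\ref{lem:tminhyperplanes} and the shear data, then handle the twist-dominant hyperplanes, then recover higher cells), and your treatment of the twist-minimal part is sound. But the twist-dominant step has a genuine gap precisely where you flag it as the main obstacle.

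You assert that since $\alpha_v$, $\Min(v)$ and $\Br(v)$ depend only on $(Y,d,f)$, the partition of $\alpha_v$ into edges is forced to be the same for $\Fp$ and $\Fp'$. This does not follow. The vertex set of $\alpha_v$ in a skewed structure is not $\alpha_v\cap\Br(v)$; it is the image of $\Br(v)$ under the projection $\pr_v^{\Fp}=s_{\Fp}^{-1}\circ\pr_v\circ s_{\Fp}$, which is emphatically \emph{not} the nearest-point projection of $(Y,d)$ and does depend on the parallelotope structure (the straightening map $s_{\Fp}$ occurs in its definition). Corollary~\ref{cor:BranchLocus2} identifies the vertex set with $\pr_v(\Br(v))$ only in the standard structure. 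So what you actually need to prove, and what the paper proves, is that $\pr_v^{\Fp}=\pr_v^{\Fp'}$. The paper does this by interpreting $\pr_v^{\Fp}$ as the map that collapses every carrier $\kappa(\UH)\subset\Min(v)$ along its (sheared) dual edge, and then checking that for twist-minimal $\UH$ the agreeing shears force the collapse directions to agree, while for twist-dominant $\UH$ (with maximal element $w\in\lk(v)$) the collapse direction is along $\alpha_w$, which is intrinsic because $\tilde\imath$ preserves twist-dominant axes. Your argument skips this identification-of-projections step, which is the crux of the twist-dominant case.

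Separately, you omit the case in which $A_\G$ has nontrivial center. For a central $v$, $\Min(v)=\widetilde Y$ and $\Br(v)=\emptyset$, so the projection/branch-locus argument gives no information. The paper needs a separate paragraph here: $X$ and $X'$ split off a flat torus factor, the isometry $i$ can differ from a combinatorial isometry by a translation of this torus, and one uses that such translations are homotopic to the identity to repair $i$. Without this you cannot conclude $[X,\Fp,h]=[X',\Fp',h']$ in that case.
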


\begin{proof} If $[X,\Fp,h]=[X',\Fp',h']$ then there is a combinatorial isometry $i\colon (X,\Fp)\to(X',\Fp')$ with $h'\simeq h\circ i$, (i.e. an isomorphism of cube complexes $X\iso X'$ which restricts to an isometry on each parallelotope), so the fact that corresponding edges have the same shearing is clear.

For the converse, suppose that $i\colon (X,d_\Fp)\to (X',d_{\Fp'})$ is an isometry of underlying metric spaces such that $h'\simeq i\circ h$.  Lift $i$ to an equivariant isometry $\tilde i\colon(\widetilde X,d_\Fp) \to (\widetilde X', d_{\Fp'})$.  By Lemma~\ref{lem:tminhyperplanes}, the CAT(0) metric  and the marking completely determine the twist-minimal hyperplanes, as well as the width of their carriers.  Hence $\tilde i$ maps each twist-minimal hyperplane $\UH$ to a twist-minimal hyperplane $\tilde i(\UH)$. The assumption on shearings now implies that the image of an  edge dual to $\UH$ is parallel to any  edge dual to $\tilde i(\UH)$ in $(\widetilde X',\Fp')$. To show that $\tilde i$ is a combinatorial isometry, we will show that it also maps twist-dominant hyperplanes in $(\widetilde X,\Fp)$ bijectively to twist-dominant hyperplanes in $(\widetilde X',\Fp')$.

 Suppose $v$ is twist-dominant and does not lie in the center of $\AG$.  Then by Propositions \ref{prop:min_h}  and \ref{prop:straightening}, the hyperplanes split by $v$ are completely determined by the projection maps $\pr_v^\Fp,~\pr_v^{\Fp'}$. Thus, to show that $i$ preserves these hyperplanes, it suffices to show that these two projection maps agree.
In both cases, the projection map may be thought of as performing a hyperplane collapse along all hyperplanes $\UH\subseteq \Min(v)$ whose maximal equivalence class commutes with $v$, where the collapse map takes the dual edges $e_H$ to a point.  Since $i$ takes twist-minimal hyperplanes to twist-minimal hyperplanes preserving the shearing and length of their dual edges, $\pr_v^\Fp$ and $\pr_v^{\Fp'}$ agree on twist-minimal hyperplanes $\UH\subseteq \Min(v)$.  If the maximal element $w \in \lk(v)$ is twist-dominant, the dual edge to $\UH$ lies along an axis $\alpha_w$  by Lemma \ref{lem:convex}. The entire axis is collapsed to a point under either of these projections. Since $i$ takes axes of twist-dominant generators to axes of twist-dominant generators, $\pr_v^\Fp$ and $\pr_v^{\Fp'}$ also agree along twist-dominant $\UH\subseteq \Min(v)$.  We conclude that the two projection maps are the same and hence determine the same twist-dominant hyperplanes.

When $A_\G$  has nontrivial center, $X$ and $X'$ decompose as (non-orthogonal) products with a locally convex torus endowed with a flat metric.  In each case, the parallelotope structure on the torus consists of a single parallelotope with opposite faces identified. In particular, any edge in the 1-skeleton of this torus is the image of an axis of some central element. As $i$ is an isometry and  $h'\simeq i\circ h$, the torus factors in $X$ and $X'$ agree as marked, metric tori.  Thus we may write $X=Z\times T, ~X'=Z'\times T$, where $Z,~Z'$ are subcomplexes, and $i$ maps every edge of the $T$-factor in $X$ parallel to an edge of the $T$-factor of $X'$. The above argument now shows that the combinatorial structure on $Z$ and $Z'$ must also agree, and that for every edge $e$ in the 1-skeleton $X$, $i(e)$ differs by translation in $T$ from an edge in the 1-skeleton of $X'$.  Since the 1-skeleton of $X$ is connected, $i$ differs from a combinatorial isometry by some fixed translation in $T$. Since any translation is isotopic to the identity, post-composing $i$ with the inverse of this translation gives a combinatorial isometry $i'\colon (X,\Fp)\rightarrow (X',\Fp')$ which still satisfies $h'\simeq h\circ i'$.
\end{proof}

\begin{corollary}\label{embedding}  The composite map $ \SG  \hookrightarrow  \tS  \xrightarrow{\Theta} \OG $ that forgets the cube complex structure on $[X,\Fp, h]$, is an embedding.
\end{corollary}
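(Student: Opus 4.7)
The plan is to deduce the corollary from Proposition~\ref{prop:uniqueness}. The substantive part is showing that the composition is injective; continuity of the inverse on the image will then be a matter of reading off edge lengths from the CAT(0) metric.

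For injectivity, I would begin with two points $[X,\E,h]$ and $[X',\E',h']$ of $\SG$ mapping to the same class $[Y,d,f] \in \OG$. The defining $\OG$-equivalence provides isometries allowing me to identify both underlying metric spaces with $Y$ and both markings with $f$, so that $(X,\E)$ and $(X',\E')$ become two rectilinear $\G$-complex structures on the common metric space $Y$ sharing the common untwisted marking $f$. After acting by an appropriate element of $U(\AG)$ (which lifts to $\tS$ and commutes with $\Theta$ by Proposition~\ref{prop:surjective}), I can arrange for $(X,\E,h)$ to play the role of the reference point $(\SP,\F,c_\pi^\F)$ in the setup preceding Proposition~\ref{prop:uniqueness}. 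It then suffices to show that the shearing $\{s_A\}$ attached to $(X',\E')$ vanishes identically.

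Vanishing of the shearing is the key step. For a hyperplane $H_A$ with $\max(A)$ twist-dominant, $s_A=0$ by convention. For a twist-minimal $H_A$ with lift $\UH$ and branch point $x_0 \in \bdry_0\kappa(\UH)$, each rectilinear structure contributes an edge at $x_0$ dual to $\UH$; being rectilinear, each such edge meets $\bdry_0\kappa(\UH)$ orthogonally, and in particular is perpendicular to the flat $\Eu_A^+$ lying within $\bdry_1\kappa(\UH)$. Hence each edge terminates at the unique foot of the orthogonal projection of $x_0$ onto the convex subspace $\bdry_1\kappa(\UH)$, and uniqueness of nearest-point projection in a CAT(0) space forces the two edges to coincide. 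Thus $s_A=0$ for every $A$, and Proposition~\ref{prop:uniqueness} yields $(X',\E',h')=(X,\E,h)$. The main obstacle here is choosing the reference structure and reducing by $U(\AG)$ so that Proposition~\ref{prop:uniqueness} is applicable; once this is in place the rectilinear hypothesis kills the shearing immediately via orthogonal projection.

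For the embedding claim, continuity of the composition follows from continuity of the inclusion $\SG\hookrightarrow\tS$ and of $\Theta$. For continuity of the inverse on the image, I would observe that each cell of $\SG$ is parametrized by the positive edge lengths of the underlying rectilinear $\G$-complex, and that these lengths can be recovered continuously from $[Y,d,f]$: the widths of twist-minimal hyperplanes are metric invariants by Lemma~\ref{lem:tminhyperplanes}, while the edge lengths along a twist-dominant axis $\alpha_v$ are determined by the spacings of $\pr_v(\Br_h(v))$ as in Corollary~\ref{cor:BranchLocus2}. This recovers the edge lengths cell-by-cell as continuous functions of the target, giving a continuous inverse on the image and completing the proof that the map is a topological embedding.
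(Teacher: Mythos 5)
Your argument is correct and follows essentially the same route as the paper: both deduce injectivity from Proposition~\ref{prop:uniqueness} by noting that a rectilinear structure forces the shear vector of the edge dual to each twist-minimal hyperplane to vanish, which you make explicit via nearest-point projection in the CAT(0) carrier while the paper compresses it to one sentence. Two small remarks: the symbol $\E$ is reserved for the unit-cube (standard) structure, so points of $\SG$ should be written $[X,\F,h]$ with $\F$ an arbitrary rectilinear (orthotope) structure rather than $[X,\E,h]$; and your discussion of continuity of the inverse is a reasonable supplement, since the paper's one-line proof of this corollary establishes only injectivity.
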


\begin{proof} Suppose $[X,\Fp,h]$, $[X',\Fp',h']$ are two rectilinear $\G$-complexes in the fiber over $[Y,d,h]\in \OG$. Then there is an isometry $i\colon (X',d_{\Fp'})\rightarrow (X,d_{\Fp})$ such that $h'\simeq i\circ h$. Since $\Fp,\Fp'$ are rectilinear, no shearing of edges  dual to twist-minimal hyperplanes is allowed, so by Proposition \ref{prop:uniqueness}, $[X, \Fp,h]=[X',\Fp',h']$ in $\tS$, and hence also in $\SG$.
\end{proof}

\subsubsection{Zero-sum shearings.} We now want to show that given any shearing of $(\SP,\F)$ satisfying a certain {\em zero-sum condition}, there is a skewed $\G$-complex structure $(X,\Fp)$ on $Y$ with that shearing.  Together with Proposition~\ref{prop:uniqueness} this gives us a characterization of all points in the fiber, which we can then use   to   prove that the fiber is contractible.

Let $v$ be a twist-minimal vertex of $\G$.  For any $A \subset \bPi \cup V$ with $ \max(A) \geq_f v$, we have $\UL(A) \subseteq \UL(v) \cup \UF(v)$, so the shearing vector $s_A \in U_A^+$ decomposes as
$$s_A = \ell^v_A + f^v_A$$ where the first factor lies in the subspace spanned by axes of $\UL(A) \cap \UL(v)$ and the second by the axes of $\UL(A) \cap \UF(v)$.  Note that if $v \in  \max(A)$, then $f_A^v=0$.

Now let $\chi_v$ a characteristic cycle for $v$ in $\SP$.  Let $H_{A_1},\ldots,H_{A_k}$ be the hyperplanes crossed by $\chi_v$ and orient the dual edges to be consistent with the orientation of $e_v$.  For all $i$ we have $ \max(A_i)\geq_f v$, so $s_{A_i} = \ell^v_{A_i} + f^v_{A_i}$.   Viewing all of the $\ell_{A_i}^v$ as vectors in the subspace of $U_A^+$  spanned by axes of $\UL(v)$, we can 
define $\ell_v$ to be the sum
$$\ell_v=\sum_i\ell_{A_i}^v.$$

\begin{definition} A shearing $\{s_A\}$ of $(\SP,\F)$ is a {\em zero-sum shearing} if $\ell_v=0$ for all  twist-minimal $v$.
\end{definition}

\begin{proposition}\label{prop:FiberZero}  If the images of $[X,\Fp,h]$ and $[\SP,d_\F,c^\F_\pi]$ in $\OG$ are equal, then $(X,\Fp)$ differs from $(\SP, \F)$ by a zero-sum shearing.
\end{proposition}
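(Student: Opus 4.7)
Given two marked skewed $\G$-complex structures $(\SP,\F,c^\F_\pi)$ and $(X,\Fp,h)$ mapping to the same point of $\OG$, I would identify $X$ with $\SP$ via the fiber isometry so that they become two $\G$-complex structures on a common underlying $(Y,d,f)$, sharing all axes and minsets. The strategy is, for each twist-minimal vertex $v$, to lift the characteristic cycle $\chi_v=e_{A_1}\cup\cdots\cup e_{A_k}$ of $(\SP,\F)$ to a path $\widetilde\chi_v$ in $\widetilde Y$ running from $\widetilde x_0\in\alpha_v$ to $v\cdot\widetilde x_0$, replace each edge by its counterpart in $(X,\Fp)$, and exploit the constraint that this shifted path still represents $v\in\AG$.

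For each index $j$ with $\max(A_j)$ twist-minimal, Lemma~\ref{lem:tminhyperplanes} identifies $H_{A_j}$ and its carrier in $(X,\Fp)$, and the dual edge there differs from $\widetilde e_{A_j}$ by $s_{A_j}\in U^+_{A_j}$. For each $j$ with $\max(A_j)=m$ twist-dominant, Proposition~\ref{prop:straightening} and Corollary~\ref{cor:BranchLocus2} imply that the axis $\alpha_m$ and its subdivision by the branch locus of $m$ are intrinsic to $(Y,d,f)$; consequently the edge dual to $H_{A_j}$ starting at a given branch point coincides in both structures, consistent with the convention $s_{A_j}=0$. Concatenating these substitutions, the shifted analog of $\widetilde\chi_v$ in $(X,\Fp)$ starts at $\widetilde x_0$ and ends at $v\cdot\widetilde x_0+\sum_j s_{A_j}$, the sum effectively running over twist-minimal indices only.

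Now decompose $s_{A_j}=\ell^v_{A_j}+f^v_{A_j}$: the $\ell$-part lies in $U^+_v$, inside $\Min(v)$ and translating between parallel $v$-axes; the $f$-part lies in the span of axes of $\UF(v)$, transverse to $\Min(v)$. The shifted path must still represent the element $v$, since it crosses the same sequence of (metrically determined) hyperplanes that $\chi_v$ crosses. Using the product decomposition $\USv=\USnlv\times\USlkv$ from Corollary~\ref{cor:ContainsAxes}, drift in the $\UF(v)$-direction is absorbed by the non-abelian action of $A_{\UF(v)}$ on $\USnlv$: the endpoint lands on a translate of $\alpha_v$ that still covers a $v$-loop. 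A nonzero $\UL(v)$-component of the total shift, however, would translate the endpoint to a genuinely distinct parallel $v$-axis inside $\Min(v)$, producing a different combinatorial structure from the one determined by Proposition~\ref{prop:uniqueness}. The vanishing of this $\UL(v)$-component is precisely $\ell_v=\sum_j\ell^v_{A_j}=0$.

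The principal obstacle is justifying the asymmetry between the $\UL(v)$- and $\UF(v)$-components: why the latter can be arbitrary while the former must vanish. This requires a careful analysis of the action of $A_{\UF(v)}\times A_{\lk(v)}$ on $\USv$, showing that $A_{\lk(v)}$-translates visible in $\Min(v)$ (i.e., the $U^+_v$ direction) are rigidly tied to the axis $\alpha_v$ and cannot be absorbed by a change of combinatorial structure, whereas $\UF(v)$-drift corresponds to a genuine ambiguity in the representative cycle. I would implement this by composing the nearest-point projection $\widetilde Y\to\Min(v)$ with the axis projection $\Min(v)\to\alpha_v$, tracking how $\widetilde\chi_v$ and its shifted analog project, and reading off the $\UL(v)$-component of the shift as the obstruction to both projections having endpoint displacement equal to the translation vector $\tau_v$ of $v$.
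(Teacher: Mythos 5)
Your proposal uses the same basic strategy as the paper — track a characteristic cycle $\chi_u$ for a twist-minimal $u$, move to the new structure, and read off a constraint on the accumulated shear — but it has a genuine gap and an incorrect claim along the way.

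\textbf{The incorrect claim.} You assert that for $\max(A_j)$ twist-dominant, the dual edge to $H_{A_j}$ ``coincides in both structures,'' because the axis $\alpha_m$ and its subdivision by $\Br(m)$ are intrinsic. The axis $\alpha_m$ is indeed a metric invariant, and so is the branch locus $\Br(m)$. But the subdivision of $\alpha_m$ is given by the image of $\Br(m)$ under a \emph{combinatorial} projection ($\pr^\F_m=s_\F^{-1}\circ\pr_m\circ s_\F$), and this projection depends on the parallelotope structure; it is not the intrinsic orthogonal projection. The construction in the proof of Proposition~\ref{prop:zerosum} explicitly replaces this projection by a new one $\pr'_m$, producing new twist-dominant hyperplanes $\WP'_1,\dots,\WP'_n$ different from the original ones. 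Thus two structures in the same $\Theta$-fiber typically have \emph{different} twist-dominant hyperplanes, and the twist-dominant dual edges do not coincide; the convention $s_A=0$ for twist-dominant $A$ is part of the definition of a shearing, not a geometric identity of edges. Because your ``shifted path'' replaces the twist-dominant edges of $\chi_u$ by edges that do not exist in $(X,\Fp)$, it need not be a path in the new combinatorial structure at all.

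\textbf{The circular step.} Your justification for the $\UL(v)$/$\UF(v)$ asymmetry appeals to Proposition~\ref{prop:uniqueness}: a nonzero $\UL(v)$-component ``would produce a different combinatorial structure from the one determined by Proposition~\ref{prop:uniqueness}.'' But Proposition~\ref{prop:uniqueness} is a uniqueness statement given a shearing; it does not tell you which shearings are realized by points of the fiber, which is exactly what Proposition~\ref{prop:FiberZero} is supposed to establish. Likewise, the claim that $\UF(v)$-drift is ``absorbed by the non-abelian action'' is an assertion, not an argument.

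The paper sidesteps both issues by \emph{not} replacing edges: it takes $\eta_u$ to be a minimal-length edge path in $(X,\Fp)$ homotopic to $i^{-1}(\chi_u)$. Because $\eta_u$ is built from the edges of $(X,\Fp)$, it is automatically a characteristic cycle for $u$ in the new structure, crossing the same twist-minimal hyperplanes as $\chi_u$ and lifting to a path with the same endpoints on an axis for $u$. Only the twist-minimal hyperplanes carry nonzero shear, and the $\UL(u)$-component of the total shear along $\eta_u$ is precisely $\ell_u$; the endpoint constraint then forces $\ell_u=0$. To repair your proof you would need to replace the naive ``replace each edge'' construction with this minimal-length representative, and drop the claim that the twist-dominant edges coincide.
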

\begin{proof}
If  $[X,\Fp,h]$ and $[\SP,d_\F,c^\F_\pi]$ have the same image in $\OG$, then there is an isometry $i\colon (X, d_{\Fp})\to (\SP,d_\Fp) $  such that $h\simeq  c_\pi^\F\circ i$. Any such isometry lifts to an equivariant isometry on universal covers that takes minsets to minsets, axes to axes and twist-minimal hyperplanes to twist-minimal hyperplanes.
Let $u$ be twist-minimal and let  $\chi_u\subseteq \SP$ be a characteristic cycle for $u$ beginning at a vertex in the image of the branch locus $\Br(u)$.  Let $\eta_u$ be a minimal length edge path in $(X,\Fp)$ homotopic to $i^{-1}(\chi_u)$. Then $\eta_u$ and $\chi_u$ cross the same twist-minimal hyperplanes and lift to homotopic paths in  $\widetilde{X}\cong\USP$ with endpoints on some axis for $u$. Thus $\eta_u$ is a characteristic cycle for $u$ in $(X,\Fp)$.  Since only twist-minimal hyperplanes contribute to the total shearing along $\eta_u$, we conclude that $\ell_u = 0$.
\end{proof}

Conversely, we claim that any zero-sum shearing corresponds to a point in the fiber.

\begin{proposition}\label{prop:zerosum} Let $(Y,d,h)$ be the image of $(\SP, \F, c_\pi^\F)$ in $\OG$.  Any zero-sum shearing of $(\SP,\F)$ is realized by some skewed $\G$-complex structure $(X,\Fp)$ on $Y$  such that $h$ is untwisted with respect to this structure and hence $(X, \Fp,h)$ represents a point in the fiber over $(Y,d,h)$.
\end{proposition}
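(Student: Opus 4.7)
The plan is to construct $(X,\Fp)$ as a skewed $\G$-complex on $Y$ by modifying the blowup structure $(\SP,\F)$ using the shearing data, working first in the universal cover.

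Fix a basepoint vertex $\tilde x_0$ of $\USP$. For each vertex $\tilde x$ of $\USP$, define a displaced point $\tilde\Phi(\tilde x) \in \widetilde Y = \USP$ by summing shears along an edge path $\gamma$ from $\tilde x_0$ to $\tilde x$:
\[
  \tilde\Phi(\tilde x) = \tilde x + \sum_{e \in \gamma} \epsilon_e\, s_{A(e)},
\]
where $A(e)$ is the label of $e$ and $\epsilon_e = \pm 1$ records orientation. Independence of the choice of $\gamma$ holds because any two edge paths in $\USP$ differ by square moves, and by Proposition~\ref{fact:parallelcubes} opposite edges in a square carry the same label (and hence contribute cancelling shears). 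I would then extend $\tilde\Phi$ to an affine map on each parallelotope; since each $s_A$ lies in the span of axes of $\UL(A)$, i.e.\ the transverse directions to $e_A$ within the relevant flat, the resulting map is a local isometry.

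The zero-sum hypothesis enters when establishing that $\tilde\Phi$ descends to $Y$. For a twist-dominant generator $v$ there is nothing to check: $s_A = 0$ whenever $\max(A)$ is twist-dominant, and characteristic cycles for $v$ lift to axes along which the action of $v$ is already translation. For a twist-minimal $v$, lift a characteristic cycle $\chi_v$ based at $\tilde x_0$ to a path from $\tilde x_0$ to $v \tilde x_0$; the accumulated shear decomposes under the splitting $\UL(A_i) \subseteq \UL(v) \cup \UF(v)$. The zero-sum condition forces the $\UL(v)$-component $\ell_v$ to vanish, killing the part of the shift that would register as an elementary twist $v \mapsto vw$ with $w \in \UL(v)$. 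The $\UF(v)$-component does not generally vanish, but because elements of $\UF(v)$ do not commute with $v$, such a shift corresponds algebraically to a partial conjugation of the form $u \mapsto vuv^{-1}$ for $u$ in the components of $\G \setminus \st(v)$ whose axes lie in the relevant directions. Accordingly, I would modify the blowup by choosing a new set of partitions $\Omega$, compatible with $\bPi$ and with new partitions based at elements of $\UF(v)$ absorbing the $\UF(v)$-displacements, following the recipe of Proposition~\ref{prop:tdom}; the result is an $\AG$-equivariant isometry $\tilde\Phi : \widetilde{\Sa}^{\Omega} \to \widetilde Y$.

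The quotient yields the desired $(X,\Fp) = (\Sa^{\Omega}, \Fp)$. Allowability of $\Fp$ is inherited from $\F$ because each $s_A$ lies in the span of axes of $\UL(A)$, which consist of twist-dominant generators commuting with $A$; shearing within these directions only changes angles between twist-minimal edges and twist-dominant hyperplanes, which is exactly the type of change permitted by Definition~\ref{def:allowable} and preserved under the rotations of Proposition~\ref{prop:shearing}. Untwistedness of $h$ follows from the equivariance construction: the marking change $c^{\Fp}_\omega \circ (c^\F_\pi)^{-1}$ is trivial on twist-dominant generators (their axes are fixed setwise) and, by the zero-sum condition, contains no elementary twist on any twist-minimal generator; the remaining contributions, coming from $\UF(v)$-shearings and the accompanying change of partitions, are partial conjugations and folds, all lying in $U^0(\AG) \subseteq U(\AG)$. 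The hard part will be the rigorous identification of the $\UF(v)$-shearings with algebraic compensating moves in $U^0(\AG)$, together with the simultaneous reconfiguration of partitions to realize this combinatorially---this is essentially a global version of the partition-swapping argument carried out in the proof of Proposition~\ref{prop:tdom}.
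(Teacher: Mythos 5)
Your central idea---accumulate shear vectors along edge paths to define new vertex positions, then rebuild the blowup structure around them---is in the right spirit, but the proposal has genuine gaps, and you flag the critical one yourself at the end.

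First, the paper's proof rests on a reduction you omit entirely: it performs the zero-sum shearing one twist-dominant direction $w$ at a time, ordering $w_1, \dots, w_n$ and decomposing the shear into a composition of single-$w$ shearings. This is not a cosmetic choice. When you shear only toward a fixed $w$ and consider a characteristic cycle $\chi_v$ for a $v$ commuting with $w$, every shear vector $r_{A_i}$ along $\chi_v$ lies in the $w$-direction, so $w \in \lk(v)$ forces $r_{A_i} = \ell^v_{A_i}$ with $f^v_{A_i} = 0$. The $\UF(v)$-component simply never appears, and the zero-sum condition $\ell_v = 0$ becomes exactly the statement that the new projection $\pr'_w : \Min(w) \to \alpha_w$ is $A_{\st(w)}$-equivariant, which is what lets the construction descend to $Y$ and produce new hyperplanes. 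Your global $\tilde\Phi$ bypasses this reduction and then has to confront the $\UF(v)$-displacement head on; you propose to identify it with partial conjugations ``following the recipe of Proposition~\ref{prop:tdom}'' and explicitly concede this is ``the hard part.'' But the recipe of Proposition~\ref{prop:tdom} realizes a TD twist $v \mapsto vw$ and a specific exchange of partitions at a single $w$; it is not designed to absorb an arbitrary $\UF(v)$-shift, and the paper does not reuse it for this proposition. Instead the paper proves a separate combinatorial statement, Lemma~\ref{lem:SameProj}, showing that branch points whose outgoing labels hit the same $w$-component project to the same point under $\pr'_w$; this is what guarantees the sets $I(\dsx_i)$ built from branch-point projections are unions of $w$-components and hence give valid $\G$-Whitehead partitions. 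Nothing in your sketch supplies this step, and without it the existence of a compatible $\Omega$ is unsubstantiated.

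Second, the claim that the affine extension of $\tilde\Phi$ is a local isometry is misleading. The source and target are the same metric space $\widetilde Y$, and shearing deliberately reshapes parallelotopes: an orthotope with edge $e_A$ is replaced by a skewed parallelotope with edge $e_A + s_A$. An affine map from the old cell to the new one is an isometry only if the abstract source complex already carries the new skewed metric $\Fp$, which is precisely the thing you are trying to construct. The paper avoids this circularity by never positing such a map: it defines the new hyperplanes directly as preimages of intervals of $\alpha_w$ under $\pr'_w$, takes maximal intersections of carriers as the new parallelotopes, and only then reads off the combinatorial structure. Your well-definedness argument for $\tilde\Phi$ via square moves is fine as far as it goes, but it establishes coherence of a formal displacement, not that the result is a $\G$-complex decomposition of $Y$. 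Finally, the verification that the marking change is untwisted in the paper (Lemma~\ref{lem:SingleShearMC}) uses the single-$w$ setup essentially: each $\mu_i$ has the form $\mu_i(v) = w_i^{n_v} v w_i^{m_v}$ and the zero-sum condition kills the twist component for each $v \leq_t w_i$; your all-at-once version would need a substitute for this calculation, and you do not supply one.
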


Before proving this proposition we deduce the following important corollary, which characterizes the fiber in terms of zero sum shearings.

\begin{corollary}\label{cor:fibers} $\Theta(X,\Fp,h)=\Theta(\SP,\F,c_\pi^\F)$ if and only if $(X,\Fp)$  differs by a zero-sum shearing  from $(\SP,\F)$ and $h\simeq c_\pi^\F\circ i$ for some isometry $i: X \to \SP$.
\end{corollary}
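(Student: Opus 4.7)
The plan is to assemble Propositions \ref{prop:FiberZero} and \ref{prop:zerosum} with the definition of the equivalence relation defining $\OG$; nothing new needs to be proved, since the corollary is a clean restatement packaging the content of these two propositions together with the definition of $\Theta$.

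For the forward direction, I would unpack the hypothesis $\Theta(X,\Fp,h)=\Theta(\SP,\F,c_\pi^\F)$. By the definition of equivalence on $\OG$ (triples up to metric isometries intertwining the markings up to homotopy), this immediately produces an isometry $i\colon (X,d_\Fp)\to(\SP,d_\F)$ with $c_\pi^\F \circ i \simeq h$, which gives the second stated condition. I would then invoke Proposition \ref{prop:FiberZero}: using $i$ to identify $X$ with $\SP$ as a metric space, $(X,\Fp)$ becomes a second skewed $\G$-complex structure on $(\SP,d_\F)$ whose associated marking is homotopic to $c_\pi^\F$, and the proposition asserts that this structure differs from $(\SP,\F)$ by a zero-sum shearing, establishing the first condition.

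For the backward direction, the existence of an isometry $i\colon X \to \SP$ with $h \simeq c_\pi^\F \circ i$ already implies $\Theta(X,\Fp,h)=\Theta(\SP,\F,c_\pi^\F)$ directly from the definition of equivalence on $\OG$: composing with $i^{-1}$ exhibits the required equivalence between $(X,d_\Fp,h)$ and $(\SP,d_\F,c_\pi^\F)$. Thus the zero-sum shearing hypothesis is in fact automatic given the isometry (by what was just shown in the forward direction); it is packaged into the statement because the two-part characterization of the fiber — allowable zero-sum shearings of a fixed base point $(\SP,\F,c_\pi^\F)$ — is what will be used to parametrize $\Theta^{-1}([Y,d,f])$ in the subsequent contractibility argument.

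There is essentially no obstacle beyond correctly unwinding definitions; all substantive content is carried by Propositions \ref{prop:FiberZero} and \ref{prop:zerosum}. The only point requiring care is that the phrase ``$(X,\Fp)$ differs by a zero-sum shearing from $(\SP,\F)$'' is only meaningful once an identification of their underlying metric spaces has been chosen, so the shearing and the isometry $i$ in the statement should be understood as compatible pieces of data rather than as independent conditions.
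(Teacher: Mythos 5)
Your forward direction is correct and matches the paper: the definition of equivalence in $\OG$ produces the isometry $i$ with $h \simeq c_\pi^\F \circ i$, and Proposition~\ref{prop:FiberZero} yields the zero-sum shearing.

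Your backward direction, however, has a genuine gap. You claim the isometry $i$ with $h\simeq c_\pi^\F\circ i$ already gives $\Theta(X,\Fp,h)=\Theta(\SP,\F,c_\pi^\F)$, and that the zero-sum shearing hypothesis is therefore ``automatic'' (a bookkeeping device for the later contractibility argument). This inverts the logic. For $\Theta(X,\Fp,h)$ to be a well-defined point of $\OG$, the triple $(X,\Fp,h)$ must lie in $\tS$, which requires both that $(X,\Fp)$ be a genuine skewed $\G$-complex (a blowup with an allowable parallelotope structure, not just some decomposition of $Y$ into parallelotopes) and that the marking $h$ be \emph{untwisted} with respect to $\Fp$. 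Neither of these is automatic from the existence of the isometry $i$: a non-zero-sum shearing can make $c_\pi^\F\circ i$ induce a twist on $\pi_1$, as in Figure~\ref{fig:one}, in which case $h\simeq c_\pi^\F\circ i$ is \emph{not} untwisted and $(X,\Fp,h)\notin\tS$. The role of the zero-sum hypothesis in the converse is precisely to rule this out: Proposition~\ref{prop:zerosum} is what establishes that a zero-sum shearing produces a legitimate skewed $\G$-complex structure with $c_\pi^\F\circ i$ untwisted, so that $(X,\Fp,h)\in\tS$ and the isometry then exhibits equality in $\OG$. Your argument that the shearing condition follows ``by the forward direction'' is circular: the forward direction presupposes the equality $\Theta(X,\Fp,h)=\Theta(\SP,\F,c_\pi^\F)$, which in turn presupposes that $\Theta(X,\Fp,h)$ is defined --- exactly the thing you need the zero-sum hypothesis (via Proposition~\ref{prop:zerosum}) to guarantee.
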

\begin{proof} If $\Theta(X,\Fp,h)=\Theta(\SP,\F,c_\pi^\F)$, then there exists an isometry $i\colon(X,d_{\Fp})\rightarrow (\SP,d_\F)$ such that $h\simeq c_\pi^\F\circ i$, and   $ (X,\Fp,h)$  differs by a zero-sum shearing  from $(\SP,\F,c_\pi^\F)$ by Proposition \ref{prop:FiberZero}. Conversely,  by Proposition \ref{prop:zerosum}, if $(X,\Fp)$ differs by a zero-sum shearing  from  $(\SP,\F)$ then $(X,\Fp)$ is a skewed $\Gamma$-complex with an isometry $i\colon(X,d_{\Fp})\rightarrow (\SP,d_\F)$ such that $c_\pi^\F\circ i$ is untwisted.  Since $h\simeq  c_\pi^\F\circ i$, we conclude that $\Theta(X,\Fp,h)=\Theta(\SP,\F,c_\pi^\F)$.
\end{proof}

The proof of Proposition \ref{prop:zerosum} will occupy the rest of this subsection.  As in the proof of surjectivity, we need to find a new decomposition of  $Y$ into parallelotopes, a corresponding skewed blowup structure $(\Sa^\Omega,\Fp),$ and then  determine the change of marking $c^\Fp_\omega\circ (c^\F_\pi)\inv.$

For each $A_i$ appearing in the characteristic cycle for $v$, $\ell^v_{A_i}$ decomposes into a sum of components lying along axes for $w \in \UL(v)$.  The zero-sum condition, $\ell_v=0$, implies that the components of  $\ell^v_{A_i}$ along the axis for each $w$ also sum to zero.  This means that we can achieve any zero-sum shearing by ordering the twist-dominant elements $w_i$, then first performing all shears towards $w_1$, then $w_2$, and so on.   At each stage, we will verify that the resulting parallelotope structure is a skewed $\Gamma$-complex with an untwisted marking.  At the final stage, we arrive at a skewed $\Gamma$-complex $(X,\Fp)$ that differs from $(\SP, \F)$ by the original zero-sum shearing.  This will prove the proposition.

\medskip\noindent{\bf Parallelotope decomposition.}
Assume we are shearing towards a single twist-dominant element $w$.  We will define the new paralleotope decomposition by determining the hyperplanes dual to the parallelotopes. The twist-minimal hyperplanes in the structure $(\SP,\F)$ will remain hyperplanes in the new decomposition, and we will eventually identify these with the twist-minimal hyperplanes in a new skewed $\G$-complex structure $(X,\Fp)$. The twist-dominant hyperplanes in $(X,\Fp)$ with maximal element  $w$ will be  defined using a projection of $\Min(w)$ to an axes for $w$.  The remaining twist-dominant hyperplanes will remain unchanged.

Choose a basepoint $x_0$ in $\Min(w)$ which is the terminal vertex of an edge labeled $w$ and  a branch point for $w$ in the structure $(\USP,\F)$.  Let $\alpha_w$ be the axis through $x_0$, viewed as a copy of the real line, based at $x_0$.
 We already have one  projection $\pr_w^\F=s_\F\inv \pr_ws_\F$ from $\Min(w)$ to $\alpha_w$ defined using the skewed blowup structure $(\SP,\F)$ on $Y$. The image of the branch locus $\Br(w)$  under this projection is a set of isolated points dividing   $\alpha_w$ into edges, and the inverse image of the midpoints of these edges are the hyperplanes $H_A$ with $ \max(A)=w$.  The image of any edge $e_A\in \Min(w)$ with $ \max(A)\neq w$ is a vertex of $\alpha_w$, while every axis for $w$ is sent isomorphically to $\alpha_w$.

Since $w$ is twist-dominant the subspace $\Min(w)$ is a subcomplex of $(\USP,\E)$, and therefore also of $(\USP,\F)$  by Proposition \ref{prop:straightening}.      We define a new projection map on edges of this subcomplex as follows.   Every (oriented) axis for $w$ can be identified with the real line $\R$ and this identification is unique up to translation.  Thus, segments of an axis can be viewed as vectors in $\R$ (up to translation). We first associate such a vector $r_A$ to each oriented edge $e_A$ in $\Min(w)$.   If $\max(A)=w$, then $e_A$ lies in an axis for $w$ and we let $r_A$ be the corresponding vector in $\R$.   If $\max(A) \neq w$, then the shearing of $e_A$ is given by a vector  $s_A\in U^+_A$.  Since we are only shearing toward $w$, $s_A$ lies along an axis for $w$ and we let $r_A= -s_A$.  Note that if $w\not\in \UL(A)$, then by definition of an allowable shearing, $r_A=0$.

Now define the new projection map $\pr'_w: \Min(w) \to \alpha_w$ as follows.  For any vertex $y$ in $\Min(w)$, choose a minimal edge path $e_{A_1} \dots e_{A_k}$ from $x_0$ to $y$ and set
$\pr'_w(y)= x_0 + \sum r_{A_i}$. Since the vectors $r_A$ depend only on the label $A$ and the orientation of $e_A$, this is independent of choice of path and two vertices connected by an edge $e_A$ will project to points that differ by the vector $r_A$.   Extending this map linearly on each parallelotope gives the desired projection.

We remark that $pr'_w$ can also be viewed as the map which collapses every hyperplane $\UH_A$ in $\Min(w)$ that does not split $w$.  The collapse is performed by identifying the hyperplane carrier with the product $e'_A \times \UH_A$ where $e'_A$ is the sheared version of $e_A$ (that is, an interval parallel to $e_A + s_A$)  and collapsing every copy of $e'_A$ to a point.

Now let $v$ be any generator that commutes with $w$ and $\chi_v$ a characteristic cycle for $v$ in the structure $(\SP,\F)$.   Then $\chi_v$ lifts to a path $p=e_{A_1} \dots e_{A_k}$ in $\Min(w)$ .   Since we are only allowing shearing in the direction of $w$, for each edge $e_{A_i}$ in $p$, we have $r_{A_i} = s_{A_i}=\ell^v_{A_i}$, so the zero-sum shearing condition says that $\sum r_{A_i} = 0$, or in other words,
the two endpoints $y$ and $vy$ of $p$ project to the same point under $\pr'_w$.  It follows that $\pr'_w$ is equivariant under the action of $A_{\st(w)}$.

Next observe that if an edge $e_A$ in $\Min(w)$ is contained in the branch locus $\Br(w)$ then $\max(A)$ must commute with some $u \notin \lk(w)$.  Thus $w \notin \UL(A)$ and hence $\pr'_w$ maps $e_A$ to a single point.  It follows that $\pr'_w$ takes each connected component of the branch locus to a single point. We declare these projection points to be the new vertices of $\alpha_w$;  note that $x_0$ is one of these vertices. This subdivides $\alpha_w$ into a new set of edges.  The inverse image under $\pr'_w$ of these edges form the carriers of the new hyperplanes that split $w$.

Now consider the hyperplane structure on $\widetilde Y$ consisting of the original hyperplanes which do not split $w$, together with the new hyperplanes that split $w$. These determine a new (equivariant) parallelotope structure $(\widetilde X,\Fp)$:  the maximal parallelotopes in $(\widetilde X,\Fp)$  are maximal intersections of carriers of these hyperplanes.

More explicitly, parallelotopes in $(\USP,\F)$ containing no edges $e_A$ with $\max(A) \leq_t w$, remain unchanged in $(\widetilde X,\Fp)$.  In particular, this is true for all parallelotopes not contained in  $\Min(w)$.   The $(\widetilde X,\Fp)$-structure on $\Min(w)$ consists of parallelotopes whose edges either lie in an axis for $w$ and project under $\pr'_w$ to a single edge in $\alpha_w$, or are parallel to $e_A + s_A$  in some $\kappa(\UH_A)$ and project to a single point in $\alpha_w$.  By the equivariance of $\pr'_w$, this descends to a parallelotope structure on the image of $\Min(w)$ in $\SP$.

 It remains to check that this new parallelotope structure is allowable in the sense of Definition~\ref{def:allowable}.  To see this, note that an allowable metric on a single parallelotope $\mathbf{c}$, as defined in  Definition~\ref{def:pre-allowable}, depends on the intersections of linear subspaces $K_i$ associated to edges $e_i$ emanating from a fixed vertex.  In our current terminology, if $e_i = e_A$, then $K_i$ is the subspace spanned by $e_A$ together with $U_A^+$. Since this subspace remains unchanged after shearing, the resulting metric on $\mathbf c$ is still allowable, so condition (1) of the definition is satisfied.
Condition (2), that if $\mathbf{c}'$ is a face of $\mathbf c$, then the metric on $\mathbf{c}'$ is the restriction of the metric on $\mathbf c$, is obvious.  For condition (3), note that if
$\max(A)=\{v\}$ is twist-dominant, then both $e_A$ and $e_v$ lie in the image $\overline{\alpha}_v$ of an axis for $v$ and neither of these edges are allowed to shear.  Thus if $B$ is adjacent to $A$, then any change in angle between $e_B$ and $e_A$ or $e_v$ must result from a shearing of the edge $e_B$.  This can only occur if $B$ is twist-minimal, in which case $H_B$ is locally convex and contains $\overline{\alpha}_v$.  It follows that any shearing of $e_B$ will change the angles between $e_B$ and any edge in $\overline{\alpha}_v$ by the same amount.

\medskip\noindent{\bf Blowup structure.} We have found a new decomposition $(X,\Fp)$ of $Y$ into parallelotopes.  The next thing to show is that $(X,\Fp)$ is a $\G$-complex, i.e.
we need to find a new set of partitions $\Omega$ such that $(X,\Fp)= (\Sa^\Omega,\Fp)$. The only difference between $\bPi$ and $\Omega$ will be the partitions that split our twist-dominant generator $w$.

Since $w$ is twist-dominant, $\Min(w)$ is a (convex) subcomplex of $(\USP,\F)$ by Proposition~\ref{prop:straightening}.  If $x$ and $y$ are vertices of $\Min(w)$ which are branch points for $w$, then there are edges $e_A$ adjacent to $x$ and $e_B$ adjacent to $y$ with $[A,w]\neq 1$ and $[B,w]\neq 1$.
Choose a lift of $\EP$ to $\USP$;  we will abuse notation by calling this $\EP$ as well. Let $M_w$ be the intersection of $\Min(w)$ with $\EP$. Since $\EP$ and $\Min(w)$ are both convex in the straightened version $(\USP,\E)$, their intersection is connected.
Since $\EP$ contains all vertices of $\SP$, every branch vertex in $\Min(w)$ has a unique translate in $M_w=\EP\cap \Min(w)$.

\begin{lemma} \label{lem:SameProj}Suppose $e_A$ and $e_B$ are edges branching off of $M_w$ at vertices $x$ and $y$ respectively.  If $A$ is a partition, let $A^\times$ denote the side of $A$ that doesn't contain $w$, and if $A$ is a vertex $v$, let $A^\times=v$ if $e_v$ terminates at $x$, and $A^\times=v\inv$ if $x$ is the initial vertex of $e_v$; define $B^\times$ similarly.
 If $a \in A^\times$ is maximal in $A$ and $b \in B^\times$ is maximal in $B$, and $a,b$ lie 
 in the same  $w$-component of $\G^\pm$, then  $x$ and $y$ project to the same point of $\alpha_w$ under $\pr_w'$.
\end{lemma}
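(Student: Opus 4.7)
The plan is to produce an edge path $\gamma$ from $x$ to $y$ inside $M_w = \EP \cap \Min(w)$ along which the vectors $r_C$ of consecutive edges sum to zero along $\alpha_w$; since $\pr'_w$ was shown to be well-defined on vertices (independent of path), this will give $\pr'_w(x) = \pr'_w(y)$. The argument proceeds by induction on the length of a shortest combinatorial path in $\G^\pm \setminus \st(w^\pm)$ from $a$ to $b$. Such a path exists precisely because $a$ and $b$ lie in the same $w$-component; write it as $a = a_0, a_1, \ldots, a_k = b$, where consecutive generators commute in $\G^\pm$ and none lies in $\st(w^\pm)$.

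For the inductive step, I would reduce to the case where $a$ and $a_1$ are adjacent in $\G^\pm$ with both outside $\st(w^\pm)$. Since $[a,a_1]=1$, the edges $e_a$ and $e_{a_1}$ span a square in $\USP$; at any vertex $z$ of that square, both $e_a$ and $e_{a_1}$ branch off $\Min(w)$ (because neither $a$ nor $a_1$ commutes with $w$), so $z$ is a branch point admitting exit direction $a_1$. It remains to connect $x$ to such a $z$ in $M_w$ by a zero-$r$-sum path, reducing everything to the case $a=b$.

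For the base case $a=b$, I would fix a minimal-length edge path in $M_w$ between the two branch points. Because $\EP$ is CAT(0), the set of hyperplanes it crosses is uniquely determined, and each crossed hyperplane carries a label $\WQ$ commuting with $w$. For such $\WQ$ with $w \notin \UL(\max\WQ)$ the shearing $s_\WQ$ vanishes identically by the definition of an allowable shearing, so those edges contribute nothing. The remaining edges have $\max\WQ <_t w$, and for these one uses the nested structure of partitions splitting $w$ (Lemma~\ref{lem:commute}) together with Lemma~\ref{lem:base}, which forces $a \leq_f \max\WQ$ whenever $\WQ$ splits $a$, to show that the residual partition contributions along the path close up into characteristic cycles of twist-minimal generators routed through $x$ and $y$; their total $r$-contribution then vanishes by the zero-sum condition $\ell_u = 0$ provided by the hypothesis of Proposition~\ref{prop:zerosum}.

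The main obstacle is precisely the base case: organizing the residual partition edges (those with $w \in \UL(\max\WQ)$) so that their $\pr'_w$-contributions cancel. This requires carefully combining the combinatorics of the nesting of $\bPi_w$ with the appearance of twist-minimal characteristic cycles bridging $x$ and $y$, so that the zero-sum hypothesis can be applied cycle by cycle. Making this bookkeeping precise is the technical heart of the proof, whereas the inductive machinery above is formal once the base case is established.
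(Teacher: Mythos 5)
Your proposal diverges substantially from the paper's proof and, more importantly, leaves the crucial step unresolved. The paper's argument is short and direct: connect $x$ to $y$ by a minimal-length edge path $e_{A_1},\ldots,e_{A_r}$ inside $M_w$ and show that $\max(A_i)\not\leq_t w$ for every $i$. Since each $e_{A_i}$ lies in $M_w$, its label commutes with $w$, and $\max(A_i)\not\leq_t w$ then forces $w\notin\UL(\max(A_i))$; by the construction of $\pr'_w$ (a shearing toward $w$ alone), this gives $r_{A_i}=0$, so the whole path collapses to a single point. The claim $\max(A_i)\not\leq_t w$ is established by a brief contradiction: if $a_i\in\max(A_i)$ had $a_i\leq_t w$, then neither $a$ nor $b$ can commute with $a_i$ (else they would land in $\st(w)$, contradicting membership in a $w$-component), so $H_{A_i}$ separates $A^*$ from $B^*$; but since each side of $A_i$ is a union of $w$-components together with elements of $\lk(w)$, this contradicts $a$ and $b$ lying in the same $w$-component.

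Your plan instead attempts an induction on the $\G^\pm\setminus\st(w^\pm)$-distance from $a$ to $b$, and in the base case $a=b$ tries to cancel contributions from edges $e_\WQ$ with $\max\WQ <_t w$ by invoking the zero-sum condition $\ell_u=0$. This misses the key observation: on a minimal path in $M_w$ no such edges occur at all, so there is nothing to cancel. The zero-sum condition is used elsewhere in the proof of Proposition~\ref{prop:zerosum} (to show $\pr'_w$ is $A_{\st(w)}$-equivariant) but plays no role in this lemma; indeed it concerns characteristic cycles, which are loops, whereas here you would be facing an open path, and there is no evident way to reassemble its ``residual'' edges into the closed cycles that $\ell_u=0$ controls. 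You acknowledge that ``making this bookkeeping precise is the technical heart of the proof,'' so the proposal as written is a plan rather than a proof, and its central step is exactly the one left unaddressed. Note also that the inductive step reduces to showing $x$ and a vertex $z$ of the $e_a,e_{a_1}$-square have the same projection, which is again an instance of the base case $a=b$; so the entire argument hinges on that base case, which is missing.
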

\begin{proof} Since $M_w$ is a connected subcomplex we may connect $x$ and $y$ by a minimal-length edge-path $e_{A_1},\ldots, e_{A_r}$ lying in this intersection.
We claim that  $\max(A_i)\not\leq_t w$ for all $i$, so each $e_{A_i}$ collapses to a point under  $\pr'_w$.  Thus $\pr'_w$ maps the entire path to a point, showing $\pr'_w(x)=\pr'_w(y)$.

We argue by contradiction, so let $a_i\in\max(A_i)$ and suppose $a_i\leq_t w$ for some $i$.   Since $e_{A_i}\subset \Min(w)$, we have $[a_i,w]=1$.  If  $[a,a_i]=1$ then $a_i\leq_t w$ implies $[a,w]=1$, so $e_A\subset \Min(w)$, contradicting our hypothesis.   Thus we have $[a, a_i]\neq 1$ for all $i$, and similarly $[b,a_i]\neq 1$.

 The lift of the hyperplane $H_{A_i}$ containing $e_{A_i}$  separates $M_w$ into two components.   Since $a_i$ does not commute with either $a$ or $b$,  the endpoints of edges labeled $e_A$ and  those labeled $e_B$   lie in different components  (where orientation matters if $A$ or $B$ is a generator). In terms of partitions, the  sides  of $A$ and  $B$ that don't contain $a_i$ sit in different sides of the partition $A_i$. Since $a_i\in\lk(w)$ and $A^\times$ does not contain $w$ it  does not  contain $a_i$ either, and similarly $B^\times$ does not contain $a_i$.  Thus $A^\times$ and $B^\times$ are in different sides of $A_i$.
But each side of $A_i$  is a union of $a_i$-components plus $a_i$ or $a_i\inv$, and, since $a_i\leq_tw$, each $a_i$-component is a union of $w$-components plus possibly some elements of $\lk(w)$.  Since $a\in A^\times$,  $b\in B^\times$ and neither is in $\lk(w)$, this contradicts the hypothesis that $a$ and $b$ are in the same $w$-component.
\end{proof}

We now form the new partitions splitting $w$ in the same way we did in Section~\ref{sec:Surjectivity}.  To each branch point $\re\in \Br(w)\cap \EP$, associate the union $I(\re)$ of the sets $A^\times$ for  edges $e_A$ incident to $\re$ but not in $\Min(w)$.     The new projection $\pr'_w$ sends  $\Br(w)\cap M_w$ to an ordered set of points
$(\dsx_1,\ldots,\dsx_n)$ on $\alpha_w\cap \EP$, and to each $\dsx_i$ we associate the union $I(\dsx_i)$ of the $I(\re)$ with $\pr'(\re)=x_i$.  Let $P'_1=\{w\}\cup I(\dsx_1), P'_2=P'_1\cup I(\dsx_2)$  etc. By Lemma~\ref{lem:SameProj} each $I(\dsx_i)$ is a union of $w$-components, so each $P_i'$ is a side of a valid \GW\ partition $\WP_i'$ based at $w$.

Let $\Omega$ be the collection of $\G$-Whitehead partitions obtained from $\bPi$ by replacing $\WP_1, \dots \WP_k$ in $\bPi$ by $\WP'_1, \dots \WP'_n$.  To see that the $\Omega$ partitions are pairwise compatible, we need only check that each $\WP'_i$ is compatible with those $\WR\in\bPi$ that  are not adjacent to  
 $w$.  The side $R^\times$  is contained in some outermost $Q^\times$ in some piece $dP_i$.  The partition $\WQ$ cannot  be adjacent to  
 $w$, so there is an edge $e_\WQ$ at a branch point $\re\in M_w$, and $Q^\times\subset I(\re)$.  Since $I(\re)\subset I(\dsx_i)$ for some $i$.   it follows  that $\WR$  is compatible with $\WP'_i$.\\

\noindent{\bf Marking change.}
The blowup structure $(X=\SPp,\Fp)$ defined above comes with a collapse map $c_\omega^\Fp\colon \SPp\rightarrow \SaG$. We now analyze the change in marking induced by the difference between $c_\omega^\Fp$ and the original collapse map $c_\pi^\F$ from $(\SP,\F)$.

\begin{lemma}\label{lem:SingleShearMC} Suppose $(X=\SPp,\Fp, c_\omega^\Fp)$ is a zero-sum shearing of $(\SP,\F,c_\pi^\F)\in \tS$ which differs only in the direction of a twist-dominant generator $w$. Then the composite map $c_\omega^\Fp\circ( c_\pi^\F)^{-1} : \SaG \to \SaG$ is untwisted.
\end{lemma}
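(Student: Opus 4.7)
The plan is to show that $\mu=c_\omega^{\Fp}\circ(c_\pi^{\F})^{-1}$ lies in $U^0(\AG)\subseteq U(\AG)$ by computing $\mu(u)$ on each generator $u\in V$.  Following the template of Lemma~\ref{lem:markingchange}, I fix a basepoint $\widetilde x_0\in\widetilde Y$ that is simultaneously a vertex of the universal covers of $\SP$ and $\SPp$; any branch point for $w$ in $\Min(w)$ serves, since the construction of $\SPp$ preserves branch points for $w$ as vertices.  For each $u\in V$, the class $(c_\pi^\F)^{-1}(u)\in\pi_1(Y,x_0)$ is represented by a characteristic cycle $\chi_u$ for $u$ in $\SP$, which lifts to a path $\widetilde\chi_u$ from $\widetilde x_0$ to $u\cdot\widetilde x_0$, and $\mu(u)$ is computed by applying $c_\omega^{\Fp}$ to a minimal edge-path $\widetilde\eta_u$ in $\widetilde{\SPp}$ with the same endpoints.

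The cube structure $\SPp$ differs from $\SP$ only by (i) replacing the $\bPi$-partitions $\WP_1,\dots,\WP_k$ that split $w$ with new $\Omega$-partitions $\WP'_1,\dots,\WP'_n$ based at $w$, and (ii) shearing every edge dual to a hyperplane $H_A$ with $\max(A)$ twist-minimal and $\max(A)<_t w$ by the vector $s_A$ parallel to the axis $\alpha_w$.  Since the only changed hyperplane labels are partition labels (which collapse to the identity under $c_\omega^{\Fp}$) and every sheared edge is sheared only along $\alpha_w$, the word $\mu(u)$ can deviate from $u$ only by (a) partial conjugations arising from the partition relabelling, or (b) the insertion of $w^N$ factors, where $N$ is the net signed $\alpha_w$-displacement accumulated when $\widetilde\chi_u$ is re-interpreted in the sheared structure.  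The task is to verify that (b) always gives an element of $U^0(\AG)$.

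For $u$ twist-dominant (including $u=w$), Lemma~\ref{lem:tdominant} ensures that every label $A$ along $\chi_u$ has $\max(A)=u$, so $s_A=0$ and $N=0$.  For $u$ twist-minimal, any shearing $s_A$ appearing in $\chi_u$ satisfies $u\leq_f\max(A)<_t w$, and a direct calculation from the definitions shows this forces either $u\leq_t w$, or $u\leq_f w$ with $u\notin\lk(w)$ (so $w\in\UF(u)$).  In the latter case the shearings contribute only to the $f^u$-components, $N$ corresponds to the iterated elementary fold $u\mapsto uw^N$, and this lies in $U^0(\AG)$.  The main obstacle is the former case $u\leq_t w$: here $w\in\UL(u)$ and a nonzero $N$ would correspond to the honest twist $u\mapsto uw^N\notin U(\AG)$, so we must show $N=0$.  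In this case the net $\alpha_w$-displacement equals $\sum_{H_{A_i}\in\chi_u}s_{A_i}=\ell_u$, using that the shearing is purely along $\alpha_w$ and $w\in\UL(u)$; the zero-sum hypothesis $\ell_u=0$ then forces $N=0$.  Combining the three cases gives $\mu\in U^0(\AG)\subseteq U(\AG)$.
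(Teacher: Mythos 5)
Your proof is correct and follows essentially the same route as the paper: you invoke the template of Lemma~\ref{lem:markingchange} to reduce to showing $\mu(u)$ can differ from $u$ only by $w$-factors, and then use the zero-sum condition to rule out genuine twists for $u\leq_t w$. The paper is terser (it cites Lemma~\ref{lem:markingchange} directly and observes nontrivial twists can arise only for $v\leq_t w$, without spelling out the $u\leq_f w$, $w\in\UF(u)$ fold case, which is already covered by the general form $\mu(v)=w^{n_v}vw^{m_v}$), but the underlying argument is the same.
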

\begin{proof}
Let  $\mu=c_\omega^\Fp\circ( c_\pi^\F)^{-1}  : \SaG \to \SaG$. Observe that the only hyperplanes which change from $\SP$ to $\SPp$ are those with $ \max=\{w\}$.  Following exactly the same argument as in the proof of Lemma \ref{lem:markingchange}, for each $v\in V$ we have that $\mu(v)=w^{n_v}vw^{m_v}$ for some $n_v,m_v\in \Z$. Thus, nontrivial twists can occur only for $v\leq_t w$.  If such a $v$ is twist-dominant, then the characteristic cycle for $v$ is the same in both $\SP$ and $\SPp$, so $\mu(v)=v$.  If $v$ is twist-minimal, the fact that $(\SPp,\Fp)$ is a zero-sum shearing implies that a characteristic cycle for $v$ with respect to $(\SPp,\Fp)$ has the same endpoints as a characteristic cycle for $v$ with respect to $(\SP,\F)$, hence in this case $\mu(v)=v$ as well. Therefore, $\mu$ is a untwisted.
\end{proof}

 The proof of Lemma~\ref{lem:SingleShearMC} shows that $\mu$ acts trivially on vertices $v \leq_t w$, and one might be tempted to conclude that it shows that the action of $\mu$ is entirely trivial.  However, if $z \leq_f v \leq_t w$, then a characteristic cycle for $z$ may have an edge $e_A$ that also lies in a characteristic cycle for $v$.  In this case, a zero sum sheering of the characteristic cycle for $v$ in the direction of $w$ may result in $\mu$ acting as a non-trivial fold of $w$ onto $z$. 

\begin{remark}\label{rmk:TMinFixed}As observed in the proof of Lemma~\ref{lem:SingleShearMC}, the only hyperplanes that change in a zero-sum shearing are twist-dominant hyperplanes. In particular, the set of twist-minimal hyperplanes which split a particular $v$ does not vary among all zero-sum sheerings.
\end{remark}

We now finish the proof of Proposition \ref{prop:zerosum}:
\begin{proof} Order the twist-dominant generators $w_1,\ldots, w_n$. We perform an arbitrary zero-sum shearing as a sequence of single generator zero-sum shearings.  By the discussion above, we obtain a sequence of skewed blowups \[(\SP,\F)=(\Sa^{\Omega_0},\Fp_0), (\Sa^{\Omega_1},\Fp_1),\ldots, (\Sa^{\Omega_n},\Fp_n)=(\SPp,\Fp)\]
where for $1\leq i\leq n$, $(\Sa^{\Omega_i},\Fp_i)$ is obtained from $(\Sa^{\Omega_{i-1}},\Fp_{i-1})$ by a zero-sum shearing in the direction of $w_i$, and the change in marking $\mu_i$ is untwisted by Lemma \ref{lem:SingleShearMC}. The change in marking from $(\SP,\F)$ to $(\SPp,\Fp)$  is then a composition of untwisted automorphisms \[c_\omega^\Fp\circ (c_\pi^\F)^{-1}\simeq \mu_n\circ\cdots\circ \mu_1,\] hence untwisted as well.
\end{proof}

\subsection{Contractibility of $\OG$}

\subsubsection{Contractibility of fibers}
Let $\mathcal{H}_{min}$ denote the set of twist-minimal hyperplanes in $(\SP,\F,c_\pi^\F)$. Since these depend only on the metric $d_\F$ by Lemma \ref{lem:tminhyperplanes}, the set $\mathcal{H}_{min}$ is well-defined over the whole $\Theta$-fiber containing $(\SP,\F,c_\pi^\F)$.  Likewise, the twist-dominant axes remain the same throughout the fiber.
The dual edge to $H\in \mathcal{H}_{min}$ is allowed to shear in the direction of $\UL(H)$, and as above, we regard a given shearing $s_{H}$ as a vector in the vector space $U_H^+$.  (Here to emphasize the independence from $\bPi$, we use the notation  $s_{H}$ and $U_H^+$ rather than specifying a label $A$ and writing  $s_{A}$ and $U_A^+$.) We now describe the fiber containing $(\SP,\F,c_\pi^\F)$ as a linear subspace of \[\bigoplus_{H\in \mathcal{H}_{min}}U_H^+\]

Let $V_{min}$ denote the set of twist-minimal vertices.  For $v\in V_{min}$, the only edges which contribute to the shearing of $v$ are those dual to $H\in \mathcal{H}_{min}$ which split $v$. By Remark \ref{rmk:TMinFixed} and Corollary \ref{cor:fibers}, the set of twist-minimal hyperplanes that split $v$ does not change within the fiber. If $v\in \Sing(H)$, $H\in\mathcal{H}_{min} $, the contribution of $s_H$ to $l_v$ is $l_{H}^v$, where $l_{H}^v$ lies in the subspace of $U_H^+$ corresponding to $\UL(H)\cap\UL(v)$.  If $v\notin \Sing(H)$,  define $l_{H}^v\equiv 0$. We then identify $l_{H}^v$ with a vector in $U_v^+$ since $l_{H}^v$ lies in the span of axes in $\UL(H)\cap\UL(v)$. Thus, for each $v\in V_{min}$ we can think of $\ell_v$ as a linear map:

\begin{align*}l_v\colon\bigoplus_{H\in \mathcal{H}_{min}}U_H^+&\rightarrow U_v^+\\
\oplus s_{H}&\longmapsto \sum l_{H}^v
\end{align*}

Call the equations $\{l_v =0\mid v \in V_{min}\}$ the \emph{structure equations} for shearings of $(\SP,\F,c_\pi^\F)$. We now easily deduce the contractibility of the fibers from Corollary \ref{cor:fibers}.

\begin{theorem}\label{thm:contractible} The fibers of the map $\Theta: \tS \to \OG$ are contractible.  \end{theorem}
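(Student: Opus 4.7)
The plan is to identify each fiber of $\Theta$ with the solution set of a homogeneous linear system inside a finite-dimensional real vector space, and then invoke convexity.

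Fix a point $[Y,d,f] \in \OG$ and choose, by Proposition~\ref{prop:surjective}, a preimage of the form $(\SP,\F,c_\pi^\F)$. Combining Corollary~\ref{cor:fibers} with the uniqueness statement of Proposition~\ref{prop:uniqueness}, the assignment $[X,\Fp,h] \mapsto \{s_H\}_{H \in \mathcal{H}_{min}}$ gives a bijection between $\Theta^{-1}([Y,d,f])$ and the set of zero-sum shearings of $(\SP,\F)$, where the latter sits inside the finite-dimensional real vector space
\[
W = \bigoplus_{H\in \mathcal{H}_{min}} U_H^+
\]
as the intersection of the kernels of the linear maps $\ell_v : W \to U_v^+$ for $v\in V_{min}$ (the structure equations of the preceding discussion). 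Hence the image is a linear subspace $Z \subset W$, which is convex and therefore contractible.

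First I would verify that this bijection is in fact a homeomorphism. On the one hand, $Z$ inherits the subspace topology from the Euclidean space $W$. On the other hand, $\Theta^{-1}([Y,d,f])$ inherits the subspace topology from $\tS$, which is a complex of spaces whose cells $U_{X,h}$ are parametrized by edge lengths and twist-related dihedral angles of the allowable parallelotope structure (Lemma~\ref{lem:angles}). Under our identification $(X,d_{\Fp}) \cong (Y,d)$, the combinatorial data of $(X,\Fp)$ (the twist-dominant hyperplane positions) and the dihedral angle/edge-length data are recovered from the shearing vector $\{s_H\}$ via the explicit construction of Section~\ref{sec:Surjectivity}/Proposition~\ref{prop:zerosum}, and conversely the shear $s_H = t(e_H) - t(e_A)$ depends continuously on the parallelotope data of the cell containing $[X,\Fp,h]$. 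As the fiber meets at most finitely many cells (since the set of combinatorial types of $\G$-complex structures on $Y$ realizing a given $[Y,d,f]$ is discrete and parametrized by the twist-dominant part of the shearing, which can be read off algebraically), the bijection $[X,\Fp,h] \leftrightarrow \{s_H\}$ is a continuous bijection with continuous inverse.

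The main obstacle in this argument is confirming that the topology gluing between the various cells of $\tS$ meeting the fiber matches the linear topology on $Z$, in particular at boundary strata where some edge lengths degenerate. The point is that a vanishing edge length of a twist-dominant edge in $(X,\Fp)$ forces a corresponding combinatorial collapse and reparametrization by the remaining partitions, and this collapse coincides with the limit of the shearing vectors in $W$ because the shears we control are all tangent to twist-minimal carriers, whose positions in $Y$ are fixed throughout the fiber (Lemma~\ref{lem:tminhyperplanes}). Once this identification is in place, contractibility of $Z$ gives contractibility of $\Theta^{-1}([Y,d,f])$, for instance via the straight-line homotopy $t \cdot \{s_H\}$ in $W$, which stays in $Z$ since each $\ell_v$ is linear and sends $\{s_H\}$ to $0$.
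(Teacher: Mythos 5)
Your proposal is correct and takes essentially the same approach as the paper: fix a preimage of the form $(\SP,\F,c_\pi^\F)$ via surjectivity, use Corollary~\ref{cor:fibers} and Proposition~\ref{prop:uniqueness} to identify the fiber with the set of zero-sum shearings, observe that these form the linear subspace $\bigcap_{v\in V_{min}}\ker\ell_v$ of $\bigoplus_{H\in\mathcal{H}_{min}}U_H^+$, and conclude by convexity. You supply more justification than the paper for why the bijection is a homeomorphism, which the paper dispatches with the remark that ``it is easy to see.''
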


\begin{proof}   The space of solutions to the structure equations is the intersection $\cap_{v\in V_{min}}\ker l_v$, which is a linear subspace of $\bigoplus_{H\in \mathcal{H}_{min}}U_H^+$ and hence contractible. The preceding discussion shows that this subspace is in one-to-one correspondence with the set of zero-sum shearings of $(\SP,\F,c_\pi^\F)$.  Thus, by Corollary \ref{cor:fibers}, there is a bijection between the space of solutions and points in $\Theta^{-1}([\SP,d_\F,c_\pi^\F])$.  It is easy to see that this correspondence is a homeomorphism.  By Proposition \ref{prop:surjective}, every fiber of  $\Theta$ is a $U(A_\G)$-translate of one containing some $[\SP,\F,c_\pi^\F]$, so every fiber is contractible.
\end{proof}

\subsubsection{Contractibility of  $\OG$} We now finish the proof of Theorem \ref{thm:Theta}. By Theorem \ref{thm:contractible}, the fibers of $\Theta$ are contractible, but since they are not compact, $\Theta$ is not a proper map.  To conclude that $\OG$ is contractible, we will show that $\Theta$ is in fact a fibration.

\begin{proof}[Proof of Theorem \ref{thm:Theta}]   Since $\OG$ is paracompact (the equivariant Gromov-Hausdorff topology is metrizable),  it suffices to show that $\Theta$ is a fibration when restricted to sufficiently small neighborhoods $U \subset \OG$.

We begin by showing that for any point $y_0$ in $\OG$, and any lift $x_0 \in \tS$ of $y_0$, there exists a neighborhood $U$ and a section $s: U \to \Theta^{-1}(U)$
with $s(y_0)=x_0$.   Say $x_0=[X_0,\F_0,h_0]$ and $y_0=[Y_0,d_0,h_0]$, so that $(X_0,d_{\F_0})$ is isometric to $(Y_0,d_0)$.   By Proposition \ref{prop:surjective}, it suffices to consider the case when $h_0=c_0$ is a collapse map.

Consider the fiber over a point $y=[Y,d,c]$ in a small neighborhood $U$ of $y_0$. To define $s(y)$, we must choose a $\G$-complex structure $(X,\F)$ on $(Y,d)$.  For any such $\F$, the twist-minimal hyperplanes with $v$ as a maximal element are determined by the projection of the branch locus  $\Br(v)$  on an axis for $v$.  If $(Y,d,c)$ is close to $(Y_0,d_0,c_0)$  in the equivariant Gromov-Hausdorff topology, then these branch loci must also be close, and hence likewise their projections on an axes for $v$.  However, these projections can change in three ways as we move from $[Y_0,d_0,c_0]$ to $[Y,d,c]$.
\begin{itemize}
\item The distance between a pair of projection points may expand or contract.  This will affect the width of the carrier of the hyperplane separating these projection points.
\item  One projection point can split into multiple points.  This will require introducing new twist-minimal hyperplanes.
\item Two or more projection points may coalesce, causing the corresponding hyperplanes to merge.
\end{itemize}
Shrinking $U$ if necessary, we may avoid the coalescing of projection points and allow only changes of the first two types.  Moreover, for $U$ sufficiently small, the new twist-minimal hyperplanes will have carriers of width less than half that of the old twist-minimal hyperplanes, and thus (by abuse of notation) we may consider the set of twist-minimal hyperplanes in $\F_0$ to be a subset of those in $\F$.  Then the marking $c : X \to \SaG$ will correspond to collapsing the newly added hyperplanes, composed with the straighten-collapse map corresponding to $c_0$.

 The collection of twist-minimal hyperplanes at $[Y,d,c]$ is completely determined by the metric $d$.  The axes of twist-dominant generators are determined by the marking $c$.  As seen in Proposition \ref{prop:uniqueness}, once we have determined the twist-minimal hyperplanes, the shearing of their dual edges together with the branch locus completely determines $\F$.  Suppose $H$ is a new hyperplane, not coming from a hyperplane in $\F_0$.  We are free to choose the shearing on the dual edge by any vector in $U_H^+$. Different choices will only affect the determination of twist-dominant hyperplanes. Therefore we choose the dual edge to be orthogonal to $H$, of length equal to the width of $\kappa(H)$. If $H$ corresponds to a  twist-minimal  hyperplane in $\F_0$ which is collapsed by $c_0$, we leave the shearing unchanged  (that is, the angle between the dual edge and the axes of $\UL(H)$), but adjust the length of the dual edge to take account of the change in the width of the hyperplane carrier.  Finally, for twist-minimal hyperplanes not collapsed by $c_0$, namely those labeled $H_v$, we adjust the shearing so that the new characteristic cycle lifts to a path whose endpoints lie on an axis for $v$.  This determines a parallelotope structure $\F$ on $(Y,d)$
with the property that the shearing along twist-minimal hyperplanes satisfies the zero-sum condition relative to any skewed $\G$-complex in the fiber over $y$. Hence by Corollary \ref{cor:fibers}, $[X, \F, c]$ also lies in this fiber.
Set $s(y) = [X, \F, c]$.  Since the construction of $\F$ depends only on the metric $d$ and lift $[Y,\F_0,c_0]$, the map $s$ is well-defined and continuous.

Now let $Z$ be any space. Suppose  $f_t: Z  \to U$ is a homotopy and let $\hat f_0: Z \to \Theta^{-1}(U)$ be a lift of $f_0$.
We can lift $f_t$ to a homotopy $g_t =s\circ f_t : Y \to \Theta^{-1}(U)$, but $g_0$ need not agree with the given lift $\hat f_0$.  We can correct this by concatenating $g_t$ with a homotopy $h_t$ from $\hat f_0$ to $g_0$ which projects at all times $t$ to the map $f_0$.  To do this, use the fact that the fibers of $\Theta$ are convex subspaces of some Euclidean space, so the straight-line homotopy in each fiber from $\hat f_0(y)$ to $g_0(y)$ to gives such a homotopy $h_t$.   Then, up to reparameterizing the interval, $h_t$ followed by $g_t$ is a lift of $f_t$.

This shows that $\Theta$ is a fibration.  Since we have already proved that the fibers are contractible (Theorem \ref{thm:contractible}), we conclude that $\Theta$ is a homotopy equivalence.  By Corollary \ref{cor:tScontractible}, $\tS$ is contractible, so the same holds for $\OG$.
\end{proof}

\bibliographystyle{plain}
\bibliography{OSbib}

\end{document}